\newtheorem{lemma}{Lemma}[section]
\newtheorem{proposition}{Proposition}[section]
\newtheorem{thm}{Theorem}[section]
\newtheorem{corollary}{Corollary}[section]
\newtheorem{remark}{Remark}[section]
\def\text#1{\mbox{\rm #1}}
\title{Bernstein-von Mises Theorems for Functionals of Covariance Matrix
\thanks{The research of Chao Gao and Harrison H. Zhou is supported
in part by NSF Grant DMS-1209191.}
}
\author{Chao Gao}
\author{Harrison H.~Zhou}
\affil{
Yale University
}
\date{November 30, 2014}
\begin{document}
\maketitle

\begin{abstract}
We provide a general theoretical framework to derive Bernstein-von Mises theorems for matrix functionals. The conditions on functionals and priors are explicit and easy to check. Results are obtained for various functionals including  entries of  covariance matrix, entries of  precision matrix, quadratic forms, log-determinant, eigenvalues in the Bayesian Gaussian covariance/precision matrix estimation setting, as well as for Bayesian linear and quadratic discriminant analysis.
\smallskip

\textbf{Keywords.} 
Bernstein-von Mises Theorem, Bayes Nonparametrics, Covariance Matrix.
\end{abstract}


\section{Introduction}

The celebrated Bernstein-von Mises (BvM) theorem \citep{laplace, bernstein27, mises28, lecam00,vaart00} justifies Bayesian methods from a frequentist point of view. It bridges the gap between Bayesians and frequentists. Consider a parametric model $\big(P_{\theta}:\theta\in\Theta\big)$, and a prior distribution $\theta\sim\Pi$. Suppose we have i.i.d. observations $X^n=(X_1,...,X_n)$ from the product measure $P_{\theta^*}^n$. Under some weak assumptions,  Bernstein-von Mises theorem shows that the conditional distribution of
$$\sqrt{n}(\theta-\hat{\theta}) |X^n$$
is asymptotically $N(0,V^2)$ under the distribution $P_{\theta^*}^n$ with some centering $\hat{\theta}$ and covariance $V^2$ when $n\rightarrow\infty$. In a local asymptotic normal (LAN) family, the centering $\hat{\theta}$ can be taken as the maximum likelihood estimator (MLE) and $V^2$ as the inverse of the Fisher information matrix. An immediate consequence of the Bernstein-von Mises theorem is that the distributions
$$\sqrt{n}(\theta-\hat{\theta}) |X^n\quad\text{and}\quad \sqrt{n}(\hat{\theta}-\theta) |\theta=\theta^*$$
are asymptotically the same under the sampling distribution $P_{\theta^*}^n$. Note that the first one, known as the posterior, is of interest to Bayesians, and the second one is of interest to frequentists in the large sample theory. Applications of Bernstein-von Mises theorem include constructing confidence sets from Bayesian methods with frequentist coverage guarantees.

Despite the success of BvM results in the classical parametric setting, little is known about the high-dimensional case, where the unknown parameter is of increasing or even infinite dimensions. The pioneering works of \cite{cox93} and \cite{freedman99} (see also \cite{johnstone10}) showed that generally BvM may not be true in  non-classical cases. Despite the negative results, further works on some notions of nonparametric BvM provide some positive answers. See, for example, \cite{leahu11, castillo13a, castillo2013bernstein, ray14}. In this paper, we consider the question whether it is possible to have BvM results for matrix functionals, such as matrix entries and eigenvalues, when the dimension of the matrix $p$ grows with the sample size $n$. 

This paper provides some positive answers to this question. To be specific, we consider a multivariate Gaussian likelihood and put a prior on the covariance matrix. We prove that the posterior distribution has a BvM behavior for various matrix functionals including entries of covariance matrix, entries of  precision matrix, quadratic forms, log-determinant, and eigenvalues. All of these conclusions are obtained from a general theoretical framework we provide in Section \ref{sec:framework}, where we propose explicit easy-to-check conditions on both functionals and priors. We illustrate the theory by both conjugate and non-conjugate priors.
A slight extension of the general framework leads to BvM results for discriminant analysis.  Both linear discriminant analysis (LDA) and quadratic discriminant analysis (QDA) are considered.

This work is inspired by a growing interest in studying the BvM phenomena on a low-dimensional functional of the whole parameter. That is, the asymptotic distribution of
$$\sqrt{n}(f(\theta)-\hat{f})|X^n,$$
with $f$ being a map from $\Theta$ to $\mathbb{R}^d$, where $d$ does not grow with $n$. A special case is the semiparametric setting, where $\theta=(\mu,\eta)$ contains both a parametric part $\mu$ and a nonparametric part $\eta$. The functional $f$ takes the form of $f(\mu,\eta)=\mu$. The works in this field are pioneered by \cite{kim04} in a right-censoring model and \cite{shen02} for a general theory in the semiparametric setting. However, the conditions provided by \cite{shen02} for BvM to hold are hard to check when specific examples are considered. To the best of our knowledge, the first general framework for semiparametric BvM with conditions cleanly stated and easy to check is the beautiful work by \cite{castillo12}, in which the recent advancement in Bayes nonparametrics such as \cite{barron99} and \cite{ghosal00} are nicely absorbed. \cite{rivoirard12} proves BvM for linear functionals for which the distribution of $\sqrt{n}(f(\theta)-\hat{f})|X^n$ converges to a mixture of normal instead of a normal. At the point when this paper is drafted, the most updated theory is due to \cite{castillo13b}, which provides conditions for BvM to hold for general functionals.
The general framework we provide for matrix functional BvM is greatly inspired by the framework developed in \cite{castillo13b} for functionals in nonparametrics. However, the theory in this paper is different from theirs since we can take advantage of the structure in the Gaussian likelihood and avoid unnecessary expansion and approximation. Hence, in the covariance matrix functional case, our assumptions can be significantly weaker.
 
The paper is organized as follows. In Section \ref{sec:framework}, we state the general theoretical framework of our results. It is illustrated with two priors, one conjugate prior and one non-conjugate prior. Section \ref{sec:example} considers specific examples of matrix functionals and the associated BvM results. The extension to discriminant analysis is developed in Section \ref{sec:DA}. Finally, we devote Section \ref{sec:disc} to some discussions on the assumptions and possible generalizations. Most of the proofs are gathered in Section \ref{sec:proof}. 

\subsection{Notation}

Given a matrix $A$, we use $||A||$ to denote its spectral norm, and $||A||_F$ to denote its Frobenius norm. The norm $||\cdot||$, when applied to a vector, is understood to be the usual vector norm. Let $S^{p-1}$ be the unit sphere in $\mathbb{R}^p$. For any $a,b\in\mathbb{R}$, we use notation $a\vee b=\max(a,b)$ and $a\wedge b=\min(a,b)$. The probability $P_{\Sigma}$ stands for $N(0,\Sigma)$ and $P_{(\mu,\Omega)}$ is for $N(\mu,\Omega^{-1})$. In most cases, we use $\Sigma$  to denote the covariance matrix, and $\Omega$ to denote the precision matrix (including those with superscripts or subscripts). The notation $\mathbb{P}$ is for a generic probability, whenever the distribution is clear in the context. We use $O_P(\cdot)$ and $o_P(\cdot)$ to denote stochastic orders under the sampling distribution of the data. We use $C$ to indicate constants throughout the paper. They may be different from line to line.

\section{A General Framework} \label{sec:framework}

Consider i.i.d. samples $X^n=(X_1,...,X_n)$ drawn from $N(0,\Sigma^*)$, where $\Sigma^*$ is a $p\times p$ covariance matrix with inverse $\Omega^*$. A Bayes method puts a prior $\Pi$ on the precision matrix $\Omega$, and the posterior distribution is defined as
$$\Pi(B|X^n)=\frac{\int_B\exp\Big(l_n(\Omega)\Big)d\Pi(\Omega)}{\int\exp\Big(l_n(\Omega)\Big)d\Pi(\Omega)},$$
where $l_n(\Omega)$ is the log-likelihood of $N(0,\Omega^{-1})$ defined as
$$l_n(\Omega)=\frac{n}{2}\log\det(\Omega)-\frac{n}{2}\text{tr}(\Omega\hat{\Sigma}),\quad\text{where }\hat{\Sigma}=\frac{1}{n}\sum_{i=1}^nX_iX_i^T.$$
We deliberately omit the logarithmic normalizing constant in $l_n(\Omega)$ for simplicity and it will not affect the definition of the posterior distribution.
Note that specifying a prior on the precision matrix $\Omega$ is equivalent to specifying a prior on the covariance matrix $\Omega^{-1}$. The goal of this work is to show that the asymptotic distribution of the functional $f(\Omega)$ under the posterior distribution is approximately normal, i.e., 
$$\Pi\Big(\sqrt{n}V^{-1}\big(f(\Omega)-\hat{f}\big)\leq t|X^n\Big)\rightarrow \mathbb{P}(Z\leq t),$$
where 
$Z\sim N(0,1)$,
as $(n,p)\rightarrow\infty$ jointly with some appropriate centering $\hat{f}$ and variance $V^2$. In this paper, we choose the centering $\hat{f}$ to be the sample version of $f(\Omega)=f(\Sigma^{-1})$, where $\Sigma$ is replaced by the sample covariance $\hat{\Sigma}$, and compare the BvM results with the classical asymptotical normality for $\hat{f}$ in the frequentist sense. Other centering $\hat{f}$, including bias correction on the sample version, will be considered in the future work.

We first provide a framework for approximately linear functionals, and then use the general theory to derive results for specific examples of priors and functionals. For clarity of presentation, we consider the cases of functionals of $\Sigma$ and functionals of $\Omega$ separately. Though a functional of $\Sigma$ is also a functional of $\Omega$, we treat them separately, since some functional may be ``more linear" in $\Sigma$ than in $\Omega$, or the other way around.

\subsection{Functional of Covariance Matrix}

Let us first consider a functional of $\Sigma$, $f=\phi(\Sigma)$. The functional is approximately linear in a neighborhood of the truth. We assume there is a set $A_n$ satisfying
\begin{equation}
A_n\subset\left\{||\Sigma-\Sigma^*||\leq\delta_n\right\}, \label{eq:covsubset}
\end{equation}
for  any sequence $\delta_n=o(1)$, on which $\phi(\Sigma)$ is approximately linear in the sense that
there exists a symmetric matrix $\Phi$ such that
\begin{equation}
\sup_{A_n}\sqrt{n}\left\|\Sigma^{*1/2}\Phi\Sigma^{*1/2}\right\|_F^{-1}\left|\phi(\Sigma)-\phi(\hat{\Sigma})-\text{tr}\Big((\Sigma-\hat{\Sigma})\Phi\Big)\right|=o_P(1). \label{eq:approxlincov}
\end{equation}
The main result is stated in the following theorem.
\begin{thm} \label{thm:main1}
Under the assumptions of (\ref{eq:approxlincov}) and $||\Sigma^*||\vee||\Omega^*||= O(1)$, if for a given prior $\Pi$, the following two conditions are satisfied:
\begin{enumerate}
\item $\Pi(A_n|X^n)=1-o_P(1)$,
\item For any fixed $t\in\mathbb{R}$, $\frac{\int_{A_n}\exp\Big(l_n(\Omega_t)\Big)d\Pi(\Omega)}{\int_{A_n}\exp\Big(l_n(\Omega)\Big)d\Pi(\Omega)}=1+o_P(1)$ for the perturbed precision matrix
$$\Omega_t=\Omega+\frac{\sqrt{2}t}{\sqrt{n}\left\|\Sigma^{*1/2}\Phi\Sigma^{*1/2}\right\|_F}\Phi,$$
\end{enumerate}
then
$$\sup_{t\in\mathbb{R}}\left|\Pi\Bigg(\frac{\sqrt{n}\big(\phi(\Sigma)-\phi(\hat{\Sigma})\big)}{\sqrt{2}\left\|\Sigma^{*1/2}\Phi\Sigma^{*1/2}\right\|_F}\leq t\Big|X^n\Bigg)-\mathbb{P}\big(Z\leq t\big)\right|=o_P(1),$$
where $Z\sim N(0,1)$.
\end{thm}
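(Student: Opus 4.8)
The plan is to run a moment‑generating‑function (Laplace transform) argument on the posterior, using the fact that the perturbation $\Omega_t$ in Condition~(2) is exactly the exponential tilt that turns the posterior Laplace transform of the linearized functional into a likelihood ratio. Write $\tau_n=\|\Sigma^{*1/2}\Phi\Sigma^{*1/2}\|_F$, $\Sigma=\Omega^{-1}$, and $T_n(\Omega)=\sqrt n\big(\phi(\Sigma)-\phi(\hat\Sigma)\big)/(\sqrt2\,\tau_n)$, so the target is $\sup_t\big|\Pi(T_n\le t\mid X^n)-\Phi(t)\big|=o_P(1)$, with $\Phi$ the standard normal c.d.f. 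Since $\phi$ may be unbounded, $T_n$ need not have a finite posterior Laplace transform, so I would first replace the posterior by its restriction $\Pi_{A_n}(\cdot\mid X^n)=\Pi(\cdot\cap A_n\mid X^n)/\Pi(A_n\mid X^n)$: Condition~(1) makes the total‑variation distance between $\Pi(\cdot\mid X^n)$ and $\Pi_{A_n}(\cdot\mid X^n)$ at most $2\,\Pi(A_n^c\mid X^n)=o_P(1)$, so it suffices to treat the conditional posterior; and on $A_n\subset\{\|\Sigma-\Sigma^*\|\le\delta_n\}$ the eigenvalues of $\Sigma$ are bounded, hence $T_n$ is bounded there and $M_n(t):=\int_{A_n}e^{tT_n(\Omega)}\,d\Pi(\Omega\mid X^n)\big/\Pi(A_n\mid X^n)$ is finite for every $t$.

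The core step is a uniform‑in‑$A_n$ second‑order expansion of $l_n(\Omega_t)-l_n(\Omega)$. Fix $t\in\mathbb{R}$ and put $h_n=\sqrt2\,t/(\sqrt n\,\tau_n)$, so $\Omega_t=\Omega+h_n\Phi$; using $\log\det(\Omega+h_n\Phi)=\log\det\Omega+\log\det(I+h_n\Sigma\Phi)$,
\[
l_n(\Omega_t)-l_n(\Omega)=\tfrac n2\log\det\!\big(I+h_n\Sigma\Phi\big)-\tfrac{nh_n}{2}\,\text{tr}\big(\Phi\hat\Sigma\big).
\]
On $A_n$ one has $\|\Sigma\|\le\|\Sigma^*\|+\delta_n=O(1)$, and from $\|\Omega^*\|=O(1)$ one gets $\tau_n\ge\lambda_{\min}(\Sigma^*)\|\Phi\|=\|\Phi\|/\|\Omega^*\|$, hence $h_n\opnorm{\Sigma\Phi}=O(n^{-1/2})\to0$ (which also makes $\Omega_t\succ0$ for large $n$). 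The eigenvalues of $\Sigma\Phi$ being real, the Taylor expansion $\log\det(I+M)=\text{tr}(M)-\tfrac12\text{tr}(M^2)+O\big(\opnorm{M}\,\text{tr}(M^2)\big)$ applies, and after multiplication by $n/2$ the cubic remainder is $O(n^{-1/2})$ uniformly over $A_n$; collecting terms,
\[
l_n(\Omega_t)-l_n(\Omega)=\frac{\sqrt n\,t}{\sqrt2\,\tau_n}\,\text{tr}\big((\Sigma-\hat\Sigma)\Phi\big)-\frac{t^2}{2\tau_n^2}\,\text{tr}\big(\Sigma\Phi\Sigma\Phi\big)+o(1)\quad\text{uniformly on }A_n.
\]
The approximate‑linearity hypothesis \eqref{eq:approxlincov} identifies the first term as $t\,T_n(\Omega)+o_P(1)$ uniformly on $A_n$; and writing $\Sigma=\Sigma^*+E$ with $\|E\|\le\delta_n$ and $\tilde E=\Sigma^{*-1/2}E\Sigma^{*-1/2}$, so $\|\tilde E\|\le\|\Omega^*\|\delta_n=o(1)$, a short computation gives $\text{tr}(\Sigma\Phi\Sigma\Phi)=\text{tr}\big((\Sigma^*\Phi)^2\big)\,(1+o(1))=\tau_n^2(1+o(1))$ uniformly on $A_n$, so the second term is $t^2/2+o(1)$. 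Hence there is a random $\rho_n(\Omega)$ with $\sup_{\Sigma\in A_n}|\rho_n(\Omega)|=o_P(1)$ and $e^{l_n(\Omega_t)}=e^{l_n(\Omega)}\exp\!\big(tT_n(\Omega)-\tfrac{t^2}{2}+\rho_n(\Omega)\big)$ on $A_n$.

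Rearranging as $e^{tT_n(\Omega)}e^{l_n(\Omega)}=e^{t^2/2}e^{-\rho_n(\Omega)}e^{l_n(\Omega_t)}$ on $A_n$, integrating $d\Pi$ over $A_n$, pulling the uniformly small $\rho_n$ outside the integral, and invoking Condition~(2) for the ratio $\int_{A_n}e^{l_n(\Omega_t)}d\Pi\big/\int_{A_n}e^{l_n}d\Pi$, one obtains $\int_{A_n}e^{tT_n}\,d\Pi(\cdot\mid X^n)=e^{t^2/2}(1+o_P(1))\,\Pi(A_n\mid X^n)$; dividing by $\Pi(A_n\mid X^n)=1-o_P(1)$ (Condition~(1)) yields $M_n(t)=e^{t^2/2}(1+o_P(1))$ for each fixed $t\in\mathbb{R}$. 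To turn this pointwise‑in‑probability Laplace transform convergence into the claimed uniform c.d.f.\ convergence I would argue along subsequences: from any subsequence extract a further one on which $M_n(t)\to e^{t^2/2}$ almost surely for all rational $t$ simultaneously; since each $M_n$ is convex and finite on all of $\mathbb{R}$ and $t\mapsto e^{t^2/2}$ is continuous, convergence on a dense set forces convergence for every $t$, hence $\Pi_{A_n}(T_n\in\cdot\mid X^n)\Rightarrow N(0,1)$ (which is determined by its Laplace transform) along that subsequence, and then P\'olya's theorem upgrades this to uniform convergence of the c.d.f.'s because $\Phi$ is continuous. As every subsequence admits such a further subsequence and the limit is deterministic, $\sup_t|\Pi_{A_n}(T_n\le t\mid X^n)-\Phi(t)|=o_P(1)$, which combined with the reduction of the first paragraph completes the proof.

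I expect the main obstacle to be the uniform‑in‑$A_n$ control in the second‑order expansion: showing that both the $\log\det$ remainder and the gap between $\text{tr}(\Sigma\Phi\Sigma\Phi)$ and $\tau_n^2=\text{tr}\big((\Sigma^*\Phi)^2\big)$ are negligible relative to their natural $\tau_n$‑scaling, \emph{uniformly} over $\Sigma\in A_n$. This is where the bounds $\|\Sigma^*\|\vee\|\Omega^*\|=O(1)$ and the specific normalization by $\|\Sigma^{*1/2}\Phi\Sigma^{*1/2}\|_F$ (rather than, e.g., $\|\Phi\|_F$) are essential — they are precisely what makes the linear and quadratic terms land at scales $t T_n$ and $t^2/2$ — and where the hypothesis \eqref{eq:approxlincov} must be fed in at exactly the right rate.
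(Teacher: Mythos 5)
Your proof is correct and follows essentially the same route as the paper: restrict to $A_n$ (so the Laplace transforms are finite and the total variation distance to the full posterior is $o_P(1)$), expand $l_n(\Omega_t)-l_n(\Omega)$ to second order, absorb the resulting bias via Condition~2 to show the restricted posterior moment generating function of $T_n$ converges in probability to $e^{t^2/2}$ for each fixed $t$, and then upgrade to uniform c.d.f.\ convergence. The only cosmetic differences are that the paper invokes Lemma~\ref{lem:laplace} (Castillo--Nickl's Lemma 2) for that last upgrade where you re-derive it via a subsequence argument and P\'olya's theorem, and that the paper organizes the expansion around an integral-form Taylor remainder (Lemma~\ref{lem:likexpansion}) where you use the equivalent tracial cubic bound on $\log\det(I+M)$.
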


The theorem gives explicit conditions on both prior and functional. The first condition says that the posterior distribution concentrates on a neighborhood of the truth under the spectral norm, on which the functional is approximately linear. The second condition says that the bias caused by the shifted parameter can be absorbed by the posterior distribution. Under both conditions,
Theorem \ref{thm:main1} shows that the asymptotic posterior distribution of $\phi(\Sigma)$ is
$$N\left(\phi(\hat{\Sigma}),2n^{-1}\left\|{\Sigma^*}^{1/2}\Phi{\Sigma^*}^{1/2}\right\|_F^2\right).$$

\subsection{Functional of Precision Matrix}

We state a corresponding theorem for functionals of precision matrix in this section. The condition for linear approximation is slightly different. Consider the functional $f=\psi(\Omega)$. Let $A_n$ be a set satisfying
\begin{equation}
A_n\subset\left\{\sqrt{rp}||\Sigma-\Sigma^*||\leq \delta_n\right\}, \label{eq:precisionsubset}
\end{equation}
for some integer $r>0$ and any sequence $\delta_n=o(1)$. We assume the functional $\psi(\Omega)$ is approximately linear on $A_n$ in the sense that
 there exists a symmetric matrix $\Psi$ satisfying $\text{rank}(\Psi)\leq r$, such that
\begin{equation}
\sup_{A_n}\sqrt{n}\left\|\Omega^{*1/2}\Psi\Omega^{*1/2}\right\|_F^{-1}\left|\psi(\Omega)-\psi(\hat{\Sigma}^{-1})-\text{tr}\Big((\Omega-\hat{\Sigma}^{-1})\Psi\Big)\right|=o_P(1). \label{eq:approxlinprecision}
\end{equation}
The main result is stated in the following theorem.
\begin{thm} \label{thm:main2}
Under the assumptions of (\ref{eq:approxlinprecision}), $rp^2/n=o(1)$ and  $||\Sigma^*||\vee||\Omega^*||= O(1)$, if for a given prior $\Pi$, the following conditions are satisfied:
\begin{enumerate}
\item $\Pi(A_n|X^n)=1-o_P(1)$,
\item For any fixed $t\in\mathbb{R}$, $\frac{\int_{A_n}\exp\Big(l_n(\Omega_t)\Big)d\Pi(\Omega)}{\int_{A_n}\exp\Big(l_n(\Omega)\Big)d\Pi(\Omega)}=1+o_P(1)$ for the perturbed precision matrix
$$\Omega_t=\Omega-\frac{\sqrt{2}t}{\sqrt{n}\left\|\Omega^{*1/2}\Psi\Omega^{*1/2}\right\|_F}\Omega^*\Psi\Omega^*,$$
\end{enumerate}
then
$$\sup_{t\in\mathbb{R}}\left|\Pi\Bigg(\frac{\sqrt{n}\big(\psi(\Omega)-\psi(\hat{\Sigma}^{-1}))\big)}{\sqrt{2}\left\|\Omega^{*1/2}\Psi\Omega^{*1/2}\right\|_F}\leq t\Big|X^n\Bigg)-\mathbb{P}\big(Z\leq t\big)\right|=o_P(1),$$
where $Z\sim N(0,1)$.
\end{thm}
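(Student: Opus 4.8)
The plan is to establish the Bernstein--von Mises statement through the Laplace-transform (moment generating function) method. Write
\[
W_n \;=\; \frac{\sqrt{n}\big(\psi(\Omega)-\psi(\hat{\Sigma}^{-1})\big)}{\sqrt{2}\,\|\Omega^{*1/2}\Psi\Omega^{*1/2}\|_F},
\]
regarded as a random variable under the posterior $\Pi(\cdot\,|X^n)$. By a subsequence argument combined with P\'olya's theorem it suffices to show that, for each fixed $t\in\mathbb{R}$, $\Expect^{\Pi(\cdot|X^n)}\big[e^{tW_n}\big]\to e^{t^2/2}$ in $P_{\Sigma^*}^n$-probability: along any subsequence one extracts a further subsequence on which this convergence is almost sure simultaneously for all rational $t$ (hence, by convexity of moment generating functions, on a neighbourhood of $0$), so that the conditional law of $W_n$ tends weakly to $N(0,1)$ almost surely, and P\'olya upgrades this to the uniform convergence of distribution functions claimed in the theorem.

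Two reductions precede the computation. Since $rp^2/n=o(1)$ forces $p^2/n=o(1)$, there is an event of $P_{\Sigma^*}^n$-probability $1-o(1)$ on which $\|\hat{\Sigma}-\Sigma^*\|\le C\sqrt{p/n}=o(1)$; on this event $\hat{\Sigma}$ is invertible and $\psi(\hat{\Sigma}^{-1})$ is well defined, and all estimates below are made there. Next, the first condition allows me to replace $\Pi(\cdot\,|X^n)$ by the renormalised restriction $\Pi_{A_n}(\cdot\,|X^n):=\Pi(\cdot\cap A_n\,|X^n)/\Pi(A_n\,|X^n)$, whose total variation distance to $\Pi(\cdot\,|X^n)$ is $O(\Pi(A_n^c|X^n))=o_P(1)$, so it is enough to prove the transform convergence under $\Pi_{A_n}$. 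On $A_n$, assumption (\ref{eq:approxlinprecision}) yields $W_n=\widetilde W_n+o_P(1)$ uniformly, where $\widetilde W_n=\sqrt{n}\,\text{tr}\big((\Omega-\hat{\Sigma}^{-1})\Psi\big)/\big(\sqrt{2}\,\|\Omega^{*1/2}\Psi\Omega^{*1/2}\|_F\big)$ is the linear part, so it suffices to treat $\Expect^{\Pi_{A_n}}\big[e^{t\widetilde W_n}\big]$.

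The core of the proof is the change-of-measure identity
\[
e^{t\widetilde W_n}\,e^{l_n(\Omega)}\;=\;e^{t^2/2}\,e^{l_n(\Omega_t)}\,\big(1+o_P(1)\big)\qquad\text{uniformly over }\Omega\in A_n,
\]
with $\Omega_t=\Omega+H$, $H=-c_n t\,\Omega^*\Psi\Omega^*$, $c_n=\sqrt{2}/(\sqrt{n}\,\|\Omega^{*1/2}\Psi\Omega^{*1/2}\|_F)$; equivalently $l_n(\Omega_t)-l_n(\Omega)=t\widetilde W_n-t^2/2+o_P(1)$ uniformly on $A_n$. Expanding $l_n(\Omega+H)-l_n(\Omega)=\tfrac n2\log\det(I+\Omega^{-1}H)-\tfrac n2\text{tr}(H\hat{\Sigma})$ produces a linear term $\tfrac n2\text{tr}\big(H(\Omega^{-1}-\hat{\Sigma})\big)$, a quadratic term $-\tfrac n4\text{tr}\big((\Omega^{-1}H)^2\big)$, and a cubic-and-higher remainder. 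On $A_n$ one has $\Omega^{-1}\Omega^*=I+O(\|\Sigma-\Sigma^*\|)$, so the quadratic term equals $-\tfrac n4 c_n^2t^2\,\text{tr}\big((\Omega^{-1}\Omega^*\Psi\Omega^*)^2\big)=-\tfrac n4 c_n^2t^2\,\text{tr}\big((\Psi\Omega^*)^2\big)+o_P(1)=-t^2/2+o_P(1)$, using $\text{tr}\big((\Psi\Omega^*)^2\big)=\|\Omega^{*1/2}\Psi\Omega^{*1/2}\|_F^2$; the remainder is $O\big(n\,\text{rank}(\Psi)\,\|\Omega^{-1}H\|^3\big)=o_P(1)$ because $H$ has rank at most $r$ and $\|\Omega^{-1}H\|=O(c_n)$. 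Since $\tfrac n2 c_n=\sqrt{n}/(\sqrt{2}\,\|\Omega^{*1/2}\Psi\Omega^{*1/2}\|_F)$, the linear term is $t$ times $\sqrt{n}\,\text{tr}\big((\hat{\Sigma}-\Sigma)\Omega^*\Psi\Omega^*\big)/\big(\sqrt{2}\,\|\Omega^{*1/2}\Psi\Omega^{*1/2}\|_F\big)$, which I identify with $t\widetilde W_n$ up to $o_P(1)$ via the exact identity $\Omega-\hat{\Sigma}^{-1}=\Omega(\hat{\Sigma}-\Sigma)\hat{\Sigma}^{-1}$ and the observation that $\text{tr}\big((\Omega-\hat{\Sigma}^{-1})\Psi\big)-\text{tr}\big((\hat{\Sigma}-\Sigma)\Omega^*\Psi\Omega^*\big)=\mathrm{Err}(\Sigma)-\mathrm{Err}(\hat{\Sigma})$, where $\mathrm{Err}(\cdot)$ is the second-order Taylor remainder at $\Sigma^*$ of $\Sigma\mapsto\text{tr}(\Sigma^{-1}\Psi)$ (its differential at $\Sigma^*$ being $V\mapsto-\text{tr}(\Omega^*V\Omega^*\Psi)$). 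Granting the identity, the second condition gives
\[
\Expect^{\Pi_{A_n}}\big[e^{t\widetilde W_n}\big]=\big(1+o_P(1)\big)e^{t^2/2}\,\frac{\int_{A_n}e^{l_n(\Omega_t)}\,d\Pi(\Omega)}{\int_{A_n}e^{l_n(\Omega)}\,d\Pi(\Omega)}=\big(1+o_P(1)\big)e^{t^2/2},
\]
which is what we need.

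The step I expect to be the main obstacle is the uniform bound $\sqrt{n}\,\|\Omega^{*1/2}\Psi\Omega^{*1/2}\|_F^{-1}\,|\mathrm{Err}(\Sigma)-\mathrm{Err}(\hat{\Sigma})|=o_P(1)$ on $A_n$ (together with the analogous control of the cubic remainder and of the error in the quadratic term). Because $\mathrm{Err}(\cdot)$ is a second-order remainder and every term in its expansion carries a factor $\Psi$, hence has rank at most $r$, one gets $|\mathrm{Err}(\Sigma)|=O\big(r\,\|\Psi\|\,\|\Sigma-\Sigma^*\|^2\big)$, so evaluating at $\hat{\Sigma}$ and at $\Sigma\in A_n$ produces contributions of order $\sqrt{n}\,\|\Psi\|\,\|\Omega^{*1/2}\Psi\Omega^{*1/2}\|_F^{-1}\big(p/n+\delta_n^2/(rp)\big)$; this is precisely where the dimension restriction $rp^2/n=o(1)$, the tightened radius $\sqrt{rp}\,\|\Sigma-\Sigma^*\|\le\delta_n$ built into $A_n$, the rank bound $\text{rank}(\Psi)\le r$, and $\|\Sigma^*\|\vee\|\Omega^*\|=O(1)$ must combine to force the $o_P(1)$. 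This term is exactly what is absent in Theorem \ref{thm:main1}: there the functional is of $\Sigma$, its linear part already matches the linear term of $l_n$ without any matrix inversion, and consequently neither the sharper concentration radius nor a dimension restriction is needed.
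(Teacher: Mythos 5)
Your high-level architecture matches the paper's: restrict to $A_n$ via Condition 1, pass from $\psi$ to its linear part via \prettyref{eq:approxlinprecision}, expand $l_n(\Omega_t)-l_n(\Omega)$ (essentially Lemma \ref{lem:likexpansion} again, since $\Phi=-\Omega^*\Psi\Omega^*$ makes the perturbation fit that lemma), invoke Condition 2 to absorb the shift, and close via convergence of Laplace transforms (the paper uses Lemma \ref{lem:laplace} directly, which plays the same role as your subsequence/P\'olya reduction). Up to here everything is consistent with the paper.

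The gap is exactly in the step you flag as ``the main obstacle,'' and the route you chose cannot close it under the hypotheses of the theorem. You write the error as $\mathrm{Err}(\Sigma)-\mathrm{Err}(\hat{\Sigma})$, where $\mathrm{Err}$ is the second-order Taylor remainder of $\Sigma\mapsto\mathrm{tr}(\Sigma^{-1}\Psi)$ at $\Sigma^*$, and then bound the two remainders separately, getting contributions of order $\sqrt{nr}\,\|\Sigma-\Sigma^*\|^2$ and $\sqrt{nr}\,\|\hat{\Sigma}-\Sigma^*\|^2$. The second is $O_P(\sqrt{rp^2/n})=o_P(1)$; but the first is $\sqrt{nr}\,\|\Sigma-\Sigma^*\|^2$, and on $A_n$ this is bounded only by $\sqrt{n}\,\delta_n^2/(\sqrt{r}\,p)$, which is \emph{not} forced to vanish by $\delta_n=o(1)$ and $rp^2/n=o(1)$ alone (take $r=p=1$, $\delta_n=1/\log n$; then it is $\sqrt{n}/(\log n)^2\to\infty$). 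Bounding the Taylor remainders separately around $\Sigma^*$ throws away the cancellation that makes $\mathrm{Err}(\Sigma)-\mathrm{Err}(\hat{\Sigma})$ vanish when $\Sigma=\hat{\Sigma}$; that cancellation is where the factor $(\Sigma-\hat{\Sigma})$ must be extracted, and it is essential.

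The paper avoids this by never Taylor-expanding at $\Sigma^*$: it uses the exact identity $\Omega-\hat{\Sigma}^{-1}=-\Omega(\Sigma-\hat{\Sigma})\hat{\Sigma}^{-1}$ to write
\[
\mathrm{tr}\big((\Omega-\hat{\Sigma}^{-1})\Psi\big)-\mathrm{tr}\big((\Sigma-\hat{\Sigma})\Phi\big)
=\mathrm{tr}\big((\Sigma-\hat{\Sigma})(\Omega^*\Psi\Omega^*-\Omega\Psi\hat{\Sigma}^{-1})\big),
\]
then splits $\Omega^*\Psi\Omega^*-\Omega\Psi\hat{\Sigma}^{-1}$ into $\Omega^*\Psi(\Omega^*-\hat{\Sigma}^{-1})+(\Omega^*-\Omega)\Psi\hat{\Sigma}^{-1}$ and applies the rank-$r$ SVD of $\Psi$ term by term. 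The outcome is a bound $O_P\big(\sqrt{rp^2/n}+\sqrt{rp}\,\|\Sigma-\Sigma^*\|\big)$ that is \emph{linear} in $\|\Sigma-\Sigma^*\|$ and hence controlled by the $A_n$ radius $\sqrt{rp}\,\|\Sigma-\Sigma^*\|\le\delta_n$; no quadratic $\|\Sigma-\Sigma^*\|^2$ term survives. If you redo your remainder accounting with $\hat{\Sigma}$ (not $\Sigma^*$) as the base point, so that the trace identity above is what emerges, the quadratic term does not appear and the stated conditions suffice.

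Two smaller remarks. First, in your cubic-remainder estimate you write $O(n\,r\,\|\Omega^{-1}H\|^3)$ with $\|\Omega^{-1}H\|=O(c_n)$; the correct normalization gives $n\,\sum_j|h_j|^3\lesssim n\cdot O(n^{-1/2})\cdot\|\Sigma^{1/2}\Phi\Sigma^{1/2}\|_F^2/\big(n\|\Sigma^{*1/2}\Phi\Sigma^{*1/2}\|_F^2\big)=O(n^{-1/2})$ without any extra $r$ factor, exactly as in the proof of Theorem \ref{thm:main1}. Second, the reduction to the Laplace transform is fine, but you may simply cite Lemma \ref{lem:laplace} rather than re-deriving a subsequence argument.
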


\begin{remark}
The extra condition $rp^2/n=o(1)$ does not appear in Theorem \ref{thm:main1}. We show that this condition is indeed sharp for Theorem \ref{thm:main2} in Section \ref{sec:sharpthm2} in comparison with the asymptotics of MLE.
\end{remark}

\subsection{Priors}

In this section, we provide examples of priors. In particular, we consider both a conjugate prior and a non-conjugate prior. Note that the result of a conjugate prior can be derived by directly exploring the posterior form without applying our general theory. However, the general framework provided in this paper can handle both conjugate and non-conjugate priors in a unified way.

\subsubsection{Wishart Prior}

Consider the Wishart prior $\mathcal{W}_p(I,p+b-1)$ on $\Omega$ with density function
\begin{equation}
\frac{d\Pi(\Omega)}{d\Omega}\propto \exp\Bigg(\frac{b-2}{2}\log\det(\Omega)-\frac{1}{2}\text{tr}(\Omega)\Bigg), \label{eq:wishartdensity}
\end{equation}
supported on the set of symmetric positive semi-definite matrices.

\begin{lemma}\label{lem:wishart}
Assume $||\Sigma^*||\vee||\Omega^*||= O(1)$ and $p/n=o(1)$. Then, for any integer $b=O(1)$, the prior $\Pi=\mathcal{W}_p(I,p+b-1)$ satisfies the two conditions in Theorem \ref{thm:main1} for some $A_n$. If the extra assumption $rp^2/n=o(1)$ is made,  the two conditions in Theorem \ref{thm:main2} are also satisfied for some $A_n$.
\end{lemma}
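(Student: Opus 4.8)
The plan is to verify the two conditions of Theorem \ref{thm:main1} (and, under the extra assumption, of Theorem \ref{thm:main2}) by direct computation with the conjugate structure. Because the Wishart density \eqref{eq:wishartdensity} is proportional to $\exp\big(\frac{b-2}{2}\log\det\Omega - \frac12\Tr(\Omega)\big)$, the posterior is again Wishart: combining with $l_n(\Omega)=\frac n2\log\det\Omega - \frac n2\Tr(\Omega\hat\Sigma)$ gives a density proportional to $\exp\big(\frac{n+b-2}{2}\log\det\Omega-\frac12\Tr(\Omega(I+n\hat\Sigma))\big)$, i.e.\ $\Omega|X^n\sim\mathcal W_p\big((I+n\hat\Sigma)^{-1}, n+p+b-1\big)$. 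Under $\|\Sigma^*\|\vee\|\Omega^*\|=O(1)$ and $p/n=o(1)$, the sample covariance satisfies $\|\hat\Sigma-\Sigma^*\|=O_P(\sqrt{p/n})$ by standard random-matrix bounds, so the posterior concentrates near $\Sigma^*$. For the set $A_n$ I would take $A_n=\{\|\Sigma-\Sigma^*\|\le\delta_n\}$ for Theorem \ref{thm:main1} (respectively $A_n=\{\sqrt{rp}\,\|\Sigma-\Sigma^*\|\le\delta_n\}$ for Theorem \ref{thm:main2}) with $\delta_n\to0$ slowly enough; Condition 1, namely $\Pi(A_n|X^n)=1-o_P(1)$, then follows from a Wishart concentration inequality applied to $\Omega=\Sigma^{-1}$ around its posterior mean, combined with the triangle inequality $\|\Sigma-\Sigma^*\|\le\|\Sigma-\Expect[\Sigma|X^n]\|+\|\Expect[\Sigma|X^n]-\hat\Sigma\|+\|\hat\Sigma-\Sigma^*\|$ and the fact that each term is $O_P$ of a vanishing rate (here one needs $p/n=o(1)$, and for Theorem \ref{thm:main2} the faster rate demands $rp^2/n=o(1)$ so that $\sqrt{rp}\cdot\sqrt{p/n}\to0$).

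For Condition 2, the key point is that the perturbation $\Omega\mapsto\Omega_t$ is an affine shift, so the likelihood ratio $\exp(l_n(\Omega_t)-l_n(\Omega))$ can be written explicitly. Using $\log\det(\Omega_t)=\log\det(\Omega)+\log\det(I+\Omega^{-1}(\Omega_t-\Omega))$ and expanding $\log\det(I+M)=\Tr M-\frac12\Tr M^2+\dots$, together with the linearity of $\Tr(\Omega_t\hat\Sigma)$ in $\Omega_t$, one finds that $l_n(\Omega_t)-l_n(\Omega)$ equals a deterministic quadratic-plus-lower-order expression in $t$ that does \emph{not} depend on $\Omega$ to leading order on $A_n$ (the $\Omega$-dependent remainder being $o_P(1)$ uniformly on $A_n$ because $\|\Sigma-\Sigma^*\|\le\delta_n$ and the perturbation has size $O(n^{-1/2})$ times a matrix of controlled norm). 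Hence both numerator and denominator of the ratio in Condition 2 pick up the same multiplicative factor $\exp(c_n(t)+o_P(1))$, and the ratio is $1+o_P(1)$; more carefully, one writes $\int_{A_n}e^{l_n(\Omega_t)}d\Pi(\Omega)=\int_{A_n}e^{l_n(\Omega)+[l_n(\Omega_t)-l_n(\Omega)]}d\Pi(\Omega)$ and pulls the leading term of the bracket out of the integral. I would carry this out first for the covariance case where $\Omega_t=\Omega+\frac{\sqrt2 t}{\sqrt n\|\Sigma^{*1/2}\Phi\Sigma^{*1/2}\|_F}\Phi$, and then separately for the precision case where $\Omega_t=\Omega-\frac{\sqrt2 t}{\sqrt n\|\Omega^{*1/2}\Psi\Omega^{*1/2}\|_F}\Omega^*\Psi\Omega^*$; the computations are parallel, the only difference being the bookkeeping of which matrix norm normalizes the shift.

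The main obstacle I expect is the uniform control, over $\Omega\in A_n$, of the remainder terms in the expansion $l_n(\Omega_t)-l_n(\Omega)$ — specifically, showing that the $\Omega$-dependent pieces (the second- and higher-order terms of $\log\det(I+\Omega^{-1}(\Omega_t-\Omega))$, and the cross term $\Tr((\Omega_t-\Omega)(\hat\Sigma-\Sigma^*))$ after centering) are $o_P(1)$ uniformly. This requires bounding operator and Frobenius norms of products like $\Omega^{-1}\Phi$ by $O(1)$ using $\|\Sigma^*\|\vee\|\Omega^*\|=O(1)$ together with $\|\Sigma-\Sigma^*\|\le\delta_n$ (which forces $\|\Omega\|$ and $\|\Omega^{-1}\|$ to stay bounded on $A_n$), and then checking that each remainder is of order $n^{-1/2}$ or smaller after multiplying by the normalizing $\sqrt n$ in the denominator; for Theorem \ref{thm:main2} the rank constraint $\rank(\Psi)\le r$ and the condition $rp^2/n=o(1)$ are exactly what make the analogous remainders vanish. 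Once this uniform remainder bound is in hand, both conditions follow and the lemma is proved by invoking Theorems \ref{thm:main1} and \ref{thm:main2}.
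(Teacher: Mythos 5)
Your handling of Condition 1 is on the right track: the posterior is indeed $\mathcal{W}_p((n\hat\Sigma+I)^{-1},n+p+b-1)$, and the paper's proof likewise represents this as a sum of outer products of Gaussian vectors, applies a Wishart concentration inequality, and takes $A_n=\{\|\Sigma-\Sigma^*\|\le M\sqrt{p/n}\}$ (which, under $p/n=o(1)$ or $rp^2/n=o(1)$, satisfies the respective set inclusion requirements).

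Your argument for Condition 2, however, rests on a claim that is false and would make the proof collapse. You assert that $l_n(\Omega_t)-l_n(\Omega)$ equals, to leading order on $A_n$, a deterministic expression $c_n(t)$ in $t$ with $\Omega$-dependent remainder $o_P(1)$, from which you conclude the ratio in Condition 2 is $1+o_P(1)$. Two things go wrong. First, if that expansion held, the ratio would equal $e^{c_n(t)}(1+o_P(1))$, which is not $1+o_P(1)$ unless $c_n(t)\to 0$; but $c_n(t)$ contains the fixed quadratic $-t^2/2$ (this term is exactly what the MGF argument needs to surface, not to vanish). Second, and more fundamentally, the linear-in-$t$ term of the expansion, $\frac{t\sqrt n}{\sqrt 2\,\|\Sigma^{*1/2}\Phi\Sigma^{*1/2}\|_F}\operatorname{tr}\!\big((\Sigma-\hat\Sigma)\Phi\big)$, depends on $\Omega$ through $\Sigma$ and is precisely the normalized functional $\sqrt n(\phi(\Sigma)-\phi(\hat\Sigma))/(\sqrt 2\|\cdot\|_F)$ whose asymptotic distribution the theorem describes; under the posterior it is $O_P(1)$, not $o_P(1)$, and over $\Omega\in A_n$ it ranges up to order $\sqrt p$. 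It cannot be absorbed into a deterministic $c_n(t)$. Condition 2 is not a statement about the likelihood ratio being approximately constant on $A_n$ -- the $\Omega$-dependence of $l_n(\Omega_t)-l_n(\Omega)$ is the whole content of the BvM phenomenon. It is a statement about the \emph{prior} being approximately invariant under the shift $\Omega\mapsto\Omega_t$. The correct argument, which the paper uses, is a change of variables $\Gamma=\Omega_t$ in the numerator (Jacobian $1$): this turns $\int_{A_n}\exp(l_n(\Omega_t))\,d\Pi(\Omega)$ into $\int_{A_n+2tn^{-1/2}\tilde\Phi}\exp(l_n(\Gamma))\,\frac{d\Pi(\Gamma-2tn^{-1/2}\tilde\Phi)}{d\Gamma}\,d\Gamma$. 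One then shows, using the explicit Wishart density, that the ratio of prior densities $\frac{d\Pi(\Gamma-2tn^{-1/2}\tilde\Phi)}{d\Pi(\Gamma)}=\exp\!\big(\tfrac{b-2}{2}\log\det(I-2tn^{-1/2}\Omega^{-1}\tilde\Phi)+\tfrac12\operatorname{tr}(2tn^{-1/2}\tilde\Phi)\big)$ is $1+o(1)$ uniformly on a slightly enlarged $A_n''$, and sandwiches the shifted domain between two sets $A_n'\subset A_n+2tn^{-1/2}\tilde\Phi\subset A_n''$ both carrying posterior mass $1-o_P(1)$. You should replace your Condition 2 argument with this change-of-variable comparison of prior densities.
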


\begin{remark}
In the proof of Lemma \ref{lem:wishart} (Section \ref{sec:proofwishart}), we set
$$A_n=\left\{||\Sigma-\Sigma^*||\leq M\sqrt{\frac{p}{n}}\right\},$$ for some $M>0$.
\end{remark}

\subsubsection{Gaussian Prior} \label{sec:gaussprior}

Consider Gaussian prior on $\Omega$ with density function
\begin{equation}
\frac{d\Pi(\Omega)}{d\Omega}\propto \exp\Big(-\frac{1}{2}||\Omega||_F^2\Big),\label{eq:gaussiandensity}
\end{equation}
supported on the following set
$$\left\{\Omega=\Omega^T, ||\Omega||<2\Lambda,||\Sigma||\leq 2\Lambda\right\},$$
for some constant $\Lambda>0$.

\begin{lemma}\label{lem:gaussprior}
Assume $||\Sigma^*||\vee||\Omega^*||\leq\Lambda=O(1)$ and $\frac{p^2\log n}{n}=o(1)$. The Gaussian prior $\Pi$ defined above satisfies the two conditions in Theorem \ref{thm:main1} for some appropriate $A_n$. If the extra assumption $\frac{rp^3\log n}{n}=o(1)$ is made, the two conditions in Theorem \ref{thm:main2} are also satisfied for some appropriate $A_n$.
\end{lemma}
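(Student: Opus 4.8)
The plan is to produce a single set $A_n$ that serves both theorems and then verify the two numbered conditions in turn. I would set $\epsilon_n:=p\sqrt{\log n/n}$ and take
$$A_n=\left\{\Omega:\ \|\Omega^{-1}-\Sigma^*\|\le M\epsilon_n\right\}$$
intersected with the prior support, for a large constant $M$. Since $p^2\log n/n=o(1)$ we have $M\epsilon_n=o(1)$, so $A_n$ obeys (\ref{eq:covsubset}); and since $\sqrt{rp}\,M\epsilon_n=M\sqrt{rp^3\log n/n}=o(1)$ under the extra hypothesis, the same $A_n$ obeys (\ref{eq:precisionsubset}) (the remaining dimension conditions in Theorems \ref{thm:main1}--\ref{thm:main2}, e.g. $rp^2/n=o(1)$, follow at once). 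Note also that membership in $A_n$ already forces the eigenvalues of $\Omega$ into $[\tfrac1\Lambda-o(1),\ \Lambda+o(1)]$. So $\epsilon_n$ in the spectral norm on $\Sigma$ is the contraction rate to aim for.

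For Condition~1, $\Pi(A_n\mid X^n)=1-o_P(1)$, I would run the standard prior-mass-plus-entropy posterior contraction argument (\cite{ghosal00}). No sieve is needed: the prior is supported on $\{\|\Omega\|<2\Lambda,\ \|\Omega^{-1}\|\le 2\Lambda\}$, a fixed precompact set whose $\epsilon$-covering number in Frobenius norm is $\lesssim p^2\log(p/\epsilon)$, and under the eigenvalue bounds this is comparable to the covering number in the Hellinger metric on $\{P_\Sigma\}$. The Gaussian density $\propto e^{-\|\Omega\|_F^2/2}$ is bounded below by $c\,e^{-2p\Lambda^2}$ on its support (there $\|\Omega\|_F^2\le p\|\Omega\|^2<4p\Lambda^2$), so a Frobenius ball of radius $\epsilon$ around $\Omega^*$ — which by the eigenvalue bounds sits inside a Kullback--Leibler neighborhood of $P_{\Sigma^*}$ — has prior mass $\gtrsim e^{-2p\Lambda^2}(c\epsilon/p)^{p(p+1)/2}$, i.e. its negative log prior mass is $\lesssim p^2\log(p/\epsilon)$. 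Both bounds are $\lesssim p^2\log n=n\epsilon_n^2$ at $\epsilon=\epsilon_n$, so the posterior contracts at Hellinger rate $M\epsilon_n$ for $M$ above the constant this produces; because $d_H(P_{\Sigma_1},P_{\Sigma_2})\gtrsim\|\Sigma_1-\Sigma_2\|_F\ge\|\Sigma_1-\Sigma_2\|$ when $\|\Sigma\|\vee\|\Sigma^{-1}\|=O(1)$, it contracts at the corresponding spectral rate on $\Sigma$ as well. This gives Condition~1, together with the slightly stronger statement $\Pi(\|\Sigma-\Sigma^*\|>(M-1)\epsilon_n\mid X^n)=o_P(1)$, which I will reuse.

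For Condition~2, the point is that $\Omega\mapsto\Omega_t=\Omega+h$ is a translation by a \emph{fixed} symmetric matrix $h$ — $h=\frac{\sqrt2\,t}{\sqrt n\,\|\Sigma^{*1/2}\Phi\Sigma^{*1/2}\|_F}\Phi$ for Theorem \ref{thm:main1}, $h=-\frac{\sqrt2\,t}{\sqrt n\,\|\Omega^{*1/2}\Psi\Omega^{*1/2}\|_F}\Omega^*\Psi\Omega^*$ for Theorem \ref{thm:main2} — and that $\|h\|_F\asymp|t|/\sqrt n$, since the norm bounds give $\|\Sigma^{*1/2}\Phi\Sigma^{*1/2}\|_F\asymp\|\Phi\|_F$ and $\|\Omega^{*1/2}\Psi\Omega^{*1/2}\|_F\asymp\|\Omega^*\Psi\Omega^*\|_F\asymp\|\Psi\|_F$. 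The change of variables $\tilde\Omega=\Omega+h$ turns $\int_{A_n}e^{l_n(\Omega_t)}\,d\Pi(\Omega)$ into $\int_{A_n+h}e^{l_n(\tilde\Omega)}\,\frac{d\Pi}{d\Omega}(\tilde\Omega-h)\,d\tilde\Omega$; on $A_n+h$ both support indicators equal $1$ (since $\Omega^*$ is interior to the support while $A_n$ and $h$ are $o(1)$-small), so the density ratio is $\exp\!\big(\langle\tilde\Omega,h\rangle-\tfrac12\|h\|_F^2\big)$. Splitting $\langle\tilde\Omega,h\rangle=\langle\Omega^*,h\rangle+\langle\tilde\Omega-\Omega^*,h\rangle$ and estimating $|\langle\Omega^*,h\rangle|\le\|\Omega^*\|_{*}\,\|h\|\lesssim p/\sqrt n$ and $|\langle\tilde\Omega-\Omega^*,h\rangle|\le\|\tilde\Omega-\Omega^*\|_F\,\|h\|_F\lesssim\sqrt p\,\epsilon_n/\sqrt n$ — both $o(1)$ and uniform over $A_n+h$ and over all (symmetric, rank $\le r$) $\Phi,\Psi$ — the ratio is $1+o(1)$. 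Finally $(A_n+h)\,\triangle\,A_n$, viewed in $\Sigma=\Omega^{-1}$, lies in an annulus around $\Sigma^*$ of radius $\asymp M\epsilon_n$ and thickness $\lesssim1/\sqrt n=o(\epsilon_n)$, hence inside $\{\|\Sigma-\Sigma^*\|>(M-1)\epsilon_n\}$, which has posterior mass $o_P(1)$ by the strengthened Condition~1; combined with $\Pi(A_n\mid X^n)=1-o_P(1)$ this gives $\int_{A_n+h}e^{l_n}\,d\Pi\,/\int_{A_n}e^{l_n}\,d\Pi=1+o_P(1)$. Multiplying the two factors gives Condition~2 for each fixed $t$, and the Theorem \ref{thm:main2} case is identical once the rank-$r$ objects are carried through the norm estimates and $A_n$ is read against the $\sqrt{rp}$-rescaled subset condition.

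The main obstacle is the rate bookkeeping rather than any single hard estimate: the hypotheses $p^2\log n/n=o(1)$ and $rp^3\log n/n=o(1)$ are precisely what the prior-mass/entropy tradeoff of an (essentially) $p^2$-dimensional Gaussian prior forces, and they must simultaneously be exactly strong enough for the linear term $\langle\tilde\Omega,h\rangle$ to vanish. The delicate point is that $A_n$ is a \emph{spectral}-norm ball, on which Frobenius norms are a factor $\sqrt p$ larger; one has to check that the resulting bound $\sqrt p\,\epsilon_n/\sqrt n=p^{3/2}\sqrt{\log n}/n$ — whose square is $(p^2\log n/n)(p/n)$ — still tends to zero, which it does because each factor does.
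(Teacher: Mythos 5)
Your proposal is correct and follows essentially the same route as the paper's proof: establish posterior contraction to a shrinking ball $A_n$ via the prior-mass/entropy (Ghosal--Ghosh--van der Vaart) testing argument for Condition 1, and handle Condition 2 by a change of variables under the translation $\Omega\mapsto\Omega+h$, bounding the resulting extra factor $\exp(\langle\tilde\Omega,h\rangle-\tfrac12\|h\|_F^2)$ and controlling the posterior mass of the set discrepancy $(A_n+h)\,\triangle\,A_n$. The paper's $A_n$ is the Frobenius ball $\{\|\Sigma-\Sigma^*\|_F\leq M\sqrt{p^2\log n/n}\}$ rather than your spectral ball of the same radius, and the paper bounds the set discrepancy by sandwiching $A_n+h$ between $A_n'$ and $A_n''$ rather than by the annulus argument, but these are cosmetic variants; likewise your H\"older split $|\langle\Omega^*,h\rangle|\leq\|\Omega^*\|_*\|h\|\lesssim p/\sqrt n$ is looser than the paper's Cauchy--Schwarz bound $\lesssim\sqrt{p/n}$, but both are $o(1)$ under the stated hypotheses, so nothing breaks.
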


\begin{remark}
In the proof of Lemma \ref{lem:gaussprior} (Section \ref{sec:proofgauss}), we set $$A_n=\left\{||\Sigma-\Sigma^*||_F\leq M\sqrt{\frac{p^2\log n}{n}}\right\},$$ for some constant $M>0$.
\end{remark}

\section{Examples of Matrix Functionals} \label{sec:example}

We consider various examples of functionals in this section. The two conditions of Theorem \ref{thm:main1} and Theorem \ref{thm:main2} are satisfied by Wishart prior and Gaussian prior, as is shown in Lemma \ref{lem:wishart} and Lemma \ref{lem:gaussprior} respectively. Hence, it is sufficient to check the approximate linearity of the functional with respect to $\Sigma$ or $\Omega$ for the BvM result to hold. Among the four examples we consider, the first two are exactly linear and the last two are approximately linear. In the below examples, $Z$ is always a random variable distributed as $N(0,1)$.

\subsection{Entry-wise Functional}

We consider the elementwise functional $\sigma_{ij}=\phi_{ij}(\Sigma)$ and $\omega_{ij}=\psi_{ij}(\Omega)$. Note that these two functionals are linear with respect to $\Sigma$ and $\Omega$ respectively. For $\sigma_{ij}$, we write
$$\sigma_{ij}=\text{tr}\Big(\Sigma\big(\frac{1}{2}E_{ij}+\frac{1}{2}E_{ji}\big)\Big),$$
where the matrix $E_{ij}$ is the $(i,j)$-th basis in $\mathbb{R}^{p\times p}$ with $1$ on its $(i,j)$-the element and $0$ elsewhere.  For $\omega_{ij}$, we write
$$\omega_{ij}=\text{tr}\Big(\Omega\big(\frac{1}{2}E_{ij}+\frac{1}{2}E_{ji}\big)\Big).$$
Note that $\text{rank}\Big(\frac{1}{2}E_{ij}+\frac{1}{2}E_{ji}\Big)\leq 2$. Hence, the corresponding matrices $\Phi$ and $\Psi$ in the linear expansion of $\phi$ and $\psi$ are $\frac{1}{2}E_{ij}+\frac{1}{2}E_{ji}$. In view of Theorem \ref{thm:main1} and Theorem \ref{thm:main2}, the asymptotic variance for $\sqrt{n}\big(\phi(\Sigma)-\phi(\hat{\Sigma})\big)$ is
$$2\left\|\Sigma^{*1/2}\Phi\Sigma^{*1/2}\right\|_F^2=\sigma_{ii}^*\sigma_{jj}^*+\sigma_{ij}^{*2}.$$
The asymptotic variance for $\sqrt{n}\big(\psi(\Omega)-\psi(\hat{\Sigma}^{-1})\big)$ is
$$2\left\|\Omega^{*1/2}\Psi\Omega^{*1/2}\right\|_F^2=\omega_{ii}^*\omega_{jj}^*+\omega_{ij}^{*2}.$$
Plugging these quantities in Theorem \ref{thm:main1}, Theorem \ref{thm:main2}, Lemma \ref{lem:wishart}, and Lemma \ref{lem:gaussprior}, we have the following  Bernstein-von Mises results.

\begin{corollary} \label{cor:wishartbvmelement}
Consider the Wishart prior $\Pi=\mathcal{W}_p(I,p+b-1)$ in (\ref{eq:wishartdensity}) with integer $b=O(1)$. Assume $||\Sigma^*||\vee||\Omega^*||= O(1)$ and $p/n=o(1)$,  then we have
$$P_{\Sigma^*}^n\sup_{t\in\mathbb{R}}\left|\Pi\Bigg(\frac{\sqrt{n}(\sigma_{ij}-\hat{\sigma}_{ij})}{\sqrt{\sigma_{ii}^*\sigma_{jj}^*+\sigma_{ij}^{*2}}}\leq t\Big|X^n\Bigg)-\mathbb{P}\big(Z\leq t\big)\right|\rightarrow 0,$$
where $\hat{\sigma}_{ij}$ is the $(i,j)$-th element of the sample covariance $\hat{\Sigma}$. If we additionally assume $p^2/n=o(1)$, then
$$P_{\Sigma^*}^n\sup_{t\in\mathbb{R}}\left|\Pi\Bigg(\frac{\sqrt{n}(\omega_{ij}-\hat{\omega}_{ij})}{\sqrt{\omega_{ii}^*\omega_{jj}^*+\omega_{ij}^{*2}}}\leq t\Big|X^n\Bigg)-\mathbb{P}\big(Z\leq t\big)\right|\rightarrow 0,$$
where $\hat{\omega}_{ij}$ is the $(i,j)$-th element of $\hat{\Sigma}^{-1}$.
\end{corollary}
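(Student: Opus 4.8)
The plan is to verify that the entry-wise functionals $\sigma_{ij}=\phi_{ij}(\Sigma)$ and $\omega_{ij}=\psi_{ij}(\Omega)$ fit the hypotheses of Theorems \ref{thm:main1} and \ref{thm:main2}, and then simply invoke Lemma \ref{lem:wishart} to discharge the two prior conditions. The first step is the linearity observation already recorded in the text: since $\sigma_{ij}=\text{tr}\big(\Sigma(\tfrac12 E_{ij}+\tfrac12 E_{ji})\big)$ exactly, the remainder $|\phi_{ij}(\Sigma)-\phi_{ij}(\hat\Sigma)-\text{tr}((\Sigma-\hat\Sigma)\Phi)|$ with $\Phi=\tfrac12 E_{ij}+\tfrac12 E_{ji}$ is identically zero, so condition (\ref{eq:approxlincov}) holds on any $A_n$ at all, in particular on the set $A_n=\{\|\Sigma-\Sigma^*\|\leq M\sqrt{p/n}\}$ singled out in the remark after Lemma \ref{lem:wishart}. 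Likewise $\omega_{ij}$ is exactly linear in $\Omega$ with $\Psi=\tfrac12 E_{ij}+\tfrac12 E_{ji}$, which has rank at most $2$, so (\ref{eq:approxlinprecision}) holds with $r=2$, again trivially since the remainder vanishes.

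Next I would compute the asymptotic variances explicitly. The claim is
\[
2\left\|\Sigma^{*1/2}\Phi\Sigma^{*1/2}\right\|_F^2=\sigma_{ii}^*\sigma_{jj}^*+\sigma_{ij}^{*2},
\]
and similarly with $\Sigma^*$ replaced by $\Omega^*$. This is a short computation: writing $\Phi=\tfrac12(E_{ij}+E_{ji})$ one has $\|\Sigma^{*1/2}\Phi\Sigma^{*1/2}\|_F^2=\text{tr}(\Phi\Sigma^*\Phi\Sigma^*)$, and expanding $\Phi\Sigma^*\Phi$ in terms of the columns $e_i,e_j$ of $\Sigma^*$ gives $\text{tr}(\Phi\Sigma^*\Phi\Sigma^*)=\tfrac12(\sigma_{ii}^*\sigma_{jj}^*+\sigma_{ij}^{*2})$; multiplying by $2$ yields the stated variance. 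This puts the normalizing constant $\sqrt{2}\|\Sigma^{*1/2}\Phi\Sigma^{*1/2}\|_F$ appearing in Theorem \ref{thm:main1} equal to $\sqrt{\sigma_{ii}^*\sigma_{jj}^*+\sigma_{ij}^{*2}}$, which is exactly the denominator in the statement.

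The final step is bookkeeping of assumptions. For the covariance part, Lemma \ref{lem:wishart} requires only $\|\Sigma^*\|\vee\|\Omega^*\|=O(1)$ and $p/n=o(1)$, which are precisely the hypotheses of the corollary; so both prior conditions of Theorem \ref{thm:main1} hold for the stated $A_n$, and Theorem \ref{thm:main1} delivers the first display. For the precision part, Theorem \ref{thm:main2} and Lemma \ref{lem:wishart} additionally require $rp^2/n=o(1)$; since here $r=2$, this is $p^2/n=o(1)$, the extra assumption invoked in the second half of the corollary, and Theorem \ref{thm:main2} then delivers the second display. The convergence in $P_{\Sigma^*}^n$-expectation (rather than merely in probability) follows because the quantity inside the expectation is bounded by $1$, so the $o_P(1)$ conclusion of the theorems upgrades to convergence in mean by bounded convergence.

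I expect no real obstacle here: the functionals are genuinely linear, so the only non-automatic steps are the variance computation (routine trace algebra) and checking that the rank bound $r=2$ is compatible with the dimension condition $p^2/n=o(1)$ — which it is, up to the harmless constant. The corollary is essentially a direct specialization of the general framework, and the write-up should be short.
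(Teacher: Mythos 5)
Your proposal is correct and follows exactly the route the paper intends: exact linearity of $\sigma_{ij}$ and $\omega_{ij}$ makes condition (\ref{eq:approxlincov}) (resp.\ (\ref{eq:approxlinprecision})) hold trivially, the trace computation $2\operatorname{tr}(\Phi\Sigma^*\Phi\Sigma^*)=\sigma_{ii}^*\sigma_{jj}^*+\sigma_{ij}^{*2}$ identifies the variance, and Lemma \ref{lem:wishart} supplies the two prior conditions of Theorems \ref{thm:main1} and \ref{thm:main2}, with $r=2$ making $rp^2/n=o(1)$ equivalent to the stated $p^2/n=o(1)$. The upgrade from $o_P(1)$ to convergence of the $P_{\Sigma^*}^n$-expectation via boundedness of the sup-of-CDF-difference is also the right observation, and is implicit in the paper's phrasing.
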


\begin{corollary}
Consider the Gaussian prior $\Pi$ in (\ref{eq:gaussiandensity}). Assume $||\Sigma^*||\vee||\Omega^*||\leq\Lambda= O(1)$ and $\frac{p^2\log n}{n}=o(1)$, then we have
$$P_{\Sigma^*}^n\sup_{t\in\mathbb{R}}\left|\Pi\Bigg(\frac{\sqrt{n}(\sigma_{ij}-\hat{\sigma}_{ij})}{\sqrt{\sigma_{ii}^*\sigma_{jj}^*+\sigma_{ij}^{*2}}}\leq t\Big|X^n\Bigg)-\mathbb{P}\big(Z\leq t\big)\right|\rightarrow 0.$$
If we additionally assume $\frac{p^3\log n}{n}=o(1)$, then
$$P_{\Sigma^*}^n\sup_{t\in\mathbb{R}}\left|\Pi\Bigg(\frac{\sqrt{n}(\omega_{ij}-\hat{\omega}_{ij})}{\sqrt{\omega_{ii}^*\omega_{jj}^*+\omega_{ij}^{*2}}}\leq t\Big|X^n\Bigg)-\mathbb{P}\big(Z\leq t\big)\right|\rightarrow 0,$$
where $\hat{\sigma}_{ij}$ and $\hat{\omega}_{ij}$ are defined in Corollary \ref{cor:wishartbvmelement}.
\end{corollary}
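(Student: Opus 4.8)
The plan is to reduce the corollary to a direct application of Theorem~\ref{thm:main1} (for the covariance entry) and Theorem~\ref{thm:main2} with $r=2$ (for the precision entry), checking only the approximate-linearity condition, since Lemma~\ref{lem:gaussprior} already guarantees that the Gaussian prior of~(\ref{eq:gaussiandensity}) satisfies the two posterior conditions of both theorems under the stated rate assumptions $\frac{p^2\log n}{n}=o(1)$ and $\frac{p^3\log n}{n}=o(1)$. As noted in the text preceding the corollary, the entry-wise functionals are \emph{exactly} linear: writing $\Phi=\Psi=\frac12 E_{ij}+\frac12 E_{ji}$, we have $\sigma_{ij}=\text{tr}(\Sigma\Phi)$ and $\omega_{ij}=\text{tr}(\Omega\Psi)$, so the remainder $\phi(\Sigma)-\phi(\hat\Sigma)-\text{tr}((\Sigma-\hat\Sigma)\Phi)$ is identically zero and~(\ref{eq:approxlincov}) holds trivially with $o_P(1)$ replaced by $0$; likewise~(\ref{eq:approxlinprecision}) holds since $\text{rank}(\Psi)\le 2=r$. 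Hence both theorems apply verbatim.

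The second step is to identify the asymptotic variance appearing in the conclusions of Theorems~\ref{thm:main1} and~\ref{thm:main2} with the explicit expressions in the corollary. This is the short computation already displayed in the text: $2\|\Sigma^{*1/2}\Phi\Sigma^{*1/2}\|_F^2=\sigma_{ii}^*\sigma_{jj}^*+\sigma_{ij}^{*2}$ and $2\|\Omega^{*1/2}\Psi\Omega^{*1/2}\|_F^2=\omega_{ii}^*\omega_{jj}^*+\omega_{ij}^{*2}$. Concretely one expands $\|\Sigma^{*1/2}(\tfrac12 E_{ij}+\tfrac12 E_{ji})\Sigma^{*1/2}\|_F^2=\tfrac14\,\text{tr}\big(\Sigma^*(E_{ij}+E_{ji})\Sigma^*(E_{ij}+E_{ji})\big)$ and reads off the entries of $\Sigma^*$ that survive; the cross terms produce $\sigma_{ij}^{*2}$ and the diagonal terms produce $\sigma_{ii}^*\sigma_{jj}^*$, and symmetrically for $\Omega^*$. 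I would also remark that $\phi(\hat\Sigma)=\hat\sigma_{ij}$ and $\psi(\hat\Sigma^{-1})=\hat\omega_{ij}$ by definition of the centering, matching the numerators in the corollary.

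Finally I would assemble the pieces: under $\|\Sigma^*\|\vee\|\Omega^*\|\le\Lambda=O(1)$ and $\frac{p^2\log n}{n}=o(1)$, Lemma~\ref{lem:gaussprior} verifies Conditions~1--2 of Theorem~\ref{thm:main1} for the prescribed $A_n=\{\|\Sigma-\Sigma^*\|_F\le M\sqrt{p^2\log n/n}\}$, and since that $A_n$ is contained in a spectral-norm ball of radius $o(1)$, the approximate-linearity hypothesis~(\ref{eq:covsubset})--(\ref{eq:approxlincov}) holds on it; Theorem~\ref{thm:main1} then gives the first display. For the precision entry, the stronger rate $\frac{p^3\log n}{n}=o(1)$ is exactly $\frac{rp^3\log n}{n}=o(1)$ with $r=2$, so the same lemma verifies Conditions~1--2 of Theorem~\ref{thm:main2}, whose remaining hypothesis $rp^2/n=o(1)$ is implied, and one obtains the second display. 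There is no genuine obstacle here—the corollary is a packaging result—so the only thing to be careful about is bookkeeping: making sure the value $r=2$ is propagated consistently through the rate conditions of both Theorem~\ref{thm:main2} and Lemma~\ref{lem:gaussprior}, and that the variance constants are computed without dropping the factor $2$ or the symmetrization factor $\tfrac12$.
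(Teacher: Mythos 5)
Your proposal is correct and matches the paper's implicit argument exactly: the corollary is obtained by verifying the (trivially satisfied) linearity condition for the rank-$\le 2$ matrix $\Phi=\Psi=\tfrac12(E_{ij}+E_{ji})$, invoking Lemma~\ref{lem:gaussprior} with $r=2$ to obtain Conditions~1--2 of Theorems~\ref{thm:main1} and~\ref{thm:main2}, and plugging in the displayed variance identities. The one bookkeeping step you flag --- checking $rp^2/n=o(1)$ follows from $p^3\log n/n=o(1)$ and propagating $r=2$ --- is handled correctly, so nothing is missing.
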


\subsection{Quadratic Form}

Consider the functional $\phi_v(\Sigma)=v^T\Sigma v=\text{tr}(\Sigma vv^T)$ and $\psi_v(\Omega)=v\Omega v^T=\text{tr}(\Omega vv^T)$ for some $v\in \mathbb{R}^p$. Therefore, the corresponding matrices $\Phi$ and $\Psi$ are $vv^T$. It is easy to see that $\text{rank}(vv^T)=1$. The asymptotic variances are
$$2\left\|\Sigma^{*1/2}\Phi\Sigma^{*1/2}\right\|_F^2=2|v^T\Sigma^*v|^2,\quad 2\left\|\Omega^{*1/2}\Psi\Omega^{*1/2}\right\|_F^2=2|v^T\Omega^*v|^2.$$
Plugging these representations in Theorem \ref{thm:main1}, Theorem \ref{thm:main2}, Lemma \ref{lem:wishart} and Lemma \ref{lem:gaussprior}, we have the following  Bernstein-von Mises results.

\begin{corollary} \label{cor:wishartbvmquadratic}
Consider the Wishart prior $\Pi=\mathcal{W}_p(I,p+b-1)$ in (\ref{eq:wishartdensity}) with integer $b=O(1)$. Assume $||\Sigma^*||\vee||\Omega^*||= O(1)$ and $p/n=o(1)$, then we have
$$P_{\Sigma^*}^n\sup_{t\in\mathbb{R}}\left|\Pi\Bigg(\frac{\sqrt{n}(v^T\Sigma v-v^T\hat{\Sigma} v)}{\sqrt{2}|v^T\Sigma^* v|}\leq t\Big|X^n\Bigg)-\mathbb{P}\big(Z\leq t\big)\right|\rightarrow 0.$$
If we additionally assume $p^2/n=o(1)$, then
$$P_{\Sigma^*}^n\sup_{t\in\mathbb{R}}\left|\Pi\Bigg(\frac{\sqrt{n}(v^T\Omega v-v^T\hat{\Sigma}^{-1}v)}{\sqrt{2}|v^T\Omega^* v|}\leq t\Big|X^n\Bigg)-\mathbb{P}\big(Z\leq t\big)\right|\rightarrow 0.$$
\end{corollary}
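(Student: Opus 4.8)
The plan is to derive both displays directly from Theorem \ref{thm:main1} and Theorem \ref{thm:main2}, using Lemma \ref{lem:wishart} to discharge the two prior conditions for the Wishart prior, so that the only work left is (i) checking that the quadratic forms meet the approximate-linearity requirements (\ref{eq:approxlincov}) and (\ref{eq:approxlinprecision}) and reading off the relevant matrices $\Phi$, $\Psi$, and (ii) evaluating the Frobenius norms that appear in the asymptotic variances.

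First I would treat the covariance functional $\phi_v(\Sigma)=v^{T}\Sigma v=\text{tr}(\Sigma vv^{T})$ with $\Phi=vv^{T}$, which is symmetric. The residual in (\ref{eq:approxlincov}) is
$$\phi_v(\Sigma)-\phi_v(\hat{\Sigma})-\text{tr}\big((\Sigma-\hat{\Sigma})vv^{T}\big)=v^{T}(\Sigma-\hat{\Sigma})v-v^{T}(\Sigma-\hat{\Sigma})v=0$$
identically, so (\ref{eq:approxlincov}) holds on any $A_n$ of the form (\ref{eq:covsubset}). Under $\|\Sigma^*\|\vee\|\Omega^*\|=O(1)$ and $p/n=o(1)$, Lemma \ref{lem:wishart} produces a set $A_n\subset\{\|\Sigma-\Sigma^*\|\le\delta_n\}$ with $\delta_n=o(1)$ on which the two conditions of Theorem \ref{thm:main1} hold for the Wishart prior; hence Theorem \ref{thm:main1} applies. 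It then remains to identify the normalization: setting $w=\Sigma^{*1/2}v$ gives $\Sigma^{*1/2}\Phi\Sigma^{*1/2}=ww^{T}$ and $\|ww^{T}\|_F^2=(w^{T}w)^2=(v^{T}\Sigma^*v)^2$, so $\sqrt{2}\,\|\Sigma^{*1/2}\Phi\Sigma^{*1/2}\|_F=\sqrt{2}\,|v^{T}\Sigma^*v|$, which is exactly the denominator in the first display.

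The precision functional $\psi_v(\Omega)=v^{T}\Omega v=\text{tr}(\Omega vv^{T})$ is handled the same way, now with $\Psi=vv^{T}$ and $\text{rank}(\Psi)=1$, so $r=1$; the residual in (\ref{eq:approxlinprecision}) again vanishes identically, and with $r=1$ the condition $rp^2/n=o(1)$ becomes the extra hypothesis $p^2/n=o(1)$ of the second display. I would check that the set from Lemma \ref{lem:wishart}, $A_n=\{\|\Sigma-\Sigma^*\|\le M\sqrt{p/n}\}$, is of the form (\ref{eq:precisionsubset}) with $r=1$: on it $\sqrt{p}\,\|\Sigma-\Sigma^*\|\le Mp/\sqrt{n}=M\sqrt{p^2/n}=o(1)$, so we may take $\delta_n=M\sqrt{p^2/n}$. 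Thus Lemma \ref{lem:wishart} supplies the two conditions of Theorem \ref{thm:main2}, which then applies, and as above $\|\Omega^{*1/2}\Psi\Omega^{*1/2}\|_F=|v^{T}\Omega^*v|$, matching the second denominator.

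Finally, Theorem \ref{thm:main1} and Theorem \ref{thm:main2} give the two suprema as $o_P(1)$ under $P_{\Sigma^*}^n$, whereas the corollary states convergence of their $P_{\Sigma^*}^n$-expectations; since each supremum is a difference of two quantities in $[0,1]$ and hence bounded by $1$, the bounded convergence theorem upgrades convergence in probability to convergence in $L^1$, yielding the stated conclusions. There is essentially no hard analytic step here, since the functionals are genuinely linear: the only points requiring care are the rank bookkeeping in Theorem \ref{thm:main2} (here $r=1$) and confirming that the $A_n$ furnished by Lemma \ref{lem:wishart} simultaneously satisfies the inclusion (\ref{eq:precisionsubset}); all the substantive work is already packaged in Theorem \ref{thm:main1}, Theorem \ref{thm:main2}, and Lemma \ref{lem:wishart}.
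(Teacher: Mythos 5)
Your proposal is correct and follows exactly the paper's route: identify $\Phi=\Psi=vv^T$ (exact linearity, rank $1$), compute the Frobenius-norm variances, and feed these into Theorems \ref{thm:main1}, \ref{thm:main2} via Lemma \ref{lem:wishart}. The paper presents this as an immediate ``plug-in'' without a formal proof; your only additions beyond what the paper sketches are the explicit verification that the Wishart $A_n$ satisfies the inclusion in (\ref{eq:precisionsubset}) with $r=1$ and the remark that bounded convergence upgrades $o_P(1)$ to convergence of $P_{\Sigma^*}^n$-expectations, both of which are standard and correctly executed.
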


\begin{corollary} \label{cor:gaussbvmquadratic}
Consider the Gaussian prior $\Pi$ in (\ref{eq:gaussiandensity}). Assume $||\Sigma^*||\vee||\Omega^*||\leq\Lambda= O(1)$ and $\frac{p^2\log n}{n}=o(1)$, then we have
$$P_{\Sigma^*}^n\sup_{t\in\mathbb{R}}\left|\Pi\Bigg(\frac{\sqrt{n}(v^T\Sigma v-v^T\hat{\Sigma} v)}{\sqrt{2}|v^T\Sigma^* v|}\leq t\Big|X^n\Bigg)-\mathbb{P}\big(Z\leq t\big)\right|\rightarrow 0.$$
If we additionally assume $\frac{p^3\log n}{n}=o(1)$, then
$$P_{\Sigma^*}^n\sup_{t\in\mathbb{R}}\left|\Pi\Bigg(\frac{\sqrt{n}(v^T\Omega v-v^T\hat{\Sigma}^{-1}v)}{\sqrt{2}|v^T\Omega^* v|}\leq t\Big|X^n\Bigg)-\mathbb{P}\big(Z\leq t\big)\right|\rightarrow 0.$$
\end{corollary}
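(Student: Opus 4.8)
The plan is to apply Theorems \ref{thm:main1} and \ref{thm:main2} directly, with the quadratic-form functionals substituted in, and to discharge the two posterior conditions by quoting Lemma \ref{lem:gaussprior}. The key simplification is that $\phi_v(\Sigma)=v^T\Sigma v=\text{tr}(\Sigma vv^T)$ and $\psi_v(\Omega)=v^T\Omega v=\text{tr}(\Omega vv^T)$ are \emph{exactly} linear in $\Sigma$ and $\Omega$: the approximate-linearity requirements (\ref{eq:approxlincov}) and (\ref{eq:approxlinprecision}) hold with $\Phi=\Psi=vv^T$ and with the left-hand suprema identically equal to $0$, on any admissible $A_n$. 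By homogeneity of the displayed quantities in $v$ we may assume $\norm{v}=1$, so that $\Phi=\Psi=vv^T$ has bounded operator norm and $\text{rank}(vv^T)=1$.

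First I would record the variance computation. Writing $w=\Sigma^{*1/2}v$, we have $\Sigma^{*1/2}\Phi\Sigma^{*1/2}=ww^T$, hence $\left\|\Sigma^{*1/2}\Phi\Sigma^{*1/2}\right\|_F^2=(w^Tw)^2=(v^T\Sigma^*v)^2$, and similarly $\left\|\Omega^{*1/2}\Psi\Omega^{*1/2}\right\|_F^2=(v^T\Omega^*v)^2$; these yield the denominators $\sqrt{2}\,|v^T\Sigma^*v|$ and $\sqrt{2}\,|v^T\Omega^*v|$ of the statement. Since $\text{rank}(vv^T)=1$ we take $r=1$ in Theorem \ref{thm:main2}, so its extra dimension condition $rp^2/n=o(1)$ reduces to $p^2/n=o(1)$, which is implied by $\frac{p^3\log n}{n}=o(1)$ (as is $\frac{p^2\log n}{n}=o(1)$).

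Next I would verify that the set $A_n=\left\{\norm{\Sigma-\Sigma^*}_F\le M\sqrt{\frac{p^2\log n}{n}}\right\}$ supplied by Lemma \ref{lem:gaussprior} is admissible for both theorems. For (\ref{eq:covsubset}) it suffices that $\norm{\Sigma-\Sigma^*}\le\norm{\Sigma-\Sigma^*}_F\le M\sqrt{\frac{p^2\log n}{n}}=o(1)$ under $\frac{p^2\log n}{n}=o(1)$; for (\ref{eq:precisionsubset}) with $r=1$ we need $\sqrt{p}\,\norm{\Sigma-\Sigma^*}\le M\sqrt{\frac{p^3\log n}{n}}=o(1)$, which holds under the extra assumption. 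With $A_n$ admissible and the boundedness hypothesis $\norm{\Sigma^*}\vee\norm{\Omega^*}\le\Lambda=O(1)$ in force, Lemma \ref{lem:gaussprior} certifies conditions (1) and (2) of Theorem \ref{thm:main1}, and — under the extra assumption — those of Theorem \ref{thm:main2}, for this same $A_n$.

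Finally, plugging $\Phi=\Psi=vv^T$ and the variances above into the conclusions of the two theorems gives the displayed $o_P(1)$ statements; upgrading $o_P(1)$ to convergence of the $P_{\Sigma^*}^n$-expectations is immediate by bounded convergence, since the supremum over $t$ of the difference of two distribution functions always lies in $[0,1]$. I expect no genuine obstacle here: the only points needing care are choosing $r=1$ so that the dimension condition is not over-demanding, and checking that the single set $A_n$ from Lemma \ref{lem:gaussprior} also satisfies the stronger inclusion (\ref{eq:precisionsubset}) required on the precision side — both of which follow at once from the stated rate assumptions.
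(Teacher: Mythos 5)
Your proof is correct and follows the same route the paper takes: identify $\Phi=\Psi=vv^T$ with $r=1$, compute $\|\Sigma^{*1/2}vv^T\Sigma^{*1/2}\|_F^2=(v^T\Sigma^*v)^2$ (and analogously for $\Omega^*$), and then plug directly into Theorems \ref{thm:main1} and \ref{thm:main2} with Lemma \ref{lem:gaussprior} supplying the admissible $A_n$. You spell out a couple of steps the paper leaves implicit (the homogeneity reduction to $\|v\|=1$, the check that the single set $A_n$ from Lemma \ref{lem:gaussprior} satisfies both inclusions, and the bounded-convergence upgrade from $o_P(1)$), but the argument is the same.
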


\begin{remark}
The entry-wise functional and the quadratic form are both special cases of
the functional $u^T\Sigma v$ for some $u,v\in\mathbb{R}^p$. It is direct to apply the general framework to this functional and obtain the result
$$P_{\Sigma^*}^n\sup_{t\in\mathbb{R}}\left|\Pi\Bigg(\frac{\sqrt{n}(u^T\Sigma v-u^T\hat{\Sigma} v)}{\sqrt{|u^T\Sigma^*v|^2+|u^T\Sigma^*u||v^T\Sigma^*v|}}\leq t\Big|X^n\Bigg)-\mathbb{P}\big(Z\leq t\big)\right|\rightarrow 0.$$
Similarly, for the functional $u^T\Omega v$ for some  $u,v\in\mathbb{R}^p$, we have
$$P_{\Sigma^*}^n\sup_{t\in\mathbb{R}}\left|\Pi\Bigg(\frac{\sqrt{n}(u^T\Omega v-u^T\hat{\Sigma}^{-1} v)}{\sqrt{|u^T\Omega^*v|^2+|u^T\Omega^*u||v^T\Omega^*v|}}\leq t\Big|X^n\Bigg)-\mathbb{P}\big(Z\leq t\big)\right|\rightarrow 0,$$
Both results can be derived
under the same conditions of Corollary \ref{cor:wishartbvmquadratic} and Corollary \ref{cor:gaussbvmquadratic}.
\end{remark}

\subsection{Log Determinant}

In this section, we consider the log-determinant functional. That is $\phi(\Sigma)=\log\det(\Sigma)$. Different from entry-wise functional and quadratic form, we do not need to consider $\log\det(\Omega)$ because of the simple observation
$$\log\det(\Omega)=-\log\det(\Sigma).$$
The following lemma establishes the approximate linearity of $\log\det(\Sigma)$.
\begin{lemma} \label{lem:logdet}
Assume $||\Sigma^*||\vee||\Omega^*||= O(1)$ and $p^3/n=o(1)$, then for any $\delta_n=o(1)$, we have
$$\sup_{\{\sqrt{n/p}||\Sigma-\Sigma^*||_F^2\vee \sqrt{p}||\Sigma-\Sigma^*||_F\leq\delta_n\}}\sqrt{\frac{n}{p}}\left|\log\det(\Sigma)-\log\det(\hat{\Sigma})-\text{tr}\Big((\Sigma-\hat{\Sigma})\Omega^*\Big)\right|=o_P(1).$$
\end{lemma}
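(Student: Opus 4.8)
The plan is to expand $\log\det(\Sigma)$ around $\hat{\Sigma}$ using the integral form of the Taylor remainder for the matrix logarithm. Write $\Sigma = \hat{\Sigma} + (\Sigma - \hat{\Sigma})$ and, more usefully, work with the substitution $\Sigma = \hat{\Sigma}^{1/2} M \hat{\Sigma}^{1/2}$ where $M = I + \hat{\Sigma}^{-1/2}(\Sigma-\hat{\Sigma})\hat{\Sigma}^{-1/2}$, so that $\log\det(\Sigma) - \log\det(\hat{\Sigma}) = \log\det(M) = \sum_{k\geq 1}\frac{(-1)^{k+1}}{k}\Tr\big((M-I)^k\big)$, valid once $\|M - I\|<1$. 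The first-order term is $\Tr(M-I) = \Tr\big(\hat{\Sigma}^{-1}(\Sigma - \hat{\Sigma})\big)$, so the remainder is $\log\det(\Sigma) - \log\det(\hat{\Sigma}) - \Tr\big(\hat{\Sigma}^{-1}(\Sigma-\hat{\Sigma})\big) = \sum_{k\geq 2}\frac{(-1)^{k+1}}{k}\Tr\big((M-I)^k\big)$, which is bounded in absolute value by $\frac{\|M-I\|_F^2}{2(1-\|M-I\|)}$ using $|\Tr(B^k)| \le \|B\|^{k-2}\|B\|_F^2$. On the set in the supremum, $\|\Sigma - \Sigma^*\|_F \le \delta_n/\sqrt{p}=o(1)$, hence $\|\Sigma - \hat{\Sigma}\| \le \|\Sigma - \Sigma^*\| + \|\hat{\Sigma}-\Sigma^*\| = o(1) + O_P(\sqrt{p/n})$, and since $\|\hat{\Sigma}^{-1}\|=O_P(1)$ (because $\|\hat{\Sigma}-\Sigma^*\|=o_P(1)$ and $\|\Omega^*\|=O(1)$), we get $\|M-I\| = o_P(1)$, so the series converges and the denominator $1-\|M-I\|$ is $\ge 1/2$ eventually. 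Thus the remainder is $O_P\big(\|M-I\|_F^2\big) = O_P\big(\|\Sigma-\hat{\Sigma}\|_F^2\big)$ uniformly over the set.

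Next I would handle the discrepancy between $\hat{\Sigma}^{-1}$ and $\Omega^*$ in the linear term. Write $\Tr\big((\Sigma-\hat{\Sigma})\hat{\Sigma}^{-1}\big) - \Tr\big((\Sigma-\hat{\Sigma})\Omega^*\big) = \Tr\big((\Sigma-\hat{\Sigma})(\hat{\Sigma}^{-1}-\Omega^*)\big)$. Using $\hat{\Sigma}^{-1} - \Omega^* = -\hat{\Sigma}^{-1}(\hat{\Sigma}-\Sigma^*)\Omega^*$ and the Cauchy–Schwarz inequality for the trace inner product, this is bounded by $\|\Sigma-\hat{\Sigma}\|_F \cdot \|\hat{\Sigma}^{-1}-\Omega^*\|_F \le \|\Sigma-\hat{\Sigma}\|_F\cdot \|\hat{\Sigma}^{-1}\|\,\|\Omega^*\|\,\|\hat{\Sigma}-\Sigma^*\|_F = O_P\big(\|\Sigma-\hat\Sigma\|_F\cdot\sqrt{p/n}\big)$, since $\|\hat{\Sigma}-\Sigma^*\|_F = O_P(\sqrt{p^2/n})$... — actually I should be careful: $\|\hat\Sigma - \Sigma^*\|_F = O_P(p/\sqrt n)$ in general, but $\Tr\big((\hat\Sigma-\Sigma^*)\Omega^*(\Sigma-\hat\Sigma)\hat\Sigma^{-1}\big)$ can be bounded more sharply by $\|\Omega^*\|\,\|\hat\Sigma^{-1}\|\,\|\hat\Sigma-\Sigma^*\|\,\|\Sigma-\hat\Sigma\|_F \cdot \sqrt{\rank}$ — but the cleanest route is just $\|\hat\Sigma-\Sigma^*\|_F\|\Sigma-\hat\Sigma\|_F\|\Omega^*\|\|\hat\Sigma^{-1}\|$; I will choose whichever bound cleanly gives $o(\sqrt{p/n})$ after multiplying by $\sqrt{n/p}$. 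Then, combining, $\sqrt{n/p}$ times the total error is $O_P\big(\sqrt{n/p}\,\|\Sigma-\hat{\Sigma}\|_F^2\big) + O_P\big(\sqrt{n/p}\,\|\Sigma-\hat\Sigma\|_F\|\hat\Sigma-\Sigma^*\|_F\big)$.

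It remains to verify these are $o_P(1)$ on the given set. We have $\|\Sigma - \hat{\Sigma}\|_F \le \|\Sigma - \Sigma^*\|_F + \|\hat\Sigma-\Sigma^*\|_F$. On the set, $\|\Sigma-\Sigma^*\|_F \le \delta_n\sqrt{p/n}\wedge \delta_n/\sqrt p$ — i.e. both $\sqrt{n/p}\|\Sigma-\Sigma^*\|_F^2 \le \delta_n$ and $\sqrt p\,\|\Sigma-\Sigma^*\|_F\le\delta_n$ — and $\|\hat\Sigma-\Sigma^*\|_F = O_P(\sqrt{p^2/n})$. For the quadratic remainder: $\sqrt{n/p}\,\|\Sigma-\hat\Sigma\|_F^2 \lesssim \sqrt{n/p}\,\|\Sigma-\Sigma^*\|_F^2 + \sqrt{n/p}\,\|\hat\Sigma-\Sigma^*\|_F^2 \le \delta_n + O_P\big(\sqrt{n/p}\cdot p^2/n\big) = \delta_n + O_P\big(\sqrt{p^3/n}\big) = o_P(1)$, using $p^3/n=o(1)$. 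For the cross term: $\sqrt{n/p}\,\|\Sigma-\hat\Sigma\|_F\|\hat\Sigma-\Sigma^*\|_F \lesssim \sqrt{n/p}\big(\|\Sigma-\Sigma^*\|_F + \|\hat\Sigma-\Sigma^*\|_F\big)\|\hat\Sigma-\Sigma^*\|_F$; the piece with $\|\Sigma-\Sigma^*\|_F$ is $\le \sqrt{n/p}\cdot(\delta_n/\sqrt p)\cdot O_P(p/\sqrt n) = O_P(\delta_n\sqrt{p/p}) = o_P(1)$ once the $p$-powers are tracked honestly (here I will use the $\sqrt p\|\Sigma-\Sigma^*\|_F\le\delta_n$ branch), and the piece with $\|\hat\Sigma-\Sigma^*\|_F^2$ is again $O_P(\sqrt{p^3/n}) = o_P(1)$. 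Hence both contributions vanish and the lemma follows. The main obstacle is purely bookkeeping: the two-part description of the set in the supremum (one bound scaling like $\|\cdot\|_F^2$, the other like $\|\cdot\|_F$) is tailored precisely so that the quadratic Taylor remainder and the $\hat\Sigma^{-1}$-vs-$\Omega^*$ linear correction are each controlled, and one must match each error term against the right branch — together with supplying the standard concentration facts $\|\hat\Sigma-\Sigma^*\| = O_P(\sqrt{p/n})$ and $\|\hat\Sigma-\Sigma^*\|_F = O_P(p/\sqrt n)$, so that $\|\hat\Sigma^{-1}\| = O_P(1)$, which should either be cited or proved by a short $\varepsilon$-net argument.
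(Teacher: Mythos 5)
Your proof is correct and follows essentially the same route as the paper's: split the error into the second-order Taylor remainder of $\log\det$ around $\hat\Sigma$ (via $M=\hat\Sigma^{-1/2}\Sigma\hat\Sigma^{-1/2}$, controlled by $\|M-I\|_F^2 \lesssim \|\Sigma-\hat\Sigma\|_F^2$) and the linear-term discrepancy $\mathop{\mathrm{tr}}\big((\Sigma-\hat\Sigma)(\hat\Sigma^{-1}-\Omega^*)\big)$ (controlled by $\|\Sigma-\hat\Sigma\|_F\|\hat\Sigma-\Sigma^*\|_F$), then match each piece against the two branches of the constraint set exactly as the paper does. The bookkeeping at the end is correct; the mid-proof hesitation about which norm of $\hat\Sigma-\Sigma^*$ to use resolves to the same $O_P(p/\sqrt n)$ Frobenius bound the paper uses.
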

By Lemma \ref{lem:logdet}, the corresponding matrix $\Phi$ is $\Omega^*$. The asymptotic variance of $\sqrt{n}\Big(\phi(\Sigma)-\phi(\hat{\Sigma})\Big)$ is
$$2\left\|\Sigma^{*1/2}\Phi\Sigma^{*1/2}\right\|_F^2=2p.$$

\begin{corollary}
Consider the Wishart prior $\Pi=\mathcal{W}_p(I,p+b-1)$ in (\ref{eq:wishartdensity}) with integer $b=O(1)$. Assume $||\Sigma^*||\vee||\Omega^*||= O(1)$ and $p^3/n=o(1)$, then we have
$$P_{\Sigma^*}^n\sup_{t\in\mathbb{R}}\left|\Pi\Bigg(\sqrt{\frac{n}{2p}}\big(\log\det(\Sigma)-\log\det(\hat{\Sigma})\big)\leq t\Big|X^n\Bigg)-\mathbb{P}\Big(Z\leq t\Big)\right|\rightarrow 0,$$
where $\hat{\Sigma}$ is the sample covariance matrix.
\end{corollary}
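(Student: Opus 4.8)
The plan is to derive the corollary from Theorem~\ref{thm:main1} applied with $\phi(\Sigma)=\log\det(\Sigma)$ and $\Phi=\Omega^*$, feeding the approximate-linearity input from Lemma~\ref{lem:logdet} and the two prior conditions from Lemma~\ref{lem:wishart}; the only genuine work is to make the set $A_n$ used by these two lemmas compatible. I would first record the relevant constant: since $\Sigma^{*1/2}\Omega^*\Sigma^{*1/2}=I_p$, one has $\left\|\Sigma^{*1/2}\Phi\Sigma^{*1/2}\right\|_F^2=\|I_p\|_F^2=p$, so the normalization $\sqrt2\,\left\|\Sigma^{*1/2}\Phi\Sigma^{*1/2}\right\|_F$ in Theorem~\ref{thm:main1} equals $\sqrt{2p}$ --- dividing $\sqrt n\big(\log\det(\Sigma)-\log\det(\hat\Sigma)\big)$ by it produces exactly the $\sqrt{n/(2p)}$ scaling of the corollary --- and the perturbation in the second condition specializes to $\Omega_t=\Omega+\tfrac{\sqrt2\,t}{\sqrt{np}}\,\Omega^*$.

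Next I would check that Lemma~\ref{lem:logdet} applies on the Wishart concentration set $A_n=\big\{\|\Sigma-\Sigma^*\|\le M\sqrt{p/n}\big\}$ from the remark following Lemma~\ref{lem:wishart}. On $A_n$ we have $\|\Sigma-\Sigma^*\|_F\le\sqrt p\,\|\Sigma-\Sigma^*\|\le Mp/\sqrt n$, hence
$$\sqrt p\,\|\Sigma-\Sigma^*\|_F\le M\sqrt{p^3/n},\qquad \sqrt{n/p}\,\|\Sigma-\Sigma^*\|_F^2\le M^2\sqrt{p^3/n}.$$
Since $p^3/n=o(1)$, both right-hand sides are $o(1)$, so I can choose a sequence $\delta_n=o(1)$ with $A_n\subset\big\{\sqrt{n/p}\,\|\Sigma-\Sigma^*\|_F^2\vee\sqrt p\,\|\Sigma-\Sigma^*\|_F\le\delta_n\big\}$. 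Lemma~\ref{lem:logdet} with this $\delta_n$ then shows that the supremum over that larger set, and a fortiori over $A_n$, of $\sqrt{n/p}\,\big|\log\det(\Sigma)-\log\det(\hat\Sigma)-\text{tr}\big((\Sigma-\hat\Sigma)\Omega^*\big)\big|$ is $o_P(1)$, which is precisely assumption (\ref{eq:approxlincov}) for $\phi=\log\det$ and $\Phi=\Omega^*$ on $A_n$.

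Finally, since $p^3/n=o(1)$ forces $p/n=o(1)$ and $\|\Sigma^*\|\vee\|\Omega^*\|=O(1)$ is assumed, Lemma~\ref{lem:wishart} provides conditions~1 and~2 of Theorem~\ref{thm:main1} for this same $A_n$ (the perturbation $\Omega_t$ above being of the form handled there), and Theorem~\ref{thm:main1} then yields the stated BvM convergence in probability. Because $\sup_{t}\big|\Pi(\,\cdot\le t\mid X^n)-\mathbb{P}(Z\le t)\big|$ lies in $[0,1]$, the bounded convergence theorem upgrades this $o_P(1)$ statement to convergence in $P_{\Sigma^*}^n$-mean, which is the assertion of the corollary. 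I expect the set-compatibility step to be the only real obstacle: one has to confirm that the spectral-norm ball on which the Wishart posterior concentrates is small enough, in the mixed Frobenius scale used in Lemma~\ref{lem:logdet}, for the quadratic remainder of the log-determinant expansion to be negligible, and this is exactly the place where the sharper requirement $p^3/n=o(1)$ (rather than merely $p/n=o(1)$) is consumed.
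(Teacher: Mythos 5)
Your proof is correct and follows the same route as the paper: invoke Theorem \ref{thm:main1} with $\Phi=\Omega^*$, verify that the Wishart concentration set $A_n=\{\|\Sigma-\Sigma^*\|\le M\sqrt{p/n}\}$ from Lemma \ref{lem:wishart} sits inside the set of Lemma \ref{lem:logdet} because $\|\Sigma-\Sigma^*\|_F\le\sqrt p\,\|\Sigma-\Sigma^*\|$ together with $p^3/n=o(1)$ makes both $\sqrt{n/p}\,\|\Sigma-\Sigma^*\|_F^2$ and $\sqrt p\,\|\Sigma-\Sigma^*\|_F$ vanish. You add the explicit evaluation $\|\Sigma^{*1/2}\Omega^*\Sigma^{*1/2}\|_F^2=p$ and the bounded-convergence upgrade from $o_P(1)$ to $L^1$ convergence, both of which the paper leaves implicit; otherwise the arguments coincide.
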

\begin{proof}
By Theorem \ref{thm:main1} and Lemma \ref{lem:wishart}, we only need to check the approximate linearity of the functional. According to the proof of Lemma \ref{lem:wishart}, the choice of $A_n$ such that $\Pi(A_n|X^n)=1-o_P(1)$ is
$$A_n=\left\{||\Sigma-\Sigma^*||\leq M\sqrt{\frac{p}{n}}\right\},$$
for some $M>0$. This implies $||\Sigma-\Sigma^*||_F\leq M\sqrt{\frac{p^2}{n}}$. Therefore,
$$A_n\subset \{\sqrt{n/p}||\Sigma-\Sigma^*||_F^2\vee \sqrt{p}||\Sigma-\Sigma^*||_F\leq\delta_n\},$$
for some $\delta_n=o(1)$. By Lemma \ref{lem:logdet}, we have
$$\sup_{A_n}\sqrt{\frac{n}{p}}\left|\log\det(\Sigma)-\log\det(\hat{\Sigma})-\text{tr}\Big((\Sigma-\hat{\Sigma})\Omega^*\Big)\right|=o_P(1),$$
and the approximate linearity holds.
\end{proof}

\begin{corollary}
Consider the Gaussian prior $\Pi$ in (\ref{eq:gaussiandensity}). Assume $||\Sigma^*||\vee||\Omega^*||\leq\Lambda= O(1)$ and $\frac{p^3(\log n)^2}{n}=o(1)$, then we have
$$P_{\Sigma^*}^n\sup_{t\in\mathbb{R}}\left|\Pi\Bigg(\sqrt{\frac{n}{2p}}\big(\log\det(\Sigma)-\log\det(\hat{\Sigma})\big)\leq t\Big|X^n\Bigg)-\mathbb{P}\Big(Z\leq t\Big)\right|\rightarrow 0,$$
where $\hat{\Sigma}$ is the sample covariance matrix.
\end{corollary}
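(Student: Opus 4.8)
The plan is to mirror the proof of the preceding corollary (the Wishart case) and reduce the statement to the approximate-linearity estimate already established in Lemma~\ref{lem:logdet}. By Theorem~\ref{thm:main1} it suffices to check that the Gaussian prior satisfies the two posterior conditions and that $\phi(\Sigma)=\log\det(\Sigma)$ obeys the linear-expansion bound (\ref{eq:approxlincov}) on the associated set $A_n$. The first part is immediate: Lemma~\ref{lem:gaussprior} supplies both conditions under the hypothesis $\frac{p^2\log n}{n}=o(1)$, which is implied by $\frac{p^3(\log n)^2}{n}=o(1)$, and it exhibits $A_n=\big\{\|\Sigma-\Sigma^*\|_F\le M\sqrt{p^2\log n/n}\big\}$ for some constant $M>0$. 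So only the approximate linearity of the log-determinant on this $A_n$ remains to be verified.

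First I would check the set inclusion needed to invoke Lemma~\ref{lem:logdet}, namely
$$A_n\subset\Big\{\sqrt{n/p}\,\|\Sigma-\Sigma^*\|_F^2\vee\sqrt{p}\,\|\Sigma-\Sigma^*\|_F\le\delta_n\Big\}$$
for some $\delta_n=o(1)$. On $A_n$ one has $\sqrt{n/p}\,\|\Sigma-\Sigma^*\|_F^2\le M^2\sqrt{p^3(\log n)^2/n}$ and $\sqrt{p}\,\|\Sigma-\Sigma^*\|_F\le M\sqrt{p^3\log n/n}$, and both right-hand sides tend to $0$ exactly because $\frac{p^3(\log n)^2}{n}=o(1)$ (which in particular also yields the assumption $p^3/n=o(1)$ required by Lemma~\ref{lem:logdet}). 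Lemma~\ref{lem:logdet} then gives
$$\sup_{A_n}\sqrt{\frac{n}{p}}\,\Big|\log\det(\Sigma)-\log\det(\hat{\Sigma})-\text{tr}\big((\Sigma-\hat{\Sigma})\Omega^*\big)\Big|=o_P(1),$$
which is precisely (\ref{eq:approxlincov}) with $\Phi=\Omega^*$. Since $\Sigma^{*1/2}\Omega^*\Sigma^{*1/2}=I$ we have $\|\Sigma^{*1/2}\Phi\Sigma^{*1/2}\|_F^2=p$, so Theorem~\ref{thm:main1} delivers the asserted convergence with variance $2n^{-1}p$, i.e.\ the $\sqrt{n/(2p)}$ normalization in the statement.

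The only genuinely interesting point — and the reason the Gaussian prior needs the stronger rate $\frac{p^3(\log n)^2}{n}=o(1)$ rather than the Wishart corollary's $\frac{p^3}{n}=o(1)$ — is that the Gaussian prior's posterior concentrates only at the Frobenius-norm rate $\sqrt{p^2\log n/n}$, a factor $\sqrt{p\log n}$ worse than the spectral-norm rate available under the Wishart prior. This extra factor must be pushed through the quadratic remainder term $\sqrt{n/p}\,\|\Sigma-\Sigma^*\|_F^2$ that Lemma~\ref{lem:logdet} controls, and squaring the rate is what produces the $(\log n)^2$ and the $p^3$. Everything else is bookkeeping with the constants $\Lambda$ and $M$, identical to the Wishart proof.
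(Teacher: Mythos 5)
Your proposal is correct and follows exactly the paper's approach: it reuses Lemma~\ref{lem:gaussprior} for the prior-side conditions, takes the same $A_n$, verifies the set inclusion required by Lemma~\ref{lem:logdet}, and plugs into Theorem~\ref{thm:main1}. The explicit bookkeeping of the bounds on $A_n$ and the closing remark on why squaring the Frobenius-rate produces the $(\log n)^2$ and $p^3$ are accurate elaborations of what the paper leaves implicit.
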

\begin{proof}
The proof of this corollary is the same as the proof of the last one using Wishart prior. The only difference is that the choice of $A_n$, according to the proof of Lemma \ref{lem:gaussprior}, is
$$A_n=\left\{||\Sigma-\Sigma^*||_F\leq M\sqrt{\frac{p^2\log n}{n}}\right\},$$
for some $M>0$. Therefore,
$$A_n\subset \{\sqrt{n/p}||\Sigma-\Sigma^*||_F^2\vee \sqrt{p}||\Sigma-\Sigma^*||_F\leq\delta_n\},$$
for some $\delta_n=o(1)$ under the assumption, and the approximate linearity holds.
\end{proof}

One immediate consequence of the result is the Bernstein-von Mises result for the entropy functional, defined as
$$H(\Sigma)=\frac{p}{2}+\frac{p\log(2\pi)}{2}+\frac{\log\det(\Sigma)}{2}.$$
Then it is direct that
$$\sqrt{\frac{2n}{p}}\Big(H(\Sigma)-H(\hat{\Sigma})\Big)\Big| X^n \approx N(0,1).$$

\subsection{Eigenvalues}

In this section, we consider the eigenvalue functional. In particular, let $\{\lambda_{m}(\Sigma)\}_{m=1}^p$ be eigenvalues of the matrix $\Sigma$ with decreasing order. We investigate the posterior distribution of $\lambda_m(\Sigma)$ for each $m=1,...,p$. Define the eigen-gap
$$\delta=\begin{cases}
|\lambda_1(\Sigma^*)-\lambda_2(\Sigma^*)| & m=1,\\
\min\{|\lambda_m(\Sigma^*)-\lambda_{m-1}(\Sigma^*)|,|\lambda_m(\Sigma^*)-\lambda_{m+1}(\Sigma^*)|\} & m=2,3,...,p-1, \\
|\lambda_{m-1}(\Sigma^*)-\lambda_m(\Sigma^*)| & m=p.
\end{cases}$$
The asymptotic order of $\delta$ plays an important role in the theory. The following lemma characterizes the approximate linearity of $\lambda_m(\Sigma)$.

\begin{lemma} \label{lem:coveigen}
Assume $||\Sigma^*||\vee||\Omega^*||= O(1)$ and $\frac{p}{\delta\sqrt{n}}=o(1)$, then for any $\delta_n=o(1)$, we have
$$\sup_{\{\delta^{-1}\sqrt{n}||\Sigma-\Sigma^*||^2\vee (\delta^{-1}+\sqrt{p})||\Sigma-\Sigma^*||\leq\delta_n\}}\sqrt{n}\left|\lambda_m(\Sigma^*)\right|^{-1}\left|\lambda_m(\Sigma)-\lambda_m(\hat{\Sigma})-\text{tr}\Big((\Sigma-\hat{\Sigma})u_m^*u_m^{*T}\Big)\right|=o_P(1),$$
where $u_m^*$ is the $m$-th eigenvector of $\Sigma^*$.
\end{lemma}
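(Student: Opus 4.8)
The plan is to establish the approximate linearity of $\lambda_m(\Sigma)$ around $\hat\Sigma$ by a second-order perturbation argument for a simple eigenvalue. Fix $m$ and write $u_m^*$ for the associated eigenvector of $\Sigma^*$, with eigen-gap $\delta$. On the event $A_n = \{\delta^{-1}\sqrt{n}\|\Sigma-\Sigma^*\|^2\vee(\delta^{-1}+\sqrt p)\|\Sigma-\Sigma^*\|\le\delta_n\}$ the matrix $\Sigma$ is spectrally close to $\Sigma^*$, so by Weyl's inequality $\lambda_m(\Sigma)$ stays within $\delta_n$ of $\lambda_m(\Sigma^*)$; since the assumption $p/(\delta\sqrt n)=o(1)$ forces $\delta$ not to vanish too fast, $\lambda_m$ is a well-separated simple eigenvalue of both $\Sigma$ and $\hat\Sigma$ on a high-probability event, and the standard analytic perturbation theory for a simple eigenvalue applies. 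I would first record, for any symmetric $\Sigma$ with $\|\Sigma-\Sigma^*\|$ small relative to $\delta$, the expansion
\begin{equation}
\lambda_m(\Sigma)=\lambda_m(\Sigma^*)+u_m^{*T}(\Sigma-\Sigma^*)u_m^*+R(\Sigma),\qquad |R(\Sigma)|\le C\,\delta^{-1}\|\Sigma-\Sigma^*\|^2, \label{eq:eigexp}
\end{equation}
which is the classical bound on the second-order term obtained from the resolvent/Rayleigh--Schrödinger series (equivalently, from Davis--Kahan plus the variational characterization).

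Next I would apply \eqref{eq:eigexp} twice — once at $\Sigma$ and once at $\hat\Sigma$ — and subtract. The linear terms combine to $u_m^{*T}(\Sigma-\hat\Sigma)u_m^* = \mathrm{tr}\big((\Sigma-\hat\Sigma)u_m^*u_m^{*T}\big)$, which is exactly the claimed linear functional with $\Phi=u_m^*u_m^{*T}$. The remainder is $R(\Sigma)-R(\hat\Sigma)$, bounded by $C\delta^{-1}(\|\Sigma-\Sigma^*\|^2+\|\hat\Sigma-\Sigma^*\|^2)$. On $A_n$ the first piece is $o_P(\delta_n/\sqrt n)$ by definition of the set; for the second, I would invoke the standard concentration $\|\hat\Sigma-\Sigma^*\|=O_P(\sqrt{p/n})$ (valid since $\|\Sigma^*\|=O(1)$), so $\delta^{-1}\|\hat\Sigma-\Sigma^*\|^2=O_P(\delta^{-1}p/n)$, and multiplying by $\sqrt n$ gives $O_P\big(p/(\delta\sqrt n)\big)=o_P(1)$ by hypothesis. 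Finally, since $|\lambda_m(\Sigma^*)|\asymp 1$ (it lies between $\|\Omega^*\|^{-1}$ and $\|\Sigma^*\|$, both $O(1)$ and bounded away from $0$), dividing the bound on $\sqrt n|R(\Sigma)-R(\hat\Sigma)|$ by $|\lambda_m(\Sigma^*)|$ does not change its order, and we get exactly the $o_P(1)$ claimed in the statement. Actually I should be a little careful that the same eigenvector $u_m^*$ appears in both applications of \eqref{eq:eigexp}: this is fine because the first-order perturbation coefficient is always evaluated against the eigenvector of the \emph{base point} $\Sigma^*$, not of the perturbed matrix.

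The main obstacle is making the remainder bound \eqref{eq:eigexp} genuinely uniform over $A_n$ with an explicit $\delta^{-1}$ dependence, rather than just a qualitative $o(\|\Sigma-\Sigma^*\|^2)$. The cleanest route is the contour-integral representation $\lambda_m(\Sigma)=\frac{1}{2\pi i}\oint_{\Gamma} z\,\mathrm{tr}\big[(zI-\Sigma)^{-1}P_m(z)\big]\,dz$ (or directly the projection $u_m u_m^T = \frac{1}{2\pi i}\oint_\Gamma (zI-\Sigma)^{-1}dz$) with $\Gamma$ a circle of radius $\delta/2$ about $\lambda_m(\Sigma^*)$; expanding the resolvent in a Neumann series in $\Sigma-\Sigma^*$ gives the zeroth, first, and higher-order terms, and the tail beyond first order is controlled by $\sum_{k\ge 2}(\sup_{\Gamma}\|(zI-\Sigma^*)^{-1}\|\,\|\Sigma-\Sigma^*\|)^k$, where $\sup_\Gamma\|(zI-\Sigma^*)^{-1}\|\le 2/\delta$. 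One must check that the contour stays in the resolvent set of $\Sigma$ throughout $A_n$, which is guaranteed once $\|\Sigma-\Sigma^*\|<\delta/4$, i.e., once $\delta_n$ is small — hence the appearance of $(\delta^{-1}+\sqrt p)\|\Sigma-\Sigma^*\|\le\delta_n$ in the set. The $\sqrt p$ in that condition is presumably what is needed to pass from the operator-norm control to the Frobenius-norm-type quantities implicit in the prior's concentration set (so that this lemma can be composed with Lemma~\ref{lem:wishart} / Lemma~\ref{lem:gaussprior}), and verifying the set inclusion there is the routine but slightly fiddly bookkeeping step.
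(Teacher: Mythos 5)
Your proof is correct, and it takes a genuinely different route from the paper's. The paper centers the perturbation expansion at $\hat{\Sigma}$: it writes $\lambda_m(\Sigma)-\lambda_m(\hat\Sigma)=\lambda_m(\hat D+\hat U^T(\hat\Sigma-\Sigma)\hat U)-\lambda_m(\hat D)$, extracts the first-order term $\text{tr}\big((\hat\Sigma-\Sigma)\hat u_m\hat u_m^T\big)$, bounds the Kato higher-order terms combinatorially, and then needs an extra approximation step to replace $\hat u_m\hat u_m^T$ by $u_m^*u_m^{*T}$. That last step is precisely where the $\sqrt p\,\|\Sigma-\Sigma^*\|\le\delta_n$ part of the set condition is consumed, via the bound $\sqrt n\,|\text{tr}\big((\hat\Sigma-\Sigma)(\hat u_m\hat u_m^T-u_m^*u_m^{*T})\big)|=O_P\big(p/\sqrt n\big)+O_P\big(\sqrt p\,\|\Sigma-\Sigma^*\|\big)$. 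You instead center both expansions at $\Sigma^*$, so the linear term is already $u_m^{*T}(\cdot-\Sigma^*)u_m^*$, and after subtracting the two expansions the eigenvector replacement step disappears entirely. Your remainder control $\sqrt n\,\delta^{-1}\|\Sigma-\Sigma^*\|^2\le\delta_n$ and $\sqrt n\,\delta^{-1}\|\hat\Sigma-\Sigma^*\|^2=O_P\big(p/(\delta\sqrt n)\big)$ matches the paper's treatment of the higher-order Kato terms and uses only the $\delta^{-1}\sqrt n\|\Sigma-\Sigma^*\|^2$ part of the set condition plus the hypothesis $p/(\delta\sqrt n)=o(1)$. In effect you prove a slightly stronger statement (valid without the $\sqrt p\,\|\Sigma-\Sigma^*\|\le\delta_n$ restriction). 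Your closing speculation about the role of the $\sqrt p$ term is off, though: it is not a Frobenius-vs-operator bookkeeping artifact but exactly the cost of the paper's eigenvector swap, which your argument simply avoids. Otherwise the argument is sound, and the contour-integral/reduced-resolvent control of the remainder with a dimension-free constant $C$ and a $\delta^{-1}$ prefactor is the right way to make it uniform.
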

Lemma \ref{lem:coveigen} implies that the corresponding $\Phi$ in the linear expansion of $\phi(\Sigma)$ is $u_m^*u_m^{*T}$, and the asymptotic variance is
$$2\left\|\Sigma^{*1/2}\Phi\Sigma^{*1/2}\right\|_F^2=2|\lambda_m(\Sigma^*)|^2.$$

We also consider eigenvalues of the precision matrix. With slight abuse of notation, we define the eigengap of $\lambda_m(\Omega^*)$ to be
$$\delta=\begin{cases}
|\lambda_1(\Omega^*)-\lambda_2(\Omega^*)| & m=1,\\
\min\{|\lambda_m(\Omega^*)-\lambda_{m-1}(\Omega^*)|,|\lambda_m(\Omega^*)-\lambda_{m+1}(\Omega^*)|\} & m=2,3,...,p-1, \\
|\lambda_{m-1}(\Omega^*)-\lambda_m(\Omega^*)| & m=p.
\end{cases}$$

The approximate linearity of $\lambda_m(\Omega)$ is established in the following lemma.

\begin{lemma} \label{lem:precisioneigen}
Assume $||\Sigma^*||\vee||\Omega^*||= O(1)$, then for any $\delta_n=o(1)$, we have
$$\sup_{\{\delta^{-1}\sqrt{n}||\Sigma-\Sigma^*||^2\vee(\delta^{-1}+\sqrt{p})||\Sigma-\Sigma^*||\leq\delta_n\}} \sqrt{n}|\lambda_m(\Omega^*)|^{-1}\left|\lambda_m(\Omega)-\lambda_m(\hat{\Sigma}^{-1})-\text{tr}\Big((\Omega-\hat{\Sigma}^{-1})u_m^*u_m^{*T}\Big)\right|=o(1),$$
where $u_m^*$ is the $m$-th eigenvector of $\Omega^*$.
\end{lemma}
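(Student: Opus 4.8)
The plan is to turn the statement into a deterministic second‑order perturbation estimate for a simple eigenvalue, evaluated at the two matrices $\Omega=\Sigma^{-1}$ and $\hat{\Sigma}^{-1}$, and then to control the two quadratic remainders thereby produced: one by the definition of the set over which the supremum is taken, the other by the operator‑norm concentration of $\hat{\Sigma}$ about $\Sigma^*$. A simplification special to the precision side, which I would exploit throughout, is that $\lambda_m(\Omega^*)\ge \lambda_p(\Omega^*)=1/\|\Sigma^*\|\ge c>0$, so the normalizing factor $|\lambda_m(\Omega^*)|^{-1}$ is $O(1)$ and effectively plays no role.

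The perturbation estimate I would invoke is the following: if $A$ is symmetric with its $m$-th eigenvalue simple and separated from the rest of the spectrum by a gap $\delta$, and $E$ is symmetric with $\|E\|\le\delta/4$, then
$$\left|\lambda_m(A+E)-\lambda_m(A)-u_m^{T}Eu_m\right|\le \frac{C\|E\|^2}{\delta},$$
where $u_m$ is the unit $m$-th eigenvector of $A$ and $C$ is absolute. This is standard: integrate the first‑variation identity $\tfrac{d}{dt}\lambda_m(A+tE)=u_m(t)^{T}Eu_m(t)$ over $t\in[0,1]$ — the eigenvalue stays simple along the path since $\|tE\|\le\delta/4$ — and bound $\|u_m(t)u_m(t)^{T}-u_mu_m^{T}\|\le C\|E\|/\delta$ by the Davis--Kahan $\sin\Theta$ theorem. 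I would apply it with $A=\Omega^*$ (so $u_m=u_m^*$, gap $\delta$) once with $E=\Omega-\Omega^*$ and once with $E=\hat{\Sigma}^{-1}-\Omega^*$, then subtract. Since $u_m^{*T}Mu_m^*=\text{tr}(Mu_m^*u_m^{*T})$, the two first‑order terms combine into $\text{tr}\big((\Omega-\hat{\Sigma}^{-1})u_m^*u_m^{*T}\big)$, yielding
$$\left|\lambda_m(\Omega)-\lambda_m(\hat{\Sigma}^{-1})-\text{tr}\big((\Omega-\hat{\Sigma}^{-1})u_m^*u_m^{*T}\big)\right|\le \frac{C}{\delta}\Big(\|\Omega-\Omega^*\|^2+\|\hat{\Sigma}^{-1}-\Omega^*\|^2\Big).$$

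It then remains to multiply by $\sqrt{n}\,|\lambda_m(\Omega^*)|^{-1}=O(\sqrt n)$ and show both pieces vanish. For the first, on the set in the statement $\|\Sigma-\Sigma^*\|\le \delta_n\delta\wedge(\delta_n/\sqrt p)=o(1)$, so $\lambda_p(\Sigma)$ is bounded below and $\|\Omega-\Omega^*\|=\|\Sigma^{-1}(\Sigma^*-\Sigma)\Sigma^{*-1}\|\le C\|\Sigma-\Sigma^*\|$; hence $\|\Omega-\Omega^*\|\le C\delta_n\delta\le\delta/4$ eventually (so the expansion applies and, by Weyl, $\lambda_m(\Omega)$ stays simple), and $\sqrt n\,\delta^{-1}\|\Omega-\Omega^*\|^2\le C\,\delta^{-1}\sqrt n\|\Sigma-\Sigma^*\|^2\le C\delta_n=o(1)$. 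For the second, I would restrict to the event of probability tending to one on which $\|\hat{\Sigma}-\Sigma^*\|\le C_0(\sqrt{p/n}+p/n)$, a standard consequence of Gaussian concentration (in particular $\hat{\Sigma}$ is invertible); there $\|\hat{\Sigma}^{-1}-\Omega^*\|\le C\|\hat{\Sigma}-\Sigma^*\|$, which is $o(\delta)$, hence $\le\delta/4$, under the ambient scaling $p/(\delta\sqrt n)\to0$, so $\lambda_m(\hat{\Sigma}^{-1})$ is simple and isolated and $\sqrt n\,\delta^{-1}\|\hat{\Sigma}^{-1}-\Omega^*\|^2=O\big(p/(\delta\sqrt n)\big)=o(1)$.

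The main obstacle is this second, data‑dependent remainder: one must simultaneously guarantee that $\hat{\Sigma}^{-1}$ lands inside the perturbation radius of $\Omega^*$ (a constant multiple of the eigengap $\delta$) and that $\sqrt n\,\|\hat{\Sigma}^{-1}-\Omega^*\|^2/\delta$ is negligible, and both hinge on the sub‑Gaussian concentration $\|\hat{\Sigma}-\Sigma^*\|=O_P(\sqrt{p/n})$ together with the scaling $p/(\delta\sqrt n)\to0$ inherited from the ambient assumptions. A minor companion point is the inequality $\|\Sigma^{-1}-\Sigma^{*-1}\|\lesssim\|\Sigma-\Sigma^*\|$, i.e.\ a uniform lower bound on $\lambda_p(\Sigma)$ over the supremum set, which is immediate once one reads off $\|\Sigma-\Sigma^*\|=o(1)$ from that set and uses $\|\Omega^*\|=O(1)$.
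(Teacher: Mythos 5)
Your proof is correct but takes a genuinely different route from the paper. The paper proves \prettyref{lmm}{lem:coveigen} and then invokes \prettyref{lmm}{lem:precisioneigen} by analogy; its argument uses the full Kato/Rellich power-series expansion of $\lambda_m(A+\Delta)$ with $A$ the diagonal matrix of sample eigenvalues $\hat D$ and $\Delta=\hat U^T(\hat\Sigma-\Sigma)\hat U$, bounds the infinite tail of higher-order terms via a combinatorial estimate on $\|\tilde A\|$, and then \emph{separately} replaces the sample eigenvector $\hat u_m$ by $u_m^*$ in the first-order term, which is where the $\sqrt p\,\|\Sigma-\Sigma^*\|$ constraint in the supremum set comes in. You instead expand both $\lambda_m(\Omega)$ and $\lambda_m(\hat\Sigma^{-1})$ around the common reference $\Omega^*$ using a single second-order remainder bound (Hadamard first variation plus Davis--Kahan along the homotopy), which makes the linear parts automatically share the direction $u_m^*u_m^{*T}$ and yields a clean quadratic remainder $C\delta^{-1}(\|\Omega-\Omega^*\|^2+\|\hat\Sigma^{-1}-\Omega^*\|^2)$. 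Your route is more elementary, sidesteps the infinite series entirely, and in fact does not even need the $\sqrt p\,\|\Sigma-\Sigma^*\|$ part of the supremum set (so it proves a slightly stronger statement); the Kato expansion the paper uses is, however, needed elsewhere to extract the exact second-order term for the counterexample in \prettyref{prop}{prop:eigencounter}. Two remarks on the statement itself that you correctly handle: the displayed lemma omits the scaling $p/(\delta\sqrt n)=o(1)$ that is present in \prettyref{lmm}{lem:coveigen} and is indispensable to control the $\hat\Sigma^{-1}$ remainder, and since $\hat\Sigma$ is random the conclusion must be $o_P(1)$ rather than the deterministic $o(1)$ printed; both appear to be typographical slips in the paper, and your proof is consistent with the corrected form.
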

Similarly, Lemma \ref{lem:precisioneigen} implies that the corresponding $\Psi$ in the linear expansion of $\psi(\Omega)$ is $u_m^*u_m^{*T}$, and the asymptotic variance is
$$2\left\|\Omega^{*1/2}\Psi\Omega^{*1/2}\right\|_F^2=2|\lambda_m(\Omega^*)|^2.$$

Plugging the above lemmas into our general framework, we get the following corollaries.

\begin{corollary}
Consider the Wishart prior $\Pi=\mathcal{W}_p(I,p+b-1)$ in (\ref{eq:wishartdensity}) with integer $b=O(1)$. Assume $||\Sigma^*||\vee||\Omega^*||= O(1)$ and $\frac{p}{\delta\sqrt{n}}=o(1)$,  then we have
$$P_{\Sigma^*}^n\sup_{t\in\mathbb{R}}\left|\Pi\Bigg(\sqrt{\frac{n}{\sqrt{2}\lambda_m(\Sigma^*)}}\big(\lambda_m(\Sigma)-\lambda_m(\hat{\Sigma})\big)\leq t|X^n\Bigg)-\mathbb{P}\Big(Z\leq t\Big)\right|\rightarrow 0,$$
where $\hat{\Sigma}$ is the sample covariance matrix. If we instead assume $\frac{p}{\delta\sqrt{n}}=o(1)$ with $\delta$ being the eigengap of $\lambda_m(\Omega^*)$, then
$$P_{\Sigma^*}^n\sup_{t\in\mathbb{R}}\left|\Pi\Big(\sqrt{\frac{n}{\sqrt{2}\lambda_m(\Omega^*)}}\big(\lambda_m(\Omega)-\lambda_m(\hat{\Sigma}^{-1})\big)\leq t|X^n\Big)-\mathbb{P}\Big(Z\leq t\Big)\right|\rightarrow 0.$$
\end{corollary}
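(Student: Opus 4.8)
The plan is to run the same argument that proves the two log-determinant corollaries. The two posterior conditions required by Theorem \ref{thm:main1} (resp. Theorem \ref{thm:main2}) are already supplied by Lemma \ref{lem:wishart} for the Wishart prior, with the explicit choice $A_n=\{||\Sigma-\Sigma^*||\le M\sqrt{p/n}\}$ recorded in the remark following that lemma. So the only thing I would need to check is that the approximate-linearity hypothesis (\ref{eq:approxlincov}) (resp. (\ref{eq:approxlinprecision})) holds on this $A_n$. Granting that, Lemma \ref{lem:coveigen} (resp. Lemma \ref{lem:precisioneigen}) identifies the linearizing matrix as $\Phi=u_m^*u_m^{*T}$ (resp. $\Psi=u_m^*u_m^{*T}$, of rank $r=1$) and the asymptotic variance as $2\|\Sigma^{*1/2}\Phi\Sigma^{*1/2}\|_F^2=2|\lambda_m(\Sigma^*)|^2$ (resp. $2|\lambda_m(\Omega^*)|^2$); substituting this into the conclusion of the relevant main theorem yields the two displays.

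For the covariance eigenvalue I would first note that on $A_n$ one has $||\Sigma-\Sigma^*||_F\le\sqrt{p}\,||\Sigma-\Sigma^*||\le M\sqrt{p^2/n}$, so it suffices to verify the inclusion $A_n\subset\{\delta^{-1}\sqrt{n}||\Sigma-\Sigma^*||^2\vee(\delta^{-1}+\sqrt{p})||\Sigma-\Sigma^*||\le\delta_n\}$ for some $\delta_n=o(1)$. On $A_n$ the first term is bounded by $M^2 p/(\delta\sqrt{n})=o(1)$ by hypothesis, and the second by $M\delta^{-1}\sqrt{p/n}+M p/\sqrt{n}$. The key observation I would stress is that $||\Sigma^*||\vee||\Omega^*||=O(1)$ forces the spectrum of $\Sigma^*$ to lie between two positive constants, hence the eigengap satisfies $\delta\le||\Sigma^*||=O(1)$; together with $p/(\delta\sqrt{n})=o(1)$ this upgrades to $p/\sqrt{n}=o(1)$, and then also $\delta^{-1}\sqrt{p/n}\le p/(\delta\sqrt{n})=o(1)$. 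Thus $A_n$ is contained in the set of Lemma \ref{lem:coveigen}, approximate linearity holds with $\Phi=u_m^*u_m^{*T}$, and Theorem \ref{thm:main1} combined with Lemma \ref{lem:wishart} gives the first display.

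The precision eigenvalue is handled identically through Theorem \ref{thm:main2} and Lemma \ref{lem:precisioneigen}, with $\delta$ now the eigengap of $\Omega^*$. Here I additionally need the extra hypothesis $rp^2/n=o(1)$ demanded by Theorem \ref{thm:main2} and by the second half of Lemma \ref{lem:wishart}; since $r=1$ this is $p^2/n=o(1)$, which again follows because $\delta\le||\Omega^*||=O(1)$ turns $p/(\delta\sqrt{n})=o(1)$ into $p/\sqrt{n}=o(1)$, whence $p^2/n=(p/\sqrt{n})^2=o(1)$. The set inclusion into $\{\delta^{-1}\sqrt{n}||\Sigma-\Sigma^*||^2\vee(\delta^{-1}+\sqrt{p})||\Sigma-\Sigma^*||\le\delta_n\}$ is verified by the same computation, and $\sqrt{rp}\,||\Sigma-\Sigma^*||\le M p/\sqrt{n}=o(1)$ shows $A_n$ also fits the form (\ref{eq:precisionsubset}) required by Lemma \ref{lem:wishart}. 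Combining Lemma \ref{lem:precisioneigen}, Lemma \ref{lem:wishart}, and Theorem \ref{thm:main2} then gives the second display.

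I do not anticipate a substantive obstacle: all of the analytic work is already encapsulated in Lemmas \ref{lem:coveigen} and \ref{lem:precisioneigen}. The only point that genuinely requires care is the rate bookkeeping — confirming that the single stated assumption $p/(\delta\sqrt{n})=o(1)$, used in conjunction with the spectral boundedness $||\Sigma^*||\vee||\Omega^*||=O(1)$, simultaneously controls the spectral-norm term, the Frobenius-norm term hidden in the $\sqrt{p}\,||\Sigma-\Sigma^*||$ factor of the linearization set, and, in the precision case, the auxiliary condition $rp^2/n=o(1)$.
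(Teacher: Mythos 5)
Your proposal is correct and follows essentially the same route as the paper's own proof: take $A_n=\{\|\Sigma-\Sigma^*\|\le M\sqrt{p/n}\}$ from Lemma~\ref{lem:wishart}, verify via the hypothesis $p/(\delta\sqrt{n})=o(1)$ that $A_n$ sits inside the linearization sets of Lemma~\ref{lem:coveigen} and Lemma~\ref{lem:precisioneigen}, and invoke Theorem~\ref{thm:main1} and Theorem~\ref{thm:main2}. The only respect in which you go slightly beyond the paper's terse write-up is in spelling out the bookkeeping (that $\delta=O(1)$ upgrades $p/(\delta\sqrt n)=o(1)$ to $p/\sqrt n=o(1)$, hence $p^2/n=o(1)$, which is needed for the precision case), and that detail is correct and worth having.
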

\begin{proof}
We only need to check the approximate linearity. According to Lemma \ref{lem:wishart}, the choice of $A_n$ is
$$A_n=\left\{||\Sigma-\Sigma^*||\leq M\sqrt{\frac{p}{n}}\right\},$$
for some $M>0$. The assumption $\frac{p}{\delta\sqrt{n}}=o(1)$ implies
$$\delta^{-1}\sqrt{n}||\Sigma-\Sigma^*||^2\vee (\delta^{-1}+\sqrt{p})||\Sigma-\Sigma^*||= o(1),$$ on the set $A_n$. By Lemma \ref{lem:coveigen} and Lemma \ref{lem:precisioneigen}, we have
$$\sup_{A_n}\sqrt{n}|\lambda_m(\Sigma^*)|^{-1}\left|\lambda_m(\Sigma)-\lambda_m(\hat{\Sigma})-\text{tr}\Big((\Sigma-\hat{\Sigma})u_m^*u_m^{*T}\Big)\right|=o_P(1),$$
and
$$\sup_{A_n} \sqrt{n}|\lambda_m(\Omega^*)|^{-1}\left|\lambda_m(\Omega)-\lambda_m(\hat{\Sigma}^{-1})-\text{tr}\Big((\Omega-\hat{\Sigma}^{-1})u_m^*u_m^{*T}\Big)\right|=o_P(1).$$
\end{proof}

\begin{corollary}
Consider the Gaussian prior $\Pi$ in (\ref{eq:gaussiandensity}). Assume $||\Sigma^*||\vee||\Omega^*||\leq\Lambda=O(1)$ and $\frac{p^2\log n}{\delta\sqrt{n}}=o(1)$, then we have
$$P_{\Sigma^*}^n\sup_{t\in\mathbb{R}}\left|\Pi\Bigg(\sqrt{\frac{n}{\sqrt{2}\lambda_m(\Sigma^*)}}\big(\lambda_m(\Sigma)-\lambda_m(\hat{\Sigma})\big)\leq t|X^n\Bigg)-\mathbb{P}\Big(Z\leq t\Big)\right|\rightarrow 0,$$
where $\hat{\Sigma}$ is the sample covariance matrix. If we instead assume $\frac{p^2\log n}{\delta\sqrt{n}}=o(1)$ with $\delta$ being the eigengap of $\lambda_m(\Omega^*)$, then 
$$P_{\Sigma^*}^n\sup_{t\in\mathbb{R}}\left|\Pi\Big(\sqrt{\frac{n}{\sqrt{2}\lambda_m(\Omega^*)}}\big(\lambda_m(\Omega)-\lambda_m(\hat{\Sigma}^{-1})\big)\leq t|X^n\Big)-\mathbb{P}\Big(Z\leq t\Big)\right|\rightarrow 0.$$
\end{corollary}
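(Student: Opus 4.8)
The plan is to mirror the proof of the preceding corollary for the Wishart prior, replacing the spectral-norm neighborhood supplied by Lemma \ref{lem:wishart} with the Frobenius-norm neighborhood supplied by Lemma \ref{lem:gaussprior}. Since Lemma \ref{lem:gaussprior} already guarantees that the Gaussian prior satisfies the two conditions of Theorem \ref{thm:main1} (and, under the extra growth assumption, of Theorem \ref{thm:main2}) on the set $A_n=\{\|\Sigma-\Sigma^*\|_F\le M\sqrt{p^2\log n/n}\}$, and since the asymptotic variances $2\|\Sigma^{*1/2}u_m^*u_m^{*T}\Sigma^{*1/2}\|_F^2=2|\lambda_m(\Sigma^*)|^2$ and $2\|\Omega^{*1/2}u_m^*u_m^{*T}\Omega^{*1/2}\|_F^2=2|\lambda_m(\Omega^*)|^2$ have already been recorded, the only thing left to verify is the approximate-linearity condition of the eigenvalue functional on this $A_n$.

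First I would collect the elementary bounds that make everything fit. On $A_n$ we have $\|\Sigma-\Sigma^*\|\le\|\Sigma-\Sigma^*\|_F\le Mp\sqrt{\log n/n}$. Because the eigengap $\delta$ of $\Sigma^*$ (or of $\Omega^*$) is bounded above by $\|\Sigma^*\|\vee\|\Omega^*\|\le\Lambda=O(1)$, the hypothesis $\frac{p^2\log n}{\delta\sqrt n}=o(1)$ is at least as strong as $\frac{p^2\log n}{\sqrt n}=o(1)$; the latter forces $\frac{p^2\log n}{n}=o(1)$ and $\frac{p^3\log n}{n}=o(1)$ (write $\frac{p^3\log n}{n}=\frac{p^2\log n}{\sqrt n}\cdot\frac{p}{\sqrt n}$, a product of two $o(1)$ factors). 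Hence the hypotheses of Lemma \ref{lem:gaussprior} are met with $r=\mathrm{rank}(u_m^*u_m^{*T})=1$, so the two conditions of Theorems \ref{thm:main1} and \ref{thm:main2} hold on $A_n$; likewise $\frac{p}{\delta\sqrt n}=o(1)$, which is the hypothesis of Lemma \ref{lem:coveigen}.

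Next I would check that $A_n$ sits inside the constraint region of Lemmas \ref{lem:coveigen} and \ref{lem:precisioneigen}, namely $\{\delta^{-1}\sqrt n\|\Sigma-\Sigma^*\|^2\vee(\delta^{-1}+\sqrt p)\|\Sigma-\Sigma^*\|\le\delta_n\}$ for some $\delta_n=o(1)$: indeed, on $A_n$, $\delta^{-1}\sqrt n\|\Sigma-\Sigma^*\|^2\le M^2\frac{p^2\log n}{\delta\sqrt n}$, $\delta^{-1}\|\Sigma-\Sigma^*\|\le M\frac{p\sqrt{\log n}}{\delta\sqrt n}\le M\frac{p^2\log n}{\delta\sqrt n}$, and $\sqrt p\,\|\Sigma-\Sigma^*\|\le M\sqrt{p^3\log n/n}$, all of which are $o(1)$ by the above. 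Therefore Lemma \ref{lem:coveigen} gives $\sup_{A_n}\sqrt n\,|\lambda_m(\Sigma^*)|^{-1}\bigl|\lambda_m(\Sigma)-\lambda_m(\hat\Sigma)-\mathrm{tr}((\Sigma-\hat\Sigma)u_m^*u_m^{*T})\bigr|=o_P(1)$, and Lemma \ref{lem:precisioneigen} gives the precision-matrix analogue; these are precisely the approximate-linearity assumptions \eqref{eq:approxlincov} and \eqref{eq:approxlinprecision} with $\Phi=\Psi=u_m^*u_m^{*T}$ and $r=1$. Applying Theorem \ref{thm:main1} for the covariance statement and Theorem \ref{thm:main2} for the precision statement then yields the two displayed convergences.

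The main obstacle here is purely bookkeeping: ensuring that the single hypothesis $\frac{p^2\log n}{\delta\sqrt n}=o(1)$ simultaneously dominates (i) the growth conditions Lemma \ref{lem:gaussprior} needs to control the Gaussian posterior, (ii) the condition of the eigenvalue linearization lemmas, and (iii) the radius of $A_n$ measured against the spectral-norm constraint appearing in those lemmas. The boundedness $\delta\le\Lambda=O(1)$ is what makes (i) follow, and the inequality $\|\cdot\|\le\|\cdot\|_F$ is what lets the Frobenius-ball $A_n$ sit inside the spectral-norm constraint set; no new analytic input beyond the already-established lemmas is required.
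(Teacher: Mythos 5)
Your proposal is correct and follows the same route as the paper: invoke Lemma~\ref{lem:gaussprior} to get the two posterior conditions on the Frobenius ball $A_n=\{\|\Sigma-\Sigma^*\|_F\le M\sqrt{p^2\log n/n}\}$, verify that this $A_n$ lies inside the constraint set of Lemmas~\ref{lem:coveigen} and~\ref{lem:precisioneigen}, and then apply Theorems~\ref{thm:main1} and~\ref{thm:main2} with $\Phi=\Psi=u_m^*u_m^{*T}$ and $r=1$. You simply spell out the implicit bookkeeping (that $\delta\le\Lambda$ forces $\frac{p^2\log n}{\delta\sqrt n}=o(1)$ to dominate all the auxiliary growth conditions), which the paper's proof leaves to the reader.
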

\begin{proof}
We only need to check the approximate linearity. According to Lemma \ref{lem:gaussprior}, the choice of $A_n$ is
$$A_n=\left\{||\Sigma-\Sigma^*||_F\leq M\sqrt{\frac{p^2\log n}{n}}\right\},$$
for some $M>0$. The assumption $\frac{p^2\log n}{\delta\sqrt{n}}=o(1)$ implies
$$\delta^{-1}\sqrt{n}||\Sigma-\Sigma^*||^2\vee (\delta^{-1}+\sqrt{p})||\Sigma-\Sigma^*||= o(1),$$ on the set $A_n$. By Lemma \ref{lem:coveigen} and Lemma \ref{lem:precisioneigen}, we have
$$\sup_{A_n}\sqrt{n}|\lambda_m(\Sigma^*)|^{-1}\left|\lambda_m(\Sigma)-\lambda_m(\hat{\Sigma})-\text{tr}\Big((\Sigma-\hat{\Sigma})u_m^*u_m^{*T}\Big)\right|=o_P(1),$$
and
$$\sup_{A_n} \sqrt{n}|\lambda_m(\Omega^*)|^{-1}\left|\lambda_m(\Omega)-\lambda_m(\hat{\Sigma}^{-1})-\text{tr}\Big((\Omega-\hat{\Sigma}^{-1})u_m^*u_m^{*T}\Big)\right|=o_P(1).$$
\end{proof}

\section{Discriminant Analysis} \label{sec:DA}

In this section, we generalize the theory in Section \ref{sec:framework} to handle the BvM theorem in discriminant analysis. Let $X^n=(X_1,...,X_n)$ and $Y^n=(Y_1,...,Y_n)$ be $n$ i.i.d. training samples, where
$$X_i\sim N(\mu_X^*,\Omega_X^{*-1}),\quad Y_i\sim N(\mu_Y^*,\Omega_Y^{*-1}).$$
The discriminant analysis problem is to predict whether an independent new sample $z$ is from the $X$-class or $Y$-class. For a given $(\mu_X,\mu_Y,\Omega_X,\Omega_Y)$, Fisher's QDA rule can be written as
$$\Delta(\mu_X,\mu_Y,\Omega_X,\Omega_Y)=-(z-\mu_X)^T\Omega_X(z-\mu_X)+(z-\mu_Y)^T\Omega_Y(z-\mu_Y)+\log\frac{\det(\Omega_X)}{\det(\Omega_Y)}.$$
In this section, we are going to find the asymptotic posterior distribution
$$\sqrt{n}V^{-1}\Big(\Delta(\mu_X,\mu_Y,\Omega_X,\Omega_Y)-\hat{\Delta}\Big)\Big| X^n,Y^n,z,$$
with some appropriate variance $V^2$ and some prior distribution. Since the result is conditional on the new observation $z$, we treat it as a fixed (non-random) vector in this section without loss of generality.

Note that when $\Omega_X=\Omega_Y$ is assumed, the QDA rule can be reduced to the LDA rule. We give general results for Bernstein-von Mises theorem to hold in both cases respectively.

\subsection{Linear Discriminant Analysis}

Assume $\Omega_X^*=\Omega_Y^*$. For a given prior $\Pi$, the posterior distribution for LDA is defined as
$$\Pi(B|X^n,Y^n)=\frac{\int_B\exp\Big(l_n(\mu_X,\mu_Y,\Omega)\Big)d\Pi(\mu_X,\mu_Y,\Omega)}{\int\exp\Big(l_n(\mu_X,\mu_Y,\Omega)\Big)d\Pi(\mu_X,\mu_Y,\Omega)},$$
where $l_n(\mu_X,\mu_Y,\Omega)$ is the log-likelihood function decomposed as
$$l_n(\mu_X,\mu_Y,\Omega)=l_X(\mu_X,\Omega)+l_Y(\mu_Y,\Omega),$$
where
$$l_X(\mu_X,\Omega)=\frac{n}{2}\log\det(\Omega)-\frac{n}{2}\text{tr}(\Omega\tilde{\Sigma}_X),$$
with $\tilde{\Sigma}_X=\frac{1}{n}\sum_{i=1}^n(X_i-\mu_X)(X_i-\mu_X)^T$, and $l_Y(\mu_Y,\Omega)$ is defined in the similar way.

Consider the LDA functional
$$\Delta(\mu_X,\mu_Y,\Omega)=-(z-\mu_X)^T\Omega(z-\mu_X)+(z-\mu_Y)^T\Omega(z-\mu_Y).$$
Define the following quantities
$$\Phi=\frac{1}{2}\Omega^*\Big((z-\mu_X^*)(z-\mu_X^*)^T-(z-\mu_Y^*)(z-\mu_Y^*)^T\Big)\Omega^*,$$
$$\xi_X=2(z-\mu_X^*),\quad \xi_Y=2(\mu_Y^*-z),$$
$$\Omega_t=\Omega+\frac{2t}{\sqrt{n}}\Phi,\quad \mu_{X,t}=\mu_X+\frac{t}{\sqrt{n}}\xi_X,\quad \mu_{Y,t}=\mu_Y+\frac{t}{\sqrt{n}}\xi_Y,$$
$$\hat{\Sigma}=\frac{1}{2}\left(\frac{1}{n}\sum_{i=1}^n(X_i-\bar{X})(X_i-\bar{X})^T+\frac{1}{n}\sum_{i=1}^n(Y_i-\bar{Y})(Y_i-\bar{Y})^T\right),$$
\begin{equation}
V^2=4\left\|\Sigma^{*1/2}\Phi\Sigma^{*1/2}\right\|_F^2+\xi_X^T\Omega^*\xi_X+\xi_Y^T\Omega^*\xi_Y. \label{eq:LDAvariance}
\end{equation}
Assume $A_n$ is a set satisfying
$$A_n\subset\left\{\sqrt{n}\Big(||\mu_X-\mu_X^*||^2+||\mu_Y-\mu_Y^*||^2+||\Sigma-\Sigma^*||^2\Big)\right.\quad\quad\quad\quad$$
$$\left.\vee\sqrt{p}\Big(||\mu_X-\mu_X^*||+||\mu_Y-\mu_Y^*||+||\Sigma-\Sigma^*||\Big)\leq \delta_n\right\},\quad \text{for some }\delta_n=o(1),$$
The main result for LDA is the following theorem.
\begin{thm} \label{thm:LDA}
Assume that $||\Sigma^*||\vee||\Omega^*||= O(1)$, $p^2/n= o(1)$, and $V^{-1}= O(1)$. If for a given prior $\Pi$, the following two conditions are satisfied:
\begin{enumerate}
\item $\Pi(A_n|X^n,Y^n)=1-o_P(1)$,
\item For any fixed $t\in\mathbb{R}$, $\frac{\int_{A_n}\exp\Big(l_n(\mu_{X,t},\mu_{Y,t},\Omega_t)\Big)d\Pi(\mu_X,\mu_Y,\Omega)}{\int_{A_n}\exp\Big(l_n(\mu_X,\mu_Y,\Omega)\Big)d\Pi(\mu_X,\mu_Y,\Omega)}=1+o_P(1)$,
\end{enumerate}
then
$$\sup_{t\in\mathbb{R}}\left|\Pi\Big(\sqrt{n}V^{-1}\big(\Delta(\mu_X,\mu_Y,\Omega)-\hat{\Delta}\big)\leq t |X^n,Y^n\Big)-\mathbb{P}\Big(Z\leq t\Big)\right|=o_P(1),$$
where $Z\sim N(0,1)$ and the centering is $\hat{\Delta}=\Delta(\bar{X},\bar{Y},\hat{\Sigma}^{-1})$.
\end{thm}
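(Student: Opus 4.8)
The plan is to mimic the proof of Theorem \ref{thm:main1}, but now tracking three perturbations simultaneously—one in $\Omega$ and one in each of $\mu_X,\mu_Y$—and to combine the contributions of the mean parameters and the precision parameter into a single Gaussian limit. First I would reduce the claim to a Laplace-transform (moment generating function) statement: by Lévy's continuity theorem and Polya's theorem it suffices to show that, under $P_{(\mu_X^*,\mu_Y^*,\Omega^*)}^n$, the posterior Laplace transform
$$
\Expect_{\Pi}\!\left[\exp\!\Big(s\sqrt n\,V^{-1}\big(\Delta(\mu_X,\mu_Y,\Omega)-\hat\Delta\big)\Big)\,\Big|\,X^n,Y^n\right]
\;\longrightarrow\; e^{s^2/2}
$$
in probability for each fixed $s$ (working with $t=-is$ as in the companion theorems). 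Restricting the posterior to $A_n$ costs only $o_P(1)$ by Condition 1, so from now on all integrals run over $A_n$.

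On $A_n$ I would expand $\Delta(\mu_X,\mu_Y,\Omega)$ around $(\bar X,\bar Y,\hat\Sigma^{-1})$. The functional is quadratic in the means and linear in $\Omega$, so the expansion is essentially exact: the first-order terms produce $\Tr\big((\Omega-\hat\Sigma^{-1})\cdot 2\Phi\big)$ together with inner-product terms $\iprod{\xi_X}{\mu_X-\bar X}$ and $\iprod{\xi_Y}{\mu_Y-\bar Y}$ (after replacing the plug-in quantities $z-\mu_X^*$, $\Omega^*$, etc. by their sample analogues, which is legitimate on $A_n$ up to $o_P(n^{-1/2})$ because $\|\mu_X-\mu_X^*\|,\|\mu_Y-\mu_Y^*\|,\|\Sigma-\Sigma^*\|$ are controlled there and $p^2/n=o(1)$), plus a remainder that is $o_P(n^{-1/2})$ uniformly on $A_n$. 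The key algebraic identity to verify is that the variance $V^2$ in \eqref{eq:LDAvariance} is exactly the sum of the three ``local" variances: $4\|\Sigma^{*1/2}\Phi\Sigma^{*1/2}\|_F^2$ from the $\Omega$-part (as in Theorem \ref{thm:main1}, with $\Phi$ here playing the role of the earlier $\Phi$ and the factor $4$ coming from the coefficient $2\Phi$), and $\xi_X^T\Omega^*\xi_X+\xi_Y^T\Omega^*\xi_Y$ from the two mean-parts—and crucially that the cross-covariances between the $\mu_X$-, $\mu_Y$-, and $\Omega$-fluctuations vanish asymptotically. This orthogonality is what makes $V^2$ additive and is the conceptual heart of the computation.

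Next I would insert this expansion into the Laplace transform. The linear-in-$\Omega$ term combines with $l_n$ exactly as in Theorem \ref{thm:main1}: completing the square in $l_X+l_Y$ shows that multiplying the integrand by $\exp\big(s\sqrt n V^{-1}\Tr((\Omega-\hat\Sigma^{-1})2\Phi)\big)$ is, up to a deterministic factor converging to $e^{(s^2/2)\cdot 4\|\Sigma^{*1/2}\Phi\Sigma^{*1/2}\|_F^2/V^2}$, the same as shifting $\Omega\mapsto\Omega_t$ with $t$ proportional to $s$. Similarly, the terms linear in $\mu_X-\bar X$ and $\mu_Y-\bar Y$ are absorbed by the Gaussian-in-$\mu$ structure of $l_X,l_Y$: completing the square in $\mu_X$ (note $l_X(\mu_X,\Omega)=l_X(\bar X,\Omega)-\tfrac n2(\mu_X-\bar X)^T\Omega(\mu_X-\bar X)$) turns $\exp\big(s\sqrt n V^{-1}\iprod{\xi_X}{\mu_X-\bar X}\big)$ into a shift $\mu_X\mapsto\mu_{X,t}$ up to the deterministic factor $e^{(s^2/2)\xi_X^T\Omega^*\xi_X/V^2}$ (using $\Omega\approx\Omega^*$ on $A_n$), and likewise for $\mu_Y$. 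Collecting the three deterministic factors gives exactly $e^{s^2/2}$ by the additivity of $V^2$, while the leftover ratio of integrals is precisely the quantity in Condition 2, which equals $1+o_P(1)$. Assembling these pieces yields the desired convergence of the Laplace transform, and hence the theorem.

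I expect the main obstacle to be the uniform control of the error terms in the joint expansion on $A_n$—in particular handling the simultaneous shift of all three parameters in $l_n$ while keeping the plug-in replacements (sample means/covariance for population ones inside $\Phi$ and $\xi_X,\xi_Y$) accurate to order $o_P(n^{-1/2})$, which is exactly where the condition $p^2/n=o(1)$ and the $\sqrt p$-scaling in the definition of $A_n$ are consumed. Once that bookkeeping is done, the completing-the-square manipulations and the orthogonality computation giving \eqref{eq:LDAvariance} are routine extensions of the arguments behind Theorem \ref{thm:main1} and Theorem \ref{thm:main2}.
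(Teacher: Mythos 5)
Your overall strategy is exactly the one the paper uses: reduce to convergence of the posterior Laplace transform (Lemma~\ref{lem:laplace}), expand $\Delta$ to first order around $(\bar X,\bar Y,\hat\Sigma^{-1})$, expand the shifted log-likelihood $l_n(\mu_{X,t},\mu_{Y,t},\Omega_t)-l_n(\mu_X,\mu_Y,\Omega)$ to extract a matching linear term plus $-t^2/2$, and then invoke Condition~2 to kill the ratio of integrals. The paper packages these two expansions as Lemma~\ref{lem:LDAexpansion} and Lemma~\ref{lem:LDAfunctional}, but the content is the same as your ``expand the functional, complete the square in the likelihood, match terms.''

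One computational slip worth flagging: you write the $\Omega$-linear part of the functional expansion as $\mathrm{tr}\big((\Omega-\hat\Sigma^{-1})\cdot 2\Phi\big)$. But $\Phi$ is the $\Sigma$-sandwiching matrix, $\Phi=\tfrac12\Omega^*\big[(z-\mu_X^*)(z-\mu_X^*)^T-(z-\mu_Y^*)(z-\mu_Y^*)^T\big]\Omega^*$, whereas the coefficient of $\Omega$ in $\Delta$ is $\Psi=-(z-\mu_X^*)(z-\mu_X^*)^T+(z-\mu_Y^*)(z-\mu_Y^*)^T=-2\Sigma^*\Phi\Sigma^*$, not $2\Phi$. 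The likelihood expansion under $\Omega\mapsto\Omega+\tfrac{2t}{\sqrt n}\Phi$ produces $\tfrac{2t\sqrt n}{V}\mathrm{tr}\big((\Sigma-\hat\Sigma)\Phi\big)$, and you need the translation $\mathrm{tr}\big((\Omega-\hat\Omega)\Psi\big)\approx 2\,\mathrm{tr}\big((\Sigma-\hat\Sigma)\Phi\big)$ up to $o_P(n^{-1/2}V)$ — precisely the precision-to-covariance matching step from Theorem~\ref{thm:main2}, which is what consumes the assumption $p^2/n=o(1)$ here (the paper carries it out in Lemma~\ref{lem:LDAfunctional}). Your note about ``replacing the plug-in quantities by their sample analogues'' covers this informally, but you should make that $\Sigma\leftrightarrow\Omega$ bookkeeping explicit since it is not a triviality. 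With that fix the proposal is correct.
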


A curious condition in the above theorem is $V^{-1}=O(1)$. The following proposition shows it is implied by the separation of the two classes.

\begin{proposition} \label{prop:V}
Under the setting of Theorem \ref{thm:LDA}, if $||\mu_X^*-\mu_Y^*||\geq c$ for some constant $c>0$, then we have $V^{-1}= O(1)$.
\end{proposition}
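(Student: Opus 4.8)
The plan is to lower-bound the variance $V^2$ defined in (\ref{eq:LDAvariance}) by a positive constant using only the separation $\|\mu_X^*-\mu_Y^*\|\geq c$ together with the spectral bounds $\|\Sigma^*\|\vee\|\Omega^*\|=O(1)$. Since $V^2$ is a sum of three nonnegative terms, it suffices to control the last two, namely $\xi_X^T\Omega^*\xi_X+\xi_Y^T\Omega^*\xi_Y$, from below. Because $\Omega^*$ is positive definite with smallest eigenvalue bounded below by $\|\Sigma^*\|^{-1}$, a constant by assumption, we have $\xi_X^T\Omega^*\xi_X\geq \|\Sigma^*\|^{-1}\|\xi_X\|^2=4\|\Sigma^*\|^{-1}\|z-\mu_X^*\|^2$ and similarly $\xi_Y^T\Omega^*\xi_Y\geq 4\|\Sigma^*\|^{-1}\|z-\mu_Y^*\|^2$. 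Hence
$$V^2\geq 4\|\Sigma^*\|^{-1}\Big(\|z-\mu_X^*\|^2+\|z-\mu_Y^*\|^2\Big).$$

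First I would reduce the right-hand side to a bound in terms of $\|\mu_X^*-\mu_Y^*\|$. By the triangle inequality, $\|\mu_X^*-\mu_Y^*\|\leq\|z-\mu_X^*\|+\|z-\mu_Y^*\|$, so at least one of $\|z-\mu_X^*\|$, $\|z-\mu_Y^*\|$ is $\geq \tfrac12\|\mu_X^*-\mu_Y^*\|$. Equivalently, using the elementary inequality $a^2+b^2\geq\tfrac12(a+b)^2$, we get
$$\|z-\mu_X^*\|^2+\|z-\mu_Y^*\|^2\geq \tfrac12\|\mu_X^*-\mu_Y^*\|^2\geq \tfrac{c^2}{2}.$$
Combining the two displays yields $V^2\geq 2c^2\|\Sigma^*\|^{-1}$, and since $\|\Sigma^*\|=O(1)$ this gives $V^2\geq c'$ for a positive constant $c'$ not depending on $(n,p)$, hence $V^{-1}=O(1)$, which is the claim.

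There is essentially no obstacle here; the only point requiring a word of care is that the argument must not implicitly assume $z$ stays bounded (it need not), which is why the bound is phrased purely through $\|\mu_X^*-\mu_Y^*\|$ and not through the individual distances of $z$ to the two means. One could alternatively note that the first term $4\|\Sigma^{*1/2}\Phi\Sigma^{*1/2}\|_F^2$ of $V^2$ is simply discarded, which is legitimate since it is nonnegative; if a sharper constant were desired one might retain it, but it is not needed for $V^{-1}=O(1)$.
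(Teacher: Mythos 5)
Your proof is correct and follows essentially the same route as the paper's: both discard the nonnegative Frobenius-norm term, lower-bound $\xi_X^T\Omega^*\xi_X+\xi_Y^T\Omega^*\xi_Y$ via the smallest eigenvalue of $\Omega^*$ (which you correctly identify as $\|\Sigma^*\|^{-1}$ where the paper writes a generic $C$), and finish with the inequality $a^2+b^2\geq\tfrac12(a+b)^2$ combined with the triangle inequality. The remark about not assuming $z$ bounded is a nice observation, but the argument is the same.
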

\begin{proof}
By the definition of $V^2$, we have
\begin{eqnarray*}
V^2 &\geq& \xi_X^T\Omega^*\xi_X + \xi_Y^T\Omega^*\xi_Y \geq C\Big(||\xi_X||^2+||\xi_Y||^2\Big) \\
&=& 4C\Big(||z-\mu_X^*||^2+||z-\mu_Y^*||^2\Big) \\
&\geq& 2C||\mu_X^*-\mu_Y^*||^2,
\end{eqnarray*}
which is greater than a constant under the separation assumption.
\end{proof}

Now we give examples of priors for LDA. Let us use independent priors. That is
$$\Omega\sim \Pi_{\Omega},\quad \mu_X\sim\Pi_X,\quad \mu_Y\sim\Pi_Y,$$
independently. The prior for the whole parameter $(\Omega,\mu_X,\mu_Y)$ is a product measure defined as
$$\Pi=\Pi_{\Omega}\times\Pi_X\times\Pi_Y.$$
Let $\Pi_{\Omega}$ be the Gaussian prior defined in (\ref{eq:gaussiandensity}). Let both $\Pi_X$ and $\Pi_Y$ be $N(0,I_{p\times p})$.

\begin{thm} \label{thm:LDAgauss}
Assume $||\Sigma^*||\vee||\Omega^*||\leq\Lambda=O(1)$, $V^{-1}= O(1)$, and $p^2=o\Big(\frac{\sqrt{n}}{\log n}\Big)$. The prior defined above satisfies the two conditions in Theorem \ref{thm:LDA} for some appropriate $A_n$. Thus, the Bernstein-von Mises result holds.
\end{thm}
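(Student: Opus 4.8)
The plan is to verify the two numbered conditions of Theorem \ref{thm:LDA} for the product prior $\Pi=\Pi_\Omega\times\Pi_X\times\Pi_Y$, with the set $A_n$ chosen analogously to the one used in Lemma \ref{lem:gaussprior}, namely
$$A_n=\left\{||\Sigma-\Sigma^*||_F\vee||\mu_X-\mu_X^*||\vee||\mu_Y-\mu_Y^*||\leq M\sqrt{\frac{p^2\log n}{n}}\right\},$$
for a suitably large constant $M$. The assumption $p^2=o(\sqrt n/\log n)$ is exactly what makes this $A_n$ fit inside the neighborhood required by Theorem \ref{thm:LDA}: on $A_n$ one has $\sqrt n\,||\Sigma-\Sigma^*||^2\lesssim p^2\log n/\sqrt n=o(1)$ and $\sqrt p\,||\Sigma-\Sigma^*||\lesssim p^{3/2}\sqrt{\log n}/\sqrt n=o(1)$ (using $p^2=o(\sqrt n/\log n)$, hence $p^3\log n=o(n)$ too), and likewise for the $\mu$-terms; so $A_n$ satisfies the containment displayed just before Theorem \ref{thm:LDA}.

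For Condition 1, posterior concentration, I would follow the strategy already used for the Gaussian prior in Lemma \ref{lem:gaussprior}: a standard testing/metric-entropy argument (Ghosal--Ghosh--van der Vaart style) gives posterior contraction of $\Omega$, equivalently $\Sigma$, at the rate $\sqrt{p^2\log n/n}$ in Frobenius norm over the bounded-spectrum parameter set; the Gaussian prior on $\Omega$ charges a Kullback--Leibler neighborhood of the truth with enough mass because it has a density bounded below on the compact region $\{||\Omega||<2\Lambda\}$. The two mean parameters are handled exactly as in the classical finite-dimensional BvM: the posteriors of $\mu_X,\mu_Y$ concentrate within $O_P(\sqrt{p/n})$ of $\bar X,\bar Y$ respectively (which are within $O_P(\sqrt{p/n})$ of the truth), and $\sqrt{p/n}\le\sqrt{p^2\log n/n}$, so all three pieces land inside $A_n$ with posterior probability $1-o_P(1)$. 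Since the prior is a product and the likelihood factors as $l_X+l_Y$ with $\Omega$ shared, these concentration statements combine directly.

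For Condition 2, the change-of-variables (Cameron--Martin) estimate, I would reuse the computation behind Lemma \ref{lem:gaussprior} for the $\Omega_t$ shift and supply the analogous, easier bound for the mean shifts. Writing the ratio as a product over the three independent factors, the $\mu_X$-factor is
$\int e^{l_X(\mu_{X,t},\Omega)}d\Pi_X/\int e^{l_X(\mu_X,\Omega)}d\Pi_X$ after the change of variables $\mu_X\mapsto\mu_X-\frac{t}{\sqrt n}\xi_X$; since $\Pi_X=N(0,I)$, the Jacobian is $1$ and the prior-density ratio is $\exp(-\frac{t}{\sqrt n}\mu_X^T\xi_X+O(||\xi_X||^2/n))$, which is $1+o_P(1)$ uniformly on $A_n$ because $||\mu_X||$ is $O_P(\sqrt p)$ there and $\sqrt p\,||\xi_X||/\sqrt n=o(1)$. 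The $\Omega_t$-factor requires controlling the prior-density ratio $\exp(-\frac12(||\Omega_t||_F^2-||\Omega||_F^2))=\exp(-\frac{2t}{\sqrt n}\langle\Omega,\Phi\rangle-\frac{2t^2}{n}||\Phi||_F^2)$, which is $1+o_P(1)$ since $||\Phi||_F=O(1)$ (as $z$ is fixed and $||\Omega^*||=O(1)$) and $\langle\Omega,\Phi\rangle=O(1)$ on $A_n$; one must also check the shifted $\Omega_t$ still lies in the prior's support $\{||\Omega||<2\Lambda\}$, which holds because the perturbation has operator norm $O(1/\sqrt n)$ while $||\Omega^*||\le\Lambda$ strictly inside the ball. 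The likelihood ratio $\exp(l_n(\mu_{X,t},\mu_{Y,t},\Omega_t)-l_n(\mu_X,\mu_Y,\Omega))$ is expanded to second order exactly as in the proof of Theorem \ref{thm:main1}/\ref{thm:main2}, and the remainder is $o_P(1)$ on $A_n$ under $p^2=o(\sqrt n/\log n)$.

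The main obstacle I expect is the joint bookkeeping in Condition 2: unlike the single-parameter settings of Theorems \ref{thm:main1} and \ref{thm:main2}, here the shift acts simultaneously on $\mu_X,\mu_Y$ and $\Omega$, and the cross terms in the second-order expansion of $l_X(\mu_{X,t},\Omega_t)+l_Y(\mu_{Y,t},\Omega_t)$ — in particular the terms coupling $\xi_X$ with $\Phi$ through $\hat\Sigma_X$ — must be tracked carefully to see that they are precisely what builds the variance $V^2$ in \eqref{eq:LDAvariance} and contribute no net bias. Everything else is a repackaging of the Wishart/Gaussian-prior arguments already carried out in Lemmas \ref{lem:wishart} and \ref{lem:gaussprior}, together with the classical finite-dimensional treatment of the mean parameters.
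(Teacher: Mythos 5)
Your overall skeleton matches the paper's proof: choose $A_n$ of the same form at rate $\sqrt{p^2\log n/n}$, check it lands in the neighborhood required before Theorem \ref{thm:LDA} under $p^2=o(\sqrt n/\log n)$, verify Condition~1 by a Ghosal--Ghosh--van der Vaart style testing/prior-mass argument, and verify Condition~2 by a change of variables in the Gaussian prior. Two places, however, need more than you have written. First, for Condition~1 you assert the means are handled ``exactly as in the classical finite-dimensional BvM'' and that the three concentration statements ``combine directly''; this glosses over the fact that the mean and the covariance are coupled through both the Kullback--Leibler divergence and the likelihood, so the zero-mean machinery of Lemma \ref{lem:gaussprior} cannot simply be re-used. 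The paper has to extend each piece: a KL/denominator bound that incorporates the mean shift (Lemma \ref{lem:KLDA}), a prior-mass estimate over the joint $(\mu,\Omega)$ KL ball (Lemma \ref{lem:priorDA}), and tests for the joint alternative built by decomposing $A_n^c$ into a ``mean far'' piece and a ``mean close, covariance far'' piece (Lemmas \ref{lem:testmean} and \ref{lem:testcov}, the latter a nontrivial modification of the known-mean covariance test to tolerate a small mean perturbation). Your alternative — conditional Gaussian conjugacy for $\mu_X,\mu_Y$ given $\Omega$, combined with marginal contraction of $\Omega$ — can in principle be made rigorous, but as written it leaves open exactly the step that requires new lemmas: controlling the marginal posterior of $\Omega$ when the likelihood has been integrated over the unknown means, and making the conditional $\mu$-concentration uniform over $\Omega$ in the contraction set. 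Second, your closing paragraph is aimed at the wrong target: the ``cross terms coupling $\xi_X$ with $\Phi$'' and the emergence of the variance $V^2$ in \eqref{eq:LDAvariance} are the content of the proof of Theorem \ref{thm:LDA} itself (specifically Lemma \ref{lem:LDAexpansion} and Lemma \ref{lem:LDAfunctional}); verifying the two numbered conditions for Theorem \ref{thm:LDAgauss} only requires the prior change-of-variable ratio and the negligibility of the set shift, not a second-order expansion of the likelihood.
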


\subsection{Quadratic Discriminant Analysis}

For the general case that $\Omega_X^*=\Omega_Y^*$ may not be true, the posterior distribution for QDA is defined as
$$\Pi(B|X^n)=\frac{\int_B\exp\Big(l_n(\mu_X,\mu_Y,\Omega_X,\Omega_Y)\Big)d\Pi(\mu_X,\mu_Y,\Omega_X,\Omega_Y)}{\int\exp\Big(l_n(\mu_X,\mu_Y,\Omega_X,\Omega_Y)\Big)d\Pi(\mu_X,\mu_Y,\Omega_X,\Omega_Y)},$$
where $l_n(\mu_X,\mu_X,\Omega_X,\Omega_Y)$ has decomposition
$$l_n(\mu_X,\mu_X,\Omega_X,\Omega_Y)=l_X(\mu_X,\Omega_X)+l_Y(\mu_Y,\Omega_Y).$$
We define the following quantities,
$$\Phi_X=-\Omega_X^*\Big(\Sigma_X^*-(z-\mu_X^*)(z-\mu_X^*)^T\Big)\Omega_X^*,$$
$$\Phi_Y=\Omega_Y^*\Big(\Sigma_Y^*-(z-\mu_Y^*)(z-\mu_Y^*)^T\Big)\Omega_Y^*,$$
$$\xi_X=2(z-\mu_X^*),\quad \xi_Y=2(\mu_Y^*-z),$$
$$\Omega_{X,t}=\Omega_X+\frac{2t}{\sqrt{n}}\Phi_X,\quad\Omega_{Y,t}=\Omega_Y+\frac{2t}{\sqrt{n}}\Phi_Y,\quad \mu_{X,t}=\mu_X+\frac{t}{\sqrt{n}}\xi_X,\quad \mu_{Y,t}=\mu_Y+\frac{t}{\sqrt{n}}\xi_Y,$$
$$\hat{\Sigma}_X=\frac{1}{n}\sum_{i=1}^n(X_i-\bar{X})(X_i-\bar{X})^T,\quad \hat{\Sigma}_Y=\frac{1}{n}\sum_{i=1}^n(Y_i-\bar{Y})(Y_i-\bar{Y})^T,$$
\begin{equation}
V^2=2\left\|\Sigma_X^{*1/2}\Phi_X\Sigma_X^{*1/2}\right\|_F^2+2\left\|\Sigma_Y^{*1/2}\Phi_Y\Sigma_Y^{*1/2}\right\|_F^2+\xi_X^T\Omega^*\xi_X+\xi_Y^T\Omega^*\xi_Y. \label{eq:QDAvariance}
\end{equation}
Assume $A_n$ is a set satisfying
\begin{eqnarray*}
A_n &\subset& \left\{\sqrt{n}\Big(||\mu_X-\mu_X^*||^2+||\mu_Y-\mu_Y^*||^2+||\Sigma-\Sigma^*||^2\Big)\right.\quad\quad\quad\quad \\
&& \left.\vee\sqrt{p}\Big(||\mu_X-\mu_X^*||+||\mu_Y-\mu_Y^*||+||\Sigma-\Sigma^*||\Big)\right.\\
&& \left.\vee\sqrt{n/p}\Big(||\Sigma_X-\Sigma_X^*||_F^2+||\Sigma_Y-\Sigma_Y^*||_F^2\Big)\right.\\
&& \left.\vee \sqrt{p}\Big(||\Sigma_X-\Sigma_X^*||_F+||\Sigma_Y-\Sigma_Y||_F\Big)\leq \delta_n\right\},
\end{eqnarray*}
with some $\delta_n=o(1)$.
The main result for QDA is the following theorem.
\begin{thm} \label{thm:QDA}
Assume $||\Sigma^*||\vee||\Omega^*|= O(1)$, $V^{-1}= O(1)$, and $p^3/n=o(1)$. If for a given prior $\Pi$, the following two conditions are satisfied:
\begin{enumerate}
\item $\Pi(A_n|X^n,Y^n)=1-o_P(1)$,
\item For any fixed $t\in\mathbb{R}$, $\frac{\int_{A_n}\exp\Big(l_n(\mu_{X,t},\mu_{Y,t},\Omega_{X,t},\Omega_{Y,t})\Big)d\Pi(\mu_X,\mu_Y,\Omega_X,\Omega_Y)}{\int_{A_n}\exp\Big(l_n(\mu_X,\mu_Y,\Omega_X,\Omega_Y)\Big)d\Pi(\mu_X,\mu_Y,\Omega_X,\Omega_Y)}=1+o_P(1)$,
\end{enumerate}
then
$$\sup_{t\in\mathbb{R}}\left|\Pi\Big(\sqrt{n}V^{-1}\big(\Delta(\mu_X,\mu_Y,\Omega_X,\Omega_Y)-\hat{\Delta}\big)\leq t |X^n,Y^n\Big)-\mathbb{P}\Big(Z\leq t\Big)\right|=o_P(1),$$
where $Z\sim N(0,1)$ and the centering is $\hat{\Delta}=\Delta(\bar{X},\bar{Y},\hat{\Sigma}_X^{-1},\hat{\Sigma}_Y^{-1})$.
\end{thm}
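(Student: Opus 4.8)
The plan is to follow the proof of Theorem~\ref{thm:LDA} essentially verbatim, the only structural change being that $\Omega_X$ and $\Omega_Y$ are now two separate parameters; consequently the precision part of the argument is run twice, and it has to absorb the two log-determinant terms $\log\det\Omega_X$ and $\log\det\Omega_Y$ appearing in $\Delta$, which is precisely the role of Lemma~\ref{lem:logdet}. Put $W=\sqrt{n}\,V^{-1}\big(\Delta(\mu_X,\mu_Y,\Omega_X,\Omega_Y)-\hat{\Delta}\big)$. As in the proofs of Theorems~\ref{thm:main1} and~\ref{thm:LDA}, it suffices to prove that for each fixed $s\in\mathbb{R}$,
\[
\frac{\int_{A_n}e^{sW}\exp\big(l_n(\mu_X,\mu_Y,\Omega_X,\Omega_Y)\big)\,d\Pi}{\int_{A_n}\exp\big(l_n(\mu_X,\mu_Y,\Omega_X,\Omega_Y)\big)\,d\Pi}=e^{s^2/2}\big(1+o_P(1)\big).
\]
Granting this, Condition~1 gives $\Pi(A_n\mid X^n,Y^n)=1+o_P(1)$, so the full posterior is within $o_P(1)$ in total variation of its restriction to $A_n$, whose Laplace transform at $s$ is exactly the left-hand side above; hence the posterior Laplace transform of $W$ converges to $e^{s^2/2}$ for every $s$, $W$ converges weakly to $N(0,1)$, and, the limit being continuous, $\sup_t|\Pi(W\le t\mid X^n,Y^n)-\mathbb{P}(Z\le t)|=o_P(1)$.

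The first ingredient is the approximate linearity of $\Delta$ on $A_n$. With
\[
L:=\text{tr}\big((\Sigma_X-\hat{\Sigma}_X)\Phi_X\big)+\text{tr}\big((\Sigma_Y-\hat{\Sigma}_Y)\Phi_Y\big)+\xi_X^T\Omega^*(\mu_X-\bar{X})+\xi_Y^T\Omega^*(\mu_Y-\bar{Y})
\]
($\Phi_X,\Phi_Y,\xi_X,\xi_Y$ as defined before (\ref{eq:QDAvariance})), I would show $\sqrt{n}\,V^{-1}\big|\Delta(\mu_X,\mu_Y,\Omega_X,\Omega_Y)-\hat{\Delta}-L\big|=o_P(1)$, splitting the estimate blockwise. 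The $\mu$-dependence of $\Delta$ is exactly quadratic, so its remainder is $O(\|\mu_X-\mu_X^*\|^2+\|\mu_Y-\mu_Y^*\|^2)$ plus a $\mu$--$\Omega$ cross term of order $(\|\mu_X-\mu_X^*\|+\|\mu_Y-\mu_Y^*\|)(\|\Sigma_X-\Sigma_X^*\|+\|\Sigma_Y-\Sigma_Y^*\|)$; each precision block contributes $-(z-\mu_X)^T\Omega_X(z-\mu_X)$, a smooth function of $\Sigma_X$ with second-order remainder $O(\|\Sigma_X-\Sigma_X^*\|^2)$, together with $\log\det\Omega_X=-\log\det\Sigma_X$, which is linearized by Lemma~\ref{lem:logdet} with error $o_P(\sqrt{p/n})$. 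The definition of $A_n$ is arranged precisely so that each of these remainders is $o_P(n^{-1/2})$; for the log-determinant piece one uses that $V^2\gtrsim p$ automatically, because $\Sigma_X^{*1/2}\Phi_X\Sigma_X^{*1/2}=-I+ww^T$ for a suitable vector $w$ and hence $\|\Sigma_X^{*1/2}\Phi_X\Sigma_X^{*1/2}\|_F^2\ge p-1$, so that the $o_P(\sqrt{p/n})$ error is $o_P(V/\sqrt{n})$. The hypothesis $p^3/n=o(1)$ is exactly the one demanded by Lemma~\ref{lem:logdet}.

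The heart of the matter is the algebraic identity linking the perturbation $\theta_t:=(\mu_{X,t},\mu_{Y,t},\Omega_{X,t},\Omega_{Y,t})$ to the tilt $e^{sW}$. Since $l_n=l_X+l_Y$ and, after completing the square, $l_X(\mu_X,\Omega_X)=\frac n2\log\det\Omega_X-\frac n2\text{tr}(\Omega_X\hat{\Sigma}_X)-\frac n2(\mu_X-\bar{X})^T\Omega_X(\mu_X-\bar{X})$, a second-order Taylor expansion of $l_n$ along $\theta\mapsto\theta_t$ — using $\log\det(\Omega_X+c\Phi_X)=\log\det\Omega_X+c\,\text{tr}(\Sigma_X\Phi_X)-\tfrac{c^2}{2}\text{tr}\big((\Sigma_X\Phi_X)^2\big)+O(c^3)$ with $c=2t/\sqrt{n}$, and the analogous quadratic-in-$\mu$ displacement — yields, uniformly over $A_n$ and with $t=sV^{-1}$,
\[
l_n(\theta_t)-l_n(\theta)=\sqrt{n}\,t\,L(\theta)-t^2\Big(\|\Sigma_X^{*1/2}\Phi_X\Sigma_X^{*1/2}\|_F^2+\|\Sigma_Y^{*1/2}\Phi_Y\Sigma_Y^{*1/2}\|_F^2+\tfrac12\xi_X^T\Omega^*\xi_X+\tfrac12\xi_Y^T\Omega^*\xi_Y\Big)+o_P(1),
\]
the first term being the score contribution and the quadratic coefficient equal to $V^2/2$ by (\ref{eq:QDAvariance}). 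Combined with the approximate linearity of $\Delta$, the right-hand side equals $sW-s^2/2+o_P(1)$, so $e^{sW}\exp(l_n(\theta))=e^{s^2/2}\exp(l_n(\theta_t))(1+o_P(1))$ uniformly on $A_n$; integrating against $d\Pi$ over $A_n$, dividing by $\int_{A_n}\exp(l_n)\,d\Pi$, and invoking Condition~2 for this $t$ gives the displayed Laplace-transform limit.

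I expect the main obstacle to be the uniform control over $A_n$ of the two approximation errors specific to $\Delta$: the linearization remainder of the log-determinant terms — whose second-order Frobenius-norm behaviour is why $A_n$ is defined through both $\|\Sigma_X-\Sigma_X^*\|_F^2$ and $\|\Sigma_X-\Sigma_X^*\|_F$ (and likewise for $Y$) and why the dimension restriction $p^3/n=o(1)$ cannot be relaxed — and the cross terms produced inside $l_X$ and $l_Y$ when $\mu_X$ and $\Omega_X$ (resp.\ $\mu_Y$, $\Omega_Y$) are displaced simultaneously, i.e.\ contributions of type $n\,(\mu_X-\bar{X})^T\Phi_X(\mu_X-\bar{X})$, which must be shown to be $o_P(1)$ after multiplication by $n$. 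These are the price of treating the joint discriminant functional rather than location and scatter separately; the remaining ingredients — the passage from the Laplace transform to the uniform statement, and the bookkeeping of the $O(1)$ constants — are identical to the proof of Theorem~\ref{thm:LDA}.
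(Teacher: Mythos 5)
Your proposal matches the paper's proof in structure and in every essential ingredient: you reproduce the two technical steps that the paper packages as Lemma~\ref{lem:QDAexpansion} (the second-order Taylor expansion of $l_n$ along the perturbed parameter, with the quadratic coefficient identified as $V^2/2$ so that the $\Phi_X,\Phi_Y,\xi_X,\xi_Y$ contributions match the definition in \prettyref{eq:QDAvariance}) and Lemma~\ref{lem:QDAfunctional} (the approximate linearity of $\Delta$ on $A_n$ with the same linear form $L$), then combine them and finish with the Laplace-transform argument of Lemma~\ref{lem:laplace} exactly as in the proof of Theorem~\ref{thm:main1}. The only departure is cosmetic — you place the $V$-normalization in the substitution $t=sV^{-1}$ rather than inside the definition of the perturbation — and your observation that $\Sigma_X^{*1/2}\Phi_X\Sigma_X^{*1/2}=-I+ww^T$ gives $V^2\geq p-1$ is a useful explicit justification of a bound the paper leaves implicit in the (unproved) Lemma~\ref{lem:QDAfunctional}.
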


\begin{remark}
With the new definition of $V$ in QDA, the assumption $V^{-1}= O(1)$ is also implied by the separation condition $||\mu_X-\mu_Y||>c$ by applying the same argument in Proposition \ref{prop:V}.
\end{remark}

\begin{remark}
For independent prior in the sense that
$$d\Pi(\mu_X,\mu_Y,\Omega_X,\Omega_Y)=d\Pi(\mu_X,\Omega_X)\times d\Pi(\mu_Y,\Omega_Y),$$
the posterior is also independent because of the decomposition of the likelihood. In this case, we have
$$\Pi(A_n|X^n,Y^n)=\Pi_X(A_{X,n}|X^n)\times \Pi_Y(A_{Y,n}|Y^n),$$
with $A_{X,n}$ and $A_{Y,n}$ being versions of $A_n$ involving only $(\mu_X,\Omega_X)$ and $(\mu_Y,\Omega_Y)$. In the same way, we also have
\begin{eqnarray*}
&& \frac{\int_{A_n}\exp\Big(l_n(\mu_{X,t},\mu_{Y,t},\Omega_{X,t},\Omega_{Y,t})\Big)d\Pi(\mu_X,\mu_Y,\Omega_X,\Omega_Y)}{\int_{A_n}\exp\Big(l_n(\mu_X,\mu_Y,\Omega_X,\Omega_Y)\Big)d\Pi(\mu_X,\mu_Y,\Omega_X,\Omega_Y)} \\
&=& \frac{\int_{A_{X,n}}\exp\Big(l_n(\Omega_{X,t},\mu_{X,t})\Big)d\Pi_X(\mu_X,\Omega_X)}{\int_{A_{X,n}}\exp\Big(l_n(\Omega_X,\mu_X)\Big)d\Pi_X(\mu_X,\Omega_X)} \times \frac{\int_{A_{Y,n}}\exp\Big(l_n(\Omega_{Y,t},\mu_{Y,t})\Big)d\Pi_Y(\mu_Y,\Omega_Y)}{\int_{A_{Y,n}}\exp\Big(l_n(\Omega_Y,\mu_Y)\Big)d\Pi_Y(\mu_Y,\Omega_Y)}.
\end{eqnarray*}
Hence, for the two conditions in Theorem \ref{thm:QDA}, it is sufficient to check
\begin{enumerate}
\item $\Pi(A_{X,n}|X^n)=1-o_P(1)$,
\item For any fixed $t\in\mathbb{R}$, $\frac{\int_{A_{X,n}}\exp\Big(l_n(\Omega_{X,t},\mu_{X,t})\Big)d\Pi_X(\mu_X,\Omega_X)}{\int_{A_{X,n}}\exp\Big(l_n(\Omega_X,\mu_X)\Big)d\Pi_X(\mu_X,\Omega_X)}=1+o_P(1)$,
\end{enumerate}
and the corresponding conditions for $Y$, when the prior has an independent structure.
\end{remark}

The example of prior we specify for QDA is similar to the one for LDA. Let us use independent priors. That is
$$\Omega_X\sim \Pi_{\Omega_X},\quad\Omega_Y\sim\Pi_{\Omega_Y},\quad \mu_X\sim\Pi_X,\quad \mu_Y\sim\Pi_Y,$$
independently. The prior for the whole parameter $(\Omega_X,\Omega_Y,\mu_X,\mu_Y)$ is a product measure defined as
$$\Pi=\Pi_{\Omega_X}\times\Pi_{\Omega_Y}\times\Pi_X\times\Pi_Y.$$
Let $\Pi_{\Omega_X}$ and $\Pi_{\Omega_Y}$ be the Gaussian prior defined in Section \ref{sec:gaussprior}. Let both $\Pi_X$ and $\Pi_Y$ be $N(0,I_{p\times p})$.

\begin{thm} \label{thm:QDAgauss}
Assume $||\Sigma_X^*||\vee||\Omega_X^*||\vee||\Sigma^*_Y||\vee||\Omega^*_Y||\leq\Lambda=O(1)$, $V^{-1}= O(1)$ and $p^2=o\Big(\frac{\sqrt{n}}{\log n}\Big)$. The prior defined above satisfies the two conditions in Theorem \ref{thm:LDA} for some appropriate $A_n$. Thus, the Bernstein-von Mises result holds.
\end{thm}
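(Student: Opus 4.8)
The plan is to verify the two conditions of Theorem \ref{thm:QDA} for the specified product prior by exploiting the independence structure pointed out in the remark preceding the theorem. Since $\Pi = \Pi_{\Omega_X}\times\Pi_{\Omega_Y}\times\Pi_X\times\Pi_Y$ and the log-likelihood splits as $l_n = l_X(\mu_X,\Omega_X) + l_Y(\mu_Y,\Omega_Y)$, both the posterior mass condition and the perturbation ratio condition factor into an $X$-part and a $Y$-part, so it suffices to check, for the $X$-class (the $Y$-class being identical), that $\Pi_X(A_{X,n}\mid X^n) = 1 - o_P(1)$ and that the perturbed-likelihood integral ratio with shifts $\Omega_{X,t} = \Omega_X + \tfrac{2t}{\sqrt n}\Phi_X$ and $\mu_{X,t} = \mu_X + \tfrac{t}{\sqrt n}\xi_X$ equals $1 + o_P(1)$. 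The choice of $A_{X,n}$ will be the natural analogue of the set used in Lemma \ref{lem:gaussprior}, namely something like $\{\|\mu_X - \mu_X^*\| \vee \|\Sigma_X - \Sigma_X^*\|_F \le M\sqrt{p^2\log n / n}\}$, which, under the assumption $p^2 = o(\sqrt n/\log n)$ (equivalently $p^4(\log n)^2/n = o(1)$, which is much stronger than $p^3/n = o(1)$), is easily seen to be contained in the set $A_n$ appearing in Theorem \ref{thm:QDA}.

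The first condition — posterior contraction of $(\mu_X,\Omega_X)$ at rate $\sqrt{p^2\log n/n}$ in Frobenius norm — follows from a standard testing/prior-mass argument in the spirit of Lemma \ref{lem:gaussprior}, with the only genuinely new ingredient being the unknown mean $\mu_X$. Here I would center at $\bar X$, use the fact that $\|\bar X - \mu_X^*\| = O_P(\sqrt{p/n})$ together with the sub-Gaussian concentration of $\hat\Sigma_X$ around $\Sigma_X^*$ in spectral and Frobenius norm, and note that integrating out $\mu_X$ against the $N(0,I_{p\times p})$ prior contributes only lower-order terms because the mean's contribution to the likelihood is exactly quadratic. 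The second condition is the crux: one must show that shifting $(\mu_X,\Omega_X)$ by an $O(1/\sqrt n)$-perturbation in the directions $(\xi_X,\Phi_X)$ changes the restricted normalizing constant by a negligible multiplicative factor. The key computation is an exact expansion of $l_X(\mu_{X,t},\Omega_{X,t}) - l_X(\mu_X,\Omega_X)$: the $\log\det$ term expands with a quadratic remainder controlled on $A_{X,n}$ using $\mathrm{rank}(\Phi_X)\le p$ and $rp^2/n$-type bounds, and the cross term $\mathrm{tr}((\cdot)\tilde\Sigma_X)$ produces a linear-in-$t$ piece that is $O_P(1)$ in distribution plus a deterministic drift; the deterministic drift is designed, via the specific choices of $\Phi_X,\Phi_Y,\xi_X,\xi_Y$ and $V^2$ in \eqref{eq:QDAvariance}, to cancel exactly the Gaussian-prior density change $\exp(-\tfrac12\|\Omega_{X,t}\|_F^2 + \tfrac12\|\Omega_X\|_F^2)$ and the $N(0,I)$-density change for $\mu_X$, up to $o_P(1)$. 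One then shows the ratio is $1 + o_P(1)$ by a dominated-convergence argument over $A_{X,n}$, where the $o_P(1)$ smallness of the exponent is uniform on $A_{X,n}$ thanks to the contraction radius.

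The main obstacle I anticipate is controlling the $\log\det$ remainder in the perturbation ratio: unlike the covariance-functional case of Theorem \ref{thm:main1}, here $\Phi_X$ has rank as large as $p$ (it is $\Omega_X^*(\Sigma_X^* - (z-\mu_X^*)(z-\mu_X^*)^T)\Omega_X^*$), so the second-order term in $\log\det(\Omega_{X,t}) = \log\det(\Omega_X) + \tfrac{2t}{\sqrt n}\mathrm{tr}(\Omega_X^{-1}\Phi_X) - \tfrac{2t^2}{n}\mathrm{tr}((\Omega_X^{-1}\Phi_X)^2) + \cdots$ is of order $p/n$, and the third-order term of order $p/n^{3/2}$ times a spectral factor — exactly the regime where the condition $p^3/n = o(1)$ (implied here by $p^2 = o(\sqrt n/\log n)$) becomes necessary. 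Keeping careful track of which terms are deterministic (and must be matched against the prior) versus stochastic $o_P(1)$, and verifying that the matching is exact rather than merely approximate, is where the bookkeeping is heaviest; everything else reduces to the already-established Lemma \ref{lem:gaussprior}-style estimates applied separately to the two classes and to the extra mean coordinates.
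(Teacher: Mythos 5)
Your high-level plan is right---exploit the product structure of the prior and the additive decomposition of the likelihood to reduce to per-class verification, and take $A_n$ to be a Frobenius ball of radius $M\sqrt{p^2\log n/n}$ in the covariance together with a ball of the same radius for the mean---and this is indeed the paper's starting point. But two of your three substantive steps diverge from what the paper actually does, and in one case in a way that I think is a genuine conceptual misstep.

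For Condition~1, your claim that ``the only genuinely new ingredient is the unknown mean,'' handled by ``centering at $\bar X$'' and ``integrating out $\mu_X$ against the $N(0,I)$ prior,'' does not give what is needed. Integrating out $\mu_X$ produces a marginal posterior for $\Omega_X$; but Condition~1 requires \emph{joint} contraction of $(\mu_X,\Omega_X)$, i.e.\ $\Pi(A_{X,n}\mid X^n)\to 1$ with $A_{X,n}$ constraining both $\|\mu_X-\mu_X^*\|$ and $\|\Sigma_X-\Sigma_X^*\|_F$. The paper does not integrate out anything; it extends the Ghosal--Ghosh--van der Vaart testing machinery to the two-parameter case. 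Concretely, this requires (i) a new KL-type lower bound for the denominator that accounts for the mean (Lemma~\ref{lem:KLDA}), (ii) a new prior-mass lower bound jointly in $(\mu,\Omega)$ (Lemma~\ref{lem:priorDA}), (iii) a test separating $\mu$ from $\mu^*$ (Lemma~\ref{lem:testmean}), and (iv) a variant of the covariance test that remains valid when $\mu$ is only known up to $O(\sqrt{p^2\log n/n})$ (Lemma~\ref{lem:testcov}); the set $A_n^c$ is then split into a mean-deviation piece $B_{1n}$ and a covariance-deviation piece $B_{2n}$ which are tested separately. Your sketch omits all of this and replaces it with a conjugacy-style heuristic that does not obviously yield the joint statement.

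For Condition~2, your description of a ``deterministic drift'' in $l_X(\mu_{X,t},\Omega_{X,t})-l_X(\mu_X,\Omega_X)$ that is ``designed to cancel exactly the Gaussian-prior density change'' is not what happens, and I don't see how it could be made to work directly: the difference $l_X(\mu_{X,t},\Omega_{X,t})-l_X(\mu_X,\Omega_X)$ contains the $O_P(1)$ (not $o_P(1)$) term $\propto t\sqrt n\,\mathrm{tr}((\Sigma_X-\hat\Sigma_X)\Phi_X)$, so no pointwise cancellation can bring the exponent to zero; the choices of $\Phi_X,\xi_X,V$ are calibrated so that this expansion reproduces $\frac{t\sqrt n}{V}(\Delta-\hat\Delta)-\frac{t^2}{2}$ (that is Lemma~\ref{lem:QDAexpansion} and Lemma~\ref{lem:QDAfunctional}, which live in the proof of Theorem~\ref{thm:QDA}, not in the prior verification). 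The paper verifies Condition~2 by the change-of-variables device already used in Part~II of Lemma~\ref{lem:gaussprior}: substitute $(\Gamma,\theta)=(\Omega_{X,t},\mu_{X,t})$ so that the perturbed likelihood becomes the unperturbed likelihood and the entire perturbation is transferred to the Gaussian prior density, producing a factor $\exp(2tn^{-1/2}\mathrm{tr}(\bar\Gamma\bar\Phi_X)+tn^{-1/2}\theta^T\xi_X-O(n^{-1}))$ which is $o(1)$ uniformly on the (slightly enlarged) $A_n$, together with a sandwich argument $A_n'\subset A_n+\mathrm{shift}\subset A_n''$; no drift-cancellation is involved. Relatedly, your worry about the $\log\det$ remainder and the role of $p^3/n=o(1)$ is genuine, but it belongs to the proofs of Lemma~\ref{lem:QDAexpansion} and Theorem~\ref{thm:QDA}, not to the verification of the two prior conditions asked for here.
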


\section{Discussion} \label{sec:disc}

\subsection{Comparison: Asymptotic Normality of $\phi(\hat{\Sigma})$ and $\psi(\hat{\Sigma}^{-1})$}

In this section, we present the classical  results for asymptotic normality of the estimators $\phi(\hat{\Sigma})$ and $\psi(\hat{\Sigma}^{-1})$. Note that in many cases, they coincide with  MLE. The purpose is to compare them with the BvM results obtained in this paper. We first review and define some notation. Remember $\hat{\sigma}_{ij}$ is the $(i,j)$-th element of $\hat{\Sigma}$ and $\hat{\omega}_{ij}$ is the $(i,j)$-th element of $\hat{\Sigma}^{-1}$. We let $\Delta_L$ and $\Delta_Q$ be the LDA and QDA functionals respectively. The corresponding asymptotic variances are denoted by $V_L^2$ and $V_Q^2$, defined in (\ref{eq:LDAvariance}) and (\ref{eq:QDAvariance}) respectively. As $p,n\rightarrow\infty$ jointly, the asymptotic normality of $\phi(\hat{\Sigma})$ or $\psi(\hat{\Sigma}^{-1})$ holds under different asymptotic regimes for different functionals. For comparison, we assume that $V_L$, $V_Q$ and the eigengap $\delta$ are at constant levels.

\begin{thm}
Let $p,n\rightarrow\infty$ jointly, then for any asymptotic regime of $(p,n)$,
$$\frac{\sqrt{n}(\hat{\sigma}_{ij}-\sigma_{ij}^*)}{\sqrt{\sigma_{ij}^{*2}+\sigma_{ii}^*\sigma_{jj}^*}}\leadsto N(0,1),$$
$$\frac{\sqrt{n}(v^T\hat{\Sigma}v-v^T\Sigma^*v)}{\sqrt{2}|v^T\Sigma^*v|}\leadsto N(0,1).$$
Assume $p^2/n=o(1)$, we have
\begin{equation}
\frac{\sqrt{n}(\hat{\omega}_{ij}-\omega_{ij}^*)}{\sqrt{\omega_{ij}^{*2}+\omega_{ii}^*\omega_{jj}^*}}\leadsto N(0,1), \label{eq:ANomega}
\end{equation}
\begin{equation}
\frac{\sqrt{n}(v^T\hat{\Sigma}^{-1}v-v^T\Omega^*v)}{\sqrt{2}|v^T\Omega^*v|}\leadsto N(0,1), \label{eq:ANquadomega}
\end{equation}
\begin{equation}
\frac{\sqrt{n}\big(\lambda_m(\hat{\Sigma})-\lambda_m(\Sigma^*)\big)}{\sqrt{2}\lambda_m(\Sigma^*)}\leadsto N(0,1), \label{eq:ANeigensigma}
\end{equation}
\begin{equation}
\frac{\sqrt{n}\big(\lambda_m(\hat{\Sigma}^{-1})-\lambda_m(\Omega^*)\big)}{\sqrt{2}\lambda_m(\Omega^*)}\leadsto N(0,1), \label{eq:ANeigenomega}
\end{equation}
\begin{equation}
\sqrt{n}V_L^{-1}\Big(\Delta_L(\bar{X},\bar{Y},\hat{\Sigma}^{-1})-\Delta_L(\mu_X^*,\mu_Y^*,\Omega^*)\Big)\leadsto N(0,1). \label{eq:ANLDA}
\end{equation}
Assume $p^3/n=o(1)$, we have
\begin{equation}
\sqrt{\frac{n}{2p}}\Big(\log\det(\hat{\Sigma})-\log\det(\Sigma^*)\Big)\leadsto N(0,1), \label{eq:ANlogdet}
\end{equation}
\begin{equation}
\sqrt{n}V_Q^{-1}\Big(\Delta_Q(\bar{X},\bar{Y},\hat{\Sigma}_X^{-1},\hat{\Sigma}_Y^{-1})-\Delta_Q(\mu_X^*,\mu_Y^*,\Omega_X^*,\Omega_Y^*)\Big)\leadsto N(0,1). \label{eq:ANQDA}
\end{equation}
\end{thm}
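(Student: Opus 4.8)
The plan is to handle every displayed limit by a delta-method expansion of the estimator about the truth, leaving a leading term that is a normalized i.i.d.\ sum of Gaussian (quadratic) forms --- to which a (triangular-array) central limit theorem applies --- plus a Taylor remainder that the stated dimension condition makes negligible. Two ingredients recur: the concentration bounds $\opnorm{\hat\Sigma-\Sigma^*}=O_P(\sqrt{p/n})$, $\fnorm{\hat\Sigma-\Sigma^*}=O_P(\sqrt{p^2/n})$, and $\|\bar X-\mu_X^*\|\vee\|\bar Y-\mu_Y^*\|=O_P(\sqrt{p/n})$, all valid when $\opnorm{\Sigma^*}\vee\opnorm{\Omega^*}=O(1)$; and Isserlis' identity $\Var\big((a^TX)(b^TX)\big)=(a^T\Sigma^*a)(b^T\Sigma^*b)+(a^T\Sigma^*b)^2$ for $X\sim N(0,\Sigma^*)$ and fixed $a,b$, which supplies all the asymptotic variances. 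Because each normalizer is a deterministic function of $\Sigma^*$ or $\Omega^*$, no Slutsky replacement of a variance estimate is needed. The two statements with no dimension restriction are then immediate: $\hat\sigma_{ij}=\frac1n\sum_k X_{ki}X_{kj}$ and $v^T\hat\Sigma v=\frac1n\sum_k(v^TX_k)^2$ are ordinary i.i.d.\ averages, and the classical CLT with Isserlis' identity gives limiting variances $\sigma_{ii}^*\sigma_{jj}^*+\sigma_{ij}^{*2}$ and $2(v^T\Sigma^*v)^2$; since $i,j$ and $v$ are fixed, $p$ plays no role.

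For \eqref{eq:ANomega} and \eqref{eq:ANquadomega} I would use the Neumann expansion $\hat\Sigma^{-1}=\Omega^*-\Omega^*(\hat\Sigma-\Sigma^*)\Omega^*+R$ with $\opnorm{R}=O_P(\opnorm{\hat\Sigma-\Sigma^*}^2)=O_P(p/n)$; contracting with $e_i,e_j$ (resp.\ with $v$) isolates the leading i.i.d.\ average $-(\Omega^*e_i)^T(\hat\Sigma-\Sigma^*)(\Omega^*e_j)$ (resp.\ $-(\Omega^*v)^T(\hat\Sigma-\Sigma^*)(\Omega^*v)$), and Isserlis' identity together with $\Omega^*\Sigma^*\Omega^*=\Omega^*$ produces exactly $\omega_{ii}^*\omega_{jj}^*+\omega_{ij}^{*2}$ and $2(v^T\Omega^*v)^2$, while $\sqrt n\,\opnorm{R}=O_P(p/\sqrt n)=o_P(1)$ precisely under $p^2/n=o(1)$. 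For the eigenvalue statements \eqref{eq:ANeigensigma} and \eqref{eq:ANeigenomega} I would invoke Lemma~\ref{lem:coveigen} and Lemma~\ref{lem:precisioneigen} at the deterministic point $\Sigma=\Sigma^*$, which lies in the set appearing there for any $\delta_n=o(1)$ (the left side vanishes), obtaining $\lambda_m(\hat\Sigma)-\lambda_m(\Sigma^*)=u_m^{*T}(\hat\Sigma-\Sigma^*)u_m^*+o_P(n^{-1/2})$ and the precision analogue; the leading i.i.d.\ average then has variance $2\lambda_m(\Sigma^*)^2/n$ (resp.\ $2\lambda_m(\Omega^*)^2/n$, where for the precision case the factor $\lambda_m(\Omega^*)^2$ enters through $\Omega^*u_m^*=\lambda_m(\Omega^*)u_m^*$ and the Neumann expansion of $\hat\Sigma^{-1}$). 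With $\delta$ at a constant level, $p^2/n=o(1)$ is exactly what validates both the lemmas and the Neumann remainder here.

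For \eqref{eq:ANlogdet} the same substitution $\Sigma=\Sigma^*$ in Lemma~\ref{lem:logdet} yields $\sqrt{n/(2p)}\big(\log\det(\hat\Sigma)-\log\det(\Sigma^*)\big)=\frac1{\sqrt{2np}}\sum_k(\|Z_k\|^2-p)+o_P(1)$ with $Z_k=\Sigma^{*-1/2}X_k\sim N(0,I_p)$; the $p$-dependent summand $(\|Z_1\|^2-p)/\sqrt{2p}$ has unit variance and fourth moment $3+12/p=O(1)$, so the Lyapunov condition holds and a triangular-array CLT gives the $N(0,1)$ limit, the error being $O_P(\sqrt{n/p}\,\fnorm{\hat\Sigma-\Sigma^*}^2)=O_P(\sqrt{p^3/n})=o_P(1)$ exactly when $p^3/n=o(1)$. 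For \eqref{eq:ANLDA} and \eqref{eq:ANQDA} I would expand $\Delta_L$ (resp.\ $\Delta_Q$) jointly about $(\mu_X^*,\mu_Y^*,\Omega^*)$ (resp.\ about $(\mu_X^*,\mu_Y^*,\Omega_X^*,\Omega_Y^*)$): differentiation gives a term $\xi_X^T\Omega^*(\bar X-\mu_X^*)$ with variance $n^{-1}\xi_X^T\Omega^*\xi_X$ (using $\Cov(\bar X)=n^{-1}\Sigma^*$ and $\Omega^*\Sigma^*\Omega^*=\Omega^*$), the analogue for $\mu_Y$, and --- after Neumann-expanding $\hat\Sigma^{-1}$ (resp.\ $\hat\Sigma_X^{-1},\hat\Sigma_Y^{-1}$) and first-order-expanding the $\log\det$ pieces in the QDA case --- the term $2\,\text{tr}\big((\hat\Sigma-\Sigma^*)\Phi\big)$ (resp.\ $\text{tr}\big((\hat\Sigma_X-\Sigma_X^*)\Phi_X\big)+\text{tr}\big((\hat\Sigma_Y-\Sigma_Y^*)\Phi_Y\big)$) with $\Phi,\Phi_X,\Phi_Y$ as in Section~\ref{sec:DA}, of variance $4n^{-1}\fnorm{\Sigma^{*1/2}\Phi\Sigma^{*1/2}}^2$ (resp.\ $2n^{-1}\fnorm{\Sigma_X^{*1/2}\Phi_X\Sigma_X^{*1/2}}^2+2n^{-1}\fnorm{\Sigma_Y^{*1/2}\Phi_Y\Sigma_Y^{*1/2}}^2$) by Isserlis. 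Under the Gaussian model $\bar X$ is independent of the centered $X$-sample covariance, $\bar Y$ of the centered $Y$-sample covariance, and the $X$- and $Y$-samples of one another, so these linear pieces are mutually independent and their variances add to $V_L^2/n$ (resp.\ $V_Q^2/n$) as in \eqref{eq:LDAvariance} (resp.\ \eqref{eq:QDAvariance}); a CLT --- again with a Lyapunov bound for the $p$-dependent $\log\det$ summands in QDA --- then yields the limit, while the second-order remainders $O_P(p/n)$ (from inversion and the mean terms) and $O_P(p^2/n)$ (from the $\log\det$ terms) are killed by the $\sqrt n$ scaling under $p^2/n=o(1)$ (resp.\ $p^3/n=o(1)$).

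The main obstacle is organizational rather than a single sharp estimate: one must (i) verify the Lyapunov (or Lindeberg) condition uniformly for the $p$-dependent Gaussian quadratic-form summands that enter the $\log\det$ and QDA limits; and (ii) in the LDA/QDA cases, establish cleanly the asymptotic decoupling of the estimated means from the estimated covariances, so that the limiting variance is exactly $V_L^2$ (resp.\ $V_Q^2$) with no surviving cross term, while simultaneously checking that every second- and higher-order Taylor remainder --- including the $O(p^2/n)$ bias incurred by centering at $\hat\Sigma$ rather than a debiased estimator in the $\log\det$-type pieces --- is $o_P$ of the relevant $n^{-1/2}$ scale under the stated dimension conditions.
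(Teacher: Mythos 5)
The paper does not supply a proof of this theorem; it states that the results are ``more or less scattered in the literature'' and can be derived by the delta method, leaving the details to the reader. Your blind proposal is precisely that deferred exercise carried out: a delta-method (first-order Taylor) expansion of each estimator about the truth, reduction of the leading term to an i.i.d.\ average of Gaussian quadratic forms whose variance is read off from Isserlis' identity, a (triangular-array) CLT for the $p$-dependent summands, and a bound on the Taylor remainder under the stated dimension condition. Your reuse of Lemmas~\ref{lem:coveigen}, \ref{lem:precisioneigen} and \ref{lem:logdet} evaluated at the deterministic point $\Sigma=\Sigma^*$ is a clean way to recycle the paper's own remainder bounds, and your identification of which remainders are killed by which scaling is consistent with the paper's summary table. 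The one place where your bookkeeping is slightly loose is the QDA case: you write that the $O_P(p^2/n)$ log-determinant remainders are ``killed by the $\sqrt n$ scaling under $p^3/n=o(1)$,'' but $\sqrt n\cdot p^2/n=p^2/\sqrt n$ diverges under that condition; what actually kills them is the full normalizer $\sqrt n\,V_Q^{-1}$, since $V_Q\asymp\sqrt p$ (one can check $\|\Sigma_X^{*1/2}\Phi_X\Sigma_X^{*1/2}\|_F^2\geq p-1$ from $\Sigma_X^{*1/2}\Phi_X\Sigma_X^{*1/2}=-(I-ww^T)$ with $w=\Omega_X^{*1/2}(z-\mu_X^*)$), giving $\sqrt{n/p}\cdot p^2/n=\sqrt{p^3/n}=o(1)$; your explicit $\sqrt{n/p}$ bookkeeping in the $\log\det$ paragraph shows you have the right computation in mind, but the QDA sentence should be phrased in terms of $\sqrt n V_Q^{-1}$ rather than $\sqrt n$. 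Modulo that phrasing, the proposal is correct and takes the route the paper itself recommends.
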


Since the above results are more or less scattered in the literature, we do not present their proofs in this paper. Readers who are interested can derive these results using delta method.

We remark that the condition $p^2/n=o(1)$ is sharp for (\ref{eq:ANomega})-(\ref{eq:ANLDA}). For (\ref{eq:ANomega}) and (\ref{eq:ANquadomega}), a common example is $\omega_{11}=e_1^T\Omega e_1$, where $e_1^T=(1,0,...,0)$. By distributional facts of inverse Wishart, $\sqrt{n}(\hat{\omega}_{11}-\omega_{11}^*)$ is not asymptotically normal if $p^2/n=o(1)$ does not hold. Since the functional $\Delta_L$ is harder than $v^T\Omega v$ (the latter is a special case of the former if $\mu_X^*$ and $\mu_Y^*$ are known), $p^2/n=o(1)$ is also sharp for (\ref{eq:ANLDA}). For (\ref{eq:ANeigensigma}) and (\ref{eq:ANeigenomega}), we have the following proposition to show that $p^2/n=o(1)$ is necessary.

\begin{proposition} \label{prop:eigencounter}
Consider a diagonal $\Sigma^*$. Let the eigengap $\sigma_{11}^*-\sigma_{22}^*$ be at constant level when $p,n\rightarrow\infty$ jointly. Assume $||\Sigma^*||\vee||\Omega^*||= O(1)$, $n^{1/2}= o(p)$ and $p= o(n^{2/3})$. Then $\lambda_1(\hat{\Sigma})$ is not $\sqrt{n}$-consistent. As a consequence, $\lambda_p(\hat{\Sigma}^{-1})=\lambda_1^{-1}(\hat{\Sigma})$ is not $\sqrt{n}$-consistent.
\end{proposition}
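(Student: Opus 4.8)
The plan is to show that $\lambda_1(\hat\Sigma)$ carries a strictly positive second-order bias of exact order $p/n$; since this is $\omega(n^{-1/2})$ once $p\gg\sqrt n$, multiplication by $\sqrt n$ makes the normalized error diverge. The mechanism is the convexity of $\lambda_1(\cdot)$: although $\Expect\hat\Sigma=\Sigma^*$, the curvature accumulated over the $p$ coordinates produces a bias of order $p/n$ rather than $1/n$. I assume, as the statement implicitly does, that the diagonal entries of $\Sigma^*$ are in decreasing order, so $\lambda_1(\Sigma^*)=\sigma_{11}^*$, $u_1^*=e_1$, the eigengap hypothesis reads $\sigma_{11}^*-\sigma_{jj}^*\ge\sigma_{11}^*-\sigma_{22}^*\ge c>0$ for all $j\ge2$, and $\|\Sigma^*\|\vee\|\Omega^*\|=O(1)$ yields $0<c_0\le\sigma_{jj}^*\le C_0$ for all $j$; note $p=o(n^{2/3})$ forces $\hat\Sigma\succ0$ for large $n$.

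The first step is a sharp variational lower bound. For any $v$ with $v_1=0$, plugging the unit vector $u=(e_1+v)/\sqrt{1+\|v\|^2}$ into $\lambda_1(\hat\Sigma)=\max_{\|u\|=1}u^T\hat\Sigma u$ and discarding the nonnegative term $v^T\hat\Sigma v$ gives $\lambda_1(\hat\Sigma)\ge\big(\hat\sigma_{11}+2\sum_{j\ge2}\hat\sigma_{1j}v_j\big)/(1+\|v\|^2)$. Writing $g=(\hat\sigma_{12},\dots,\hat\sigma_{1p})^T$ and $a^2=\|g\|^2$, I would take $v\propto g$ (Cauchy--Schwarz) and optimize its length; elementary calculus then yields, on the high-probability event $\{a\le\hat\sigma_{11}/2\}$,
$$\lambda_1(\hat\Sigma)\ \ge\ \hat\sigma_{11}+\frac{a^2}{\hat\sigma_{11}}-C\,\frac{a^4}{\hat\sigma_{11}^{3}}.$$

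The second step pins down the stochastic sizes of $\hat\sigma_{11}$ and $a^2$ using the coordinatewise independence implied by diagonality of $\Sigma^*$. One has $\hat\sigma_{11}=\frac1n\sum_iX_{i1}^2=\sigma_{11}^*+O_P(n^{-1/2})$, while for $j\ge2$, $\Expect\hat\sigma_{1j}=0$ and $\Var(\hat\sigma_{1j})=\sigma_{11}^*\sigma_{jj}^*/n$, so $\Expect a^2=\frac1n\sum_{j=2}^p\sigma_{11}^*\sigma_{jj}^*\asymp p/n$. Conditioning on $(X_{11},\dots,X_{n1})$ makes the $\hat\sigma_{1j}$, $j\ge2$, independent, and a law-of-total-variance computation gives $\Var(a^2)=O(p/n^2)+O(p^2/n^3)=o((p/n)^2)$ under $p=o(n)$; Chebyshev then gives $c_1'p/n\le a^2\le c_2'p/n$ with probability $1-o(1)$. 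Combining the two steps, on an event of probability $1-o(1)$,
$$\lambda_1(\hat\Sigma)-\sigma_{11}^*\ \ge\ c_1\frac pn-O_P(n^{-1/2})-O_P\!\big((p/n)^2\big),$$
so, using $n^{1/2}=o(p)$ and $p=o(n^{2/3})$ (which makes $\sqrt n(p/n)^2=p^2/n^{3/2}=o(p/\sqrt n)$), $\sqrt n\big(\lambda_1(\hat\Sigma)-\sigma_{11}^*\big)\ge c_1p/\sqrt n-O_P(1)\to\infty$ in probability; hence $\lambda_1(\hat\Sigma)$ is not $\sqrt n$-consistent. The consequence follows at once: $\hat\Sigma\succ0$ gives $\lambda_p(\hat\Sigma^{-1})=\lambda_1(\hat\Sigma)^{-1}$ and $\lambda_p(\Omega^*)=(\sigma_{11}^*)^{-1}$, and since $\lambda_1(\hat\Sigma)\to\sigma_{11}^*$ in probability, $\sqrt n\,|\lambda_p(\hat\Sigma^{-1})-\lambda_p(\Omega^*)|=\sqrt n\,|\lambda_1(\hat\Sigma)-\sigma_{11}^*|/(\lambda_1(\hat\Sigma)\sigma_{11}^*)\to\infty$ in probability.

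The main obstacle will be the two-sided control of $a^2$ in the second step together with justifying the lower bound in the first --- that is, making the second-order perturbation expansion of $\lambda_1(\hat\Sigma)$ around $\sigma_{11}^*$ rigorous and controlling its tail. This is precisely where the upper bounds on $p$ are used: they keep $\|\hat\Sigma-\Sigma^*\|=O_P(\sqrt{p/n})$ comfortably within the constant eigen-gap, so the expansion is valid, the leading term $a^2/\hat\sigma_{11}\asymp p/n$ is genuine, and every higher-order remainder (such as the $a^4/\hat\sigma_{11}^3$ term, of order $(p/n)^2$) is of strictly smaller order than the $p/n$ bias.
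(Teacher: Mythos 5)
Your proof is correct, but it takes a genuinely different route from the paper's. The paper proves this by invoking Kato's perturbation series: writing $\lambda_1(\hat\Sigma)-\lambda_1(\Sigma^*)=\lambda_1'(A,\Delta)+\lambda_1^{(2)}(A,\Delta)+\sum_{k\ge3}\lambda_1^{(k)}(A,\Delta)$ with $A=D^*$ and $\Delta=U^{*T}(\Sigma^*-\hat\Sigma)U^*$, showing that $\sqrt n\,\lambda_1'(A,\Delta)$ is asymptotically normal, that the tail $\sqrt n\sum_{k\ge3}\lambda_1^{(k)}$ is $O_P(\sqrt n\|\Delta\|^3)=O_P(p^{3/2}/n)=o_P(1)$ under $p=o(n^{2/3})$, and that the second-order term satisfies $\sqrt n\,|\lambda_1^{(2)}(A,\Delta)|\ge C\sqrt n\sum_{j\ge2}\hat\sigma_{1j}^2\asymp p/\sqrt n\to\infty$. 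You replace the exact perturbation series with a variational lower bound coming from the Rayleigh quotient: plugging $u\propto e_1+t\,g$ into $\max_{\|u\|=1}u^T\hat\Sigma u$ and optimizing $t$ yields $\lambda_1(\hat\Sigma)\ge\hat\sigma_{11}+a^2/\hat\sigma_{11}-O(a^4)$, whose second-order bias term $a^2/\hat\sigma_{11}=\|g\|^2/\hat\sigma_{11}\asymp p/n$ is the same quantity as the paper's $\lambda_1^{(2)}$ lower bound. Your approach is more elementary and self-contained (no need for the Kato/Ames expansion machinery), needs only a one-sided bound, and your remainder is $O(a^4)\asymp(p/n)^2$ rather than $O(\|\Delta\|^3)\asymp(p/n)^{3/2}$, so in principle your argument works for all $p=o(n)$ rather than just $p=o(n^{2/3})$ (your parenthetical remark that $p=o(n^{2/3})$ is what makes $p^2/n^{3/2}=o(p/\sqrt n)$ is an overstatement -- that only needs $p=o(n)$). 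The paper's approach has the advantage of producing the precise asymptotic decomposition of $\lambda_1(\hat\Sigma)-\lambda_1(\Sigma^*)$ and is the same machinery already used to prove Lemma \ref{lem:coveigen}, so it comes at no extra cost within the paper. Your concentration argument for $a^2$ (conditioning on column $1$ and invoking conditional independence across $j\ge 2$) and your deduction for $\lambda_p(\hat\Sigma^{-1})$ are both sound.
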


The condition $p^3/n=o(1)$ is sharp for (\ref{eq:ANlogdet}) and (\ref{eq:ANQDA}). If $p^3/n=o(1)$ does not hold, a bias correction is necessary for (\ref{eq:ANlogdet}) to hold (see \cite{cai13}). That the condition $p^3/n=o(1)$ is  necessary for (\ref{eq:ANQDA}) is because the functional $\Delta_Q$ contains the part $\log\det(\Sigma)$.

In the next section, we are going to discuss the asymptotic regime of $(p,n)$ for BvM and compare them with the frequentist results listed in this section.

\subsection{The Asymptotic Regime of $(p,n)$} \label{sec:pn}

For all the BvM results we obtain in this paper, they assume different asymptotic regime of the sample size $n$ and the dimension $p$. Ignoring the $\log n$ factor and assume constant eigengap $\delta$ and asymptotic variances for LDA and QDA, the asymptotic regime for $(p,n)$ is summarized in the following table.
\begin{center}
    \begin{tabular}{| l | l | l | l |}
    \hline
    functional & $\phi(\hat{\Sigma})$ or $\psi(\hat{\Sigma}^{-1})$ & conjugate & non-conjugate \\ \hline
    $\sigma_{ij}$ & *** & $p\ll n$ & $p^2\ll n$  \\ \hline
    $\omega_{ij}$ & $p^2\ll n$ & $p^2\ll n$ & $p^3\ll n$  \\ \hline
    $v^T\Sigma v$ & *** & $p\ll n$  & $p^2\ll n$ \\ \hline
    $v^T\Omega v$ & $p^2\ll n$ & $p^2\ll n$ & $p^3\ll n$ \\ \hline
    $\log\det(\Sigma)$ & $p^3\ll n$ & $p^3\ll n$ & $p^3\ll n$ \\ \hline
    $\lambda_m(\Sigma)$ & $p^2\ll n$ & $p^2\ll n$ & $p^4\ll n$ \\ \hline
    $\lambda_m(\Omega)$ & $p^2\ll n$ & $p^2\ll n$ & $p^4\ll n$ \\ \hline
    LDA & $p^2\ll n$ & $p^2\ll$ n & $p^4\ll n$ \\ \hline
    QDA & $p^3\ll n$ & $p^3\ll$ n & $p^4\ll n$ \\ \hline
    \end{tabular}
\end{center}

The table has three columns for the asymptotic normality of $\phi(\hat{\Sigma})$ and $\psi(\hat{\Sigma}^{-1})$ and for BvM with conjugate and non-conjugate priors respectively. The purpose is to compare our BvM result with the classical frequentist asymptotic normality. The priors are the Wishart prior and Gaussian prior we consider in this paper. For discriminant analysis, we did not consider conjugate prior because of limit of space. The conjugate prior in the LDA and QDA settings is the normal-Wishart prior. Its posterior distribution can be decomposed as a marginal Wishart times a conditional normal. The analysis of the BvM result for this case is direct, and we claim the asymptotic regimes for LDA and QDA are $p^2\ll n$ and $p^3\ll n$ respectively without giving a formal proof.

Comparing the first and the second columns, the condition for $p$ and $n$ we need for the BvM results with conjugate prior matches the conditions for the frequentist results. The two exceptions are $\sigma_{ij}$ and $v^T\Sigma v$, where for the frequentist asymptotic normality to hold, there is no assumption on $p,n$. Our technique of proof requires $p\ll n$. This is because our theory requires a set $A_n\subset \{||\Sigma-\Sigma^*||\leq\delta_n\}$ for some $\delta_n=o(1)$ to satisfy $\Pi(A_n|X^n)=1-o_P(1)$. The best rate of convergence for $||\Sigma-\Sigma^*||$ is $\sqrt{p/n}$, which leads to $p\ll n$. Such assumption may be weaken if a different theory than ours can be developed (or through direct calculation by taking advantage of the conjugacy).

The comparison of the second and the third columns suggests that using of non-conjugate prior requires stronger assumptions. We believe these stronger assumptions can all be weakened. The current stronger assumptions on $p$ and $n$ are caused  the technique we use in this paper to prove posterior contraction, which is Condition 1 in  Theorem \ref{thm:main1} and Theorem \ref{thm:main2}.
The current way of proving posterior contraction in nonparametric Bayes theory only allows loss functions which are at the same order of the Kullback-Leibler divergence. In the covariance matrix estimation setting, we can only deal with Frobenius loss. We choose
$$A_n=\left\{||\Sigma-\Sigma^*||_F^2\leq M\frac{p^2\log n}{n}\right\}.$$
For functionals of covariance such as $\sigma_{ij}$ and $v^T\Sigma v$, we need $A_n\subset\{||\Sigma-\Sigma^*||\leq\delta_n\}$ for some $\delta_n$. We have to bound $||\Sigma-\Sigma^*||$ as
$$||\Sigma-\Sigma^*||\leq ||\Sigma-\Sigma^*||_F\leq \sqrt{M\frac{p^2\log n}{n}},$$
and require $\sqrt{M\frac{p^2\log n}{n}}\leq\delta_n=o(1)$. This leads to $p^2\ll n$. For functionals of precision matrix, we need $A_n\subset\{\sqrt{p}||\Sigma-\Sigma^*||\leq\delta_n\}$. Again, we have bound
$$\sqrt{p}||\Sigma-\Sigma^*||\leq \sqrt{p}||\Sigma-\Sigma^*||_F\leq \sqrt{M\frac{p^3\log n}{n}},$$
and require $\sqrt{M\frac{p^3\log n}{n}}=o(1)$. This leads to $p^3\ll n$. It would be great if we can prove a posterior contraction on $\{||\Sigma-\Sigma^*||\leq M\sqrt{p/n}\}$ directly without referring to the Frobenius loss. However, under the current technique of Bayes nonparemtrics \cite{ghosal00}, this is impossible. See a lower bound argument in \cite{hoffmann13}.

\subsection{Sharpness of The Condition $rp^2/n=o(1)$ in Theorem \ref{thm:main2}} \label{sec:sharpthm2}

It is curious whether the condition $rp^2/n=o(1)$ is sharp in Theorem \ref{thm:main2}. Let us consider the funcitonal $\psi(\Omega)=\log\det(\Omega)$. In this case, the corresponding matrix $\Psi$ in the linear expansion of $\psi(\Omega)$ is $\Psi=\Sigma^*$ and $r=\text{rank}(\Sigma^*)=p$. Then, the condition $rp^2=o(1)$ becomes $p^3/n=o(1)$.
Since $\log\det(\Omega)=-\log\det(\Sigma)$ and $p^3/n=o(1)$ is sharp for  BvM to hold for $\log\det(\Sigma)$, it is also sharp for $\log\det(\Omega)$.

\subsection{Covariance Priors}

The general framework in Section \ref{sec:framework} only considers prior defined on precision matrix $\Omega$. However, sometimes it is more natural to use prior defined on covariance matrix $\Sigma$, for example, Gaussian prior on $\Sigma$. Then, the first conditions in Theorem \ref{thm:main1} and Theorem \ref{thm:main2} are hard to check. We propose a slight variation of this condition, so that our theory can also be user-friendly for covariance priors.

We first consider approximate linear functionals of $\Sigma$ satisfying (\ref{eq:approxlincov}). Then, the first condition of Theorem \ref{thm:main1} can be replaced by
$$
\frac{\int_{A_n}\exp\Big(l_n(\Sigma_t^{-1})\Big)d\Pi(\Sigma)}{\int_{A_n}\exp\Big(l_n(\Sigma^{-1})\Big)d\Pi(\Sigma)}=1+o_P(1),\quad\text{for each fixed } t\in\mathbb{R}, $$
where $\Sigma_t=\Sigma-\frac{2t}{\sqrt{n}||\Sigma^{*1/2}\Phi\Sigma^{*1/2}||_F}\Sigma^*\Phi\Sigma^*$. Then we consider approximate linear functionals of $\Omega$ satisfying (\ref{eq:approxlinprecision}). The first condition of Theorem \ref{thm:main2} can be replaced by
$$\frac{\int_{A_n}\exp\Big(l_n(\Sigma_t^{-1})\Big)d\Pi(\Sigma)}{\int_{A_n}\exp\Big(l_n(\Sigma^{-1})\Big)d\Pi(\Sigma)}=1+o_P(1),\quad\text{for each fixed } t\in\mathbb{R},$$
where $\Sigma_t=\Sigma+\frac{2t}{\sqrt{n}||\Omega^*{1/2}\Psi\Omega^{*1/2}||_F}\Psi$.

With the new conditions, it is direct to check them for covariance priors by change of variable, as is done in the proof of Lemma \ref{lem:wishart} and Lemma \ref{lem:gaussprior}. In particular, for the Gaussian prior on covariance matrix, we claim the conclusion of Lemma \ref{lem:gaussprior} holds. We avoid expanding the technical details for the covariance priors in this paper due to the limit of space.

\subsection{Relation to Matrix Estimation under Non-Frobenius Loss}

As we have mentioned in the end of Section \ref{sec:pn}, the current Bayes nonparametric technique for proving posterior contraction rate only covers losses which are at the same order of Kullback-Leiber divergence. It cannot handle other non-intrinsic loss \citep{hoffmann13}. In the Bayes matrix estimation setting, whether we can show the following conclusion
\begin{equation}
\Pi\Big(||\Sigma-\Sigma^*||\leq M\sqrt{\frac{p}{n}}\Big|X^n\Big)=1-o_P(1), \label{eq:futurework}
\end{equation}
for a general non-conjugate prior still remains open. This explains why there is so little literature in this field compared to the growing research using frequentist methods. See, for example, \cite{cai10} and \cite{cai12}.

However, we observe that for the spectral norm loss,
$$||\Sigma-\Sigma^*||\leq 2\sup_{v\in\mathcal{N}}|v^T(\Sigma-\Sigma^*)v|,$$
where $\mathcal{N}$ is a subset of $S^{p-1}$ with cardinality bound $\log|\mathcal{N}|\leq cp$ for some $c>0$. The BvM result we establish for the functional $v^T\Sigma v$ indicates that for each $v$, the posterior distribution of $|v^T(\Sigma-\Sigma^*)v|$ is at the order of $n^{-1/2}$. Therefore, heuristically, $2\sup_{v\in\mathcal{N}}|v^T(\Sigma-\Sigma^*)v|$ should be at the order of $\frac{\sqrt{\log|\mathcal{N}|}}{\sqrt{n}}$, which is $\sqrt{p/n}$. We will use this intuition as a key idea in our future research project on the topic of Bayes matrix estimation.

Once (\ref{eq:futurework}) is established for a non-conjugate prior (e.g. Gaussian prior in this paper), then we may use (\ref{eq:futurework}) to weaken the conditions in the third column of the table in Section \ref{sec:pn}. In fact, most entries of that column can be weakened to match the conditions in the second column for a conjugate prior. As argued in Section \ref{sec:pn}, (\ref{eq:futurework}) directly implies the concentration $\Pi(A_n|X^n)=1-o_P(1)$, which is Condition 1 in both Theorem \ref{thm:main1} and Theorem \ref{thm:main2}.

\section{Proofs} \label{sec:proof}

\subsection{Proof of Theorem \ref{thm:main1} \& Theorem \ref{thm:main2}}

Before stating the proofs, we first display some lemmas.
The following lemma is Lemma 2 in \cite{castillo13b}. It allows us to prove BvM results through convergence of moment generating functions.

\begin{lemma} \label{lem:laplace}
Consider the random probability measure $P_n$ and a fixed probability measure $P$. Suppose for any real $t$, the Laplace transformation $\int e^{tx}dP(x)$ is finite, and $\int e^{tx}dP_n(x)\rightarrow \int e^{tx}dP(x)$ in probability. Then, it holds that
$$\sup_{t\in\mathbb{R}}\left|P_n\Big((-\infty,t]\Big)-P\Big((-\infty,t]\Big)\right|=o_P(1).$$
\end{lemma}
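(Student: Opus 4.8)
The plan is to reduce the statement to two classical deterministic facts and then deal with the randomness by a subsequence argument. The two deterministic inputs are: (i) a continuity theorem for Laplace transforms --- if $\mu_n$ and $\mu$ are Borel probabilities on $\mathbb{R}$ with $\int e^{tx}\,d\mu(x)<\infty$ for every $t$ and $\int e^{tx}\,d\mu_n(x)\to\int e^{tx}\,d\mu(x)$ for all $t$ in some countable dense set, then $\mu_n\Rightarrow\mu$; and (ii) P\'olya's theorem --- if $\mu_n\Rightarrow\mu$ and the c.d.f.\ $F$ of $\mu$ is continuous (which is the case in all our applications, where $\mu$ is Gaussian), then $\sup_t|F_{\mu_n}(t)-F(t)|\to 0$. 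Granting (i)--(ii), I would finish with the \emph{subsequence criterion}: a sequence of nonnegative random variables tends to $0$ in probability iff every subsequence admits a further subsequence tending to $0$ almost surely.

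For (i), I would argue as follows. Writing $M_n(t):=\int e^{tx}\,d\mu_n(x)$, convergence along the dense set makes $M_n(t)$ bounded in $n$; applying this at $t=\pm 1$ together with the Markov-type bound $\mu_n(|x|>K)\le e^{-K}\big(M_n(1)+M_n(-1)\big)$ yields uniform tightness of $\{\mu_n\}$. By Prokhorov, along any subsequence there is a further subsequence with $\mu_{n_k}\Rightarrow\nu$ for some probability measure $\nu$. Uniform integrability of $\{e^{tX_{n_k}}\}$ --- which follows from the $L^{t'/t}$-boundedness implied by convergence of $M_{n_k}(t')$ for a slightly larger $t'$ in the dense set --- upgrades weak convergence to $\int e^{tx}\,d\mu_{n_k}(x)\to\int e^{tx}\,d\nu(x)$, so $\int e^{tx}\,d\nu(x)=\int e^{tx}\,d\mu(x)$ on a set with an accumulation point; since $z\mapsto\int e^{zx}\,d\mu(x)$ is analytic on a complex strip about $\mathbb{R}$, this forces $\nu=\mu$. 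Hence every subsequence has a sub-subsequence converging weakly to $\mu$, i.e.\ $\mu_n\Rightarrow\mu$, and then (ii) gives the uniform c.d.f.\ convergence.

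To handle the randomness, fix a countable dense set $D\subset\mathbb{R}$. By hypothesis $\int e^{tx}\,dP_n(x)\to\int e^{tx}\,dP(x)$ in probability for each $t\in D$, so along any prescribed subsequence I can extract, by a diagonal argument over $D$, a further subsequence on which this convergence holds almost surely simultaneously for all $t\in D$. On that almost-sure event, facts (i)--(ii) applied to the realized measures give $\sup_t|F_{P_n}(t)-F_P(t)|\to 0$ along the sub-subsequence. Since $F_{P_n}-F_P$ is right-continuous, its supremum over $\mathbb{R}$ equals its supremum over $\mathbb{Q}$, so $\sup_t|F_{P_n}(t)-F_P(t)|$ is a genuine random variable; the subsequence criterion then gives $\sup_t|F_{P_n}(t)-F_P(t)|=o_P(1)$, which is the claim.

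I expect the part needing the most care to be the interface between the ``in probability'' hypothesis and the essentially deterministic analytic machinery: one must make sure a single almost-sure event controls all of $D$ at once, and that everything is measurable. The analytic continuity-theorem input (tightness from tail bounds, uniform integrability, identification of the subsequential weak limit via analyticity / moment-determinacy) is classical and routine; the only structural assumption beyond finiteness of the Laplace transform that is genuinely needed is continuity of the limiting c.d.f., which is automatic here because the target law is Gaussian.
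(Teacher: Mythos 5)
The paper does not prove this lemma --- it is quoted verbatim as ``Lemma 2 in \cite{castillo13b}'' --- so there is no in-paper argument to compare against. Your reconstruction is correct and is essentially the standard route one would take: tightness from exponential Markov bounds $\mu_n(|x|>K)\le e^{-K}(M_n(1)+M_n(-1))$, Prokhorov plus uniform integrability (obtained by passing to a slightly larger exponent $t'\in D$) to identify subsequential weak limits via analyticity of the Laplace transform, P\'olya's theorem for the uniformity over $t$, and the subsequence criterion combined with a diagonal extraction over the countable dense set $D$ to pass from convergence in probability at each fixed $t$ to a single almost-sure event. Your observation about measurability of the supremum (reducing to a supremum over $\mathbb{Q}$ by right-continuity of c.d.f.'s) is also a necessary step often left implicit.

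One point deserves emphasis: you correctly flag that P\'olya's theorem, and hence the uniform-in-$t$ conclusion, requires the limiting c.d.f.\ $F_P$ to be \emph{continuous}, a hypothesis absent from the lemma as stated. Without it the claim is false even deterministically: take $P_n=\delta_{1/n}$ and $P=\delta_0$; then $\int e^{tx}\,dP_n(x)=e^{t/n}\to 1=\int e^{tx}\,dP(x)$ for every $t$, yet $\sup_t|F_{P_n}(t)-F_P(t)|\equiv 1$. In every application in this paper $P=N(0,1)$, so the omission is harmless, but your instinct to surface this as an unstated structural assumption is correct, and the proof would be incomplete without it.
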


The next lemma is an expansion of the Gaussian likelihood.
\begin{lemma} \label{lem:likexpansion}
Assume $||\Sigma^*||\vee||\Omega^*||= O(1)$.
For any symmetric matrix $\Phi$  and the perturbed precision matrix
$$\Omega_t=\Omega+\frac{\sqrt{2}t}{\sqrt{n}\left\|\Sigma^{*1/2}\Phi\Sigma^{*1/2}\right\|_F}\Phi,$$
the following equation holds for all $\Omega\in A_n$ with $A_n$ satisfying (\ref{eq:covsubset}) or (\ref{eq:precisionsubset}).
\begin{equation}
l_n(\Omega_t)-l_n(\Omega)=\frac{t\sqrt{n}}{\sqrt{2}\left\|\Sigma^{*1/2}\Phi\Sigma^{*1/2}\right\|_F}\text{tr}\Big((\Sigma-\hat{\Sigma})\Phi\Big)-\frac{1}{2}t^2\frac{\left\|\Sigma^{1/2}\Phi\Sigma^{1/2}\right\|_F^2}{\left\|\Sigma^{*1/2}\Phi\Sigma^{*1/2}\right\|_F^2}-\frac{n}{2}\sum_{j=1}^p\frac{(h_j-s)^2}{(1-s)^3}ds, \label{eq:likexpansion}
\end{equation}
where $\{h_j\}_{j=1}^p$ are eigenvalues of $\Sigma^{1/2}(\Omega-\Omega_t)\Sigma^{1/2}$.
\end{lemma}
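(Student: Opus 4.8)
The plan is to compute $l_n(\Omega_t) - l_n(\Omega)$ directly from the definition $l_n(\Omega) = \frac{n}{2}\log\det(\Omega) - \frac{n}{2}\text{tr}(\Omega\hat{\Sigma})$, and separate it into a linear term in $\Omega_t - \Omega$, a quadratic term, and a remainder. The trace part is exactly linear: $-\frac{n}{2}\text{tr}\big((\Omega_t - \Omega)\hat{\Sigma}\big) = -\frac{n}{2}\cdot\frac{\sqrt{2}t}{\sqrt{n}\|\Sigma^{*1/2}\Phi\Sigma^{*1/2}\|_F}\text{tr}(\Phi\hat{\Sigma})$, so no expansion is needed there. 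The work is all in the log-determinant part. First I would write $\log\det(\Omega_t) - \log\det(\Omega) = \log\det\big(I + \Omega^{-1}(\Omega_t - \Omega)\big)$. Conjugating by $\Sigma^{1/2} = \Omega^{-1/2}$ (note $\Sigma = \Omega^{-1}$ here since the prior is on $\Omega$ and $\Sigma$ denotes $\Omega^{-1}$ throughout), this equals $\log\det\big(I + \Sigma^{1/2}(\Omega_t - \Omega)\Sigma^{1/2}\big) = \sum_{j=1}^p \log(1 + h_j)$, where $h_j$ are the eigenvalues of $\Sigma^{1/2}(\Omega_t - \Omega)\Sigma^{1/2} = -\Sigma^{1/2}(\Omega - \Omega_t)\Sigma^{1/2}$. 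Wait — I should be careful with the sign: the $h_j$ in the statement are eigenvalues of $\Sigma^{1/2}(\Omega - \Omega_t)\Sigma^{1/2} = -\Sigma^{1/2}(\Omega_t-\Omega)\Sigma^{1/2}$, so $\log\det(\Omega_t)-\log\det(\Omega) = \sum_j \log(1 - h_j)$.

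Next I would Taylor-expand $\log(1-h_j)$ with an exact integral remainder: $\log(1-h_j) = -h_j - \tfrac12 h_j^2 - \int_0^{h_j}\frac{(h_j - s)^2}{(1-s)^3}\,ds$ (the second-order Taylor expansion of $-\log(1-x)$ at $0$ with Lagrange/integral remainder, since $\frac{d^3}{dx^3}[-\log(1-x)] = \frac{2}{(1-x)^3}$). Summing over $j$: $\sum_j h_j = \text{tr}\big(\Sigma^{1/2}(\Omega-\Omega_t)\Sigma^{1/2}\big) = \text{tr}\big((\Omega-\Omega_t)\Sigma\big) = -\frac{\sqrt{2}t}{\sqrt{n}\|\Sigma^{*1/2}\Phi\Sigma^{*1/2}\|_F}\text{tr}(\Phi\Sigma)$, and $\sum_j h_j^2 = \|\Sigma^{1/2}(\Omega-\Omega_t)\Sigma^{1/2}\|_F^2 = \frac{2t^2}{n\|\Sigma^{*1/2}\Phi\Sigma^{*1/2}\|_F^2}\|\Sigma^{1/2}\Phi\Sigma^{1/2}\|_F^2$. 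Multiplying by $\frac{n}{2}$ and combining with the trace term, the first-order pieces assemble into $\frac{t\sqrt{n}}{\sqrt2\,\|\Sigma^{*1/2}\Phi\Sigma^{*1/2}\|_F}\big(\text{tr}(\Phi\Sigma) - \text{tr}(\Phi\hat\Sigma)\big) = \frac{t\sqrt n}{\sqrt2\,\|\Sigma^{*1/2}\Phi\Sigma^{*1/2}\|_F}\text{tr}\big((\Sigma-\hat\Sigma)\Phi\big)$, the quadratic piece gives $-\frac12 t^2 \frac{\|\Sigma^{1/2}\Phi\Sigma^{1/2}\|_F^2}{\|\Sigma^{*1/2}\Phi\Sigma^{*1/2}\|_F^2}$, and the remainders sum to $-\frac{n}{2}\sum_{j=1}^p\int_0^{h_j}\frac{(h_j-s)^2}{(1-s)^3}\,ds$, matching the claimed identity (the integral sign appears to have been dropped in the typeset statement).

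The only genuine point needing care — and the main obstacle — is justifying that the Taylor expansion with this remainder form is valid, i.e. that each $h_j < 1$ (equivalently $1 - s > 0$ along the integration path) so that the integrand is finite. This is where the hypothesis $\Omega \in A_n$ enters: on $A_n$, $\|\Sigma - \Sigma^*\|$ is small, hence $\|\Sigma\| = \|\Omega^{-1}\|$ is bounded and $\|\Omega\|$ is bounded (using $\|\Sigma^*\|\vee\|\Omega^*\| = O(1)$), so $\|h\|_{\max} \le \|\Sigma^{1/2}(\Omega-\Omega_t)\Sigma^{1/2}\| \le \|\Sigma\|\,\|\Omega-\Omega_t\| = \|\Sigma\|\cdot\frac{\sqrt2|t|}{\sqrt n\,\|\Sigma^{*1/2}\Phi\Sigma^{*1/2}\|_F}\|\Phi\|$. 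I would need a lower bound on $\|\Sigma^{*1/2}\Phi\Sigma^{*1/2}\|_F$ relative to $\|\Phi\|$ — since $\Sigma^*$ and $\Omega^*$ have bounded norms, $\|\Sigma^{*1/2}\Phi\Sigma^{*1/2}\|_F \gtrsim \|\Phi\|_F \ge \|\Phi\|$, giving $\max_j |h_j| = O(1/\sqrt n) = o(1)$, uniformly over $A_n$. This guarantees $\max_j h_j < 1$ (indeed $< 1/2$) for $n$ large, so the integral remainder is well-defined and the expansion is exact. The rest is bookkeeping: the trace identity $\text{tr}(AB)=\text{tr}(BA)$, invariance of the Frobenius norm and trace under the conjugation $M \mapsto \Sigma^{1/2} M \Sigma^{1/2}$ composed with reading off eigenvalues, and collecting terms.
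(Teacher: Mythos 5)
Your proposal is correct and matches the paper's argument: both expand the log-determinant via the eigenvalues $h_j$ of $\Sigma^{1/2}(\Omega-\Omega_t)\Sigma^{1/2}$ and apply the exact second-order Taylor expansion of $\log(1-h_j)$ with integral remainder, and both verify validity of the expansion on $A_n$ (you via $\max_j|h_j|=O(n^{-1/2})$, the paper via the equivalent statement that $\Omega_t$ stays positive definite). You are also right that the typeset remainder in the lemma statement is missing its integral sign $\int_0^{h_j}$, which reappears in the paper's proof.
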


The following lemma is Proposition D.1 in the supplementary material of \cite{ma13}, which is rooted in \cite{davidson01}.
\begin{lemma} \label{lem:wishartconcen}
Let $Y_l\sim N(0,I_{p\times p})$. Then, for any $t>0$,
$$P_{I}^n\Bigg(\left\|\frac{1}{n}\sum_{l=1}^nY_lY_l^T-I\right\|\leq 2\Big(\sqrt{\frac{p}{n}}+t\Big)+\Big(\sqrt{\frac{p}{n}}+t\Big)^2\Bigg)\geq 1-2e^{-nt^2/2}.$$
\end{lemma}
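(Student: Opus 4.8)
The plan is to read the left-hand quantity as a function of the extreme singular values of a rectangular Gaussian matrix, and then combine Gaussian Lipschitz concentration with the classical Gaussian comparison bounds on those singular values. Stack the samples into $Y=(Y_1,\dots,Y_n)^T\in\mathbb{R}^{n\times p}$, so that $\frac1n\sum_{l=1}^nY_lY_l^T=\frac1n Y^TY$. Writing $s_{\min}(Y)\le s_{\max}(Y)$ for the smallest and largest singular values of $Y$, the eigenvalues of $\frac1n Y^TY$ all lie in $\bigl[s_{\min}(Y)^2/n,\,s_{\max}(Y)^2/n\bigr]$, and therefore
$$\left\|\tfrac1n Y^TY-I\right\|=\max\left\{\left|\tfrac{s_{\max}(Y)^2}{n}-1\right|,\;\left|\tfrac{s_{\min}(Y)^2}{n}-1\right|\right\}.$$
So it suffices to show that, on an event of probability at least $1-2e^{-nt^2/2}$, both normalized singular values $s_{\max}(Y)/\sqrt n$ and $s_{\min}(Y)/\sqrt n$ lie in $[1-\varepsilon,1+\varepsilon]$ with $\varepsilon=\sqrt{p/n}+t$: indeed $|\sigma^2-1|=|\sigma-1|\,|\sigma+1|\le\varepsilon(2+\varepsilon)=2\varepsilon+\varepsilon^2$ for every $\sigma\in[1-\varepsilon,1+\varepsilon]$, which is exactly the asserted bound.

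For the two-sided control of the singular values I would use two standard ingredients. First, both $Y\mapsto s_{\max}(Y)$ and $Y\mapsto s_{\min}(Y)$ are $1$-Lipschitz in the Frobenius norm on $\mathbb{R}^{n\times p}$ (Weyl-type perturbation bound for singular values), so the Gaussian concentration inequality for Lipschitz functions gives, for every $u>0$,
$$\Prob\big(s_{\max}(Y)\ge\Expect s_{\max}(Y)+u\big)\le e^{-u^2/2},\qquad \Prob\big(s_{\min}(Y)\le\Expect s_{\min}(Y)-u\big)\le e^{-u^2/2}.$$
Second, the Gordon/Chevet (Slepian-type) comparison inequalities for Gaussian processes give $\Expect s_{\max}(Y)\le\sqrt n+\sqrt p$ and $\Expect s_{\min}(Y)\ge\sqrt n-\sqrt p$ when $n\ge p$. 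Taking $u=t\sqrt n$ and a union bound, on an event of probability at least $1-2e^{-nt^2/2}$ we get $\sqrt n-\sqrt p-t\sqrt n\le s_{\min}(Y)\le s_{\max}(Y)\le\sqrt n+\sqrt p+t\sqrt n$; dividing through by $\sqrt n$ places both normalized singular values in $[1-\varepsilon,1+\varepsilon]$, and the reduction of the first paragraph concludes. (When $p>n$ one has $s_{\min}(Y)=0$, but then $\varepsilon=\sqrt{p/n}+t>1$, so $2\varepsilon+\varepsilon^2>2$ and the claimed bound follows from the upper bound on $s_{\max}(Y)$ alone.)

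The only step that is not pure bookkeeping is the control of the expected extreme singular values, $\Expect s_{\max}(Y)\le\sqrt n+\sqrt p$ and $\Expect s_{\min}(Y)\ge\sqrt n-\sqrt p$; this is exactly the Davidson--Szarek / Gordon comparison estimate, and is why \cite{davidson01} is invoked here. Everything else — the Lipschitz property of singular values, the Gaussian concentration inequality, a union bound, and the elementary inequality $|\sigma^2-1|\le|\sigma-1|\,|\sigma+1|$ — is routine. Since the inequality is quoted verbatim as Proposition D.1 of \cite{ma13}, the cleanest option in the paper is simply to cite it; the sketch above is how one reconstructs it from first principles.
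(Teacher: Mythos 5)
Your reconstruction is correct: reducing to the extreme singular values of the $n\times p$ Gaussian matrix, bounding $\Expect s_{\max}\le\sqrt n+\sqrt p$ and $\Expect s_{\min}\ge\sqrt n-\sqrt p$ by Gordon's comparison, and applying Gaussian concentration for $1$-Lipschitz functions with $u=t\sqrt n$ is exactly the Davidson--Szarek argument that the paper's citation chain (Ma 2013, Proposition D.1, rooted in Davidson and Szarek 2001) rests on. The paper itself offers no proof and simply cites, so your sketch reproduces the underlying argument faithfully.
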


\begin{proof}[Proof of Theorem \ref{thm:main1}]
We are going to use Lemma \ref{lem:laplace} and establish the convergence of moment generating function.
We claim that
\begin{equation}
l_n(\Omega_t)-l_n(\Omega)=\frac{t\sqrt{n}}{\sqrt{2}\left\|\Sigma^{*1/2}\Phi\Sigma^{*1/2}\right\|_F}\Big(\phi(\Sigma)-\phi(\hat{\Sigma})\Big)-\frac{1}{2}t^2+o_P(1), \label{eq:covexpansion1}
\end{equation}
uniformly over $A_n$. The derivation of (\ref{eq:covexpansion1}) will be given at the end of the proof.  Define the posterior distribution conditioning on $A_n$ by
$$\Pi^{A_n}(B|X^n)=\frac{\Pi(A_n\cap B|X^n)}{\Pi(A_n|X^n)},\quad\text{for any }B.$$
It is easy to see
\begin{equation}
\sup_B\left|\Pi^{A_n}(B|X^n)-\Pi(B|X^n)\right|=o_P(1),\label{eq:condition}
\end{equation}
by the first condition of Theorem \ref{thm:main1}. Now we calculation the moment generating function of $\frac{\sqrt{n}\Big(\phi(\Sigma)-\phi(\hat{\Sigma})\Big)}{\sqrt{2}\left\|\Sigma^{*1/2}\Phi\Sigma^{*1/2}\right\|_F}$ under the distribution $\Pi^{A_n}(\cdot|X^n)$, which is
\begin{eqnarray*}
&& \int \exp\Bigg(\frac{t\sqrt{n}\Big(\phi(\Sigma)-\phi(\hat{\Sigma})\Big)}{\sqrt{2}\left\|\Sigma^{*1/2}\Phi\Sigma^{*1/2}\right\|_F}\Bigg)d\Pi^{A_n}(\Omega|X^n) \\
&=& \frac{\int_{A_n}\exp\Big(\frac{t\sqrt{n}\Big(\phi(\Sigma)-\phi(\hat{\Sigma})\Big)}{\sqrt{2}\left\|\Sigma^{*1/2}\Phi\Sigma^{*1/2}\right\|_F}+l_n(\Omega)\Big)d\Pi(\Omega)}{\int_{A_n}\exp\Big(l_n(\Omega)\Big)d\Pi(\Omega)} \\
&=& \Big(1+o_P(1)\Big)\exp\big(t^2/2\big)\frac{\int_{A_n}\exp\Big(l_n(\Omega_t)\Big)d\Pi(\Omega)}{\int_{A_n}\exp\Big(l_n(\Omega)\Big)d\Pi(\Omega)} \\
&=& \Big(1+o_P(1)\Big)\exp\big(t^2/2\big),
\end{eqnarray*}
where the second equality is because of (\ref{eq:covexpansion1}) and the last inequality is because of the second condition of Theorem \ref{thm:main1}. We have shown that the moment generating function of $\frac{\sqrt{n}\Big(\phi(\Sigma)-\phi(\hat{\Sigma})\Big)}{\sqrt{2}\left\|\Sigma^{*1/2}\Phi\Sigma^{*1/2}\right\|_F}$ under the distribution $\Pi^{A_n}(\cdot|X^n)$ converges to the moment generating function of $N\big(0,1\big)$ in probability. By Lemma \ref{lem:laplace} and (\ref{eq:condition}), we have established the desired result.

To finish the proof, let us derive (\ref{eq:covexpansion1}).
 Using the result of the likelihood expansion in Lemma \ref{lem:likexpansion}, we will first show
\begin{equation}
l_n(\Omega_t)-l_n(\Omega)=\frac{t\sqrt{n}}{\sqrt{2}\left\|\Sigma^{*1/2}\Phi\Sigma^{*1/2}\right\|_F}\text{tr}\Big((\Sigma-\hat{\Sigma})\Phi\Big)-\frac{t^2}{2}+o(1), \label{eq:covexpansion}
\end{equation}
where the $o(1)$ above is uniform on $A_n$. Compare (\ref{eq:covexpansion}) with (\ref{eq:likexpansion}) in Lemma \ref{lem:likexpansion}, it is sufficient to bound
$$R_1=\left|\frac{\left\|\Sigma^{1/2}\Phi\Sigma^{1/2}\right\|_F^2}{\left\|\Sigma^{*1/2}\Phi\Sigma^{*1/2}\right\|_F^2}-1\right|\quad\text{and}\quad R_2=\left|\frac{n}{2}\sum_{j=1}^p\frac{(h_j-s)^2}{(1-s)^3}ds\right|.$$
We use the following argument to bound $R_1$ on $A_n$.
\begin{eqnarray}
\nonumber && \left|||\Sigma^{1/2}\Phi\Sigma^{1/2}||_F^2-||\Sigma^{1/2*}\Phi\Sigma^{*1/2}||_F^2\right| \\
\nonumber &=& \left|\text{tr}(\Sigma\Phi\Sigma\Phi)-\text{tr}(\Sigma^*\Phi\Sigma^*\Phi)\right| \\
\nonumber &\leq& \left|\text{tr}\Big(\Sigma\Phi(\Sigma-\Sigma^*)\Phi\Big)\right| + \left|\text{tr}\Big((\Sigma-\Sigma^*)\Phi\Sigma^*\Phi\Big)\right| \\
\nonumber &=& \left|\text{tr}\Big(\Sigma^{1/2}\Phi\Sigma\Phi\Sigma^{1/2}(I-\Sigma^{-1/2}\Sigma^*\Sigma^{-1/2})\Big)\right|  \\
\nonumber && + \left|\text{tr}\Big(\Sigma^{*1/2}\Phi\Sigma^*\Phi\Sigma^{*1/2}(\Sigma^{*-1/2}\Sigma\Sigma^{*-1/2}-I)\Big)\right| \\
\label{eq:VNtrace} &\leq& \text{tr}\Big(\Sigma^{1/2}\Phi\Sigma\Phi\Sigma^{1/2}\Big)||I-\Sigma^{-1/2}\Sigma^*\Sigma^{-1/2}|| \\
\nonumber && + \text{tr}\Big(\Sigma^{*1/2}\Phi\Sigma^*\Phi\Sigma^{*1/2}\Big)||I-\Sigma^{*-1/2}\Sigma\Sigma^{*-1/2}|| \\
\nonumber &=&  ||\Sigma^{1/2}\Phi\Sigma^{1/2}||_F^2||I-\Sigma^{-1/2}\Sigma^*\Sigma^{-1/2}||+||\Sigma^{1/2*}\Phi\Sigma^{*1/2}||_F^2||I-\Sigma^{*-1/2}\Sigma\Sigma^{*-1/2}||  \\
\label{eq:byAn} &\leq&  o(1)||\Sigma^{1/2}\Phi\Sigma^{1/2}||_F^2+o(1)||\Sigma^{1/2*}\Phi\Sigma^{*1/2}||_F^2,
\end{eqnarray}
where the inequality (\ref{eq:VNtrace}) is by von Neumann's trace inequality and the inequality (\ref{eq:byAn}) is due to the fact that  $||\Sigma-\Sigma^*||= o(1)$ on $A_n$. Rearranging the above argument, we get $R_1=o(1)$ uniformly on $A_n$.
To bound $R_2$, we first use Weyl's theorem to get
$$\max_{1\leq j\leq p}|h_j|\leq \frac{\sqrt{2}t}{\sqrt{n}}\frac{\left\|\Sigma^{1/2}\Phi\Sigma^{1/2}\right\|}{\left\|\Sigma^{*1/2}\Phi\Sigma^{*1/2}\right\|_F}= O(n^{-1/2}),$$
on $A_n$.
Thus, on $A_n$, we have
\begin{eqnarray*}
R_2 &\leq& Cn\left|\sum_{j=1}^p\int_0^{h_j}(h_j-s)^2ds\right| \\
 &\leq& Cn\sum_{j=1}^p|h_j|^3 \\
 &\leq& Cn\max_{1\leq j\leq p}|h_j| \sum_{j=1}^p|h_j|^2 \\
 &\leq& Cn \times O(n^{-1/2}) \times O\Bigg(\frac{\left\|\Sigma^{1/2}\Phi\Sigma^{1/2}\right\|_F^2}{n\left\|\Sigma^{*1/2}\Phi\Sigma^{*1/2}\right\|_F^2}\Bigg) \\
 &=& O(n^{-1/2}).
\end{eqnarray*}
Hence, (\ref{eq:covexpansion}) is proved. Together with the approximate linearity condition (\ref{eq:approxlincov}) of the functional $\phi(\Sigma)$, (\ref{eq:covexpansion1}) is proved. Thus, the proof is complete.
\end{proof}

\begin{proof}[Proof of Theorem \ref{thm:main2}]
We follow the reasoning in the proof of Theorem \ref{thm:main1} and omit some similar steps. Define
$$\Phi=-\Omega^*\Psi\Omega^*.$$
It is easy to see that
$$\left\|\Omega^{*1/2}\Psi\Omega^{*1/2}\right\|_F=\left\|\Sigma^{*1/2}\Phi\Sigma^{*1/2}\right\|_F.$$
Then by Lemma \ref{lem:likexpansion} and the similar arguments in the proof of Theorem \ref{thm:main1}, we obtain
$$l_n(\Omega_t)-l_n(\Omega)=\frac{t\sqrt{n}\text{tr}\Big((\Sigma-\hat{\Sigma})\Phi\Big)}{\sqrt{2}\left\|\Omega^{*1/2}\Psi\Omega^{*1/2}\right\|_F}-\frac{1}{2}t^2+o(1),$$
uniformly on $A_n$, which is analogous to (\ref{eq:covexpansion}). 
We are going to approximate $\sqrt{n}\text{tr}\Big((\Sigma-\hat{\Sigma})\Phi\Big)$ by $\sqrt{n}\Big(\psi(\Omega)-\psi(\hat{\Sigma}^{-1})\Big)$ on $A_n$. Define $\hat{\Omega}=\hat{\Sigma}^{-1}$. The assumption $rp^2/n=o(1)$ implies that $p/n=o(1)$. Thus, $\hat{\Omega}$ is well defined. By Lemma \ref{lem:wishartconcen},
\begin{equation}
||\hat{\Sigma}-\Sigma^*||= O\Big(\sqrt{\frac{p}{n}}\Big),\quad ||\hat{\Omega}-\Omega^*||= O\Big(\sqrt{\frac{p}{n}}\Big). \label{eq:samplecovconverge}
\end{equation}
Using notation $V=2\left\|\Omega^{*1/2}\Psi\Omega^{*1/2}\right\|_F^2$, the approximation error on $A_n$ is
\begin{eqnarray*}
&& \sqrt{n}V^{-1/2}\left|\psi(\Omega)-\psi(\hat{\Omega})-\text{tr}\Big((\Sigma-\hat{\Sigma})\Phi\Big)\right| \\
&=& \sqrt{n}V^{-1/2}\left|\text{tr}\Big((\Omega-\hat{\Omega})\Psi\Big)+\text{tr}\Big((\Sigma-\hat{\Sigma})\Omega^*\Psi\Omega^*\Big)\right| \\
&=& \sqrt{n}V^{-1/2}\left|\text{tr}\Big((\Sigma-\hat{\Sigma})(\Omega^*\Psi\Omega^*-\Omega\Psi\hat{\Omega})\Big)\right| \\
&\leq& \sqrt{n}V^{-1/2}\left|\text{tr}\Big((\Sigma-\hat{\Sigma})\Omega^*\Psi(\Omega^*-\hat{\Omega})\Big)\right| + \sqrt{n}V^{-1/2}\left|\text{tr}\Big((\Sigma-\hat{\Sigma})(\Omega^*-\Omega)\Psi\hat{\Omega}\Big)\right|.
\end{eqnarray*}
Let the singular value decomposition of $\Psi$ be $\Psi=\sum_{l=1}^r d_l q_lq_l^T$. Then,
\begin{eqnarray*}
&& \left|\text{tr}\Big((\Sigma-\hat{\Sigma})\Omega^*\Psi(\Omega^*-\hat{\Omega})\Big)\right| \\
&\leq& \sum_{l=1}^r |d_l| \left|\text{tr}\Big((\Sigma-\hat{\Sigma})\Omega^*q_lq_l^T(\Omega^*-\hat{\Omega})\Big)\right| \\
&\leq&  \sum_{l=1}^r |d_l| ||(\Sigma-\hat{\Sigma})\Omega^*q_l|| ||q_l^T(\Omega^*-\hat{\Omega})\Big)|| \\
&\leq& O_P\Bigg(||\hat{\Sigma}-\Sigma||||\hat{\Sigma}-\Sigma^*||\sum_{l=1}^r|d_l|\Bigg).
\end{eqnarray*}
Similarly, 
\begin{eqnarray*}
&& \left|\text{tr}\Big((\Sigma-\hat{\Sigma})(\Omega^*-\Omega)\Psi\hat{\Omega}\Big)\right| \\
&\leq& O_P\Bigg(||\hat{\Sigma}-\Sigma||||\hat{\Sigma}-\Sigma^*||\sum_{l=1}^r|d_l|\Bigg).
\end{eqnarray*}
Since
$$V^{-1/2}\leq C||\Psi||_F^{-1}=\frac{C}{\sqrt{\sum_{l=1}^rd_l^2}},$$
we have
\begin{eqnarray*}
&& \sqrt{n}V^{-1/2}\left|\psi(\Omega)-\psi(\hat{\Omega})-\text{tr}\Big((\Sigma-\hat{\Sigma})\Phi\Big)\right| \\
&\leq& O_P\Big(\sqrt{nr}||\hat{\Sigma}-\Sigma||||\hat{\Sigma}-\Sigma^*||\Big) \\
&\leq& O_P\Big(\sqrt{nr}||\hat{\Sigma}-\Sigma^*||^2+\sqrt{nr}||\hat{\Sigma}-\Sigma^*||||\Sigma-\Sigma^*||\Big) \\
&\leq& O_P\Bigg(\sqrt{\frac{rp^2}{n}}+\sqrt{rp}||\Sigma-\Sigma^*||\Bigg) \\
&=& o_P(1)
\end{eqnarray*}
uniformly on $A_n$, where we have used (\ref{eq:samplecovconverge}) in the second last inequality above. Hence,
\begin{equation}
l_n(\Omega_t)-l_n(\Omega)=\frac{t\sqrt{n}\Big(\psi(\Omega)-\psi(\hat{\Sigma}^{-1})\Big)}{\sqrt{2}\left\|\Omega^{*1/2}\Psi\Omega^{*1/2}\right\|_F}-\frac{1}{2}t^2+o_P(1),
\end{equation}
uniformly on $A_n$. The remaining part of the proof are the same as the corresponding steps in the proof of Theorem \ref{thm:main1}. Thus, the proof is complete.
\end{proof}

\subsection{Proof of Lemma \ref{lem:wishart}} \label{sec:proofwishart}

\begin{proof}[Proof of Lemma \ref{lem:wishart}]
The proof has two parts. In the first part, we establish the first condition of the two theorems by proving a posterior contraction rate. In the second part, we establish the second condition of the two theorems by showing that a change of variable is negligible under Wishart density.

\noindent\textbf{Part I. }
The posterior distribution $\Omega|X^n$ is $\mathcal{W}_p\Big((n\hat{\Sigma}+I)^{-1},n+p+b-1\Big)$. Conditioning on $X^n$, let $Z_l|X^n\sim P_{(n\hat{\Sigma}+I)^{-1}}$ i.i.d. for each $l=1,2,...,n+p+b-1$. Then the posterior distribution of $\Omega$ is identical to the distribution of $\sum_{l=1}^{n+p+b-1}Z_lZ_l^T\Big|X^n$. Define the set 
$$G_n=\left\{||\Omega^{*1/2}\hat{\Sigma}\Omega^{*1/2}-I||\leq C\sqrt{\frac{p}{n}}\right\},$$
and we have $P_{\Sigma^*}^n(G_n^c)\leq\exp\big(-cp\big)$ by Lemma \ref{lem:wishartconcen}, for some $c,C>0$. The event $G_n$ implies $||\hat{\Sigma}-\Sigma^*||\leq C||\Sigma^*||\sqrt{\frac{p}{n}}$, by which we can deduce
\begin{eqnarray*}
 \left\|\Big(\hat{\Sigma}+\frac{1}{n}I\Big)^{-1}\right\| &=& \frac{1}{\lambda_{\min}\Big(\hat{\Sigma}+\frac{1}{n}I\Big)} \\
&\leq& \frac{1}{\lambda_{\min}(\Sigma^*)-\frac{1}{n}-||\hat{\Sigma}-\Sigma^*||} \\
&\leq& 2||\Omega^*||.
\end{eqnarray*}
Using the obtained results, we can bound the deviation of the sample covariance by
\begin{eqnarray*}
&& \left\|(n+p+b-1)(n\hat{\Sigma}+I)^{-1}-\Omega^*\right\| \\
&\leq& \left\|(n+p+b-1)(n\hat{\Sigma}+I)^{-1}\Bigg(\frac{n}{n+b+p-1}(\hat{\Sigma}-\Sigma^*)-\frac{b+p-1}{n+b+p-1}\Sigma^*+\frac{1}{n+p+b-1}I\Bigg)\Omega^*\right\| \\
&\leq&  \left\|\Big(\hat{\Sigma}+\frac{1}{n}I\Big)^{-1}\right\|||(\hat{\Sigma}-\Sigma^*)\Omega^*||+||(p+b-1)(n\hat{\Sigma}+I)^{-1}||+||(n\hat{\Sigma}+I)^{-1}\Omega^*|| \\
&\leq& 2C||\Sigma^*||^{1/2}||\Omega^*||^{3/2}\sqrt{\frac{p}{n}}+\frac{2(p+b-1)}{n}||\Omega^*||+\frac{2}{n}||\Omega^*||^2 \\
&\leq& C'||\Sigma^*||^{1/2}||\Omega^*||^{3/2}\sqrt{\frac{p}{n}},
\end{eqnarray*}
and the posterior deviation can be bounded by
\begin{eqnarray*}
&& P_{\Sigma^*}^n\Pi\Big(||\Omega-\Omega^*||>2C'||\Sigma^*||^{1/2}||\Omega^*||^{3/2}\sqrt{\frac{p}{n}}|X^n\Big) \\
&\leq& P_{\Sigma^*}^n\Pi\Big(||\Omega-\Omega^*||>2C'||\Sigma^*||^{1/2}||\Omega^*||^{3/2}\sqrt{\frac{p}{n}}|X^n\Big)\mathbb{I}_{G_n}+P_{\Sigma^*}^n(G_n^c) \\
&\leq& P_{\Sigma^*}^n\Pi\Big(\left\|\Omega-(n+p+b-1)(n\hat{\Sigma}+I)^{-1}\right\|>C'||\Sigma^*||^{1/2}||\Omega^*||^{3/2}\sqrt{\frac{p}{n}}|X^n\Big)\mathbb{I}_{G_n}+P_{\Sigma^*}^n(G_n^c) \\
&=& P_{\Sigma^*}^n\mathbb{P}\Bigg(\left\|\sum_{l=1}^{n+p+b-1}Z_lZ_l^T-(n+p+b-1)(n\hat{\Sigma}+I)^{-1}\right\|>C'||\Sigma^*||^{1/2}||\Omega^*||^{3/2}\sqrt{\frac{p}{n}}\Big|X^n\Bigg)\\
&& +P_{\Sigma^*}^n(G_n^c) \\
&\leq& P_{\Sigma^*}^n\mathbb{P}\Bigg(\left\|(n+p+b-1)^{-1}\sum_{l=1}^{n+p+b-1}W_lW_l^T-I\right\|>\frac{1}{2}\frac{1}{||(\hat{\Sigma}+n^{-1}I)^{-1}||}C'||\Sigma^*||^{1/2}||\Omega^*||^{3/2}\sqrt{\frac{p}{n}}\Big|X^n\Bigg)\\
&& +P_{\Sigma^*}^n(G_n^c) \\
&\leq& \mathbb{P}\Bigg(\left\|(n+p+b-1)^{-1}\sum_{l=1}^{n+p+b-1}W_lW_l^T-I\right\|>\frac{1}{4}C'||\Sigma^*||^{1/2}||\Omega^*||^{1/2}\sqrt{\frac{p}{n}}\Bigg) +P_{\Sigma^*}^n(G_n^c) \\
&\leq& \exp\big(-c'p\big),
\end{eqnarray*}
where we use $W_l\sim N(0,I)$ in the above equations.
In summary, we have proved
\begin{equation}
P_{\Sigma^*}^n\Pi\Bigg(||\Omega-\Omega^*||\leq 2C'||\Sigma^*||^{1/2}||\Omega^*||^{3/2}\sqrt{\frac{p}{n}}\Big|X^n\Bigg)\rightarrow 1, \label{eq:wishartpost}
\end{equation}
which implies
$$P_{\Sigma^*}^n\Pi\Big(||\Sigma-\Sigma^*||\leq M\sqrt{\frac{p}{n}}\Big|X^n\Big)\rightarrow 1,$$
with some sufficiently large $M>0$.
We choose
$$A_n=\left\{||\Sigma-\Sigma^*||\leq M\sqrt{\frac{p}{n}}\right\},$$
so that $\Pi(A_n|X^n)=1-o_P(1)$ is true.
For Theorem \ref{thm:main1}, let $\delta_n=M\sqrt{\frac{p}{n}}$. Then $\delta_n=o(1)$ by assumption,
and $A_n\subset\{||\Sigma-\Sigma^*||\leq \delta_n\}$.
For Theorem \ref{thm:main2}, let $\delta_n=M\sqrt{\frac{r^2p}{n}}$, then we have $\delta_n=o(1)$ and $A_n\subset\{\sqrt{rp}||\Sigma-\Sigma^*||\leq \delta_n\}$.

\noindent\textbf{Part II. }
Note that the proof for this part is the same for both Theorem \ref{thm:main1} and Theorem \ref{thm:main2} by letting $\Phi=-\Omega^*\Psi\Omega^*$. We introduce the notation
$$\tilde{\Phi}=\Big(\sqrt{2}\left\|\Sigma^{*1/2}\Phi\Sigma^{*1/2}\right\|_F\Big)^{-1}\Phi.$$
Now we study the integral $\int_{A_n}\exp\Big(l_n(\Omega_t)\Big)d\Pi(\Omega)$. Let $\mathcal{N}(p,b)$ be the normalizing constant of $\mathcal{W}_p(I,p+b-1)$. We have
\begin{eqnarray*}
&& \int_{A_n}\exp\Big(l_n(\Omega_t)\Big)d\Pi(\Omega) \\
&=& \mathcal{N}^{-1}(p,b)\int_{A_n}\exp\Big(l_n(\Omega+2tn^{-1/2}\tilde{\Phi})+\frac{b-2}{2}\log\det(\Omega)-\frac{1}{2}\text{tr}(\Omega)\Big)d\Omega \\
&=& \mathcal{N}^{-1}(p,b)\int_{A_n+2tn^{-1/2}\tilde{\Phi}}\exp\Big(l_n(\Gamma)+\frac{b-2}{2}\log\det(\Gamma-2tn^{-1/2}\tilde{\Phi})-\frac{1}{2}\text{tr}(\Gamma-2tn^{-1/2}\tilde{\Phi})\Big)d\Gamma \\
&=& \int_{A_n+2tn^{-1/2}\tilde{\Phi}}\exp\Big(l_n(\Omega)\Big)\exp\Big(\frac{b-2}{2}\log\det(I-2tn^{-1/2}\Omega^{-1}\tilde{\Phi})+\frac{1}{2}\text{tr}(2tn^{-1/2}\tilde{\Phi})\Big)d\Pi(\Omega).
\end{eqnarray*}
The above integrals are meaningful because $A_n\cup\big(A_n+2tn^{-1/2}\tilde{\Phi}\big)\subset\{\Omega:\Omega>0, \Omega=\Omega^T\}$. Note that
$$A_n+2tn^{-1/2}\tilde{\Phi}=\left\{\left\|(\Omega-2tn^{-1/2}\tilde{\Phi})^{-1}-\Sigma^*\right\|\leq M\sqrt{\frac{p}{n}}\right\}.$$
Since $||2tn^{-1/2}\tilde{\Phi}||= o\Big(\sqrt{\frac{p}{n}}\Big)$, there exist $M',M''$ arbitrarily close to $M$ such that $M'<M<M''$ and
$$A_n'\subset A_n+2tn^{-1/2}\tilde{\Phi}\subset A_n''$$
for $A_n'=\left\{||\Sigma-\Sigma^*||\leq M'\sqrt{\frac{p}{n}}\right\}$ and $A_n''=\left\{||\Sigma-\Sigma^*||\leq M''\sqrt{\frac{p}{n}}\right\}$. The result (\ref{eq:wishartpost}) implies  $\Pi(A_n'|X^n)=1-o_P(1)$ and $\Pi(A_n''|X^n)=1-o_P(1)$ are also true when $M',M,M''$ are large enough. Let $||\tilde{\Phi}||_N$ be the nuclear norm of $\tilde{\Phi}$, defined as the sum of its absolute eigenvalues. Note that on $A_n''$,
$$||\tilde{\Phi}||_N\leq C\frac{||\Phi||_N}{||\Phi||_F}\leq C\sqrt{p}.$$
Since
\begin{eqnarray*}
&& \sup_{A_n''}\left|\frac{b-2}{2}\log\det(I-2tn^{-1/2}\Omega^{-1}\tilde{\Phi})+\frac{1}{2}\text{tr}(2tn^{-1/2}\tilde{\Phi})\right| \\
&\leq& tn^{-1/2}\sup_{A_n''}\left||b-2|||\Omega^{-1/2}\tilde{\Phi}\Omega^{-1/2}||_N+||\tilde{\Phi}||_N\right| \\
&\leq& O(\sqrt{p/n}) \\
&=& o(1),
\end{eqnarray*}
we have
$$\int_{A_n}\exp\Big(l_n(\Omega_t)\Big)d\Pi(\Omega)\leq \big(1+o(1)\big)\int_{A_n''}\exp\Big(l_n(\Omega)\Big)d\Pi(\Omega),$$
and
$$\int_{A_n}\exp\Big(l_n(\Omega_t)\Big)d\Pi(\Omega)\leq \big(1-o(1)\big)\int_{A_n'}\exp\Big(l_n(\Omega)\Big)d\Pi(\Omega).$$
The facts that $\Pi(A_n'|X^n)=1-o_P(1)$ and $\Pi(A_n''|X^n)=1-o_P(1)$ lead to
$$\frac{\int_{A_n}\exp\Big(l_n(\Omega_t)\Big)d\Pi(\Omega)}{\int_{A_n}\exp\Big(l_n(\Omega)\Big)d\Pi(\Omega)}=1+o_P(1).$$
\end{proof}

\subsection{Proof of Lemma \ref{lem:gaussprior}} \label{sec:proofgauss}

Now we are going to prove Lemma \ref{lem:gaussprior}. Like the proof of Lemma \ref{lem:wishart}, it has two parts. The first part is to show posterior contraction on some appropriate set $A_n$. Note that Wishart prior is a conjugate prior. The posterior contraction can be directly calculated. For the Gaussian prior, its non-conjugacy requires to apply some general result from nonparametric Bayes theory. To be specific, we follow the testing approach in \cite{barron99} and \cite{ghosal00}. The outline of using testing approach to prove posterior contraction for Bayesian matrix estimation is referred to Section 5 in \cite{gao13}.

We first state some lemmas.
\begin{lemma} \label{lem:prior}
Assume $p^2=o(n/\log n)$ and $||\Sigma^*||\vee||\Omega^*||\leq\Lambda= O(1)$. For the Gaussian prior $\Pi$, we have
$$\Pi\Big(||\Omega||^2||\Sigma-\Sigma^*||_F^2\leq\frac{p^2\log n}{n}\Big)\geq\exp\Big(-Cp^2\log n\Big),$$
for some constant $C>0$.
\end{lemma}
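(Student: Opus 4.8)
The plan is to reduce the claimed small-ball bound for the covariance to an elementary small-ball lower bound for a product of one-dimensional Gaussians. First, on the support of $\Pi$ we have $\|\Omega\|<2\Lambda$, hence $\|\Omega\|^2<4\Lambda^2$, so it suffices to lower bound $\Pi\big(\|\Sigma-\Sigma^*\|_F^2\le\tfrac{p^2\log n}{4\Lambda^2 n}\big)$. Next I would pass from a covariance ball to a precision ball via the identity $\Sigma-\Sigma^*=\Sigma(\Omega^*-\Omega)\Sigma^*$: on the support $\|\Sigma\|\le 2\Lambda$ and $\|\Sigma^*\|\le\Lambda$, so $\|\Sigma-\Sigma^*\|_F\le 2\Lambda^2\|\Omega-\Omega^*\|_F$, and it is therefore enough to bound $\Pi\big(\|\Omega-\Omega^*\|_F\le\epsilon_n\big)$ from below, with $\epsilon_n=\tfrac{p\sqrt{\log n}}{4\Lambda^3\sqrt n}$. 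Note $\epsilon_n\to 0$ by the hypothesis $p^2=o(n/\log n)$, a fact used repeatedly below.

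The second step removes the truncation. Writing the prior density $\propto\exp(-\tfrac12\|\Omega\|_F^2)$ in terms of the free entries $\{\Omega_{ij}\}_{i\le j}$, it is proportional to $\prod_i\exp(-\Omega_{ii}^2/2)\prod_{i<j}\exp(-\Omega_{ij}^2)$; that is, $\Pi$ is the law of a random symmetric matrix $\widetilde\Omega$ with independent entries $\widetilde\Omega_{ii}\sim N(0,1)$ and $\widetilde\Omega_{ij}\sim N(0,\tfrac12)$ for $i<j$, conditioned on the support event $S=\{\|\widetilde\Omega\|<2\Lambda,\ \|\widetilde\Omega^{-1}\|\le 2\Lambda\}$. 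I would then check that for $n$ large the Frobenius ball $B=\{\|\Omega-\Omega^*\|_F\le\epsilon_n\}$ lies inside $S$: indeed $\|\Omega\|\le\|\Omega^*\|+\epsilon_n\le\Lambda+\epsilon_n<2\Lambda$, and $\lambda_{\min}(\Omega)\ge\lambda_{\min}(\Omega^*)-\epsilon_n\ge\Lambda^{-1}-\epsilon_n>(2\Lambda)^{-1}$, so $\|\Omega^{-1}\|<2\Lambda$. Since $B\subseteq S$, $\Pi(B)=\widetilde{\mathbb P}(B)/\widetilde{\mathbb P}(S)\ge\widetilde{\mathbb P}(B)$, and it remains to bound the untruncated product-Gaussian probability $\widetilde{\mathbb P}(B)$.

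The third step is the elementary estimate. If $|\widetilde\Omega_{ij}-\Omega^*_{ij}|\le\epsilon_n/p$ for every $i\le j$, then $\|\widetilde\Omega-\Omega^*\|_F^2\le p^2(\epsilon_n/p)^2=\epsilon_n^2$, so by independence $\widetilde{\mathbb P}(B)\ge\prod_{i\le j}\widetilde{\mathbb P}\big(|\widetilde\Omega_{ij}-\Omega^*_{ij}|\le\epsilon_n/p\big)$. Each factor concerns a centered Gaussian with variance in $\{1,\tfrac12\}$; since $|\Omega^*_{ij}|\le\|\Omega^*\|\le\Lambda$ and $\epsilon_n/p\to 0$, bounding the Gaussian density from below on the interval in question gives a factor at least $c_1(\Lambda)\,\epsilon_n/p$ for some constant $c_1(\Lambda)>0$. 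Because $\epsilon_n/p=\tfrac{\sqrt{\log n}}{4\Lambda^3\sqrt n}\ge n^{-1}$ for $n$ large, each factor is at least $e^{-2\log n}$, and there are $p(p+1)/2\le p^2$ of them, whence $\widetilde{\mathbb P}(B)\ge e^{-2p^2\log n}$. This gives the lemma with $C=2$ (any larger constant also works).

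I do not expect a genuine obstacle; the content is a truncated-Gaussian small-ball computation. The one point requiring care is the order of the per-coordinate radius: after converting covariance to precision and allocating the Frobenius budget entrywise it is $\Theta(\sqrt{\log n/n})$, and it is precisely the $\sqrt{\log n}$ factor—hence the $\log n$ appearing in the statement—that keeps each of the $\Theta(p^2)$ Gaussian factors above $e^{-O(\log n)}$; without it the product bound would fail. The containment $B\subseteq S$ also quietly relies on $p^2=o(n/\log n)$ through $\epsilon_n\to 0$.
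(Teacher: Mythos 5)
Your proposal is correct and follows essentially the same route as the paper: reduce the event to a Frobenius ball $\{\|\Omega-\Omega^*\|_F\le\epsilon_n\}$ with $\epsilon_n\asymp p\sqrt{\log n/n}$, drop the support truncation after checking the ball lies inside it, and then exploit the product-Gaussian structure of the prior to get an $e^{-Cp^2\log n}$ small-ball bound. The only difference is cosmetic: you carry out the per-entry allocation $\epsilon_n/p$ and the one-dimensional Gaussian estimate by hand, whereas the paper invokes Lemma~E.1 of \cite{gao13} for that final step.
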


The next lemma is  Lemma 5.1 in \cite{gao13}.
\begin{lemma} \label{lem:KL}
Let $K_n=\left\{||\Omega||^2||\Sigma-\Sigma^*||_F^2\leq\frac{p^2\log n}{n}\right\}$. Then for any $b>0$, we have
\begin{eqnarray*}
&& P_{\Sigma^*}^n\Bigg(\int \exp\Big(l_n(\Omega)-l_n(\Omega^*)\Big)d\Pi(\Omega)\leq \Pi(K_n)\exp\big(-(b+1)p^2\log n\big)\Bigg) \\
&\leq& \exp\Big(-Cb^2p^2\log n\Big),
\end{eqnarray*}
for some constant $C>0$.
\end{lemma}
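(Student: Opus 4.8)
This is a form of Lemma 5.1 in \cite{gao13}, and I would establish it by the standard ``evidence lower bound'' device of nonparametric Bayes (\cite{barron99,ghosal00}), exploiting that here the log-likelihood ratio is an explicit affine-plus-log function of $\Omega$ whose fluctuation is a Gaussian quadratic form. First, discard the mass outside $K_n$ and apply Jensen's inequality to the probability measure $d\Pi/\Pi(K_n)$ restricted to $K_n$. Writing $l_n(\Omega)-l_n(\Omega^*)=-nK(\Omega^*,\Omega)+W_n(\Omega)$, where $K(\Omega^*,\Omega)=\frac12\big(\text{tr}(\Omega\Sigma^*)-\log\det(\Omega\Sigma^*)-p\big)$ is the Kullback--Leibler divergence between $N(0,\Sigma^*)$ and $N(0,\Omega^{-1})$ and $W_n(\Omega)=-\frac n2\text{tr}\big((\Omega-\Omega^*)(\hat\Sigma-\Sigma^*)\big)$ has mean zero, this yields
$$\int\exp\big(l_n(\Omega)-l_n(\Omega^*)\big)\,d\Pi(\Omega)\ \ge\ \Pi(K_n)\exp\big(-n\bar K_n+\bar W_n\big),$$
where $\bar K_n$ and $\bar W_n$ are the $\Pi|_{K_n}/\Pi(K_n)$-averages of $K(\Omega^*,\cdot)$ and $W_n(\cdot)$. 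So it suffices to show (a) $n\bar K_n\lesssim p^2\log n$ deterministically, and (b) $\bar W_n\ge-\tfrac b2 p^2\log n$ off an event of probability at most $\exp(-Cb^2p^2\log n)$.

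For (a): on $K_n$ the inequality $\|\Omega\|^2\|\Sigma-\Sigma^*\|_F^2\le p^2\log n/n$ with $p^2=o(n/\log n)$ gives $\|\Omega\|\,\|\Sigma-\Sigma^*\|\le\|\Omega\|\,\|\Sigma-\Sigma^*\|_F=o(1)$, hence $\|\Omega\Sigma^*-I\|=o(1)$; so all eigenvalues of $\Omega\Sigma^*$ lie in a fixed neighbourhood of $1$ and $\|\Omega\|\vee\|\Sigma\|=O(1)$. Using $x-\log x-1\le C(x-1)^2$ near $x=1$ and $\Omega-\Omega^*=\Omega(\Sigma^*-\Sigma)\Omega^*$, one gets $K(\Omega^*,\Omega)\le C\|\Sigma^{*1/2}(\Omega-\Omega^*)\Sigma^{*1/2}\|_F^2\le C\|\Omega\|^2\|\Omega^*\|^2\|\Sigma-\Sigma^*\|_F^2\le Cp^2\log n/n$, whence $n\bar K_n\le Cp^2\log n$. (Producing the exact constant $1$ in ``$b+1$'' only requires shrinking the radius of $K_n$ by a fixed factor, which merely changes the constant in Lemma \ref{lem:prior}.)

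For (b): with $\hat\Sigma=\frac1n\Sigma^{*1/2}\big(\sum_{i=1}^nZ_iZ_i^T\big)\Sigma^{*1/2}$, $Z_i\sim N(0,I_p)$ i.i.d., and $B=B(\Omega)=\Sigma^{*1/2}(\Omega-\Omega^*)\Sigma^{*1/2}$, we have $W_n(\Omega)=-\frac12\big(\mathbf Z^T(I_n\otimes B)\mathbf Z-\text{tr}(I_n\otimes B)\big)$, a centered second-order Gaussian chaos. The elementary moment generating function bound for such chaoses gives $\mathbb E\,e^{-\lambda W_n(\Omega)}\le\exp\big(C\lambda^2 n\|B\|_F^2\big)$ whenever $|\lambda|\,\|B\|\le c$. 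By (a), on $K_n$ we have $\|B\|\le\|\Sigma^*\|\,\|\Omega-\Omega^*\|\le Cp\sqrt{\log n/n}$ and $n\|B\|_F^2\le Cp^2\log n$, so $\mathbb E\,e^{-\lambda W_n(\Omega)}\le\exp(C\lambda^2p^2\log n)$ uniformly over $\Omega\in K_n$, for $|\lambda|\le c\sqrt{n/(p^2\log n)}$. Applying Jensen (convexity of $x\mapsto e^{-\lambda x}$) together with Fubini to the $\Pi|_{K_n}/\Pi(K_n)$-average propagates the same bound to $\bar W_n$, and a Chernoff argument with $\lambda\asymp b$ gives $\mathbb P\big(\bar W_n\le-\tfrac b2 p^2\log n\big)\le\exp(-C'b^2p^2\log n)$; the optimizing $\lambda$ lies in the admissible range precisely because $p^2=o(n/\log n)$. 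Combining (a), (b) and the displayed lower bound proves the lemma.

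\textbf{Main obstacle.} The crux is step (b): one needs an \emph{exponential} tail for $\bar W_n$ (not the merely polynomial rate a Chebyshev bound would give), and one needs control of the chaos parameters $\|B\|,\|B\|_F$ that is \emph{uniform over $K_n$}, including the region where $\|\Omega\|$ is large. The second point is exactly why $K_n$ is defined with the weight $\|\Omega\|^2$ multiplying $\|\Sigma-\Sigma^*\|_F^2$; and the passage from the fixed-$\Omega$ chaos bound to the prior average $\bar W_n$ is handled cleanly by Jensen applied to the moment generating function, which avoids any covering or union-bound argument over $K_n$.
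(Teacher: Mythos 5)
Your proof is correct and follows essentially the same route as the paper's (the paper simply cites Lemma 5.1 of \cite{gao13}, but its own proof of the mean-generalization, Lemma \ref{lem:KLDA}, reveals the argument). Both arguments restrict the prior to $K_n$, apply Jensen to lower bound the integrated likelihood ratio, split the exponent into the deterministic integrated Kullback--Leibler term (of order $p^2\log n$ on $K_n$, which is precisely why $K_n$ carries the $\|\Omega\|^2$ weight) plus a centered Gaussian quadratic-form fluctuation, and close with a sub-exponential/Gaussian-chaos tail bound. The only cosmetic difference is the order in which the two averagings are performed: the paper first averages the per-observation log-likelihood ratio over $\tilde\Pi$ to obtain a single i.i.d.\ sum $\sum_i Y_i$ and then applies Bernstein's inequality, whereas you bound the moment generating function of $W_n(\Omega)$ for each fixed $\Omega\in K_n$ and then push the bound through a second application of Jensen (plus Fubini) to reach $\bar W_n$; these yield the same estimate since $\bar W_n = \sum_i(Y_i-P_{\Sigma^*}Y_i)$.
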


The next lemma is Lemma 5.9 in \cite{gao13}.
\begin{lemma} \label{lem:test}
For $||\Sigma^*||\vee||\Omega^*||\leq\Lambda= O(1)$ and $||\Sigma_1||\vee||\Omega_1||\leq 2\Lambda$, there exist small $\delta,\delta'>0$ only depending on $\Lambda$, and a testing function $\phi$ such that
$$P_{\Sigma^*}^n\phi\leq 2\exp\Big(-C\delta'||\Sigma^*-\Sigma_1||_F^2\Big),$$
$$\sup_{\{\Sigma\in\text{supp}(\Pi):||\Sigma-\Sigma_1||_F\leq\delta ||\Sigma^*-\Sigma_1||_F\}}P_{\Sigma}^n(1-\phi)\leq 2\exp\Big(-C\delta'||\Sigma^*-\Sigma_1||_F^2\Big),$$
for some constant $C>0$.
\end{lemma}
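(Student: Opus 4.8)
The plan is to build $\phi$ by the classical Le Cam--Birg\'e recipe: a one-dimensional test based on a linear functional of the sample covariance matrix, aligned with the direction $\Sigma_1-\Sigma^*$. This has the advantage of being self-contained and of making the perturbation to a small neighbourhood of $\Sigma_1$ transparent, since such a perturbation only shifts the mean of the test statistic by a small multiple of $\rho^2$. Put $D=\Sigma_1-\Sigma^*$ and $\rho=||D||_F$ (if $\rho=0$ both bounds are trivial, so assume $\rho>0$), and set
$$T=\text{tr}\pth{\hat{\Sigma}D},\quad \hat{\Sigma}=\frac1n\sum_{i=1}^nX_iX_i^T,\quad \phi=\indc{T>\tau},\quad \tau=\text{tr}(\Sigma^*D)+\frac{1}{2}\rho^2.$$
Since $P_\Sigma^nT=\text{tr}(\Sigma D)$, one has $P_{\Sigma^*}^nT=\text{tr}(\Sigma^*D)$ while $P_{\Sigma_1}^nT=\text{tr}(\Sigma^*D)+\rho^2$, so $\tau$ is the midpoint of these two means, at distance $\rho^2/2$ from each.

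The heart of the argument is a deviation bound for $T$ about $\text{tr}(\Sigma D)$. Writing $X_i=\Sigma^{1/2}g_i$ with $g_i\sim N(0,I_p)$ independent and $B=\Sigma^{1/2}D\Sigma^{1/2}$, we have $nT=\sum_{i=1}^ng_i^TBg_i$; after diagonalizing $B$ this becomes $\sum_j\lambda_j(B)\chi^2_{n,j}$ with the $\chi^2_{n,j}$ independent $\chi^2_n$ variables, so the Laurent--Massart bound (equivalently Hanson--Wright) yields, for all $x>0$ and all $\Sigma$,
$$P_\Sigma^n\pth{\left|T-\text{tr}(\Sigma D)\right|\ge 2||B||_F\sqrt{x/n}+2||B||\,x/n}\le 2e^{-x}.$$
At $\Sigma=\Sigma^*$ and at every $\Sigma\in\text{supp}(\Pi)$ we have $||\Sigma||\le 2\Lambda$, so $||B||_F\le||\Sigma||\,||D||_F\le 2\Lambda\rho$ and $||B||\le 2\Lambda\rho$; furthermore $||\Sigma^*||\vee||\Sigma_1||=O(1)$ gives $\rho=||D||_F\le\sqrt p\,||D||\le 3\Lambda\sqrt p$, which is $o(n)$ in every regime considered here. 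Hence, with $x=\delta'\rho^2$ for a constant $\delta'=\delta'(\Lambda)$ small enough, the level $2||B||_F\sqrt{x/n}+2||B||\,x/n$ is at most $\rho^2/4$, and therefore $P_\Sigma^n\pth{|T-\text{tr}(\Sigma D)|\ge\rho^2/4}\le 2e^{-\delta'\rho^2}$ for every $\Sigma$ relevant below.

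It remains to combine the two errors. For the Type~I error, $\phi=1$ forces $T-\text{tr}(\Sigma^*D)>\rho^2/2>\rho^2/4$, so $P_{\Sigma^*}^n\phi\le 2e^{-\delta'\rho^2}$. For the Type~II error, fix $\Sigma\in\text{supp}(\Pi)$ with $||\Sigma-\Sigma_1||_F\le\delta\rho$; then
$$P_\Sigma^nT=\text{tr}(\Sigma D)\ge\text{tr}(\Sigma_1D)-||\Sigma-\Sigma_1||_F\,||D||_F\ge\text{tr}(\Sigma^*D)+(1-\delta)\rho^2,$$
so for $\delta\le\frac{1}{4}$ we get $P_\Sigma^nT-\tau\ge\rho^2/4$, hence $\phi=0$ forces $\text{tr}(\Sigma D)-T\ge\rho^2/4$, and $P_\Sigma^n(1-\phi)\le 2e^{-\delta'\rho^2}$ uniformly over the indicated set. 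This gives the lemma with $\delta=\frac{1}{4}$ and $\delta'=\delta'(\Lambda)$, both depending only on $\Lambda$.

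The one step that needs real care is the concentration estimate: unlike the usual testing lemmas, which are content with a bound of the form $e^{-cn\rho^2}$, here the target is precisely $e^{-\delta'\rho^2}$ with $\delta'$ uniform across the whole admissible range of $\rho$, which is what forces one to keep the sub-exponential term $||B||\,x/n$ and to use $\rho=O(\sqrt p)=o(n)$. Everything else is routine bookkeeping with Gaussian moments and spectral-norm inequalities.
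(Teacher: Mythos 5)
Your test statistic $T=\text{tr}\big(\hat\Sigma(\Sigma_1-\Sigma^*)\big)$ is a legitimate and arguably cleaner alternative to the likelihood-ratio statistic $\frac1n\sum_i X_i^T(\Omega^*-\Omega_1)X_i$ that the paper borrows from \cite{gao13} (compare the test $\phi$ displayed in the proof of Lemma~\ref{lem:testcov}), and the Hanson--Wright/Laurent--Massart concentration step is set up correctly. However, there is a genuine quantitative gap at the very point you flag as ``the one step that needs real care'': you took $x=\delta'\rho^2$ and concluded $P_{\Sigma^*}^n\phi\le 2e^{-\delta'\rho^2}$, but this is the wrong scale, and it makes the conclusion vacuous. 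In the regime where the lemma is actually invoked (proof of Lemma~\ref{lem:gaussprior}), $\rho=\|\Sigma_1-\Sigma^*\|_F$ can be as small as $M\sqrt{p^2\log n/n}=o(1)$, and then $2e^{-\delta'\rho^2}\to 2$; since every probability is $\le 1<2$, you have proved a trivial statement. The printed statement of Lemma~\ref{lem:test} is missing a factor of $n$ in the exponent; this is clear both from the source (Lemma~5.9 of \cite{gao13}) and from the paper's own use of it, which plugs in $\rho\ge M\sqrt{p^2\log n/n}$ and reads off a testing error of order $\exp(-CM^2p^2\log n)$ --- an inference that only goes through if the exponent is $-C\delta' n\rho^2$.

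The fix is to take $x=\delta' n\rho^2$ for a small constant $\delta'=\delta'(\Lambda)$: then $2\|B\|_F\sqrt{x/n}\le 4\Lambda\sqrt{\delta'}\,\rho^2\le\rho^2/8$ whenever $\delta'\le(32\Lambda)^{-2}$, while the sub-exponential term $2\|B\|x/n\le 4\Lambda\delta'\rho^3$ is $\le\rho^2/8$ only for $\rho\le(32\Lambda\delta')^{-1}$, a constant depending only on $\Lambda$. In that range you get $e^{-\delta' n\rho^2}$; for larger $\rho$ the linear term dominates and the best exponent is of order $n\rho$, which is still far more than enough for the posterior contraction argument because $n\rho\gtrsim n\gg p^2\log n$ once $\rho$ is bounded below by a constant and $p^2\log n=o(n)$. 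So the test and the deviation inequality you chose both work, but $x$ must scale like $n\rho^2$ (with a case split at $\rho\asymp\Lambda$), not like $\rho^2$; your closing remark that the target ``is precisely $e^{-\delta'\rho^2}$'' is exactly where the argument goes astray.
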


\begin{proof}[Proof of Lemma \ref{lem:gaussprior}]
Like what we have done in the Wishart case, the proof has two parts. In the first part, we establish the first condition of the two theorems by proving a posterior contraction rate. In the second part, we establish the second condition of the two theorems by showing that a change of variable is negligible under Gaussian density.

\noindent\textbf{Part I. }  Define
$$A_n=\left\{||\Sigma-\Sigma^*||_F\leq M\sqrt{\frac{p^2\log n}{n}}\right\},$$
for some $M$ sufficiently large. Then, we may write
$$\Pi(A_n^c|X^n)=\frac{\int_{A_n^c}\exp\Big(l_n(\Omega)-l_n(\Omega^*)\Big)d\Pi(\Omega)}{\int\exp\Big(l_n(\Omega)-l_n(\Omega^*)\Big)d\Pi(\Omega)}=\frac{N_n}{D_n}.$$
Let us establish a testing between the following hypotheses:
$$H_0: \Sigma=\Sigma^*\quad\text{vs}\quad H_1: \Sigma\in A_n^c\cap\text{supp}(\Pi).$$
There exists $\{\Sigma_j\}_{j=1}^N\subset A_n^c\cap\text{supp}(\Pi)$, such that
$$A_n^c\cap\text{supp}(\Pi)\subset \text{supp}(\Pi)\cap\Bigg(\cup_{j=1}^N\left\{||\Sigma-\Sigma_j||_F\leq \sqrt{\frac{p^2\log n}{n}}\right\}\Bigg).$$
We choose the smallest $N$, which is determined by the covering number. Since $A_n^c\cap\text{supp}(\Pi)\subset\{||\Omega||_F\leq 2
\Lambda \sqrt{p}\}$, we have
$$\log N\leq C'p^2\log\Bigg(\frac{2\Lambda\sqrt{n}}{\sqrt{p\log n}}\Bigg)\leq Cp^2\log n.$$
By Lemma \ref{lem:test}, there exists $\phi_j$ such that
$$P_{\Sigma^*}^n\phi_j\leq 2\exp\Big(-CM^2p^2\log n\Big),$$
$$\sup_{\{\Sigma\in\text{supp}(\Pi):||\Sigma-\Sigma_j||_F\leq \sqrt{p^2\log n/n}\}}P_{\Sigma}^n(1-\phi_j)\leq 2\exp\Big(-CM^2p^2\log n\Big).$$
Define $\phi=\max_{1\leq j\leq N}\phi_j$. Using union bound to control the testing error, we have
$$P_{\Sigma^*}^n\phi\leq \exp\Big(-C_1M^2p^2\log n\Big),$$
$$\sup_{\{\Sigma\in A_n^c\cap\text{supp}(\Pi)\}}P_{\Sigma}^n(1-\phi)\leq\exp\Big(-C_1M^2p^2\log n\Big),$$
for sufficiently large $M$. We bound $\Pi(A_n^c|X^n)$ by
\begin{eqnarray*}
P_{\Sigma^*}^n\Pi(A_n^c|X^n) &\leq& P_{\Sigma^*}^n\Pi(A_n^c|X^n)(1-\phi)\mathbb{I}\left\{D_n>\exp(-2p^2\log n)\right\}\\
&& + P_{\Sigma^*}^n\phi+P_{\Sigma^*}^n\Big(D_n\leq \exp(-2p^2\log n)\Big) \\
&\leq& \exp\big(2p^2\log n\big)P_{\Sigma^*}^n\int_{A_n^c}\exp\Big(l_n(\Omega)-l_n(\Omega^*)\Big)(1-\phi)d\Pi(\Omega) \\
&& + P_{\Sigma^*}^n\phi+P_{\Sigma^*}^n\Big(D_n\leq \exp(-2p^2\log n)\Big) \\
&\leq& \exp\big(2p^2\log n\big)\int_{A_n^c}P_{\Sigma}^n(1-\phi)d\Pi(\Omega) \\
&& + P_{\Sigma^*}^n\phi+P_{\Sigma^*}^n\Big(D_n\leq \exp(-2p^2\log n)\Big) \\
&\leq& \exp\big(2p^2\log n\big)\sup_{\Sigma\in {A_n^c}\cap\text{supp}(\Pi)}P_{\Sigma}^n(1-\phi) \\
&& + P_{\Sigma^*}^n\phi+P_{\Sigma^*}^n\Big(D_n\leq \exp(-2p^2\log n)\Big).
\end{eqnarray*}
In the upper bound above, the first two terms are bounded by the testing error we have established. The last term can be bounded by combining the results of Lemma \ref{lem:prior} and Lemma \ref{lem:KL}. Hence, we have proved that
$$\Pi(A_n^c|X^n)=1-o_P(1).$$
For Theorem \ref{thm:main1}, let $\delta_n=M\sqrt{\frac{p^2\log n}{n}}$. Then $\delta_n=o(1)$ by assumption,
and $A_n\subset\{||\Sigma-\Sigma^*||\leq \delta_n\}$.
For Theorem \ref{thm:main2}, let $\delta_n=M\sqrt{\frac{rp^3\log n}{n}}$, then we have $\delta_n=o(1)$ and $A_n\subset\{\sqrt{rp}||\Sigma-\Sigma^*||\leq \delta_n\}$.

\noindent\textbf{Part II. }
Let $\Pi_G$ induce a prior distribution on symmetric $\Omega$ with each of the upper triangular element independently following $N(0,1)$. The density of $\Pi_G$ is
$$\frac{d\Pi_G(\Omega)}{d\Omega}=\xi_p^{-1}\exp\Big(-\frac{1}{2}||\bar{\Omega}||_F^2\Big),$$
where we use $\bar{\Omega}$ to zero out the lower triangular elements of $\Omega$ except the diagonal part and $\xi_p$ is the normalizing constant. Write
\begin{eqnarray*}
&& \int_{A_n}\exp\Big(l_n(\Omega_t)\Big)d\Pi(\Omega) \\
&=& \xi_p^{-1}\int_{A_n}\exp\Big(l_n(\Omega_t)-\frac{1}{2}||\bar{\Omega}||_F^2\Big)d\bar{\Omega}.
\end{eqnarray*}
Remembering the notation $\tilde{\Phi}$ defined in the proof of Lemma \ref{lem:wishart}, we have
\begin{eqnarray*}
&& \int_{A_n}\exp\Big(l_n(\Omega_t)-\frac{1}{2}||\bar{\Omega}||_F^2\Big)d\bar{\Omega} \\
&=& \int_{A_n+2tn^{-1/2}\tilde{\Phi}}\exp\Big(l_n(\Gamma)-\frac{1}{2}||\bar{\Gamma}-2tn^{-1/2}\bar{\tilde{\Phi}}||_F^2\Big)d\bar{\Gamma} \\
&=& \int_{A_n+2tn^{-1/2}\tilde{\Phi}}\exp\Big(l_n(\Gamma)-\frac{1}{2}||\bar{\Gamma}||_F^2+2tn^{-1/2}\text{tr}\big(\bar{\Gamma}\bar{\tilde{\Phi}}\big)-2t^2n^{-1}||\bar{\tilde{\Phi}}||_F^2\Big)d\bar{\Gamma}.
\end{eqnarray*}
We may choose $M',M''$ arbitrarily close to $M$ such that $M'<M<M''$ and $A_n'\subset A_n+2tn^{-1/2}\tilde{\Phi}\subset A_n''$ for
$$A_n'=\left\{||\Sigma-\Sigma^*||_F\leq M'\sqrt{\frac{p^2\log n}{n}}\right\},\quad A_n''=\left\{||\Sigma-\Sigma^*||_F\leq M''\sqrt{\frac{p^2\log n}{n}}\right\}.$$
This can always be done because $||2tn^{-1/2}\tilde{\Phi}||_F= O\big(n^{-1/2}\big)=o\Bigg(\sqrt{\frac{p^2\log n}{n}}\Bigg)$. Moreover, we have $\Pi(A_n'|X^n)=1-o_P(1)$, $\Pi(A_n''|X^n)=1-o_P(1)$ and
\begin{eqnarray*}
&& \sup_{A_n''}\left|2tn^{-1/2}\text{tr}\big(\bar{\Gamma}\bar{\tilde{\Phi}}\big)-2t^2n^{-1}||\bar{\tilde{\Phi}}||_F^2\right| \\
&\leq& C\sup_{A_n''}\left|n^{-1/2}||\Gamma||_F||\tilde{\Phi}||_F+n^{-1}||\tilde{\Phi}||_F^2\right| \\
&=& O\Bigg(\sqrt{\frac{p}{n}}+\frac{1}{n}\Bigg) \\
&=& o(1).
\end{eqnarray*}
Therefore, using the same argument in the proof of Lemma \ref{lem:wishart}, we have
$$\frac{\int_{A_n}\exp\Big(l_n(\Omega_t)\Big)d\Pi(\Omega)}{\int_{A_n}\exp\Big(l_n(\Omega)\Big)d\Pi(\Omega)}=1+o_P(1).$$
This completes the proof.
\end{proof}

\subsection{Proof of Technical Lemmas}

\begin{proof}[Proof of Lemma \ref{lem:likexpansion}]
First, we show $\Omega_t$ is a valid precision matrix under the event $A_n$, i.e., $\Omega_t>0$. 
Using Weyl's theorem, we have
$$|\lambda_{\min}(\Omega_t)-\lambda_{\min}(\Omega^*)| \leq ||\Omega_t-\Omega|| + ||\Omega-\Omega^*||,$$
where the first term is bounded by
$$||\Omega_t-\Omega||\leq \frac{\sqrt{2}t}{\sqrt{n}}\frac{||\Phi||}{\left\|\Sigma^{*1/2}\Phi\Sigma^{*1/2}\right\|_F}= O(n^{-1/2}).$$
Hence,
$$|\lambda_{\min}(\Omega_t)-\lambda_{\min}(\Omega^*)| \leq O(n^{-1/2}) + ||\Omega-\Omega^*||.$$
Under the current assumption, $O(n^{-1/2}) + ||\Omega-\Omega^*||=o\Big(\lambda_{\min}(\Omega^*)\Big)$. Hence, $\lambda_{\min}(\Omega_t)>0$.
Knowing the fact that $l_n(\Omega_t)$ is well-defined, we study $l_n(\Omega_t)-l_n(\Omega)$,
\begin{eqnarray*}
l_n(\Omega_t)-l_n(\Omega) &=& \frac{n}{2}\text{tr}\Big(\hat{\Sigma}(\Omega-\Omega_t)\Big)+\frac{n}{2}\log\det\Big(I-(\Omega-\Omega_t)\Sigma\Big) \\
&=& \frac{n}{2}\text{tr}\Big((\hat{\Sigma}-\Sigma)(\Omega-\Omega_t)\Big)+\frac{n}{2}\text{tr}\Big(\Sigma^{1/2}(\Omega-\Omega_t)\Sigma^{1/2}\Big) \\
&& +\frac{n}{2}\log\det\Big(I-\Sigma^{1/2}(\Omega-\Omega_t)\Sigma^{1/2}\Big).
\end{eqnarray*}
Let $\{h_j\}_{j=1}^p$ be eigenvalues of $\Sigma^{1/2}(\Omega-\Omega_t)\Sigma^{1/2}$. Then, we have
\begin{eqnarray*}
&& \frac{n}{2}\text{tr}\Big(\Sigma^{1/2}(\Omega-\Omega_t)\Sigma^{1/2}\Big)+\frac{n}{2}\log\det\Big(I-\Sigma^{1/2}(\Omega-\Omega_t)\Sigma^{1/2}\Big) \\
&=& \frac{n}{2}\sum_{j=1}^p \Big(h_j+\log(1-h_j)\Big) \\
&=& -\frac{n}{4}\sum_{j=1}^p h_j^2 -\frac{n}{2}\sum_{j=1}^p\int_0^{h_j}\frac{(h_j-s)^2}{(1-s)^3}ds \\
&=& -\frac{n}{4}||\Sigma^{1/2}(\Omega-\Omega_t)\Sigma^{1/2}||_F^2-\frac{n}{2}\sum_{j=1}^p\int_0^{h_j}\frac{(h_j-s)^2}{(1-s)^3}ds,
\end{eqnarray*}
where $\int_0^{h_j}\frac{(h_j-s)^2}{(1-s)^3}ds$ is the remainder of the Taylor expansion.
Therefore, we have obtained the expansion
\begin{eqnarray*}
&& l_n(\Omega_t)-l_n(\Omega) \\
&=& \frac{n}{2}\text{tr}\Big((\hat{\Sigma}-\Sigma)(\Omega-\Omega_t)\Big)-\frac{n}{4}||\Sigma^{1/2}(\Omega-\Omega_t)\Sigma^{1/2}||_F^2-\frac{n}{2}\sum_{j=1}^p\int_0^{h_j}\frac{(h_j-s)^2}{(1-s)^3}ds \\
&=& \frac{t\sqrt{n}}{\sqrt{2}||\Sigma^{*1/2}\Phi\Sigma^{*1/2}||_F}\text{tr}\Big((\Sigma-\hat{\Sigma})\Phi\Big)-\frac{t^2}{2}\frac{||\Sigma^{1/2}\Phi\Sigma^{1/2}||_F^2}{||\Sigma^{*1/2}\Phi\Sigma^{*1/2}||_F^2}-\frac{n}{2}\sum_{j=1}^p\int_0^{h_j}\frac{(h_j-s)^2}{(1-s)^3}ds.
\end{eqnarray*}
The proof is complete.
\end{proof}

\begin{proof} [Proof of Lemma \ref{lem:prior}]
Define $\Pi_G$ to be the distribution which specifies i.i.d. $N(0,1)$ on the upper triangular part of $\Omega$ and then take the lower triangular part to satisfy $\Omega^T=\Omega$. Define
$$D=\{||\Omega||\vee||\Sigma||\leq 2\Lambda\}.$$
Then according to the definition of $\Pi$, we have
$$\Pi(B)=\frac{\Pi_G(B\cap D)}{\Pi_G(D)},\quad\text{for any }B.$$
Since $\Pi_G(D)\leq 1$, we have
$$\Pi(B)\geq\Pi_G(B\cap D),\quad\text{for any }B.$$
In particular, we have
$$\Pi\Big(||\Omega||^2||\Sigma-\Sigma^*||_F^2\leq p^2\log n/n\Big)\geq\Pi_G\Big(||\Omega||^2||\Sigma-\Sigma^*||_F^2\leq p^2\log n/n, ||\Omega||\vee||\Sigma||\leq 2\Lambda\Big).$$
Since $p^2/n=o(1)$, we have
$$\left\{||\Omega-\Omega^*||_F\leq \frac{p\sqrt{\log n}}{(2\Lambda)^3\sqrt{n}}\right\}\subset \left\{||\Omega||^2||\Sigma-\Sigma^*||_F^2\leq \frac{p^2\log n}{n}, ||\Omega||\vee||\Sigma||\leq 2\Lambda\right\}.$$
Thus,
$$\Pi\Big(||\Omega||^2||\Sigma-\Sigma^*||_F^2\leq p^2\log n/n\Big)\geq\Pi_G\Big(||\Omega-\Omega^*||_F\leq \frac{p\sqrt{\log n}}{(2\Lambda)^3\sqrt{n}}\Big).$$
Calculate using Gaussian density directly, for example, according to Lemma E.1 in \cite{gao13}, and we have
\begin{eqnarray*}
&&  \Pi_G\Big(||\Omega-\Omega^*||_F\leq \frac{p\sqrt{\log n}}{(2\Lambda)^3\sqrt{n}}\Big) \\
&\geq& e^{-||\Omega^*||_F^2}\left(\mathbb{P}\Big(|Z|^2\leq \frac{\log n}{c n}\Big)\right)^{p(p+1)/2} \\
&\geq& \exp\Big(-||\Omega^*||_F^2-Cp^2\log n\Big),
\end{eqnarray*}
where $Z\sim N(0,1)$. The proof is complete by observing that $||\Omega^*||_F^2= o(p^2\log n)$ under the assumption.
\end{proof}

\appendix

\section{Proof of Theorem \ref{thm:LDA} \& Theorem \ref{thm:QDA}}

\begin{lemma} \label{lem:LDAexpansion}
Under the setting of Theorem \ref{thm:LDA},
assume $p^2/n=o(1)$ and $||\Sigma^*||\vee||\Omega^*||= O(1)$, then we have
\begin{eqnarray*}
&& l_n(\mu_{X,t},\mu_{Y,t},\Omega_t)-l_n(\mu_X,\mu_Y,\Omega) \\
&=& \frac{2t\sqrt{n}}{V}\text{tr}\Big((\Sigma-\hat{\Sigma})\Phi\Big)-\frac{t\sqrt{n}}{V}(\bar{X}-\mu_X)^T\Omega^*\xi_X-\frac{t\sqrt{n}}{V}(\bar{Y}-\mu_Y)^T\Omega^*\xi_Y-\frac{1}{2}t^2+o_P(1),
\end{eqnarray*}
uniformly on $A_n$.
\end{lemma}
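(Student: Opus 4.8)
The plan is to split the joint perturbation into an $\Omega$-move and a $(\mu_X,\mu_Y)$-move and to expand each on $A_n$, exploiting two structural features of the LDA likelihood. First, for fixed $(\mu_X,\mu_Y)$ the map $\Omega\mapsto l_n(\mu_X,\mu_Y,\Omega)=n\log\det\Omega-n\,\text{tr}(\Omega\tilde\Sigma)$, with the pooled $\tilde\Sigma=\tfrac12(\tilde\Sigma_X+\tilde\Sigma_Y)$, is \emph{exactly} the single-population Gaussian log-likelihood built from $2n$ observations with ``sample covariance'' $\tilde\Sigma$, so Lemma \ref{lem:likexpansion} applies verbatim after replacing $(n,\hat\Sigma)$ by $(2n,\tilde\Sigma)$ and choosing the free parameter to match the normalisation of $\Omega_t$ (restricted to the $\Sigma$-coordinate, $A_n$ satisfies (\ref{eq:covsubset})). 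Second, for fixed $\Omega$ the map $(\mu_X,\mu_Y)\mapsto l_n$ is a quadratic polynomial, so that move has no Taylor remainder at all. Thus I write
\[
l_n(\mu_{X,t},\mu_{Y,t},\Omega_t)-l_n(\mu_X,\mu_Y,\Omega)=\big[l_n(\mu_X,\mu_Y,\Omega_t)-l_n(\mu_X,\mu_Y,\Omega)\big]+\big[l_n(\mu_{X,t},\mu_{Y,t},\Omega_t)-l_n(\mu_X,\mu_Y,\Omega_t)\big]
\]
and treat the two brackets in turn.

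For the first bracket I would first verify $\Omega_t>0$ on $A_n$, exactly as at the start of the proof of Lemma \ref{lem:likexpansion}: the increment has spectral norm $O(n^{-1/2})$, and $\lambda_{\min}(\Omega)$ is bounded away from zero on $A_n$ since $\|\Sigma-\Sigma^*\|=o(1)$ there. Then Lemma \ref{lem:likexpansion} (with $2n$, $\tilde\Sigma$) gives the bracket as a linear term in $\text{tr}((\Sigma-\tilde\Sigma)\Phi)$ plus a quadratic term plus a cubic remainder; the clean-up is the one already carried out in the proof of Theorem \ref{thm:main1}, namely $\|\Sigma^{1/2}\Phi\Sigma^{1/2}\|_F^2/\|\Sigma^{*1/2}\Phi\Sigma^{*1/2}\|_F^2\to1$ by von Neumann's trace inequality and $\|\Sigma-\Sigma^*\|=o(1)$ on $A_n$, while the remainder is $o(1)$ by Weyl's inequality ($\max_j|h_j|=O(n^{-1/2})$) together with $\sum_jh_j^2=O(n^{-1})\|\Sigma^{1/2}\Phi\Sigma^{1/2}\|_F^2$. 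This produces $\tfrac{2t\sqrt n}{V}\,\text{tr}((\Sigma-\tilde\Sigma)\Phi)$ minus $\tfrac{2t^2}{V^2}\|\Sigma^{*1/2}\Phi\Sigma^{*1/2}\|_F^2$, up to $o_P(1)$; it remains to replace $\tilde\Sigma$ by $\hat\Sigma$. Here the key point is that $\tilde\Sigma-\hat\Sigma=\tfrac12(\bar X-\mu_X)(\bar X-\mu_X)^T+\tfrac12(\bar Y-\mu_Y)(\bar Y-\mu_Y)^T$ has rank at most two, so $|\text{tr}((\tilde\Sigma-\hat\Sigma)\Phi)|\le2\|\tilde\Sigma-\hat\Sigma\|\,\|\Phi\|$ with $\|\tilde\Sigma-\hat\Sigma\|=O_P(p/n)+O(\delta_n/\sqrt n)$ on $A_n$; since $\|\Phi\|/V=O(1)$ (because $V^2\ge4\|\Sigma^{*1/2}\Phi\Sigma^{*1/2}\|_F^2\ge4\lambda_{\min}(\Sigma^*)^2\|\Phi\|^2$), the replacement costs $\tfrac{\sqrt n}{V}O_P(p/n+\delta_n/\sqrt n)\,\|\Phi\|=O_P(p/\sqrt n+\delta_n)=o_P(1)$ under $p^2/n=o(1)$.

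For the second bracket, each summand $l_X(\mu_{X,t},\Omega_t)-l_X(\mu_X,\Omega_t)$ depends only on the quadratic part of the likelihood; using $\tilde\Sigma_X(\mu_{X,t})-\tilde\Sigma_X(\mu_X)=-wv^T-vw^T+vv^T$ with $w=\bar X-\mu_X$ and $v=\mu_{X,t}-\mu_X$, it equals exactly $n\,w^T\Omega_tv-\tfrac n2v^T\Omega_tv$, i.e.\ a term linear in $(\bar X-\mu_X)^T\Omega_t\xi_X$ of size $O(\sqrt n/V)$ and a term of order $\tfrac{t^2}{V^2}\xi_X^T\Omega_t\xi_X$, with the $Y$-analogue alongside. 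Replacing $\Omega_t$ by $\Omega^*$ in these leading terms costs $o_P(1)$ on $A_n$: for the linear term one bounds $n\,w^T(\Omega_t-\Omega^*)v$ by $\|w\|\,\|\Omega_t-\Omega^*\|\,\|\xi_X\|\cdot O(\sqrt n/V)$ with $\|w\|=O_P(\sqrt{p/n})$, $\|\Omega_t-\Omega^*\|\le O(n^{-1/2})+C\|\Sigma-\Sigma^*\|$ and $\|\xi_X\|/V=O(1)$, then using the $\sqrt p(\,\cdot\,)\le\delta_n$ half of the definition of $A_n$ for the $\|\Sigma-\Sigma^*\|$ part and the $\sqrt n(\,\cdot\,)^2\le\delta_n$ half where sharper control is needed; the $vv$-term is handled by $\|v\|^2=O(n^{-1})$. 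Finally, the three quadratic contributions, $-\tfrac{2t^2}{V^2}\|\Sigma^{*1/2}\Phi\Sigma^{*1/2}\|_F^2$ from the first bracket and $-\tfrac{t^2}{2V^2}(\xi_X^T\Omega^*\xi_X+\xi_Y^T\Omega^*\xi_Y)$ from the second, sum to exactly $-\tfrac12t^2$ by the definition (\ref{eq:LDAvariance}) of $V^2$, and the linear contributions match the asserted formula; all estimates above are uniform over $A_n$.

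The step I expect to be the main obstacle is precisely this uniform error control: because the sample means only satisfy $\|\bar X-\mu_X^*\|^2=O_P(p/n)$ rather than $o(n^{-1})$, the naive $\sqrt n$-scaled bounds blow up, and the remedy — applied both in the $\tilde\Sigma\to\hat\Sigma$ replacement and in the $\Omega_t\to\Omega^*$ replacement inside the quadratic-in-$\mu$ piece — is to use that the offending correction matrices have rank at most two, which converts their pairings with $\Phi$ into $O_P(p/n)$ quantities and hence makes the $\sqrt n$-scaled remainders $O_P(p/\sqrt n)=o_P(1)$ exactly under the hypothesis $p^2/n=o(1)$. Everything else is a faithful reprise of the proofs of Lemma \ref{lem:likexpansion} and Theorem \ref{thm:main1}.
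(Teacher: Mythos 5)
Your proof is correct, and it takes a genuinely different decomposition from the paper's. The paper splits $l_n$ by data source, writing $l_n(\mu_{X,t},\mu_{Y,t},\Omega_t)-l_n(\mu_X,\mu_Y,\Omega)=\bigl[l_X(\mu_{X,t},\Omega_t)-l_X(\mu_X,\Omega)\bigr]+\bigl[l_Y(\mu_{Y,t},\Omega_t)-l_Y(\mu_Y,\Omega)\bigr]$ and then Taylor-expands each single-population piece jointly in $(\mu,\Omega)$, with the remainder controlled as in Lemma \ref{lem:likexpansion}. You instead split by parameter component: first the $\Omega$-move (applying Lemma \ref{lem:likexpansion} to the pooled likelihood $n\log\det\Omega-n\,\mathrm{tr}(\Omega\tilde\Sigma)$ viewed as a single-population log-likelihood with $2n$ effective observations), then the $(\mu_X,\mu_Y)$-move, which you correctly observe has no Taylor remainder at all because the log-likelihood is exactly quadratic in the means at fixed $\Omega$. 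Both decompositions produce the same leading linear and quadratic terms, and the residuals you list, the rank-two $\tilde\Sigma\to\hat\Sigma$ replacement controlled by $\|\Phi\|/V=O(1)$ and $p^2/n=o(1)$, the $\Omega_t\to\Omega^*$ replacement in the mean-linear term controlled by the two halves of the $A_n$ definition, and the von Neumann and Weyl arguments inherited from Theorem \ref{thm:main1}, are the same residuals the paper handles, just grouped differently. Your route buys modularity: the $\Omega$-step is literally a rescaled call to Lemma \ref{lem:likexpansion} rather than a re-derived joint expansion, and the $\mu$-step requires no expansion lemma. One small cosmetic caveat: on $A_n$ the exact bound is $\|\bar X-\mu_X\|\le O_P(\sqrt{p/n})+\|\mu_X-\mu_X^*\|$, not $O_P(\sqrt{p/n})$ outright; you already compensate by invoking the $\sqrt p\cdot\le\delta_n$ and $\sqrt n(\cdot)^2\le\delta_n$ constraints for the $\|\mu_X-\mu_X^*\|$ part, which is exactly how the paper closes the bound, so no gap results.
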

\begin{proof}
Since
\begin{eqnarray*}
&& l_n(\mu_{X,t},\mu_{Y,t},\Omega_t)-l_n(\mu_X,\mu_Y,\Omega) \\
&=& \Big(l_X(\mu_{X,t},\Omega_t)-l_X(\mu_X,\Omega)\Big) + \Big(l_Y(\mu_{Y,t},\Omega_t)-l_Y(\mu_Y,\Omega)\Big),
\end{eqnarray*}
we expand both quantities in the brackets using the general notation $l(\mu_t,\Omega_t)-l(\mu,\Omega)$. Using Taylor expansion as in the proof of Lemma \ref{lem:likexpansion} and the notation $\tilde{\Sigma}=\frac{1}{n}\sum_{i=1}^n(X_i-\mu)(X_i-\mu)^T$, we have
\begin{eqnarray*}
&& l(\mu_t,\Omega_t)-l(\mu,\Omega) \\
&=& \frac{n}{2}\text{tr}\Big((\tilde{\Sigma}-\Sigma)(\Omega-\Omega_t)\Big)-n(\mu-\mu_t)^T\Omega_t(\bar{X}-\mu)-\frac{n}{2}\left\|\Sigma^{1/2}(\Omega-\Omega_t)\Sigma^{1/2}\right\|_F^2\\
&& -\frac{n}{2}(\mu-\mu_t)^T\Omega_t(\mu-\mu_t)-\frac{n}{2}\sum_{j=1}^p\int_{0}^{h_j}\frac{(h_j-s)^2}{(1-s)^3}ds \\
&=& \frac{t\sqrt{n}}{V}\text{tr}\Big((\Sigma-\tilde{\Sigma})\Phi\Big)+\frac{t\sqrt{n}}{V}\xi^T\Omega_t(\bar{X}-\mu) \\
&& -\frac{t^2}{V^2}\left\|\Sigma^{1/2}\Phi\Sigma^{1/2}\right\|_F^2 - \frac{t^2}{2V^2}\xi^T\Omega_t\xi-\frac{n}{2}\sum_{j=1}^p\int_{0}^{h_j}\frac{(h_j-s)^2}{(1-s)^3}ds,
\end{eqnarray*}
where $\{h_j\}_{j=1}^p$ are eigenvalues of $\Sigma^{1/2}(\Omega-\Omega_t)\Sigma^{1/2}$. The same proof in Lemma \ref{lem:likexpansion} implies
$$\left|\frac{n}{2}\sum_{j=1}^p\int_{0}^{h_j}\frac{(h_j-s)^2}{(1-s)^3}ds\right|= o(1),\quad \text{on }A_n.$$
Therefore,
\begin{eqnarray*}
&& l_n(\mu_{X,t},\mu_{Y,t},\Omega_t)-l_n(\mu_X,\mu_Y,\Omega) \\
&=& \frac{2t\sqrt{n}}{V}\text{tr}\Bigg(\Big(\Sigma-\frac{1}{2}\big(\tilde{\Sigma}_X+\tilde{\Sigma}_Y\big)\Big)\Phi\Bigg)+\frac{t\sqrt{n}}{V}(\bar{X}-\mu_X)^T\Omega_t\xi_X+\frac{t\sqrt{n}}{V}(\bar{Y}-\mu_Y)^T\Omega_t\xi_Y \\
&& -\frac{t^2}{2V^2}\Big(4\left\|\Sigma^{1/2}\Phi\Sigma^{1/2}\right\|_F^2+\xi_X^T\Omega_t\xi_X+\xi_Y^T\Omega_t\xi_Y\Big) + o(1).
\end{eqnarray*}
We approximate $\frac{2t\sqrt{n}}{V}\text{tr}\Bigg(\Big(\Sigma-\frac{1}{2}\big(\tilde{\Sigma}_X+\tilde{\Sigma}_Y\big)\Big)\Phi\Bigg)$ by $\frac{2t\sqrt{n}}{V}\text{tr}\Big((\Sigma-\hat{\Sigma})\Phi\Big)$, and the approximation error is bounded by
\begin{eqnarray*}
&& \frac{C\sqrt{n}}{V}\left|\text{tr}\Big((\hat{\Sigma}_X-\tilde{\Sigma}_X)\Phi\Big)\right|+ \frac{C\sqrt{n}}{V}\left|\text{tr}\Big((\hat{\Sigma}_Y-\tilde{\Sigma}_Y)\Phi\Big)\right| \\
&\leq& CV^{-1}\sqrt{n}\Big((\bar{X}-\mu_X)^T\Phi(\bar{X}-\mu_X)+(\bar{Y}-\mu_Y)^T\Phi(\bar{Y}-\mu_Y)\Big) \\
&\leq& C||\Phi||V^{-1}\sqrt{n}\Big(||\bar{X}-\mu_X||^2+||\bar{Y}-\mu_Y||^2\Big) \\
&\leq& C||\Phi||V^{-1}\sqrt{n}\Big(||\mu_X-\mu_X^*||^2+||\mu_Y-\mu_Y^*||^2+O_P(p/n)\Big) \\
&=& o_P(1),
\end{eqnarray*}
under $A_n$ and the assumption $p^2/n=o(1)$, where we have used the fact that $||\Phi||/V\leq C$. We approximate $\frac{t\sqrt{n}}{V}(\bar{X}-\mu_X)^T\Omega_t\xi_X$ by $\frac{t\sqrt{n}}{V}(\bar{X}-\mu_X)^T\Omega^*\xi_X$, and the difference is bounded by
\begin{eqnarray*}
&& C\sqrt{n}V^{-1}\left|\xi_X^T(\Omega_t-\Omega)(\bar{X}-\mu_X)\right| + C\sqrt{n}V^{-1}\left|\xi_X^T(\Omega-\Omega^*)(\bar{X}-\mu_X)\right| \\
&\leq& CV^{-2}||\xi_X||||\bar{X}-\mu_X||||\Phi|| + C\sqrt{n}V^{-1}||\xi_X||||\Omega-\Omega^*||||\bar{X}-\mu_X|| \\
&\leq& C||\bar{X}-\mu_X|| + C\sqrt{n}||\Omega-\Omega^*||||\bar{X}-\mu_X|| \\
&\leq& C\Big(||\mu_X-\mu_X^*||+\sqrt{n}||\mu_X-\mu_X^*||||\Omega-\Omega^*||+\sqrt{p}||\Omega-\Omega^*||+O_P(\sqrt{p/n})\Big) \\
&=& o_P(1),
\end{eqnarray*}
under $A_n$ and the fact that $||\xi_X||/V\leq C$. Using the same argument, we can also approximate $\frac{t\sqrt{n}}{V}(\bar{Y}-\mu_Y)^T\Omega_t\xi_Y$ by $\frac{t\sqrt{n}}{V}(\bar{Y}-\mu_Y)^T\Omega^*\xi_Y$. Now we approximate the quadratic terms. Using the same argument in the proof of Lemma \ref{lem:likexpansion}, we have
$$\frac{\left|\left\|\Sigma^{1/2}\Phi\Sigma^{1/2}\right\|_F^2-\left\|\Sigma^{*1/2}\Phi\Sigma^{*1/2}\right\|_F^2\right|}{V^2}= o(1).$$
We also have
$$\frac{|\xi_X^T(\Omega_t-\Omega^*)\xi_X|}{V^2}\leq C\Big(||\Omega-\Omega^*||+||\Omega_t-\Omega||\Big)= o(1),$$
and the same bound for $\frac{|\xi_Y^T(\Omega_t-\Omega^*)\xi_Y|}{V^2}$. Therefore,
$$ \frac{t^2}{2V^2}\Big(4\left\|\Sigma^{1/2}\Phi\Sigma^{1/2}\right\|_F^2+\xi_X^T\Omega_t\xi_X+\xi_Y^T\Omega_t\xi_Y\Big) =\frac{t^2}{2}+o(1),$$
on $A_n$. The proof is complete by considering all the approximations above.
\end{proof}

\begin{lemma} \label{lem:LDAfunctional}
Under the same setting of Lemma \ref{lem:LDAexpansion} and further assume $V^{-1}= O(1)$, we have
\begin{eqnarray*}
&& \frac{t\sqrt{n}}{V}\Big(\Delta(\mu_X,\mu_Y,\Omega)-\Delta(\bar{X},\bar{Y},\hat{\Sigma}^{-1})\Big) \\
&=& \frac{2t\sqrt{n}}{V}\text{tr}\Big((\Sigma-\hat{\Sigma})\Phi\Big)-\frac{t\sqrt{n}}{V}(\bar{X}-\mu_X)^T\Omega^*\xi_X-\frac{t\sqrt{n}}{V}(\bar{Y}-\mu_Y)^T\Omega^*\xi_Y + o_P(1),
\end{eqnarray*}
uniformly on $A_n$.
\end{lemma}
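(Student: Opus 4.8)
The plan is to prove the identity by a \emph{finite} Taylor expansion of $\Delta$ about the truth $(\mu_X^*,\mu_Y^*,\Omega^*)$, carried out simultaneously on $\Delta(\mu_X,\mu_Y,\Omega)$ and on the centering $\Delta(\bar X,\bar Y,\hat\Sigma^{-1})$, after which I match leading terms and discard the rest. The key structural fact is that
$$\Delta(\mu_X,\mu_Y,\Omega)=-\text{tr}\Big(\Omega(z-\mu_X)(z-\mu_X)^T\Big)+\text{tr}\Big(\Omega(z-\mu_Y)(z-\mu_Y)^T\Big)$$
is exactly affine in $\Omega$ and exactly quadratic in $(\mu_X,\mu_Y)$, so the expansion of each copy is a genuine finite sum: a constant (which cancels between the two copies), terms linear in $\mu_X-\mu_X^*$, $\mu_Y-\mu_Y^*$ and $\Omega-\Omega^*$, and second/third order terms in which at least one factor is a squared mean deviation or a product of a mean deviation with $\Omega-\Omega^*$. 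The same applies to the centering, which also lies near the truth because $\|\bar X-\mu_X^*\|\vee\|\bar Y-\mu_Y^*\|=O_P(\sqrt{p/n})$ and $\|\hat\Sigma-\Sigma^*\|=O_P(\sqrt{p/n})$ (the latter from Lemma~\ref{lem:wishartconcen}), hence also $\|\hat\Sigma^{-1}-\Omega^*\|=O_P(\sqrt{p/n})$.

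Next I would read off the three leading terms. Differentiating $-(z-\mu_X)^T\Omega(z-\mu_X)$ in $\mu_X$ at the truth gives gradient $\Omega^*\xi_X$ (as $\xi_X=2(z-\mu_X^*)$), and $(z-\mu_Y)^T\Omega(z-\mu_Y)$ gives gradient $\Omega^*\xi_Y$ (as $\xi_Y=-2(z-\mu_Y^*)$), so the linear-in-means part of $\Delta(\mu_X,\mu_Y,\Omega)$ is $\xi_X^T\Omega^*(\mu_X-\mu_X^*)+\xi_Y^T\Omega^*(\mu_Y-\mu_Y^*)$; subtracting the analogous part of the centering expansion produces exactly $-(\bar X-\mu_X)^T\Omega^*\xi_X-(\bar Y-\mu_Y)^T\Omega^*\xi_Y$. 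For the linear-in-$\Omega$ part, the coefficient matrix is $(z-\mu_Y^*)(z-\mu_Y^*)^T-(z-\mu_X^*)(z-\mu_X^*)^T=-2\Sigma^*\Phi\Sigma^*$ by the definition of $\Phi$, so that part equals $-2\,\text{tr}\big(\Sigma^*(\Omega-\Omega^*)\Sigma^*\Phi\big)$; using the identity $\Sigma^*(\Omega-\Omega^*)\Sigma^*=-(\Sigma-\Sigma^*)+(\Sigma-\Sigma^*)\Sigma^{-1}(\Sigma-\Sigma^*)$ this becomes $2\,\text{tr}\big((\Sigma-\Sigma^*)\Phi\big)$ plus a quadratic remainder, and the same manipulation applied to $\hat\Sigma^{-1}$ turns the difference into $2\,\text{tr}\big((\Sigma-\hat\Sigma)\Phi\big)$ plus quadratic remainders. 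Multiplying through by $t\sqrt n/V$ then yields the claimed right-hand side plus a finite sum of error terms.

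It remains to check that, after multiplication by $t\sqrt n/V$, every error term is $o_P(1)$ uniformly on $A_n$. These are: the pure quadratic mean terms $\text{tr}\big(\Omega^*(\mu_X-\mu_X^*)(\mu_X-\mu_X^*)^T\big)$, the mixed terms bounded by $\|z-\mu_X^*\|\,\|\Omega-\Omega^*\|\,\|\mu_X-\mu_X^*\|$, the cubic terms bounded by $\|\Omega-\Omega^*\|\,\|\mu_X-\mu_X^*\|^2$ (and their $Y$ analogues), the two quadratic remainders bounded by $\|\Sigma-\Sigma^*\|^2\,\|\Sigma^{-1}\|$ times the nuclear norm of $\Phi$, and the corresponding terms built from the centering deviations $\|\bar X-\mu_X^*\|$, $\|\bar Y-\mu_Y^*\|$, $\|\hat\Sigma-\Sigma^*\|$. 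For the parameter-dependent ones I would invoke $V^{-1}=O(1)$, $\|\xi_X\|\vee\|\xi_Y\|\le CV$ (from $V^2\ge\xi_X^T\Omega^*\xi_X+\xi_Y^T\Omega^*\xi_Y$), the fact that $\Phi$ has rank at most two so both its operator and nuclear norms are $\le CV$ (from $V^2\ge4\|\Sigma^{*1/2}\Phi\Sigma^{*1/2}\|_F^2$), and $\|\Omega-\Omega^*\|\le C\|\Sigma-\Sigma^*\|$ on $A_n$; the defining bounds $\sqrt n\big(\|\mu_X-\mu_X^*\|^2+\|\mu_Y-\mu_Y^*\|^2+\|\Sigma-\Sigma^*\|^2\big)\le\delta_n$ and $\sqrt p\big(\|\mu_X-\mu_X^*\|+\|\mu_Y-\mu_Y^*\|+\|\Sigma-\Sigma^*\|\big)\le\delta_n$ then make each $o(1)$ uniformly. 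For the centering-driven ones I would use $\sqrt n\cdot(p/n)=p/\sqrt n\to0$, which holds since $p^2/n=o(1)$. The main obstacle is not conceptual but purely organizational: tracking the roughly ten remainder terms and attaching to each the correct $A_n$-bound or $O_P(\sqrt{p/n})$ rate, and in particular being careful to use the low rank of $\Phi$ so the quadratic remainders cost only a factor $\|\Sigma-\Sigma^*\|^2$ in operator norm rather than $\|\Sigma-\Sigma^*\|_F^2$.
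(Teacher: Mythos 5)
Your proof is correct, and in fact the paper states Lemma~\ref{lem:LDAfunctional} without any accompanying proof, so there is no official argument to compare against. Your route --- exploiting the exact affine-in-$\Omega$, quadratic-in-$(\mu_X,\mu_Y)$ structure of $\Delta$ to obtain a \emph{finite} expansion about $(\mu_X^*,\mu_Y^*,\Omega^*)$ for both $\Delta(\mu_X,\mu_Y,\Omega)$ and the centering $\Delta(\bar X,\bar Y,\hat\Sigma^{-1})$, converting $\text{tr}\big(\Sigma^*(\Omega-\Omega^*)\Sigma^*\Phi\big)$ to $\text{tr}\big((\Sigma-\Sigma^*)\Phi\big)$ through the second-order resolvent identity, and then killing the remainders with the $A_n$-bounds together with $V^{-1}=O(1)$, $\|\xi_X\|\vee\|\xi_Y\|\le CV$, the rank-two structure of $\Phi$ (so $\|\Phi\|_N\le CV$), and $\|\hat\Sigma-\Sigma^*\|\vee\|\bar X-\mu_X^*\|\vee\|\bar Y-\mu_Y^*\|=O_P(\sqrt{p/n})$ --- is exactly the style of expansion-and-error-control argument the paper employs in the companion Lemma~\ref{lem:LDAexpansion} and in Lemma~\ref{lem:coveigen}, so you are aligned with the paper's methodology even though it leaves this particular proof to the reader.
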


\begin{proof}[Proof of Theorem \ref{thm:LDA}]
Combining Lemma \ref{lem:LDAexpansion} and Lemma \ref{lem:LDAfunctional}, we have
$$l_n(\mu_{X,t},\mu_{Y,t},\Omega_t)-l_n(\mu_X,\mu_Y,\Omega)=\frac{t\sqrt{n}}{V}\Big(\Delta(\mu_X,\mu_Y,\Omega)-\Delta(\bar{X},\bar{Y},\hat{\Sigma}^{-1})\Big)-\frac{1}{2}t^2+o_P(1),$$
uniformly in $A_n$. The remaining of the proof is the same as the proof of Theorem \ref{thm:main1}.
\end{proof}

The proof of Theorem \ref{thm:QDA},  is very similar to the proof of Theorem \ref{thm:LDA}. We simply state the technical steps in the following lemmas and omit the details of the proof.

\begin{lemma} \label{lem:QDAexpansion}
Under the setting of Theorem \ref{thm:QDA},
assume $p^2/n=o(1)$ and $||\Sigma^*||\vee||\Omega^*||= O(1)$, then we have
\begin{eqnarray*}
&& l_n(\mu_{X,t},\mu_{Y,t},\Omega_{X,t},\Omega_{Y,t})-l_n(\mu_X,\mu_Y,\Omega_X,\Omega_Y) \\
&=& \frac{t\sqrt{n}}{V}\text{tr}\Big((\Sigma_X-\hat{\Sigma}_X)\Phi_X\Big)+\frac{t\sqrt{n}}{V}\text{tr}\Big((\Sigma_Y-\hat{\Sigma}_Y)\Phi_Y\Big) \\
&& -\frac{t\sqrt{n}}{V}(\bar{X}-\mu_X)^T\Omega^*\xi_X-\frac{t\sqrt{n}}{V}(\bar{Y}-\mu_Y)^T\Omega^*\xi_Y-\frac{1}{2}t^2+o_P(1),
\end{eqnarray*}
uniformly on $A_n$.
\end{lemma}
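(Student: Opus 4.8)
The plan is to reduce everything to the single-class likelihood expansion already carried out inside the proof of Lemma~\ref{lem:LDAexpansion}. Since $l_n(\mu_X,\mu_Y,\Omega_X,\Omega_Y)=l_X(\mu_X,\Omega_X)+l_Y(\mu_Y,\Omega_Y)$ and, unlike in the LDA case, the two precision matrices are perturbed independently ($\Omega_{X,t}$ enters only $l_X$, $\Omega_{Y,t}$ only $l_Y$), the increment $l_n(\mu_{X,t},\mu_{Y,t},\Omega_{X,t},\Omega_{Y,t})-l_n(\mu_X,\mu_Y,\Omega_X,\Omega_Y)$ splits cleanly as an $X$-increment plus a $Y$-increment. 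Each of these is exactly the quantity $l(\mu_t,\Omega_t)-l(\mu,\Omega)$ treated in the proof of Lemma~\ref{lem:LDAexpansion}, instantiated with $(\Sigma,\Omega,\Phi,\xi,\mu)$ equal to $(\Sigma_X,\Omega_X,\Phi_X,\xi_X,\mu_X)$ and to $(\Sigma_Y,\Omega_Y,\Phi_Y,\xi_Y,\mu_Y)$. So the proof is to run that computation twice and add; there is no genuinely new ingredient, and the decoupling only simplifies the bookkeeping relative to LDA.

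Concretely, for each block I would: (1) write the second-order Taylor expansion of the Gaussian log-likelihood jointly in the $\mu$- and $\Omega$-directions, as in the proofs of Lemma~\ref{lem:likexpansion} and Lemma~\ref{lem:LDAexpansion}, producing a linear term proportional to $\text{tr}\big((\Sigma_X-\tilde\Sigma_X)\Phi_X\big)$ (with $\tilde\Sigma_X=\tfrac1n\sum_i(X_i-\mu_X)(X_i-\mu_X)^T$), a linear-in-mean term proportional to $\xi_X^T\Omega_{X,t}(\bar X-\mu_X)$, a quadratic term $-\tfrac{t^2}{V^2}\|\Sigma_X^{1/2}\Phi_X\Sigma_X^{1/2}\|_F^2-\tfrac{t^2}{2V^2}\xi_X^T\Omega_{X,t}\xi_X$, and an integral (Lagrange) remainder in the eigenvalues of $\Sigma_X^{1/2}(\Omega_X-\Omega_{X,t})\Sigma_X^{1/2}$; (2) bound the remainder by $o(1)$ on $A_n$ verbatim as in Lemma~\ref{lem:likexpansion}, using $\|\Phi_X\|/V=O(1)$ (immediate from the form of $V^2$) and $\|\Sigma_X\|=O(1)$ on $A_n$; (3) replace $\tilde\Sigma_X$ by the $\bar X$-centered $\hat\Sigma_X$, whose cost is $\tfrac{C\sqrt n}{V}\|\Phi_X\|\|\bar X-\mu_X\|^2$ with $\|\bar X-\mu_X\|^2\le 2\|\mu_X-\mu_X^*\|^2+O_P(p/n)$, hence $o_P(1)$ on $A_n$ by the definition of $A_n$ and $p^2/n=o(1)$; (4) replace $\Omega_{X,t}$ by $\Omega^*$ in the linear-in-mean term, splitting $\Omega_{X,t}-\Omega^*$ into the $O(n^{-1/2})$ shift and the $o(1)$-on-$A_n$ deviation $\Omega_X-\Omega^*$ and using $\|\xi_X\|/V=O(1)$, exactly as in the proof of Lemma~\ref{lem:LDAexpansion}; (5) in the quadratic term, replace $\Sigma_X$ by $\Sigma_X^*$ inside the Frobenius norm (von Neumann's trace inequality, relative error $\le C\|\Sigma_X-\Sigma_X^*\|=o(1)$ on $A_n$) and $\Omega_{X,t}$ by $\Omega^*$ inside $\xi_X^T\Omega_{X,t}\xi_X$ (error $o(V^2)$). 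Doing the same for $Y$ and adding the two blocks, the quadratic contributions combine, by the definition (\ref{eq:QDAvariance}) of $V^2$, into exactly $-\tfrac12 t^2+o(1)$; the linear-in-mean terms become (up to sign, read off the one-block expansion) $\tfrac{t\sqrt n}{V}(\bar X-\mu_X)^T\Omega^*\xi_X$ and $\tfrac{t\sqrt n}{V}(\bar Y-\mu_Y)^T\Omega^*\xi_Y$, matching the statement once $\xi_X=2(z-\mu_X^*)$, $\xi_Y=2(\mu_Y^*-z)$ are used; and the linear term becomes $\tfrac{t\sqrt n}{V}\text{tr}((\Sigma_X-\hat\Sigma_X)\Phi_X)+\tfrac{t\sqrt n}{V}\text{tr}((\Sigma_Y-\hat\Sigma_Y)\Phi_Y)$ --- the asserted identity, uniformly on $A_n$.

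I expect the main obstacle to be purely the uniform bookkeeping: every replacement error in steps (3)--(5) must be shown to be $o_P(1)$ \emph{uniformly over} $A_n$, and this is the reason $A_n$ is defined with so many ingredients --- operator- and Frobenius-norm control of $\Sigma_X-\Sigma_X^*$ and $\Sigma_Y-\Sigma_Y^*$, plus the $\sqrt n(\cdot)^2$ and $\sqrt p(\cdot)$ controls on the means --- so that each cross-term appearing along the way (for instance $\sqrt n\|\mu_X-\mu_X^*\|\,\|\Omega_X-\Omega^*\|$) is dominated by those defining quantities through an AM--GM split. The one structural difference from LDA worth flagging is that, since $\Sigma_X^*\ne\Sigma_Y^*$ in general, the pieces $\Omega_X^*\Sigma_X^*\Omega_X^*=\Omega_X^*$ inside $\Phi_X$ and $\Phi_Y$ no longer cancel across the two classes, so $\Phi_X$ and $\Phi_Y$ are full-rank rather than rank $\le 2$; this is harmless for the present lemma because only their operator norms --- which are $O(V)$, hence absorbed after dividing by $V$ --- enter the estimates, but it is precisely why turning this likelihood expansion into a statement about $\Delta-\hat\Delta$, in the companion functional-expansion lemma, will require the stronger condition $p^3/n=o(1)$.
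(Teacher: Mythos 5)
Your proposal is correct and matches what the paper intends: the paper explicitly declines to prove Lemma~\ref{lem:QDAexpansion} in detail, stating only that the QDA case is ``very similar'' to LDA, and your reduction --- split $l_n=l_X+l_Y$, note that $\Omega_{X,t}$ and $\Omega_{Y,t}$ are decoupled so the increment factors into two independent single-class increments, and run the $l(\mu_t,\Omega_t)-l(\mu,\Omega)$ computation from the proof of Lemma~\ref{lem:LDAexpansion} once per block --- is exactly the intended argument, with each replacement in steps (3)--(5) controlled uniformly on $A_n$ by the same estimates and the quadratic pieces recombining into $-t^2/2+o(1)$ via the QDA definition of $V^2$ in (\ref{eq:QDAvariance}). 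Your closing remark that $\Phi_X,\Phi_Y$ are full rank (because the log-determinant term in the QDA functional does not cancel, whereas it vanishes identically in LDA) and that this is why the companion functional lemma needs $p^3/n=o(1)$ is accurate and well spotted, even though it plays no role in the present likelihood expansion.
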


\begin{lemma} \label{lem:QDAfunctional}
Under the same setting of Lemma \ref{lem:QDAexpansion} and further assume $V^{-1}= O(1)$ and $p^3/n=o(1)$, then
\begin{eqnarray*}
&& \frac{t\sqrt{n}}{V}\Big(\Delta(\mu_X,\mu_Y,\Omega_X,\Omega_Y)-\Delta(\bar{X},\bar{Y},\hat{\Sigma}_X^{-1},\hat{\Sigma}_Y^{-1})\Big) \\
&=& \frac{t\sqrt{n}}{V}\text{tr}\Big((\Sigma_X-\hat{\Sigma}_X)\Phi_X\Big)+\frac{t\sqrt{n}}{V}\text{tr}\Big((\Sigma_Y-\hat{\Sigma}_Y)\Phi_Y\Big) \\
&& -\frac{t\sqrt{n}}{V}(\bar{X}-\mu_X)^T\Omega^*\xi_X-\frac{t\sqrt{n}}{V}(\bar{Y}-\mu_Y)^T\Omega^*\xi_Y+ o_P(1),
\end{eqnarray*}
uniformly on $A_n$.
\end{lemma}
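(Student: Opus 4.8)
The plan is to prove this approximate‑linearity statement for the QDA functional by Taylor‑expanding $\Delta$ and its sample version $\hat\Delta=\Delta(\bar X,\bar Y,\hat\Sigma_X^{-1},\hat\Sigma_Y^{-1})$ about the truth; relative to the LDA analogue (Lemma~\ref{lem:LDAfunctional}), the only new feature is that the two log‑determinant pieces of $\Delta$ no longer cancel, and must be handled by Lemma~\ref{lem:logdet}, which is exactly why the extra hypothesis $p^3/n=o(1)$ appears. Write $g(\mu,\Sigma)=-(z-\mu)^T\Sigma^{-1}(z-\mu)-\log\det\Sigma$, so that $\Delta=g(\mu_X,\Sigma_X)-g(\mu_Y,\Sigma_Y)$ and $\hat\Delta=g(\bar X,\hat\Sigma_X)-g(\bar Y,\hat\Sigma_Y)$; by symmetry it suffices to expand $g(\mu_X,\Sigma_X)-g(\bar X,\hat\Sigma_X)$ and add the $Y$‑block with the overall minus sign. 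I would split $g$ into its log‑determinant part and its Gaussian‑exponent quadratic part. For the log‑determinant part, Lemma~\ref{lem:logdet} (applied with $(\mu_X^*,\Sigma_X^*)$, $\hat\Sigma_X$ in place of $(0,\Sigma^*)$, $\hat\Sigma$; the inclusion $A_n\subset\{\sqrt{n/p}\|\Sigma_X-\Sigma_X^*\|_F^2\vee\sqrt p\,\|\Sigma_X-\Sigma_X^*\|_F\le\delta_n\}$ holds by the definition of $A_n$) gives $-\log\det\Sigma_X+\log\det\hat\Sigma_X=-\text{tr}\big((\Sigma_X-\hat\Sigma_X)\Omega_X^*\big)+o_P(\sqrt{p/n})$ uniformly on $A_n$. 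For the quadratic part, a second‑order expansion about $(\mu_X^*,\Sigma_X^*)$ has first‑order term with $\mu$‑gradient $2\Omega_X^*(z-\mu_X^*)=\Omega_X^*\xi_X$ and $\Sigma$‑gradient $\Omega_X^*(z-\mu_X^*)(z-\mu_X^*)^T\Omega_X^*$. Subtracting the sample version and adding these to the log‑determinant main term, the covariance‑linear contributions fuse (using $\Omega_X^*=\Omega_X^*\Sigma_X^*\Omega_X^*$) into $\text{tr}\big((\Sigma_X-\hat\Sigma_X)\Phi_X\big)$, with $\Phi_X$ exactly the matrix in the statement, while the means contribute $(\mu_X-\bar X)^T\Omega_X^*\xi_X$. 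The $Y$‑block gives the analogous terms; the signs align because $\xi_Y=2(\mu_Y^*-z)$ and $\Phi_Y=\Omega_Y^*(\Sigma_Y^*-(z-\mu_Y^*)(z-\mu_Y^*)^T)\Omega_Y^*$ absorb the overall minus sign, and $(\mu-\bar X)^T\Omega^*\xi=-(\bar X-\mu)^T\Omega^*\xi$ matches the form in the lemma.

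It then remains to show that all Taylor remainders, after multiplication by $t\sqrt n/V$, are $o_P(1)$ uniformly on $A_n$. The key is that $V\gtrsim\sqrt p$ (in particular the stated $V^{-1}=O(1)$ is automatic): from $\Sigma_X^{*1/2}\Phi_X\Sigma_X^{*1/2}=-(I-uu^T)$ with $u=\Sigma_X^{*-1/2}(z-\mu_X^*)$ one gets $V^2\ge 2\|\Sigma_X^{*1/2}\Phi_X\Sigma_X^{*1/2}\|_F^2=2(p-2\|u\|^2+\|u\|^4)\gtrsim p$, so $t\sqrt n/V\lesssim\sqrt{n/p}$ and it suffices that each remainder be $o_P(\sqrt{p/n})$. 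The log‑determinant remainder is $o_P(\sqrt{p/n})$ directly by Lemma~\ref{lem:logdet}. The remainders of the quadratic part involve, in the covariance direction, only operator‑norm‑squared quantities $\|\Sigma_X-\Sigma_X^*\|^2$ and $\|\hat\Sigma_X-\Sigma_X^*\|^2$, and in the mean direction the quantities $\|\mu_X-\mu_X^*\|^2$, $\|\bar X-\mu_X^*\|^2$ and mixed products with $\|\Sigma_X-\Sigma_X^*\|$ or $\|\hat\Sigma_X-\Sigma_X^*\|$; these are absorbed after multiplication by $\sqrt{n/p}$ using the defining bounds of $A_n$ ($\sqrt p\,\|\mu_X-\mu_X^*\|\vee\sqrt n\,\|\mu_X-\mu_X^*\|^2\vee\sqrt p\,\|\Sigma_X-\Sigma_X^*\|_F\le\delta_n$) together with $\|\hat\Sigma_X-\Sigma_X^*\|\vee\|\bar X-\mu_X^*\|=O_P(\sqrt{p/n})$ (Lemma~\ref{lem:wishartconcen}).

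I expect the main obstacle to be precisely this last accounting — in particular being careful about when a Frobenius norm must be paid for with a factor $\sqrt p$ relative to an operator norm, since that is exactly where $p^3/n=o(1)$ (rather than the $p^2/n=o(1)$ that suffices for LDA) gets consumed: the borderline terms are those where $\|\hat\Sigma_X-\Sigma_X^*\|_F^2=O_P(p^2/n)$ enters a log‑determinant‑type remainder and must survive multiplication by $\sqrt{n/p}$, giving $O_P(p^{3/2}/\sqrt n)=o_P(1)$ exactly under $p^3/n=o(1)$. Once all remainders are absorbed, the claimed identity holds uniformly on $A_n$; combining it with Lemma~\ref{lem:QDAexpansion} then yields $l_n(\mu_{X,t},\mu_{Y,t},\Omega_{X,t},\Omega_{Y,t})-l_n(\mu_X,\mu_Y,\Omega_X,\Omega_Y)=\frac{t\sqrt n}{V}\big(\Delta(\mu_X,\mu_Y,\Omega_X,\Omega_Y)-\hat\Delta\big)-\frac12 t^2+o_P(1)$, the input needed to run the argument of Theorem~\ref{thm:main1} in the QDA setting.
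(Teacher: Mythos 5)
The paper itself does not supply a proof of Lemma~\ref{lem:QDAfunctional}; it explicitly says that for the QDA case ``we simply state the technical steps in the following lemmas and omit the details of the proof.'' Your blind proposal therefore fills a genuine gap, and it does so along what is clearly the intended route: decompose $\Delta = g(\mu_X,\Sigma_X) - g(\mu_Y,\Sigma_Y)$ with $g(\mu,\Sigma) = -(z-\mu)^T\Sigma^{-1}(z-\mu) - \log\det\Sigma$, handle the log-determinant piece by Lemma~\ref{lem:logdet}, and Taylor-expand the quadratic piece, then verify that all remainders survive multiplication by $t\sqrt{n}/V$. The sign bookkeeping, and in particular the observation that the $\Sigma$-gradient of $g$ at $(\mu_X^*,\Sigma_X^*)$ fuses $-\Omega_X^*\Sigma_X^*\Omega_X^*$ and $\Omega_X^*(z-\mu_X^*)(z-\mu_X^*)^T\Omega_X^*$ into exactly $\Phi_X$, is correct.

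The pivotal structural insight in your proposal is that for QDA the variance satisfies $V \gtrsim \sqrt{p}$ automatically, via
\[
\Sigma_X^{*1/2}\Phi_X\Sigma_X^{*1/2} = -(I - uu^T),\qquad u = \Sigma_X^{*-1/2}(z-\mu_X^*),
\]
so $\|\Sigma_X^{*1/2}\Phi_X\Sigma_X^{*1/2}\|_F^2 = (\|u\|^2-1)^2 + (p-1) \geq p-1$. This is specific to QDA (in LDA, $\Phi$ is a rank-$2$ matrix, so no such $\sqrt{p}$ gain) and is exactly what lets the log-determinant remainder $o_P(\sqrt{p/n})$ from Lemma~\ref{lem:logdet} be absorbed after multiplying by $\sqrt{n}/V \lesssim \sqrt{n/p}$. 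Your identification of the borderline term $\sqrt{n/p}\,\|\hat\Sigma_X - \Sigma_X^*\|_F^2 = O_P(\sqrt{p^3/n})$ as the source of the $p^3/n = o(1)$ requirement matches precisely where this condition is consumed in the proof of Lemma~\ref{lem:logdet}. In fact your argument shows $V^{-1} = O(1/\sqrt{p})$, which is stronger than the hypothesis $V^{-1} = O(1)$ stated in the lemma; the paper did not record this, instead citing the separation condition $\|\mu_X^*-\mu_Y^*\|\geq c$ via Proposition~\ref{prop:V}.

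One cosmetic point worth flagging: the lemma as printed writes $(\bar X - \mu_X)^T\Omega^*\xi_X$ and $(\bar Y - \mu_Y)^T\Omega^*\xi_Y$ with an unsubscripted $\Omega^*$, but your derivation (and the analogous Lemma~\ref{lem:QDAexpansion}) naturally produces $\Omega_X^*$ and $\Omega_Y^*$ respectively. This is almost certainly a typographical carry-over from the LDA case where $\Omega_X^* = \Omega_Y^* = \Omega^*$, and your reading is the correct one. Subject to this, the proposal is sound, and the remaining mean/covariance cross-remainders you list are indeed of lower order on $A_n$ given $\|\hat\Sigma_X-\Sigma_X^*\| \vee \|\bar X - \mu_X^*\| = O_P(\sqrt{p/n})$ and the defining bounds of $A_n$.
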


\begin{proof}[Proof of Theorem \ref{thm:QDA}]
Combining Lemma \ref{lem:QDAexpansion} and Lemma \ref{lem:QDAfunctional}, we have
\begin{eqnarray*}
&& l_n(\mu_{X,t},\mu_{Y,t},\Omega_{X,t},\Omega_{Y,t})-l_n(\mu_X,\mu_Y,\Omega_X,\Omega_Y) \\
&=& \frac{t\sqrt{n}}{V}\Big(\Delta(\mu_X,\mu_Y,\Omega_X,\Omega_Y)-\Delta(\bar{X},\bar{Y},\hat{\Sigma}_X^{-1},\hat{\Sigma}_Y^{-1})\Big)-\frac{1}{2}t^2+o_P(1),
\end{eqnarray*}
uniformly in $A_n$. The remaining of the proof is the same as the proof of Theorem \ref{thm:main1}.
\end{proof}

\section{Proof of  Theorem \ref{thm:LDAgauss} \& Theorem \ref{thm:QDAgauss}}

In this section, we are going to prove Theorem \ref{thm:LDAgauss} and Theorem \ref{thm:QDAgauss}.
Due to the similarity of the two theorems, we only present the details of the proof of Theorem \ref{thm:QDAgauss}. The proof of Theorem \ref{thm:LDAgauss} will be outlined.
By the remark after Theorem \ref{thm:QDAgauss}, it is sufficient to check the two conditions in Theorem \ref{thm:QDAgauss} for $X$ and $Y$ separately. Therefore, we only prove for the $X$ part and omit the subscript $X$ from now on. 

Denote the prior for $(\Omega,\mu)$ as $\Pi=\Pi_{\Omega}\times\Pi_{\mu}$. The following lemma is a generalization of Lemma \ref{lem:KL} to the nonzero mean case.
\begin{lemma} \label{lem:KLDA}
Let $\epsilon$ be any sequence such that $\epsilon\rightarrow 0$. Define
$$K_n=\left\{||\Omega||^2||\Sigma-\Sigma^*||_F^2+2||\Omega||||\mu-\mu^*||^2\leq \epsilon^2\right\}.$$
Then for any $b>0$, we have
\begin{eqnarray*}
&& P_{\Sigma^*}^n\Bigg(\int \exp\Big(l_n(\Omega)-l_n(\Omega^*)\Big)d\Pi(\Omega)\leq \Pi(K_n)\exp\big(-(b+1)n\epsilon^2\big)\Bigg) \\
&\leq& \exp\Big(-Cb^2n\epsilon^2\Big),
\end{eqnarray*}
for some constant $C>0$.
\end{lemma}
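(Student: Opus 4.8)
The plan is to run the standard evidence--lower--bound argument (as in \cite{barron99, ghosal00}), following verbatim the proof of Lemma~\ref{lem:KL} (Lemma~5.1 of \cite{gao13}) but now carrying the mean parameter along; throughout, $l_n(\mu,\Omega)$ denotes the nonzero--mean Gaussian log--likelihood and $\Pi=\Pi_\Omega\times\Pi_\mu$. First I would restrict the integral to $K_n$ and apply Jensen's inequality to the exponential, which gives
\[
\int\exp\!\Big(l_n(\mu,\Omega)-l_n(\mu^*,\Omega^*)\Big)\,d\Pi(\mu,\Omega)\;\ge\;\Pi(K_n)\,\exp\!\left(\frac{1}{\Pi(K_n)}\int_{K_n}\Big(l_n(\mu,\Omega)-l_n(\mu^*,\Omega^*)\Big)\,d\Pi(\mu,\Omega)\right).
\]
Hence it suffices to show that $\Pi(K_n)^{-1}\int_{K_n}\big(l_n(\mu,\Omega)-l_n(\mu^*,\Omega^*)\big)\,d\Pi\ge-(b+1)n\epsilon^2$ holds with $P_{\Sigma^*}^n$--probability at least $1-\exp(-Cb^2n\epsilon^2)$. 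For each fixed $(\mu,\Omega)$ I would decompose $l_n(\mu,\Omega)-l_n(\mu^*,\Omega^*)=-n\,\mathrm{KL}(\mu,\Omega)+Z_n(\mu,\Omega)$, where $\mathrm{KL}(\mu,\Omega)=\mathrm{KL}\big(N(\mu^*,\Omega^{*-1})\,\|\,N(\mu,\Omega^{-1})\big)$ and $Z_n(\mu,\Omega)$ is centered under $P_{\Sigma^*}^n$, and treat the two pieces separately.

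For the deterministic term I would use the closed form
\[
\mathrm{KL}(\mu,\Omega)=\tfrac12\big[\mathrm{tr}(\Omega\Sigma^*)-p-\log\det(\Omega\Sigma^*)\big]+\tfrac12(\mu-\mu^*)^T\Omega(\mu-\mu^*).
\]
On $\mathrm{supp}(\Pi)\cap K_n$ the eigenvalues of $\Omega$ and of $\Sigma$, and therefore those of $\Omega^{1/2}\Sigma^*\Omega^{1/2}$, lie in a fixed compact subset of $(0,\infty)$; combining the elementary bound $x-1-\log x\le c(x-1)^2$ on that range with $\Omega^{1/2}\Sigma\Omega^{1/2}=I$ bounds the bracket by $c\|\Omega^{1/2}(\Sigma^*-\Sigma)\Omega^{1/2}\|_F^2\le c\|\Omega\|^2\|\Sigma-\Sigma^*\|_F^2$, while the remaining term is at most $\tfrac12\|\Omega\|\,\|\mu-\mu^*\|^2$. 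By the definition of $K_n$ -- the factor $2$ in front of $\|\Omega\|\,\|\mu-\mu^*\|^2$ is precisely what makes the two pieces combine cleanly -- these sum to $\mathrm{KL}(\mu,\Omega)\le C_0\epsilon^2$, so $\Pi(K_n)^{-1}\int_{K_n}n\,\mathrm{KL}\,d\Pi\le C_0 n\epsilon^2$.

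For the fluctuation I would write $X_i=\mu^*+\Omega^{*-1/2}g_i$ with $g_i\sim N(0,I_p)$ i.i.d., so each $Z_n(\mu,\Omega)$ is a centered quadratic--plus--linear functional of $g=(g_1,\dots,g_n)$; its quadratic matrix $A(\mu,\Omega)$ satisfies $\|A(\mu,\Omega)\|\lesssim\|\Omega-\Omega^*\|=O(1)$, and $\|A(\mu,\Omega)\|_F^2$ together with the squared norm of the linear coefficient are $\lesssim n\big(\|\Omega\|^2\|\Sigma-\Sigma^*\|_F^2+\|\Omega\|\,\|\mu-\mu^*\|^2\big)\lesssim n\epsilon^2$ on $K_n\cap\mathrm{supp}(\Pi)$. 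The observation that makes the posterior averaging harmless is that $\bar Z_n:=\Pi(K_n)^{-1}\int_{K_n}Z_n\,d\Pi$ is itself a single centered quadratic--plus--linear functional of $g$, with matrix and linear coefficient equal to the $\Pi$--averages of the pointwise ones, so by convexity of $\|\cdot\|$ and $\|\cdot\|_F$ the same two bounds hold for $\bar Z_n$. Hanson--Wright for the quadratic part and a Gaussian tail bound for the linear part then give $P_{\Sigma^*}^n(\bar Z_n\le-s)\le\exp\!\big(-c\min\{s^2/(n\epsilon^2),\,s\}\big)$; taking $s=bn\epsilon^2$ yields $\exp(-c\min\{b^2,b\}\,n\epsilon^2)\le\exp(-Cb^2n\epsilon^2)$ for $b$ in any fixed bounded range (with an even smaller bound for larger $b$). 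Combining with the KL bound gives $\Pi(K_n)^{-1}\int_{K_n}(l_n-l_n^*)\,d\Pi\ge-(C_0+b)n\epsilon^2$ on the good event, and relabeling $b$ (equivalently, absorbing $C_0$ into the $+1$) completes the argument.

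The main obstacle is exactly this last concentration step: because the same sample enters $Z_n(\mu,\Omega)$ for every parameter, a union bound over the (uncountable) parameter set is unavailable, and one must instead recognize the posterior--weighted average as one quadratic--plus--linear functional of the Gaussian data, whose Frobenius and operator norms are controlled via Jensen's inequality from the per--parameter bounds extracted from membership in $K_n$. The Gaussian KL computation, the eigenvalue estimates on $\mathrm{supp}(\Pi)$, and the inequality $x-1-\log x\le c(x-1)^2$ are all routine.
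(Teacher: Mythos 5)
Your proposal is correct and follows essentially the same route as the paper's proof: after Jensen's inequality, the paper also defines the prior-averaged log-likelihood ratio (its $Y_i=\int\log\frac{dP_\Sigma}{dP_{\Sigma^*}}(X_i)\,d\tilde\Pi$) and observes it is a single centered quadratic-plus-linear Gaussian functional with mean equal to the averaged KL, then bounds the quadratic and linear pieces separately by sub-exponential/Bernstein concentration. The only cosmetic differences are that you invoke Hanson--Wright where the paper cites Lemma~5.1 of Gao and Zhou, and that you derive the KL bound from $x-1-\log x\le c(x-1)^2$ on the compact eigenvalue range (acknowledging the constant must then be absorbed) whereas the paper simply asserts the bound with constant $1/4$.
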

\begin{proof}
We renormalize the prior $\Pi$ as $\tilde{\Pi}=\Pi(K_n)^{-1}\tilde{\Pi}$ so that $\tilde{\Pi}$ is a distribution with support within $K_n$. Write $\mathbb{E}_{\tilde{\Pi}}$ to be the expectation using probability $\tilde{\Pi}$. Define the random variable
$$Y_i=\int \log\frac{dP_{\Sigma}}{dP_{\Sigma^*}}(X_i)d\tilde{\Pi}(\Omega)=c+\frac{1}{2}(X_i-\mu^*)^T(\Omega^*-\mathbb{E}_{\tilde{\Pi}}\Omega)(X_i-\mu^*)+(X_i-\mu^*)^T\mathbb{E}_{\tilde{\Pi}}\Big(\Omega(\mu-\mu^*)\Big),$$
for $i=1,...,n$, where $c$ is a constant independent of $X_1,...,X_n$. Then, $Y_i$ is a sub-exponential random variable with mean
\begin{eqnarray*}
-P_{\Sigma^*}Y_i &=& \int D(P_{\Sigma^*}||P_{\Sigma})d\tilde{\Pi}(\Omega) \\
&\leq& \int \Bigg(\frac{1}{4}||\Omega||^2||\Sigma-\Sigma^*||_F^2+\frac{1}{2}||\Omega||||\mu-\mu^*||^2\Bigg)d\tilde{\Pi}(\Omega) \\
&\leq&\epsilon^2/4.
\end{eqnarray*}
Thus, by Jensen's inequality, we have
\begin{eqnarray*}
&& P_{\Sigma^*}^n\Bigg(\int\frac{dP_{\Sigma}^n}{dP_{\Sigma^*}^n}(X^n)d\tilde{\Pi}(\Omega)\leq\exp\Big(-(b+1)n\epsilon^2\Big)\Bigg) \\
&\leq& P_{\Sigma^*}^n\Bigg(\frac{1}{n}\sum_{i=1}^nY_i\leq -(b+1)\epsilon^2\Bigg) \\
&\leq& P_{\Sigma^*}^n\Bigg(\frac{1}{n}\sum_{i=1}^n(Y_i-P_{\Sigma^*}Y_i)\leq-b\epsilon^2\Bigg) \\
&=& P_{\Sigma}^n\Bigg(\frac{1}{n}\sum_{i=1}^n(Y_{1i}-P_{\Sigma^*}Y_{1i})+\frac{1}{n}\sum_{i=1}^n(Y_{2i}-P_{\Sigma^*}Y_{2i})\leq-b\epsilon^2\Bigg),
\end{eqnarray*}
where in the last equality we defined
$$Y_{1i}=\frac{1}{2}(X_i-\mu^*)^T(\Omega^*-\mathbb{E}_{\tilde{\Pi}}\Omega)(X_i-\mu^*),\quad Y_{2i}=(X_i-\mu^*)^T\mathbb{E}_{\tilde{\Pi}}\Big(\Omega(\mu-\mu^*)\Big),$$
for $i=1,...,n$. By union bound, we have
\begin{eqnarray*}
&& P_{\Sigma}^n\Bigg(\frac{1}{n}\sum_{i=1}^n(Y_{1i}-P_{\Sigma^*}Y_{1i})+\frac{1}{n}\sum_{i=1}^n(Y_{2i}-P_{\Sigma^*}Y_{2i})\leq-b\epsilon^2\Bigg) \\
&\leq& P_{\Sigma}^n\Bigg(\frac{1}{n}\sum_{i=1}^n(Y_{1i}-P_{\Sigma^*}Y_{1i})\leq-\frac{b\epsilon^2}{2}\Bigg) + P_{\Sigma}^n\Bigg(\frac{1}{n}\sum_{i=1}^n(Y_{2i}-P_{\Sigma^*}Y_{2i})\leq-\frac{b\epsilon^2}{2}\Bigg).
\end{eqnarray*}
In the proof of Lemma 5.1 of \cite{gao13}, we have shown that
$$P_{\Sigma}^n\Bigg(\frac{1}{n}\sum_{i=1}^n(Y_{1i}-P_{\Sigma^*}Y_{1i})\leq-\frac{b\epsilon^2}{2}\Bigg)\leq \exp\Big(-Cb^2n\epsilon^2\Big).$$
Hence, it is sufficient to bound the second term. Define $Z_i=\Omega^{*1/2}(X_i-\mu^*)$, and then we have
$$Y_{2i}-P_{\Sigma^*}Y_{2i}=Z_i^Ta\quad\text{and}\quad Z_i\sim N(0,I_{p\times p})$$
with $a=\Sigma^{*1/2}\mathbb{E}_{\tilde{\Pi}}\Big(\Omega(\mu-\mu^*)\Big)$.  By Bernstein's inequality (see, for example, Proposition 5.16 of \cite{vershynin10}), we have
$$\mathbb{P}\Bigg(\frac{1}{n}\sum_{i=1}^na^T Z_i\leq -\frac{b\epsilon^2}{2}\Bigg)\leq \exp\Bigg(-C\min\Big(\frac{(nb\epsilon^2)^2}{n||a||^2},\frac{nb\epsilon^2}{||a||_{\infty}}\Big)\Bigg).$$
Since
\begin{eqnarray*}
||a||^2 &\leq& ||\Sigma^*||\left\|\mathbb{E}_{\tilde{\Pi}}\Big(\Omega(\mu-\mu^*)\Big)\right\|^2 \\
&\leq& ||\Sigma^*||\mathbb{E}_{\tilde{\Pi}}||\Omega(\mu-\mu^*)||^2 \\
&\leq& C'\epsilon^2,
\end{eqnarray*}
and $||a||_{\infty}\leq ||a||\leq \sqrt{C'\epsilon^2}$, then
$$\mathbb{P}\Bigg(\frac{1}{n}\sum_{i=1}^na^T Z_i\leq -\frac{b\epsilon^2}{2}\Bigg)\leq \exp\Big(-C\min\big(b^2n\epsilon^2,bn\epsilon\big)\Big)=\exp\Big(-Cb^2n\epsilon^2\Big),$$
because $\epsilon\rightarrow 0$. The conclusion follows the fact that
\begin{eqnarray*}
&& P_{\Sigma^*}\Bigg(\int\frac{dP_{\Sigma}^n}{dP_{\Sigma^*}^n}(X^n)d\Pi(\Omega)\leq\Pi(K_n)\exp\Big(-(b+1)n\epsilon^2\Big)\Bigg) \\
&\leq& P_{\Sigma^*}^n\Bigg(\int\frac{dP_{\Sigma}^n}{dP_{\Sigma^*}^n}(X^n)d\tilde{\Pi}(\Omega)\leq\exp\Big(-(b+1)n\epsilon^2\Big)\Bigg).
\end{eqnarray*}
\end{proof}

The following lemma proves prior concentration.
\begin{lemma} \label{lem:priorDA}
Assume $p^2=o(n/\log n)$, $||\mu^*||= O(1)$ and $||\Sigma^*||\vee||\Omega^*||\leq\Lambda= O(1)$. For the prior $\Pi=\Pi_{\Omega}\times\Pi_{\mu}$, we have
$$\Pi\Big(||\Omega||^2||\Sigma-\Sigma^*||_F^2+2||\Omega||||\mu-\mu^*||^2\leq\frac{p^2\log n}{n}\Big)\geq\exp\Big(-Cp^2\log n\Big),$$
for some constant $C>0$.
\end{lemma}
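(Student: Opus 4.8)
The plan is to follow the proof of Lemma~\ref{lem:prior}, splitting the budget $p^2\log n/n$ between the $\Omega$-part and the $\mu$-part and exploiting the product structure $\Pi=\Pi_{\Omega}\times\Pi_{\mu}$. Since $||\Omega||<2\Lambda$ on the entire support of $\Pi_{\Omega}$, we have $||\Omega||^2\le 4\Lambda^2$ and $2||\Omega||\le 4\Lambda$ there, so $\Pi$-almost surely
$$||\Omega||^2||\Sigma-\Sigma^*||_F^2+2||\Omega||\,||\mu-\mu^*||^2\le 4\Lambda^2||\Sigma-\Sigma^*||_F^2+4\Lambda||\mu-\mu^*||^2.$$
Consequently the target event contains the intersection of $\{||\Sigma-\Sigma^*||_F^2\le p^2\log n/(8\Lambda^2 n)\}$ and $\{||\mu-\mu^*||^2\le p^2\log n/(8\Lambda n)\}$; the first depends only on $\Omega$ and the second only on $\mu$, so they are independent under $\Pi$, and it suffices to bound each factor from below by $\exp(-Cp^2\log n)$.

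For the $\Omega$-factor I would reuse the argument in Lemma~\ref{lem:prior} almost verbatim. On the Frobenius ball $B_n=\{||\Omega-\Omega^*||_F\le c\,p\sqrt{\log n}/\sqrt n\}$, with $c$ depending only on $\Lambda$, the hypothesis $p^2=o(n/\log n)$ makes the radius $o(1)$, which forces $||\Omega||\vee||\Sigma||\le 2\Lambda$ (so $B_n\subset\supp(\Pi_{\Omega})$); moreover the identity $\Sigma-\Sigma^*=\Omega^{-1}(\Omega^*-\Omega)\Omega^{*-1}$ gives $||\Sigma-\Sigma^*||_F\le||\Sigma||\,||\Sigma^*||\,||\Omega-\Omega^*||_F\le 2\Lambda^2||\Omega-\Omega^*||_F$, so $B_n$ lies inside $\{||\Sigma-\Sigma^*||_F^2\le p^2\log n/(8\Lambda^2 n)\}$. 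Then $\Pi_{\Omega}(B_n)\ge\Pi_G(B_n)$ as in Lemma~\ref{lem:prior}, and the direct Gaussian computation there (Lemma~E.1 of \cite{gao13}) yields $\Pi_G(B_n)\ge e^{-||\Omega^*||_F^2}\big(\mathbb{P}(|Z|^2\le\log n/(cn))\big)^{p(p+1)/2}\ge\exp(-||\Omega^*||_F^2-Cp^2\log n)$, which is $\ge\exp(-C'p^2\log n)$ since $||\Omega^*||_F^2\le p\Lambda^2=o(p^2\log n)$.

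For the $\mu$-factor, put $r_n^2=p^2\log n/(8\Lambda n)$, so $r_n=\Theta(p\sqrt{\log n/n})\to 0$. Under $\Pi_{\mu}=N(0,I_p)$ the coordinates are i.i.d.\ $N(0,1)$, and it is enough that $|\mu_j-\mu_j^*|\le r_n/\sqrt p$ for every $j$. Bounding the standard normal density on each interval $[\mu_j^*-r_n/\sqrt p,\ \mu_j^*+r_n/\sqrt p]$ from below and multiplying,
$$\Pi_{\mu}\big(||\mu-\mu^*||\le r_n\big)\ \ge\ \Big(\frac{2r_n}{\sqrt{2\pi p}}\Big)^{p}\exp\Big(-\frac12\sum_{j=1}^{p}\big(|\mu_j^*|+r_n/\sqrt p\big)^2\Big)\ \ge\ \exp\Big(p\log\frac{2r_n}{\sqrt{2\pi p}}-C\Big),$$
using $\sum_{j}(|\mu_j^*|+r_n/\sqrt p)^2\le 2||\mu^*||^2+2r_n^2=O(1)$ (here $||\mu^*||=O(1)$ is used). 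Since $2r_n/\sqrt{2\pi p}=\Theta(\sqrt{p\log n/n})$, we have $p\log(2r_n/\sqrt{2\pi p})=\tfrac p2(\log p+\log\log n-\log n)+O(p)$, and $p^2=o(n/\log n)$ gives $\log p\le\tfrac12\log n$ and $p\le p^2$, so this is $\ge -Cp^2\log n$. Multiplying the two factors proves the lemma.

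I do not anticipate a genuine obstacle: the statement is a routine extension of Lemma~\ref{lem:prior}. The only points needing care are the bookkeeping of the $\Lambda$-dependent constants in the decomposition above, and verifying that the $\mu$-side small-ball radius $r_n=\Theta(p\sqrt{\log n/n})$ is still large enough for its $p$-dimensional Gaussian small-ball probability to exceed $\exp(-Cp^2\log n)$; this is precisely the place where the hypothesis $p^2=o(n/\log n)$ is invoked a second time.
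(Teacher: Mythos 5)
Your proof is correct and follows essentially the same route as the paper: bound $||\Omega||$ by the constant $2\Lambda$ on the support, split the budget between the $\Omega$- and $\mu$-parts using the product structure of $\Pi$, and estimate each factor by a Gaussian small-ball calculation. The only cosmetic difference is that the paper keeps the factor $||\Omega||^2$ in the $\Omega$-event so it can cite Lemma~\ref{lem:prior} verbatim, whereas you replace it by $4\Lambda^2$ and re-derive the content of that lemma; for the $\mu$-factor the paper invokes the same Gaussian small-ball estimate (Lemma E.1 of Gao--Zhou) that you reproduce by hand.
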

\begin{proof}
We have
\begin{eqnarray*}
&& \Pi\Big(||\Omega||^2||\Sigma-\Sigma^*||_F^2+2||\Omega||||\mu-\mu^*||^2\leq\frac{p^2\log n}{n}\Big) \\
&\geq& \Pi\Big(||\Omega||^2||\Sigma-\Sigma^*||_F^2+4\Lambda||\mu-\mu^*||^2\leq\frac{p^2\log n}{n}\Big) \\
&\geq& \Pi\Big(||\Omega||^2||\Sigma-\Sigma^*||_F^2\leq\frac{p^2\log n}{2n}, 4\Lambda||\mu-\mu^*||^2\leq\frac{p^2\log n}{2n}\Big) \\
&=& \Pi_{\Omega}\Big(||\Omega||^2||\Sigma-\Sigma^*||_F^2\leq\frac{p^2\log n}{2n}\Big)\Pi_{\mu}\Big(4\Lambda||\mu-\mu^*||^2\leq\frac{p^2\log n}{2n}\Big),
\end{eqnarray*}
where the first term is lower bounded in Lemma \ref{lem:prior}.
It is sufficient to lower bound $\Pi_{\mu}\Big(4\Lambda||\mu-\mu^*||^2\leq\frac{p^2\log n}{2n}\Big)$. By the definition of  Gaussian density,
\begin{eqnarray*}
&& \Pi_{\mu}\Big(4\Lambda||\mu-\mu^*||^2\leq\frac{p^2\log n}{2n}\Big) \\
&\geq&e^{-||\mu^*||^2/2}\Bigg(\mathbb{P}\Big(|Z|^2\leq\frac{p\log n}{c n}\Big)\Bigg)^p \\
&\geq&\exp\Big(-||\mu^*||^2/2-Cp\log n\Big).
\end{eqnarray*}
The proof is complete by noticing $||\mu^*||= O(1)$.
\end{proof}

\begin{lemma} \label{lem:testmean}
Assume $||\Sigma^*||\vee||\Omega^*||\leq \Lambda= O(1)$. Then for any constant $M>0$, there exists a testing function $\phi$ such that
$$P_{(\mu^*,\Omega^*)}^n\phi\leq\exp\Big(-CM^2p^2\log n\Big),$$
$$\sup_{\{(\mu,\Omega)\in\text{supp}(\Pi):||\mu-\mu^*||>M\sqrt{\frac{p^2\log n}{n}}\}}P_{(\mu,\Omega)}^n(1-\phi)\leq \exp\Big(-CM^2p^2\log n\Big),$$
for some constant $C>0$.
\end{lemma}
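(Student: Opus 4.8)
The plan is to use the sample mean $\bar X=\frac1n\sum_{i=1}^n X_i$ as the test statistic, exploiting that the separation $M\sqrt{p^2\log n/n}$ is much larger than the typical fluctuation of $\bar X$ around its mean, which is of order $\sqrt{p/n}$. Concretely, fix a threshold strictly between these two scales, e.g.
$$\phi=\indc{\,\|\bar X-\mu^*\|>\tfrac12 M\sqrt{p^2\log n/n}\,}.$$
No covering-number argument will be needed here (unlike Lemma~\ref{lem:test}), because on the alternative the nuisance $\Omega$ enters only through the bound $\|\Omega^{-1}\|=\|\Sigma\|\le 2\Lambda$, which holds uniformly on $\text{supp}(\Pi)$.

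For the type~I error: under $P^n_{(\mu^*,\Omega^*)}$ we have $\sqrt n(\bar X-\mu^*)=\Sigma^{*1/2}Z$ with $Z\sim N(0,I_p)$, so $n\|\bar X-\mu^*\|^2=Z^T\Sigma^*Z\le\|\Sigma^*\|\,\chi^2_p\le\Lambda\,\chi^2_p$ in distribution. On $\{\phi=1\}$ we therefore have $\chi^2_p>\frac{M^2}{4\Lambda}p^2\log n$. Since the prior-support hypothesis $p^2=o(n/\log n)$ forces $\log n\to\infty$, for $n$ large this threshold exceeds $8p$, so the Laurent--Massart chi-square tail bound yields $\Prob(\chi^2_p>\frac{M^2}{4\Lambda}p^2\log n)\le\exp(-c M^2 p^2\log n)$, which is the claimed bound on $P^n_{(\mu^*,\Omega^*)}\phi$.

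For the type~II error: let $(\mu,\Omega)\in\text{supp}(\Pi)$ with $\|\mu-\mu^*\|>M\sqrt{p^2\log n/n}$. On $\{1-\phi=1\}=\{\|\bar X-\mu^*\|\le\frac12 M\sqrt{p^2\log n/n}\}$, the triangle inequality gives $\|\bar X-\mu\|\ge\|\mu-\mu^*\|-\|\bar X-\mu^*\|\ge\frac12 M\sqrt{p^2\log n/n}$. Under $P^n_{(\mu,\Omega)}$ one has $\sqrt n(\bar X-\mu)=\Sigma^{1/2}Z$ with $Z\sim N(0,I_p)$, and since $\|\Sigma\|\le 2\Lambda$ uniformly on $\text{supp}(\Pi)$, $n\|\bar X-\mu\|^2\le 2\Lambda\,\chi^2_p$ in distribution; hence on this event $\chi^2_p\ge\frac{M^2}{8\Lambda}p^2\log n$. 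The same chi-square tail bound gives $P^n_{(\mu,\Omega)}(1-\phi)\le\exp(-c M^2 p^2\log n)$, and because this bound depends on $(\mu,\Omega)$ only through the uniform constant $2\Lambda$, it holds uniformly over the alternative. Choosing $C>0$ small enough (depending only on $\Lambda$) gives the statement.

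Since the argument reduces to two chi-square tail estimates, there is no serious obstacle; the only points requiring care are (i) picking the threshold so that it lies between the null fluctuation scale $\sqrt{p/n}$ and the separation scale $M\sqrt{p^2\log n/n}$, which is precisely what produces an exponent proportional to $p^2\log n$ rather than merely $p$, and (ii) checking that $\|\Omega^{-1}\|$ is bounded by $2\Lambda$ uniformly on $\text{supp}(\Pi)$ so that the type~II bound is genuinely uniform.
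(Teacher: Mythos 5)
Your proposal uses exactly the same test statistic $\phi=\indc{\|\bar X-\mu^*\|>\frac12 M\sqrt{p^2\log n/n}}$ as the paper, the same reduction of both error types to a $\chi^2_p$ tail via the spectral-norm bound $\|\Sigma\|\le 2\Lambda$ on $\mathrm{supp}(\Pi)$ and the same triangle-inequality step for the type~II error; the only cosmetic difference is that you cite the Laurent--Massart chi-square bound where the paper invokes Bernstein's inequality for $\sum_j(Z_j^2-1)$, and these give the same exponent. The proof is correct and essentially identical to the paper's.
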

\begin{proof}
Use notation $\epsilon^2=p^2\log n/n$.
Consider the testing function
$$\phi=\left\{||\bar{X}-\mu^*||>\frac{M\epsilon}{2}\right\}.$$
Then we have
$$P_{(\mu^*,\Omega^*)}^n\phi=\mathbb{P}\Bigg(\frac{1}{\sqrt{n}}||\Sigma^{*1/2}Z\Sigma^{*1/2}||>\frac{M\epsilon}{2}\Bigg)\leq \mathbb{P}\Big(||Z||^2\geq CM^2n\epsilon^2\Big),$$
where $Z\sim N(0,I_{p\times p})$. We also have for any $(\mu,\Omega)$ in the alternative set,
\begin{eqnarray*}
P_{(\mu,\Omega)}^n(1-\phi) &\leq& P_{(\mu,\Omega)}^n\Bigg(||\mu-\mu^*||-||\bar{X}-\mu||\leq\frac{M}{2}\epsilon\Bigg) \\
&\leq& P_{(\mu,\Omega)}^n\Bigg(||\bar{X}-\mu||>\frac{M\epsilon}{2}\Bigg) \\
&=& \mathbb{P}\Bigg(\frac{1}{\sqrt{n}}||\Sigma^{1/2}Z\Sigma^{1/2}||>\frac{M\epsilon}{2}\Bigg) \\
&\leq& \mathbb{P}\Big(||Z||^2\geq CM^2n\epsilon^2\Big).
\end{eqnarray*}
Finally, it is sufficient to bound $\mathbb{P}\Big(||Z||^2\geq CM^2n\epsilon^2\Big)$. We have
\begin{eqnarray*}
&& \mathbb{P}\Big(||Z||^2\geq CM^2n\epsilon^2\Big) \\
&=& \mathbb{P}\Bigg(\sum_{j=1}^p (Z_j^2-1)\geq CM^2n\epsilon^2-p\Bigg) \\
&\leq& \mathbb{P}\Bigg(\sum_{j=1}^p (Z_j^2-1)\geq CM^2n\epsilon^2/2\Bigg) \\
&\leq& \exp\Big(-C\min \big((M^2n\epsilon^2)^2/p, M^2n\epsilon^2\big)\Big) \\
&=& \exp\Big(-CM^2n\epsilon^2\Big),
\end{eqnarray*}
where we have used Bernstein's inequality. The proof is complete.
\end{proof}

\begin{lemma} \label{lem:testcov}
Assume $||\Sigma^*||\vee||\Omega^*||\leq\Lambda= O(1)$ and $||\Sigma_1||\vee||\Omega_1||\leq 2\Lambda$. There exist small $\delta,\delta',\bar{\delta}>0$ only depending on $\Lambda$ such that for any $M>0$, there exists  a testing function $\phi$ such that
$$P_{(\mu^*,\Omega^*)}^n\phi\leq 2\exp\Big(-C\delta'||\Sigma^*-\Sigma_1||_F^2\Big),$$
$$\sup_{\{(\mu,\Omega)\in\text{supp}(\Pi):||\Sigma-\Sigma_1||_F\leq\delta ||\Sigma^*-\Sigma_1||_F,||\mu-\mu^*||\leq M\epsilon\}}P_{\Sigma}^n(1-\phi)\leq 2\exp\Big(-C\delta'||\Sigma^*-\Sigma_1||_F^2\Big),$$
for some constant $C>0$, whenever $6\Lambda M^2\epsilon^2\leq \bar{\delta}||\Sigma_1-\Sigma^*||_F^2.$
\end{lemma}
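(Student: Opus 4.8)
The plan is to reduce to Lemma~\ref{lem:test} by re-centering the data at the hypothesized mean $\mu^*$. Following the proof of that lemma (Lemma~5.9 of \cite{gao13}), I take $\phi$ to be the likelihood-ratio test of $(\mu^*,\Omega^*)$ against the single alternative $(\mu^*,\Omega_1)$, which is a function of the recentered second moment $S:=\frac1n\sum_{i=1}^n(X_i-\mu^*)(X_i-\mu^*)^T$: explicitly $\phi=\indc{\text{tr}((\Omega^*-\Omega_1)S)\ge\tau}$, with $\tau$ the midpoint of $\text{tr}((\Omega^*-\Omega_1)\Sigma^*)$ and $\text{tr}((\Omega^*-\Omega_1)\Sigma_1)$. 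Using the identity $\Omega^*-\Omega_1=\Omega^*(\Sigma_1-\Sigma^*)\Omega_1$ one has $\text{tr}((\Omega^*-\Omega_1)(\Sigma_1-\Sigma^*))=\|\Omega^{*1/2}(\Sigma_1-\Sigma^*)\Omega_1^{1/2}\|_F^2\ge c_\Lambda\|\Sigma_1-\Sigma^*\|_F^2>0$, together with $\|\Omega^*-\Omega_1\|_F\le 2\Lambda^2\|\Sigma_1-\Sigma^*\|_F$ and $\|\Omega^*-\Omega_1\|\le 3\Lambda$; these are exactly the quantities that drive the error bounds in Lemma~\ref{lem:test}.

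For the type-I error, under $P_{(\mu^*,\Omega^*)}^n$ the vectors $X_i-\mu^*$ are i.i.d.\ $N(0,\Sigma^*)$, so $S$ has exactly the law of the sample covariance under $P_{\Sigma^*}^n$ and the Bernstein bound on $\text{tr}((\Omega^*-\Omega_1)(S-\Sigma^*))$ from the proof of Lemma~\ref{lem:test} applies verbatim, giving $P_{(\mu^*,\Omega^*)}^n\phi\le 2\exp(-C\delta'\|\Sigma^*-\Sigma_1\|_F^2)$.

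For the type-II error, fix $(\mu,\Omega)$ in the alternative set and write $X_i-\mu^*=(\mu-\mu^*)+W_i$ with $W_i$ i.i.d.\ $N(0,\Sigma)$, so that $S=\widehat{\Sigma}_W+R$ with $\widehat{\Sigma}_W=\frac1n\sum_iW_iW_i^T$ and $R=(\mu-\mu^*)(\mu-\mu^*)^T+(\mu-\mu^*)\overline{W}^T+\overline{W}(\mu-\mu^*)^T$. The term $\text{tr}((\Omega^*-\Omega_1)\widehat{\Sigma}_W)$ is handled as in Lemma~\ref{lem:test}: since $\|\Sigma-\Sigma_1\|_F\le\delta\|\Sigma^*-\Sigma_1\|_F$ with $\delta=\delta(\Lambda)$ small, its mean $\text{tr}((\Omega^*-\Omega_1)\Sigma)$ lies on the rejection side of $\tau$ by a margin $\ge c_\Lambda\|\Sigma_1-\Sigma^*\|_F^2$, and it concentrates at the Bernstein rate. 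For the remainder, $R$ has rank at most $2$, so I bound $|\text{tr}((\Omega^*-\Omega_1)R)|\le\|\Omega^*-\Omega_1\|\,\nnorm{R}\le 3\Lambda(\|\mu-\mu^*\|^2+2\|\mu-\mu^*\|\,\|\overline{W}\|)$; using $\|\mu-\mu^*\|\le M\epsilon$, the inequality $2\|\mu-\mu^*\|\,\|\overline{W}\|\le M^2\epsilon^2+\|\overline{W}\|^2$, and the $\chi^2$-deviation bound $\|\overline{W}\|^2\le Cp/n$ off an event of probability $\le e^{-cp}$ (negligible relative to the target error, and $p/n\le M^2\epsilon^2$, in the regime $p^2\ll n/\log n$), this is at most $C_\Lambda M^2\epsilon^2$. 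Under the hypothesis $6\Lambda M^2\epsilon^2\le\bar\delta\|\Sigma_1-\Sigma^*\|_F^2$ this is below a small (for $\bar\delta=\bar\delta(\Lambda)$) fraction of the margin, so $R$ cannot move $S$ across $\tau$, and $P_{(\mu,\Omega)}^n(1-\phi)\le 2\exp(-C\delta'\|\Sigma^*-\Sigma_1\|_F^2)$ follows. Choosing $\delta,\delta',\bar\delta$ depending on $\Lambda$ alone finishes the proof; this $\phi$ is later combined, via a maximum, with the mean test of Lemma~\ref{lem:testmean} in the proofs of Theorems~\ref{thm:LDAgauss} and~\ref{thm:QDAgauss}.

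The delicate point is the control of $R$: a naive Cauchy--Schwarz bound $|\text{tr}((\Omega^*-\Omega_1)R)|\le\|\Omega^*-\Omega_1\|_F\,\|R\|_F$ is of order $\|\Sigma^*-\Sigma_1\|_F\cdot M^2\epsilon^2$, which after invoking the hypothesis is only $O(\|\Sigma_1-\Sigma^*\|_F^3)$ and cannot be absorbed into the $O(\|\Sigma_1-\Sigma^*\|_F^2)$ margin when $\|\Sigma_1-\Sigma^*\|_F$ is not small. It is the low rank of $R$ --- which permits the use of the dimension-free operator norm $\|\Omega^*-\Omega_1\|\le 3\Lambda$ --- that makes the quadratic dependence on $\|\Sigma_1-\Sigma^*\|_F$ in the condition $6\Lambda M^2\epsilon^2\le\bar\delta\|\Sigma_1-\Sigma^*\|_F^2$ precisely the right one.
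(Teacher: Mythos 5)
Your proposal uses the same test statistic and the same re-centering decomposition as the paper: $\phi$ is the LRT of $(\mu^*,\Omega^*)$ against $(\mu^*,\Omega_1)$ based on $S=\frac1n\sum_i(X_i-\mu^*)(X_i-\mu^*)^T$, the type-I error reduces to Lemma~\ref{lem:test} verbatim, and you split $\text{tr}((\Omega^*-\Omega_1)S)$ into a main term (which you handle exactly as the paper does) plus a remainder $\text{tr}((\Omega^*-\Omega_1)R)$ which, when expanded, is precisely the paper's cross term $2(\bar X-\mu)^T(\Omega^*-\Omega_1)(\mu-\mu^*)$ plus bias $(\mu-\mu^*)^T(\Omega^*-\Omega_1)(\mu-\mu^*)$. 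Your bound on the bias term via $\|\Omega^*-\Omega_1\|\le 3\Lambda$ and $\|\mu-\mu^*\|\le M\epsilon$ matches the paper's.

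Where you diverge is in the cross term. The paper writes $2(\bar X-\mu)^T(\Omega^*-\Omega_1)(\mu-\mu^*)$ as a Gaussian linear form $Z^Ta$ with $\|a\|\lesssim\Lambda^{3/2}M\epsilon$, and applies Hoeffding's inequality, so the exponent in the tail bound is $\gtrsim\bar\rho^2/(M^2\epsilon^2)\gtrsim\|\Sigma_1-\Sigma^*\|_F^2$ using only the hypothesis $6\Lambda M^2\epsilon^2\le\bar\delta\|\Sigma_1-\Sigma^*\|_F^2$. This exploits the one-dimensional Gaussianity of $\bar W$ in the fixed direction $(\Omega^*-\Omega_1)(\mu-\mu^*)$, at scale $M\epsilon/\sqrt n$. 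You instead apply AM--GM, $2\|\mu-\mu^*\|\,\|\bar W\|\le M^2\epsilon^2+\|\bar W\|^2$, and a $\chi^2$ bound $\|\bar W\|^2\le Cp/n$ off an $e^{-cp}$ event. This discards the directional information and produces a term at the larger scale $p/n$, forcing you to import two side conditions that are not among the lemma's hypotheses: $p/n\lesssim M^2\epsilon^2$, and $e^{-cp}\le 2\exp(-C\delta'\|\Sigma_1-\Sigma^*\|_F^2)$. Both hold in the actual application (where $\epsilon^2=p^2\log n/n$ and $\|\Sigma_1-\Sigma^*\|_F^2\lesssim p$ on $\text{supp}(\Pi)$), and you do flag them, but they are external to what the lemma is given. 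The paper's Hoeffding step is both sharper and self-contained, requiring nothing beyond $6\Lambda M^2\epsilon^2\le\bar\delta\|\Sigma_1-\Sigma^*\|_F^2$. Your observation about why the nuclear-norm/operator-norm pairing is essential (versus Frobenius--Frobenius) is correct and is in the same spirit as the paper's use of $\|\Omega^*-\Omega_1\|$ rather than $\|\Omega^*-\Omega_1\|_F$ in that step.

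So: correct proof, same architecture, but a coarser and less self-contained treatment of the cross term. To make the argument fully rigorous as a proof of the lemma as stated, either adopt the paper's one-dimensional Gaussian tail bound for the cross term, or add the two side conditions to the lemma's hypotheses and verify them at the point of use.
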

\begin{proof}
Since the lemma is a slight variation of Lemma 5.9 in \cite{gao13}. We do not write the proof in full details. We choose to highlight the part where the current form is different from that in \cite{gao13}, and omit the similar part where the readers may find its full details in the proof of Lemma 5.9 in Gao and Zhou. We use the testing function
$$\phi=\left\{\frac{1}{n}\sum_{i=1}^n(X_i-\mu^*)^T(\Omega^*-\Omega_1)(X_i-\mu^*)>\log\det(\Omega\Sigma_1)\right\}.$$
We immediately have
$$P_{(\mu^*,\Omega^*)}^n\phi\leq 2\exp\Big(-C\delta'||\Sigma_1-\Sigma^*||_F^2\Big),$$
as is proved in \cite{gao13}. Now we are going to bound $P_{(\mu,\Omega)}^n(1-\phi)$ for every $(\mu,\Omega)$ in the alternative set. Note that we have
\begin{eqnarray*}
&& 1-\phi \\
&=& \Bigg\{\frac{1}{n}\sum_{i=1}^n(X_i-\mu)^T(\Omega^*-\Omega_1)(X_i-\mu)+2(\bar{X}-\mu)^T(\Omega^*-\Omega)(\mu-\mu^*)\\
&& +(\mu-\mu^*)^T(\Omega^*-\Omega_1)(\mu-\mu^*)<\log\det(\Omega\Sigma_1)\Bigg\}\\
&=& \Bigg\{\frac{1}{n}\sum_{i=1}^n\Bigg((X_i-\mu)^T(\Omega^*-\Omega_1)(X_i-\mu)-P_{(\mu,\Omega)}(X_i-\mu)^T(\Omega^*-\Omega_1)(X_i-\mu)\Bigg) \\
&& +2(\bar{X}-\mu)^T(\Omega^*-\Omega_1)(\mu-\mu^*)+(\mu-\mu^*)^T(\Omega_1-\Omega^*)(\mu-\mu^*)>\bar{\rho}\Bigg\},
\end{eqnarray*} 
where we have proved in \cite{gao13} that
$$\bar{\rho}\geq \bar{\delta}||\Sigma_1-\Sigma^*||_F^2,$$
for some $\bar{\delta}$ only depending on $\Lambda$. Using union bound, we have
\begin{eqnarray*}
&& P_{(\mu,\Omega)}^n(1-\phi) \\
&\leq& P_{(\mu,\Omega)}^n \Bigg\{\frac{1}{n}\sum_{i=1}^n\Bigg((X_i-\mu)^T(\Omega^*-\Omega_1)(X_i-\mu)-P_{(\mu,\Omega)}(X_i-\mu)^T(\Omega^*-\Omega_1)(X_i-\mu)\Bigg)>\frac{\bar{\rho}}{2}\Bigg\} \\
&& + P_{(\mu,\Omega)}^n\left\{2(\bar{X}-\mu)^T(\Omega^*-\Omega_1)(\mu-\mu^*)+(\mu-\mu^*)^T(\Omega_1-\Omega^*)(\mu-\mu^*)>\frac{\bar{\rho}}{2}\right\}.
\end{eqnarray*}
\cite{gao13} showed that the first term above is bounded by $2\exp\Big(-C\delta'||\Sigma_1-\Sigma^*||_F^2\Big)$. It is sufficient to bound the second term to close the proof. Actually, this is the only difference between this proof and the one in \cite{gao13}. Note that
$$|(\mu-\mu^*)^T(\Omega_1-\Omega^*)(\mu-\mu^*)|\leq 3\Lambda ||\mu-\mu^*||^2\leq 3\Lambda M^2\epsilon^2.$$
By assumption,
$$3\Lambda M^2\epsilon^2\leq \frac{1}{2}\bar{\delta}||\Sigma_1-\Sigma^*||_F^2\leq \frac{\bar{\rho}}{4}.$$
Hence,
\begin{eqnarray*}
&& P_{(\mu,\Omega)}^n\left\{2(\bar{X}-\mu)^T(\Omega^*-\Omega_1)(\mu-\mu^*)+(\mu-\mu^*)^T(\Omega_1-\Omega^*)(\mu-\mu^*)>\frac{\bar{\rho}}{2}\right\} \\
&\leq& P_{(\mu,\Omega)}^n\left\{2(\bar{X}-\mu)^T(\Omega^*-\Omega_1)(\mu-\mu^*)>\frac{\bar{\rho}}{4}\right\} \\
&=& \mathbb{P}\Bigg(Z^Ta>\frac{\bar{\rho}}{8}\Bigg),
\end{eqnarray*}
where $Z\sim N(0,1)$ and $a=\Sigma^{1/2}(\Omega^*-\Omega)(\mu-\mu^*)$. Using Hoeffding's inequality (see, for example, Proposition 5.10 of \cite{vershynin10}), we have
$$\mathbb{P}\Bigg(Z^Ta>\frac{\bar{\rho}}{8}\Bigg)\leq \exp\Big(-\frac{C\bar{\rho}^2}{||a||^2}\Big),$$
where
$$||a||^2\leq 9\Lambda^3||\mu-\mu^*||^2\leq 9\Lambda^3M^2\epsilon^2\leq \frac{3\Lambda^2}{4}\bar{\rho},$$
according to the assumption. Thus,
$$\mathbb{P}\Bigg(Z^Ta>\frac{\bar{\rho}}{8}\Bigg)\leq\exp\Big(-C\delta'||\Sigma_1-\Sigma^*||_F^2\Big),$$
for some $\delta'$ only depending on $\Lambda$. Therefore, $P_{(\mu,\Omega)}^n(1-\phi)\leq\exp\Big(-C\delta'||\Sigma_1-\Sigma^*||_F^2\Big)$ for all $(\mu,\Omega)$ in the alternative set and the proof is complete.
\end{proof}

\begin{proof}[Proof of Theorem \ref{thm:LDAgauss} and Theorem \ref{thm:QDAgauss}]
According to the remark after Theorem \ref{thm:QDA},
$$\Pi(A_{n}|X^n,Y^n)=\Pi_X(A_{X,n}|X^n)\Pi_Y(A_{Y,n}|Y^n).$$
Thus, it is sufficient to show both $\Pi_X(A_{X,n}|X^n)$ and $\Pi_Y(A_{Y,n}|Y^n)$ converge to $1$ in probability. Since they have the same form, we treat them together by omitting the subscript $X$ and $Y$. The posterior distribution is defined as
$$\Pi(A_n^c|X^n) = \frac{\int_{A_n^c}\exp\Big(l_n(\mu,\Omega)-l_n(\mu^*,\Omega^*)\Big)d\Pi(\mu,\Omega)}{\int\exp\Big(l_n(\mu,\Omega)-l_n(\mu^*,\Omega^*)\Big)d\Pi(\mu,\Omega)}=\frac{N_n}{D_n},$$
where we consider
$$A_n=\left\{||\mu-\mu^*||\leq M\sqrt{\frac{p^2\log n}{n}}, ||\Sigma-\Sigma^*||_F\leq \bar{M}\sqrt{\frac{p^2\log n}{n}}\right\},$$
for some $M$ and $\bar{M}$ sufficiently large. We are going to establish a test between the following hypotheses:
$$H_0: (\mu,\Omega)=(\mu^*,\Omega^*)\quad\text{vs}\quad H_1: (\mu,\Omega)\in A_n^c\cap\text{supp}(\Pi).$$
Decompose $A_n^c$ as
$$A_n^c=B_{1n}\cup B_{2n},$$
where
$$B_{1n}=\left\{||\mu-\mu^*||> M\sqrt{\frac{p^2\log n}{n}}\right\},$$
and
$$B_{2n}=\left\{||\mu-\mu^*||\leq M\sqrt{\frac{p^2\log n}{n}}, ||\Sigma-\Sigma^*||_F> \bar{M}\sqrt{\frac{p^2\log n}{n}}\right\}.$$
By Lemma \ref{lem:testmean}, there exists $\phi_1$ such that
$$P_{(\mu^*,\Omega^*)}^n\phi_1\vee \sup_{\text{supp}(\Pi)\cap B_{1n}}P_{(\mu,\Omega)}^n(1-\phi_1)\leq \exp\Big(-CM^2p^2\log n\Big).$$
For $B_{2n}$, we pick a covering set $\{\Sigma_j\}_{j=1}^N\subset B_{2n}\cap\text{supp}(\Pi)$, such that
$$B_{2n}\subset \cup_{j=1}^N B_{2nj},$$
where
$$B_{2nj}=\left\{||\mu-\mu^*||\leq M\sqrt{\frac{p^2\log n}{n}}, ||\Sigma-\Sigma_j||_F\leq \sqrt{\frac{p^2\log n}{n}}\right\},$$
and the covering number $N$ can be chosen to satisfy
$$\log N\leq Cp^2\log n,$$
as is shown in detail in the proof of Lemma \ref{lem:gaussprior}.
We may choose $\bar{M}$ large enough so that the assumption of Lemma \ref{lem:testcov} is satisfied, which implies the existence of $\phi_{2j}$ such that
$$P_{(\mu^*,\Omega^*)}^n\phi_{2j}\vee \sup_{\text{supp}(\Pi)\cap B_{2nj}}P_{(\mu,\Omega)}^n(1-\phi_{2j})\leq \exp\Big(-C\bar{M}^2p^2\log n\Big).$$
Define the final test as $\phi=\max\big(\phi_1,\vee_{j=1}^N \phi_{2j}\big)$. Then using union bound, we have
$$P_{(\mu^*,\Omega^*)}^n\phi\vee \sup_{\text{supp}(\Pi)\cap A_n^c}P_{(\mu,\Omega)}^n(1-\phi)\leq \exp\Big(-C(\bar{M}^2\wedge M^2)p^2\log n\Big),$$
for large $M$ and $\bar{M}$. Combining the testing result and the conclusions from Lemma \ref{lem:KLDA} and Lemma \ref{lem:priorDA},  we have
$$P_{(\mu^*,\Omega^*)}^n\Pi(A_n^c|X^n)=o_P(1),$$
by using the same argument in the proof of Lemma \ref{lem:gaussprior}. For QDA, as long as $p^2=o\Big(\frac{\sqrt{n}}{\log n}\Big)$, $A_n$ satisfies the requirement. For LDA, we use a $A_n$ defined as
$$A_n=\left\{||\mu_X-\mu_X^*||\vee||\mu_Y-\mu_Y^*||\leq M\sqrt{\frac{p^2\log n}{n}}, ||\Sigma-\Sigma^*||_F\leq \bar{M}\sqrt{\frac{p^2\log n}{n}}\right\}.$$
The proof needs some slight modification (including the previous lemmas) which is not essential and we choose to omit here. When $p^2=o\Big(\frac{\sqrt{n}}{\log n}\Big)$ is true, $A_n$ also satisfies the requirement there.

Now we are going to check the second conditions of Theorem \ref{thm:LDA} and Theorem \ref{thm:QDA}. We mainly sketch the QDA case. Using the notation in Lemma \ref{lem:gaussprior},
$$\int_{A_n}\exp\Big(l_n(\Omega_t,\mu_t)\Big)d\Pi(\Omega,\mu)=\xi_p^{-1}\int_{A_n}\exp\Big(l_n(\Omega_t,\mu_t)-\frac{1}{2}||\bar{\Omega}||_F^2-\frac{1}{2}||\mu||^2\Big)d\bar{\Omega}d\mu,$$
where $\xi_p$ is a normalizing constant and
\begin{eqnarray*}
&& \int_{A_n}\exp\Big(l_n(\Omega_t,\mu_t)-\frac{1}{2}||\bar{\Omega}||_F^2-\frac{1}{2}||\mu||^2\Big)d\bar{\Omega}d\mu \\
&=& \int_{A_n+(2tn^{-1/2}\Phi,tn^{-1/2}\xi)}\exp\Big(l_n(\Gamma,\theta)-\frac{1}{2}||\bar{\Gamma}-2tn^{1/2}\bar{\Phi}||_F^2-\frac{1}{2}||\theta-tn^{-1/2}\xi||^2\Big)d\bar{\Gamma}d\theta.
\end{eqnarray*}
Proceeding as in Lemma \ref{lem:gaussprior}, the result is proved.
\end{proof}

\section{Proof of Lemma \ref{lem:logdet}}

Let $\hat{\Omega}=\hat{\Sigma}^{-1}$.
Note that
\begin{eqnarray*}
&& \left|\log\det(\Sigma)-\log\det(\hat{\Sigma})-\text{tr}\Big((\Sigma-\hat{\Sigma})\Omega^*\Big)\right| \\
&\leq& \left|\log\det\Big(\hat{\Omega}^{1/2}\Sigma\hat{\Omega}^{1/2}-I+I\Big)-\text{tr}\Big(\hat{\Omega}^{1/2}\Sigma\hat{\Omega}^{1/2}-I\Big)\right| \\
&& + \left|\text{tr}\Big((\Sigma-\hat{\Sigma})(\Omega^*-\hat{\Omega})\Big)\right|.
\end{eqnarray*}
For the second term,
\begin{eqnarray*}
&& \sqrt{\frac{n}{p}}\left|\text{tr}\Big((\Sigma-\hat{\Sigma})(\Omega^*-\hat{\Omega})\Big)\right| \\
&\leq& C\sqrt{\frac{n}{p}}||\Sigma-\hat{\Sigma}||_F||\hat{\Sigma}-\Sigma^*||_F \\
&\leq& C\sqrt{\frac{n}{p}}\Big(||\hat{\Sigma}-\Sigma^*||_F^2+||\Sigma-\Sigma^*||_F||\hat{\Sigma}-\Sigma^*||_F\Big) \\
&\leq& O_P\Bigg(\sqrt{\frac{p^3}{n}}+\sqrt{p}||\Sigma-\Sigma^*||_F\Bigg),
\end{eqnarray*}
which converges to zero whenever $\sqrt{p}||\Sigma-\Sigma^*||_F\leq \delta_n=o(1)$. For the first term,
\begin{eqnarray*}
&& \sqrt{\frac{n}{p}}\left|\log\det\Big(\hat{\Omega}^{1/2}\Sigma\hat{\Omega}^{1/2}-I+I\Big)-\text{tr}\Big(\hat{\Omega}^{1/2}\Sigma\hat{\Omega}^{1/2}-I\Big)\right| \\
&\leq& C\sqrt{\frac{n}{p}}\left\|\hat{\Omega}^{1/2}\Sigma\hat{\Omega}^{1/2}-I\right\|_F^2 \\
&\leq& C\sqrt{\frac{n}{p}}\Big(||\Sigma-\Sigma^*||_F^2+||\hat{\Sigma}-\Sigma^*||_F^2\Big) \\
&\leq& O_P\Bigg(\sqrt{\frac{p^3}{n}}+\sqrt{\frac{n}{p}}||\Sigma-\Sigma^*||_F^2\Bigg),
\end{eqnarray*}
which converges to zero whenver $\sqrt{\frac{n}{p}}||\Sigma-\Sigma^*||_F^2\leq\delta_n=o(1)$. Thus, the proof is complete.

\section{Proof of Lemma \ref{lem:coveigen}, Lemma \ref{lem:precisioneigen} \& Proposition \ref{prop:eigencounter}}

Due to the similarity between Lemma \ref{lem:coveigen} and Lemma \ref{lem:precisioneigen},
we only give the  proof of Lemma \ref{lem:coveigen}.
Let us study the linear approximation of eigenvalue perturbation. In particular, we are going to find the first-order Taylor expansion of $\lambda_m(\Sigma)-\lambda_m(\hat{\Sigma})$ and control the error term in some set $A_n$. We have the following spectral decomposition for the three covariance matrices $\Sigma,\hat{\Sigma},\Sigma^*$.
$$\Sigma=UDU^T,\quad \hat{\Sigma}=\hat{U}\hat{D}\hat{U}^T,\quad \Sigma^*=U^*D^*U^{*T}.$$
Denote the $m$-th column of $U,\hat{U},U^*$ by $u_m,\hat{u}_m,u_m^*$.
Then,
\begin{eqnarray*}
\lambda_m(\Sigma)-\lambda_m(\hat{\Sigma}) &=& \lambda_m(\hat{U}^T\Sigma\hat{U})-\lambda_m(\hat{U}^T\hat{\Sigma}\hat{U}) \\
&=& \lambda_m(\hat{D}+\hat{U}^T(\hat{\Sigma}-\Sigma)\hat{U})-\lambda_m(\hat{D}).
\end{eqnarray*}
Write $A=\hat{D}$ and $\Delta=\hat{U}^T(\hat{\Sigma}-\Sigma)\hat{U}$. The problem is reduced to the eigenvalue perturbation of a diagonal matrix. According to the expansion formula in \cite{kato95} and \cite{ames12}, we have
\begin{equation}
\lambda_m(A+\Delta)-\lambda_m(A)=\lambda_m'(A,\Delta)+\sum_{k=2}^{\infty}\lambda_m^{(k)}(A,\Delta), \label{eq:kato}
\end{equation}
where the first-order term is
$$\lambda_m'(A,\Delta)=\Delta_{mm}=\text{tr}\Big(\hat{U}^T(\hat{\Sigma}-\Sigma)\hat{U} E_{mm}\Big)=\text{tr}\Big((\hat{\Sigma}-\Sigma)\hat{U}E_{mm}\hat{U}^T\Big)=\text{tr}\Big((\hat{\Sigma}-\Sigma)\hat{u}_m\hat{u}_m^T\Big).$$
In the remainder term, we have
$$\lambda_m^{(k+1)}(A,\Delta)=-\frac{1}{k+1}\sum_{v_1+...+v_{k+1}=k,v_1,...,v_{k+1}\geq 0}\text{tr}\Big(\Delta \tilde{A}^{v_1}...\Delta \tilde{A}^{v_{k+1}}\Big),$$
where $\tilde{A}^{v}$ is the matrix power when $v\geq 1$ with an exception that $\tilde{A}^0=-e_m e_m^T$, where $e_m$ is the $m$-th vector of the canonical basis of $\mathbb{R}^p$. The matrix $\tilde{A}$ is defined as
$$\tilde{A}=\sum_{1\leq j\leq p, j\neq m}\frac{e_je_j^T}{a_m-a_j},$$
where $a_j=\lambda_j(\hat{\Sigma})$ is the $(j,j)$-th entry of $A$.
Therefore, for any integer $v\geq 1$,
$$||\tilde{A}^{v}||=||\tilde{A}||^{v}\leq \max\left\{\frac{1}{|a_m-a_{m-1}|},\frac{1}{|a_m-a_{m+1}|}\right\}.$$
We are going to show that the first term in (\ref{eq:kato}) is a good enough approximation of $\lambda_m(A+\Delta)-\lambda_m(A)$ by bounding the higher-order terms.
Let us provide a bound for $|\lambda_m^{(k+1)}(A,\Delta)|$. Let $\mathbb{N}=\{0,1,2,...\}$. Consider the set
$$\left\{(v_1,...,v_{k+1})\in\mathbb{N}: v_1+...+v_{k+1}=k\right\}.$$
From its definition, there must be some $l$, such that $v_l=0$ to satisfy $v_1+...+v_{k+1}=k$. Thus, the set can be decomposed into a union of disjoint subsets as follows,
\begin{eqnarray*}
&& \left\{(v_1,...,v_{k+1})\in\mathbb{N}^{k+1}: v_1+...+v_{k+1}=k\right\} \\
&=& \bigcup_{l=1}^{k+1}\left\{(v_1,...,v_{l-1},v_{l+1},...,v_{k+1})\in\mathbb{N}^{k}: v_1+...+v_{l-1}+v_{l+1}+...+v_{k+1}=k\right\}.
\end{eqnarray*}
Clearly, each the cardinality of each subset is
$${2k-1\choose k-1}\leq (3e)^k.$$
We give names to the sets we have mentioned by
$$V_{k+1}=\cup_{l=1}^{k+1}V_{k+1,l}.$$
For $l=1$, we have
\begin{eqnarray*}
\left|\sum_{V_{k+1,1}}\text{tr}\Big(\Delta \tilde{A}^{v_1}...\Delta \tilde{A}^{v_{k+1}}\Big)\right| &\leq& \sum_{V_{k+1},1}\left|\text{tr}\Big(\Delta \tilde{A}^{v_1}...\Delta \tilde{A}^{v_{k+1}}\Big)\right| \\
&=& \sum_{V_{k+1},1}\left|\text{tr}\Big(\Delta \tilde{A}^{0}\Delta \tilde{A}^{v_2}...\Delta \tilde{A}^{v_{k+1}}\Big)\right| \\
&=& \sum_{V_{k+1},1}\left|\text{tr}\Big(\Delta e_me_m^T\Delta \tilde{A}^{v_2}...\Delta \tilde{A}^{v_{k+1}}\Big)\right| \\
&\leq& \sum_{V_{k+1},1}\left\|\Delta e_m\right\|\left\|e_m^T\Delta \tilde{A}^{v_2}...\Delta \tilde{A}^{v_{k+1}}\right\| \\
&\leq& \sum_{V_{k+1},1}||\Delta||^{k+1} ||\tilde{A}||^{v_2+...+v_{k+1}} \\
&=& \sum_{V_{k+1},1}||\Delta||^{k+1} ||\tilde{A}||^{k} \\
&=& ||\Delta|| \Big(3e ||\Delta||||\tilde{A}||\Big)^k 
\end{eqnarray*}
In the same way, the bound also holds for other $l$. Therefore,
\begin{eqnarray*}
|\lambda_m^{(k+1)}(A,\Delta)| &=& \left|\frac{1}{k+1}\sum_{l=1}^{k+1} \sum_{V_{k+1,l}}\text{tr}\Big(\Delta \tilde{A}^{v_1}...\Delta \tilde{A}^{v_{k+1}}\Big)\right| \\
&\leq& \frac{1}{k+1}\sum_{l=1}^{k+1}||\Delta|| \Big(3e ||\Delta||||\tilde{A}||\Big)^k \\
&=& ||\Delta|| \Big(3e ||\Delta||||\tilde{A}||\Big)^k.
\end{eqnarray*}
When $3e ||\Delta||||\tilde{A}||<1$, we may sum over $k$, and obtain
\begin{eqnarray*}
\left|\sum_{k=2}^{\infty}\lambda_m^{(k)}(A,\Delta)\right| &\leq& ||\Delta||\sum_{k=1}^{\infty}\Big(3e ||\Delta||||\tilde{A}||\Big)^k \\
&\leq& \frac{3e||\Delta||^2||\tilde{A}||}{1-3e||\Delta||||\tilde{A}||}.
\end{eqnarray*}
Note that
$$||\Delta||=||\hat{\Sigma}-\Sigma||\leq ||\hat{\Sigma}-\Sigma^*||+||\Sigma-\Sigma^*||\leq O_{P}\Big(\sqrt{\frac{p}{n}}\Big)+||\Sigma-\Sigma^*||,$$
and
$$||\tilde{A}||\leq C\min\left\{\max\{|\lambda_m-\lambda_{m-1}|^{-1}, |\lambda_m-\lambda_{m+1}|^{-1}\}, ||\hat{\Sigma}-\Sigma||^{-1}\right\}= O_P\Big(\min\big(\delta^{-1},\sqrt{n/p}\big)\Big).$$
Therefore,
$$3e||\Delta||||\tilde{A}||= O_P\Bigg(\frac{\sqrt{p/n}+||\Sigma-\Sigma^*||}{\delta}\Bigg)= o_P(1),$$
holds under the assumption $\delta^{-1}\sqrt{p/n}=o(1)$ and when $\delta^{-1}||\Sigma-\Sigma^*||=o_P(1)$. The remainders are controlled by
$$\sqrt{n}\left|\sum_{k=2}^{\infty}\lambda_m^{(k)}(A,\Delta)\right|= O_P\Bigg(\frac{p}{\delta \sqrt{n}}+\frac{\sqrt{n}||\Sigma-\Sigma^*||^2}{\delta}\Bigg)= o_P(1),$$
under the assumption $\frac{p}{\delta\sqrt{n}}= o(1)$ and when $\delta^{-1}\sqrt{n}||\Sigma-\Sigma^*||^2=o(1)$. Hence, by (\ref{eq:kato}), we have proved
$$\sup_{\{\delta^{-1}||\Sigma-\Sigma^*||\vee\delta^{-1}\sqrt{n}||\Sigma-\Sigma^*||^2\leq\delta_n\}}\sqrt{n}\left|\lambda_m(A+\Delta)-\lambda_m(A)-\lambda_m'(A,\Delta)\right|= o_P(1),$$
for any $\delta_n= o(1)$.

Finally, for the first order term $\lambda_m'(A,\Delta)$, we approximate it by $\text{tr}\Big((\hat{\Sigma}-\Sigma)u_m^*u_{m}^{*T}\Big)$, and the approximation error is
$$\left|\text{tr}\Big((\hat{\Sigma}-\Sigma)(\hat{u}_m\hat{u}_m^T-u_m^*u_m^{*T})\Big)\right|=||u_m^*u_m^{*T}-\hat{u}_m\hat{u}_m^T||_F \left|\text{tr}\Big((\hat{\Sigma}-\Sigma)K\Big)\right|,$$
where $K$ is a rank-two unit Frobenius norm matrix. It has SVD $K=c_1d_1d_1^T+c_2d_2d_2^T$, with $c_1\vee c_2\leq 1$. Therefore,
$$||u_m^*u_m^{*T}-\hat{u}_m\hat{u}_m^T||_F \left|\text{tr}\Big((\hat{\Sigma}-\Sigma)K\Big)\right|\leq C||\hat{\Sigma}-\Sigma|| ||\hat{\Sigma}-\Sigma^*||\leq O_{P}\Big(\frac{p}{n}\Big)+O_{P}\Big(\sqrt{\frac{p}{n}}||\Sigma-\Sigma^*||\Big).$$
Under the assumption $p= o(n)$, when $\sqrt{p}||\Sigma-\Sigma^*||=o(1)$, we have
$$\sqrt{n}\left|\text{tr}\Big((\hat{\Sigma}-\Sigma)(\hat{u}_m\hat{u}_m^T-u_m^*u_m^{*T})\Big)\right|= o_P(1).$$
Therefore, the proof of Lemma \ref{lem:coveigen} is complete.

Now we prove Proposition \ref{prop:eigencounter}. We redefine $A=D^*$ and  $\Delta=U^{*T}(\Sigma^*-\hat{\Sigma})U^*$ and correspondingly $\tilde{A}^v$. In the case where $\delta$ is a constant, we have
$$||\tilde{A}||\leq C,\quad ||\Delta||= O_P\Bigg(\sqrt{\frac{p}{n}}\Bigg).$$
Similar to (\ref{eq:kato}), we have
$$\lambda_1(\hat{\Sigma})-\lambda_1(\Sigma^*)=\lambda_1'(A,\Delta)+\sum_{k=2}^{\infty}\lambda_1^{(k)}(A,\Delta),$$
where $\lambda_1'(A,\Delta)=\text{tr}\Big((\Sigma^*-\hat{\Sigma})u_1^*u_1^{*T}\Big)$ and thus $\sqrt{n}\lambda_1'(A,\Delta)$ is asymptotically normal. For the remainder term, we decompose it as
$$
\sum_{k=2}^{\infty}\lambda_1^{(k)}(A,\Delta) = \lambda_1^{(2)}(A,\Delta)+\sum_{k=3}^{\infty}\lambda_1^{(k)}(A,\Delta).
$$
Using similar techniques in proving Lemma \ref{lem:coveigen}, we have
\begin{eqnarray*}
\sqrt{n}\left|\sum_{k=3}^{\infty}\lambda_1^{(k)}(A,\Delta)\right| &\leq& \sqrt{n}\sum_{k=2}^{\infty}||\Delta||\Big(3e||\Delta||||\tilde{A}||\Big)^k \\
&\leq& C\sqrt{n}||\Delta||^3 \\
&=& O_P\Bigg(\sqrt{\frac{p^3}{n^2}}\Bigg),
\end{eqnarray*}
which is $o_P(1)$ under the assumption. Therefore,
$$\sqrt{n}\Big(\lambda_1(\hat{\Sigma})-\lambda_1(\Sigma^*)\Big)=\sqrt{n}\text{tr}\Big((\Sigma^*-\hat{\Sigma})u_1^*u_1^{*T}\Big)+\sqrt{n}\lambda_1^{(2)}(A,\Delta)+o_P(1).$$
It remains to show that $\sqrt{n}\lambda_1^{(2)}(A,\Delta)$ is not $o_P(1)$. Note that $\tilde{A}=\sum_{j\geq 2}\frac{e_je_j^T}{a_1-a_j}$, with $a_j=\lambda_j(\Sigma^*)$. Hence,
\begin{eqnarray*}
\sqrt{n}\left|\lambda_1^{(2)}(A,\Delta)\right| &=& \sqrt{n}\left|\text{tr}\Big(\Delta\tilde{A}\Delta e_1e_1^T\Big)\right| \\
&=& \sqrt{n}\left|\sum_{j\geq 2}\frac{1}{a_1-a_j}|e_1^TU^{*T}(\Sigma^*-\hat{\Sigma})U^*e_j|^2\right| \\
&\geq& \frac{\sqrt{n}}{a_1-a_2}\sum_{j\geq 2}|e_1^TU^{*T}(\Sigma^*-\hat{\Sigma})U^*e_j|^2.
\end{eqnarray*}
For fixed eigengap, $a_1-a_2$ is a constant.
When $\Sigma^*$ is diagonal, $U^*=I$, and we have $|e_1^TU^{*T}(\Sigma^*-\hat{\Sigma})U^*e_j|^2=\hat{\sigma}_{1j}^2$. Moreover, the fact that $\Sigma^*$ is diagonal implies $\hat{\sigma}_{1j}^2$ are independent for $j=2,...,p$.  Hence,
$$\sqrt{n}\left|\lambda_1^{(2)}(A,\Delta)\right| \geq C\sqrt{n}\sum_{j=2}^p\hat{\sigma}_{1j}^2,$$
where $\sqrt{n}\sum_{j=2}^p\hat{\sigma}_{1j}^2$ is at the level of $p/\sqrt{n}$, which diverges to $\infty$ under the assumption. The proof is complete.

\bibliographystyle{biometrika}
\bibliography{reference}


\end{document}